\newtheorem{theorem}{Theorem}[section]
\newtheorem{lemma}[theorem]{Lemma}
\newtheorem{proposition}[theorem]{Proposition}
\newtheorem{corollary}[theorem]{Corollary}
\theoremstyle{definition}
\newtheorem{definition}{Definition}[theorem]
\newtheorem{example}{Example}[definition]
\newtheorem{convention}{Convention}[definition]
\newtheorem{remark}{Remark}[definition]
\DeclareMathAlphabet{\mathpzc}{OT1}{pzc}{m}{it}
\begin{document}

\begin{center}

{ \LARGE \bfseries Algebras over $\infty$-operads}\\[0.5cm]
\textsc{ \large  Gijs Heuts } \\ [2cm]

\end{center}

\author{ Gijs Heuts \\ Harvard University \\ gheuts@math.harvard.edu}

\begin{abstract}
We develop a notion of an \emph{algebra over an $\infty$-operad $X$ with values in $\infty$-categories} which is completely intrinsic to the formalism of dendroidal sets. Its definition involves the notion of a \emph{coCartesian fibration of dendroidal sets} and extends Lurie's definition of a coCartesian fibration of simplicial sets. We show how, for a dendroidal set X, the coCartesian fibrations over $X$ fit together to form an $\infty$-category $\mathbf{coCart}(X)$. Using a generalization of the Grothendieck construction, we prove that $\mathbf{coCart}(X)$ is equivalent to the $\infty$-category of algebras in $\infty$-categories over the simplicial operad $\mathrm{hc}\tau_d(X)$ associated to $X$. This equivalence can be restricted to give an equivalence between algebras taking values in $\infty$-groupoids (or equivalently, spaces) and the $\infty$-category of so-called \emph{left fibrations} over $X$.
\end{abstract}

\tableofcontents

\newpage

% introduction and preliminaries
\section*{Introduction}

Since their conception in the 70's through the works of Boardman and Vogt \cite{boardmanvogt} and May \cite{may} operads have been an important tool for studying algebraic structures in homotopy theory. The reason for this is that topological operads are capable of describing algebraic structures up to coherent higher homotopy. The classical examples are the little $k$-cubes operads $E_k$ which capture the algebraic structure of $k$-fold loop spaces. \par 
In recent years homotopy-theoretic methods have been infused into other areas of mathematics by the works of, amongst many others, Lurie \cite{dagv}\cite{htt}\cite{higheralgebra}  and To\"en and Vezzosi \cite{hag1}\cite{hag2}, in particular resulting in the subject of derived algebraic geometry. This infusion relies heavily on the methods of higher category theory, specifically that of so-called \emph{$\infty$-categories}. These provide a generalization of category theory in which one investigates structures up to coherent higher homotopy. As the reader might well know, one way of building such a formalism is by using topological categories, but it turns out that in practice one would often like to relax the strictness of composition of morphisms that exists in these categories. Joyal \cite{joyal} and Lurie \cite{htt} have developed a more flexible approach to higher category theory using the language of simplicial sets. When one tries to consider algebraic structures in this setting, the question of defining \emph{$\infty$-operads} surfaces naturally. In particular, since a category is a very specific kind of operad (namely one with only unary operations), the theory of $\infty$-operads should in fact subsume the theory of $\infty$-categories.
\par 
There is an obvious approach to such a theory, namely that of topological operads. However, as mentioned above, such a theory is too strict for many purposes. Two alternatives have been developed: one by Lurie \cite{higheralgebra}, the other by Moerdijk and Weiss \cite{moerdijkweiss2} through the use of \emph{dendroidal sets}. These dendroidal sets generalize simplicial sets in a natural way and produce an efficient framework for studying $\infty$-operads and hence algebraic structures up to coherent higher homotopy. A necessity for any such theory is a description of algebras over an $\infty$-operad. This paper suggests an approach to such a definition in a way that is completely intrinsic to the formalism of dendroidal sets. \par 
We take our inspiration from the classical Grothendieck construction. Given a category $C$ and a pseudofunctor
\begin{equation*}
\mathcal{F}: C \longrightarrow \mathbf{Cat}
\end{equation*}
to the category of (small) categories, we can associate to this a new category, called the \emph{Grothendieck construction on $\mathcal{F}$}, as follows. We define a category $\int_C \mathcal{F}$ whose objects are given by pairs $(c, x)$, where $c$ is an object of $C$ and $x$ is an object in the category $\mathcal{F}(c)$. A morphism $(c,x) \rightarrow (d,y)$ is a pair $(f, \phi)$ where $f: c \rightarrow d$ is a morphism of $C$ and $\phi$ is a morphism $(\mathcal{F}f)(x) \rightarrow y$ in $\mathcal{F}(d)$. There is an obvious projection functor
\begin{equation*}
\pi_{\mathcal{F}}: \int_C \mathcal{F} \longrightarrow C
\end{equation*}
This projection has several special properties, which make it into something called a \emph{cofibered category}. It turns out that the Grothendieck construction establishes an equivalence between the theory of pseudo-functors from $C$ to $\mathbf{Cat}$ and cofibered categories over $C$. This construction is of great use in the study of stacks, where it originated. \par 
Lurie extensively studies the generalization of this construction to the setting of $\infty$-categories in \cite{htt}. The higher-categorical analogue of a cofibered category is a \emph{coCartesian fibration} of simplicial sets. Lurie then establishes an equivalence between the theory of such coCartesian fibrations and the theory of functors from an $\infty$-category $C$ to the $\infty$-category $\mathbf{Cat}_\infty$ of small $\infty$-categories. \par 
The generalization of a functor to the setting of operads is the concept of an algebra. Since the theory of topological operads is equivalent to the study of $\infty$-operads using dendroidal sets \cite{dsetsvssimploper}, we might as well study algebras over an $\infty$-operad $S$ by looking at algebras over the operad $\mathrm{hc}\tau_d(S)$, the topological operad associated to $S$ through the just mentioned equivalence. (This is to be made precise later on.) However, it turns out there is a more efficient and flexible approach to such algebras. We propose a definition of a \emph{coCartesian fibration of dendroidal sets}. We show how these fibrations fit together into an $\infty$-category
\begin{equation*}
\mathbf{coCart}(S)
\end{equation*} 
of coCartesian fibrations over $S$. We construct an analog of the Grothendieck construction in this setting, which we refer to as the \emph{$\infty$-operadic Grothendieck construction}. The main result of this text is Theorem \ref{thm:Grothendieckconstruction}, which, formulated somewhat imprecisely, says:
\begin{theorem}
The $\infty$-operadic Grothendieck construction gives an equivalence of $\infty$-categories
\begin{equation*}
\mathbf{coCart}(S) \simeq \mathrm{Alg}_{\mathrm{hc}\tau_d(S)}(\mathbf{Cat}_\infty)
\end{equation*}
where the category on the right denotes the category of $\mathrm{hc}\tau_d(S)$-algebras in $\infty$-categories.
\end{theorem}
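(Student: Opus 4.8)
The plan is to deduce the stated equivalence from a Quillen equivalence between two model categories presenting the two sides. On the left, I would use a coCartesian model structure on the category of marked dendroidal sets over $S$, whose fibrant objects are the coCartesian fibrations over $S$ and whose underlying $\infty$-category is $\mathbf{coCart}(S)$. On the right, writing $\mathcal{O} = \mathrm{hc}\tau_d(S)$, I would use the projective model structure on the category of $\mathcal{O}$-algebras valued in marked simplicial sets; its underlying $\infty$-category is $\mathrm{Alg}_{\mathcal{O}}(\mathbf{Cat}_\infty)$. The theorem then follows once the two are connected by a Quillen equivalence and one passes to underlying $\infty$-categories.

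First I would construct the adjoint pair of \emph{straightening} and \emph{unstraightening} functors, following the template of Lurie's simplicial straightening but adapted to trees. The unstraightening $\mathrm{Un}_S$ is the $\infty$-operadic Grothendieck construction: given an algebra $A$, it glues the fibers $A(c)$ over the colors $c$ of $S$, with the multifunctors of $A$ encoding the coCartesian transport along the operations of $S$; concretely it is defined by a coend against a dendroidal resolution arising from the counit of the adjunction $(\mathrm{hc}\tau_d, N_d)$. Its left adjoint, the straightening $\mathrm{St}_S$, is determined on the representables $\Omega[T]$ and extended by colimits. I would then check that $(\mathrm{St}_S, \mathrm{Un}_S)$ is a Quillen adjunction by verifying that $\mathrm{St}_S$ carries normal monomorphisms, and marked trivial cofibrations, to cofibrations (respectively trivial cofibrations) of $\mathcal{O}$-algebras.

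The substance is proving this adjunction is a Quillen equivalence. I would argue by a skeletal induction, filtering the normal monomorphism $\emptyset \to S$ by attaching boundaries $\partial\Omega[T] \to \Omega[T]$ of trees in order of increasing size. Since $\mathrm{St}_S$ is a left adjoint it preserves the resulting pushouts, so the inductive step reduces the claim that the derived unit and counit are equivalences to the case of a single representable $S = \Omega[T]$; a further decomposition of $\Omega[T]$ along its vertices then reduces this to the corollas $C_n$. For a corolla the statement becomes an explicit identification of the straightening of the universal coCartesian fibration with the appropriate free algebra, which I would verify directly using the computation of $\mathrm{hc}\tau_d$ on $\Omega[T]$ and the known behavior of simplicial straightening on the spaces of operations.

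The hard part will be the combinatorics of the inductive step, where two points demand care. First, one must show that $\mathrm{St}_S$ is compatible with the skeletal filtration, i.e. that it sends the pushout-products of tree boundary inclusions to cofibrations between cofibrant objects; this rests on the behavior of the Boardman--Vogt tensor product of dendroidal sets, which---unlike the Cartesian product of simplicial sets---is not associative up to isomorphism and only preserves normal monomorphisms under control, so the pushout-product estimates of Moerdijk--Weiss must be invoked carefully. Second, the corolla base case is where the genuine input enters: one must know that $\mathrm{hc}\tau_d$ computes the correct spaces of operations, so that the strict algebra produced by straightening models the intended homotopy-coherent algebra. I expect the bulk of the technical effort to lie in the first point, controlling normal monomorphisms and tensor products across the cellular filtration of $S$.
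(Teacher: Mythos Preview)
Your overall architecture matches the paper's: both sides are presented by model categories (the coCartesian model structure on $\mathbf{dSets^+}/S$ and the projective model structure on $\mathrm{Alg}_{\mathrm{hc}\tau_d(S)}(\mathbf{sSets^+})$), the bridge is a straightening/unstraightening Quillen pair, and the Quillen equivalence is established by skeletal induction on the normal dendroidal set $S$, reducing to the representable case $S=\Omega[T]$. So the plan is sound.

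Two substantive differences are worth flagging. First, the paper does \emph{not} reduce the representable case $\Omega[T]$ to corollas. Instead it introduces auxiliary objects called \emph{mapping trees} $M^+(A)$, for $A$ a simplicial $\Omega(T)$-algebra, and shows (i) every coCartesian fibration over $\Omega[T]$ is marked-equivalent to some $M^+(A)$, and (ii) the straightening of $M^+(A)$ is computable and pointwise equivalent to $\mathcal{S}(A(c)^\flat)$. This lets one check the derived unit is an equivalence directly over each $\Omega[T]$. Your proposed reduction ``decompose $\Omega[T]$ along its vertices to reach corollas'' is essentially invoking that $\mathrm{spine}(T)\hookrightarrow\Omega[T]$ is inner anodyne together with invariance of the Quillen equivalence under operadic equivalences of the base; but the paper proves that invariance \emph{as a corollary of} the main theorem, so using it here would be circular. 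The mapping-tree argument is what breaks that circle.

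Second, you locate the difficulty of the inductive step in pushout-product estimates for the Boardman--Vogt tensor product. That is where the effort goes for building the \emph{model structure} (stability of marked anodynes under smash products), but not for the inductive step of the equivalence. The paper's inductive step is a descent argument: for a pushout $X'\leftarrow X\to Y$ of normal dendroidal sets along a normal mono, one shows that the localized simplicial category $(\mathbf{dSets^+}/Y')^\circ[\tilde W^{-1}]$ is the homotopy pullback of the three known pieces, by checking that the square of $\mathrm{Map}^\sharp$-spaces is a homotopy pullback (the relevant map is a Kan fibration because restriction along a normal mono is); the algebra side is handled by a general fact about projective model structures on algebras over a colimit of cofibrant simplicial operads. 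So the genuine input at this stage is the fibration lemma for $\mathrm{Map}^\sharp$ and the homotopy-limit comparison of simplicial categories, not tensor-product combinatorics.
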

The reader familiar with chapter 3 of Lurie's book \cite{htt} will see that our strategy of proof is very much inspired by his. \par 
The plan of this text is as follows:
\begin{itemize}
\item The first section discusses a generalization of the classical Grothendieck construction to the theory of operads in $\mathbf{Sets}$. This generalization is straightforward and will not be a surprise to anyone familiar with the Grothendieck construction.
\item The second section discusses algebras over an $\infty$-operad in $\infty$-groupoids, or spaces, using the concept of left fibrations. This section is expository, proofs are provided in later sections. 
\item The third section discusses the definition of a coCartesian fibration of dendroidal sets and gives its basic properties.
\item The fourth section is aimed at producing an $\infty$-category of coCartesian fibrations. To do this, we pass through the world of simplicial model categories. To be precise: for any dendroidal set $S$ we define a combinatorial simplicial model category of \emph{marked dendroidal sets over $S$}. The $\infty$-category $\mathbf{coCart}(S)$ will be the homotopy-coherent nerve of the full simplicial subcategory of this model category on the fibrant-cofibrant objects.
\item The fifth chapter discusses the $\infty$-operadic Grothendieck construction and establishes our main result, Theorem \ref{thm:Grothendieckconstruction}.
\item The sixth chapter discusses the naturality properties of the $\infty$-category $\mathbf{coCart}(S)$ with respect to morphisms $S \longrightarrow T$ and the definition of symmetric monoidal $\infty$-categories in terms of dendroidal sets. It also provides proofs for the results of Section 2 and mentions how these can be used to obtain an infinite loop space machine for $\infty$-operads.
\end{itemize} 

\subsection*{Acknowledgements}
Most of the work presented here was carried out during the writing of my thesis required for a Master's degree at Utrecht University. I would like to thank Urs Schreiber, Dave Carchedi and Vladimir Hinich for useful conversations relating to the subject matter of this text. I would also like to thank Ittay Weiss, who was the second reader of my thesis. Most of all, thanks are due to Ieke Moerdijk; without his countless ideas and suggestions this work would not have been possible.

\section*{Preliminaries}
Throughout this text the word operad will always mean a \emph{symmetric coloured operad}. An operad $P$ in a given closed symmetric monoidal category $\mathbf{\mathcal{E}}$ with tensor unit $I$ can be described by a set of colours $C$ and for any tuple of colours $(c_1, \ldots, c_n, c)$ an object
\begin{equation*}
P(c_1, \ldots, c_n; c)
\end{equation*} 
of $\mathbf{\mathcal{E}}$, which is to be thought of as the object of operations of $P$ with inputs $c_1, \ldots, c_n$ and output $c$. Furthermore, for $c \in C$, we should have a \emph{unit}
\begin{equation*}
I \longrightarrow P(c; c)
\end{equation*}
and we should have \emph{compositions}
\begin{equation*}
P(c_1, \ldots, c_n; c) \otimes P(d_1^1, \ldots, d_1^{j_1}; c_1) \otimes \ldots \otimes P(d_n^1, \ldots, d_n^{j_n}; c_n) \longrightarrow P(d_1^1, \ldots, d_n^{j_n}; c)
\end{equation*}
Finally, permutations $\sigma \in \Sigma_n$ should act by transformations
\begin{equation*}
\sigma^*: P(c_1, \ldots, c_n; c) \longrightarrow P(c_{\sigma(1)}, \ldots, c_{\sigma(n)}; c)
\end{equation*}
All of these data are required to satisfy various well-known associativity, unit and equivariance axioms. In the particular case where $\mathbf{\mathcal{E}} = \mathbf{Sets}$ and all but the sets of operations of $P$ corresponding to operations with one input are empty, we retrieve the definition of a category. In case we set $\mathbf{\mathcal{E}}$ equal to the category $\mathbf{Top}$ of (compactly generated) spaces or the category $\mathbf{sSets}$ of simplicial sets, we obtain definitions of topological resp. simplicial operads. These restrict to definitions of topological and simplicial categories by allowing only operads with unary operations, i.e. operations with exactly one input. We will denote the category of operads in $\mathbf{\mathcal{E}}$ by $\mathbf{Oper}_{\mathbf{\mathcal{E}}}$. In case $\mathbf{\mathcal{E}}$ is the category of simplicial sets we will deviate from this convention and write $\mathbf{sOper}$. Apologies for the inconsistency, but it is convenient. \par 
Any symmetric monoidal category $\mathbf{\mathcal{C}}$ enriched over $\mathbf{\mathcal{E}}$ gives rise to an operad $\underline{\mathbf{\mathcal{C}}}$ by
\begin{equation*}
\underline{\mathbf{\mathcal{C}}}(c_1, \ldots, c_n; c) := \mathbf{\mathcal{C}}(c_1 \otimes \cdots \otimes c_n, c)
\end{equation*}
Given an operad $P$ in $\mathbf{\mathcal{E}}$ an algebra over $P$ in $\mathbf{\mathcal{C}}$ is then simply a morphism of operads
\begin{equation*}
P \longrightarrow \underline{\mathbf{\mathcal{C}}}
\end{equation*}
We briefly review the basics of dendroidal sets, of which an extensive treatment can be found in \cite{dendroidalsets} and \cite{moerdijkweiss2}, and fix our notation. The category $\mathbf{\Omega}$ is defined to be the category of finite rooted trees. These are trees equipped with a distinguished outer vertex called the \emph{output} and a (possibly empty) set of outer vertices not containing the output called \emph{inputs}. When drawing trees, we will always omit out- and input vertices from the picture. Recall that each such rooted tree $T$ defines a $\mathbf{Sets}$-operad $\Omega(T)$, the free operad generated by $T$, which is coloured by the edges of $T$. A morphism of trees $S \longrightarrow T$ is defined to be a morphism of operads $\Omega(S) \longrightarrow \Omega(T)$. The category of dendroidal sets is defined to be the category of presheaves on $\mathbf{\Omega}$:
\begin{equation*}
\mathbf{dSets} := \mathbf{Sets}^{\mathbf{\Omega}^{\mathrm{op}}}
\end{equation*} 
The dendroidal set represented by a tree $T$ will be denoted by $\Omega[T]$. We will denote the set of $T$-dendrices of a dendroidal set $X$ by $X_T$. There is an embedding of the simplex category into the category of finite rooted trees, denoted
\begin{equation*}
i: \mathbf{\Delta} \longrightarrow \mathbf{\Omega}
\end{equation*}
defined by sending $[n]$ to the linear tree $L_n$ with $n+1$ edges and $n$ inner vertices. By left Kan extension this induces an adjunction
\[
\xymatrix@R=40pt@C=40pt{
i_!: \mathbf{sSets} \ar@<.5ex>[r] & \mathbf{dSets}: i^*\ar@<.5ex>[l]
}
\]
As is the case in the simplex category, any morphism in $\Omega$ may be factorized into \emph{face and degeneracy maps}. Relations between these maps extending the well-known relations in the simplex category are described in $\cite{dendroidalsets}$. An inner face of $T$ contracting an inner edge $e$ will be denoted $\partial_e\Omega[T]$, an outer face chopping off a corolla with vertex $v$ is denoted $\partial_v\Omega[T]$. We will sometimes abuse notation and write $\partial_e T$ and $\partial_v T$ for the trees in $\Omega$ corresponding to the shapes of these faces. \par 
We will call a tree with one vertex and $n$ leaves an \emph{$n$-corolla}. The tree with no vertices and only a single edge will be denoted by $\eta$. We will often blur the distinction between $\Omega[\eta]$ and $\eta$ and write $\eta$ for the former, or $\eta_c$ if we want to be explicit about the fact that the unique edge of $\Omega[\eta]$ has colour $c$. Note that we have an isomorphism
\begin{equation*}
\mathbf{dSets}/\eta \simeq \mathbf{sSets}
\end{equation*} 
A dendroidal set $X$ is said to be an \emph{$\infty$-operad} if it has the extension property with respect to all inner horn inclusions of trees. If $X$ is an $\infty$-operad then the simplicial set $i^*(X)$ is an $\infty$-category and a 1-corolla of $X$ is called an equivalence if the induced 1-simplex of $i^*(X)$ is an equivalence. \par 
The functor
\begin{equation*}
\mathbf{\Omega} \longrightarrow \mathbf{Oper_{Sets}}: T \longmapsto \Omega(T)
\end{equation*}
defines, by left Kan extension, an adjunction
\[
\xymatrix@R=40pt@C=40pt{
\tau_d: \mathbf{dSets} \ar@<.5ex>[r] & \mathbf{Oper_{Sets}}: N_d\ar@<.5ex>[l]
}
\]
The right adjoint $N_d$ is called the \emph{dendroidal nerve}. Recall that the category $\mathbf{Oper_{Sets}}$ carries a tensor product $\otimes_{BV}$ called the \emph{Boardman-Vogt tensor product}. For two representables $\Omega[S], \Omega[T] \in \mathbf{dSets}$ their tensor product is defined by
\begin{equation*}
\Omega[S] \otimes \Omega[T] := N_d(\Omega(S) \otimes_{BV} \Omega(T))
\end{equation*}
We extend this definition by colimits to all of $\mathbf{dSets}$. By general arguments the functor $- \otimes X$ has a right adjoint $\mathbf{Hom}_\mathbf{dSets}(X, -)$, making $\mathbf{dSets}$ into a closed symmetric monoidal category. \par 
The category of dendroidal sets is closely related to the category $\mathbf{sOper}$ of simplicial operads. The Boardman-Vogt $W$-construction with respect to the interval $\Delta^1 \in \mathbf{sSets}$ (see \cite{bergermoerdijk2} and \cite{dendroidalsets} for a detailed description) yields a functor
\begin{equation*}
\mathbf{\Omega} \longrightarrow \mathbf{sOper}: T \longmapsto W(\Omega(T))
\end{equation*}
By left Kan extension this gives an adjunction
\[
\xymatrix@R=40pt@C=40pt{
\mathrm{hc}\tau_d: \mathbf{dSets} \ar@<.5ex>[r] & \mathbf{sOper}: \mathrm{hc}N_d\ar@<.5ex>[l]
}
\]
The right adjoint $\mathrm{hc}N_d$ is called the \emph{homotopy coherent dendroidal nerve}. Let us describe the simplicial operad $W(\Omega(T))$ explicitly. Given colours $c_1, \ldots, c_n, c$ of $T$, we describe the space of operations $W(\Omega(T))(c_1, \ldots, c_n; c)$. Suppose there exists a subtree $S$ of $T$ such that the leaves of $S$ are $c_1, \ldots, c_n$ and its root is $c$. If it exists, such an $S$ is unique. Define
\begin{equation*}
W(\Omega(T))(c_1, \ldots, c_n; c) := (\Delta^1)^{I(S)}
\end{equation*}
where $I(S)$ denotes the set of inner edges of the tree $S$. If this set is empty the right-hand side is understood to be the point $\Delta^0$. If there exists no $S$ matching the description above, we let this space of operations be empty. Composition is defined by grafting trees, assigning length 1 to newly arising inner edges (i.e. the edges along which the grafting occurs).  
\par 
We will without explicit mention use basic facts from the theory of model categories, which can for example be found in \cite{hirschhorn} and \cite{hovey}. We will say a class of morphisms is \emph{weakly saturated} if it is closed under retracts, pushouts and transfinite compositions. \par 
A monomorphism $f: X \longrightarrow Y$ of dendroidal sets is said to be \emph{normal} if, for any tree $T \in \mathbf{\Omega}$ and any $\alpha \in Y_T$ which is not in the image of $f$, the stabilizer $\mathrm{Aut}(T)_{\alpha}$ is trivial. A dendroidal set $X$ is called $\emph{normal}$ if the unique map $\emptyset \longrightarrow X$ is normal. The normal monomorphisms are the weakly saturated class generated by the boundary inclusions of trees $\partial\Omega[T] \subseteq \Omega[T]$. By Quillen's small object argument any map of dendroidal sets may be factored as a normal monomorphism followed by a morphism having the right lifting property with respect to all normal monomorphisms, which we refer to as a \emph{trivial fibration}. In particular, factoring a map $\emptyset \longrightarrow X$ in this way, we obtain a normal dendroidal set $X_{(n)}$ which we call a \emph{normalization} of $X$. An easy to prove and very useful fact is that any dendroidal set admitting a map to a normal dendroidal set is itself normal. \par 
A map $f: X \longrightarrow Y$ of dendroidal sets is called an $\emph{operadic equivalence}$ if there exists normalizations $X_{(n)}$ and $Y_{(n)}$ and a map $f_{(n)}: X_{(n)} \longrightarrow Y_{(n)}$ making the obvious diagram commute such that for any $\infty$-operad $Z$ the induced map
\begin{equation*}
i^* \mathbf{Hom_{dSets}}(Y_{(n)}, Z) \longrightarrow i^* \mathbf{Hom_{dSets}}(X_{(n)}, Z)
\end{equation*}
is a categorical equivalence of simplicial sets, i.e. an equivalence in the Joyal model structure. The following was established by Cisinski and Moerdijk in \cite{moerdijkcisinski}:
\begin{theorem}
There exists a combinatorial model structure on the category of dendroidal sets in which the cofibrations are the normal monomorphisms and the weak equivalences are the operadic equivalences. The fibrant objects of this model structure are precisely the $\infty$-operads. By slicing over $\eta$ we obtain a model structure on $\mathbf{sSets}$ which coincides with the Joyal model structure.
\end{theorem}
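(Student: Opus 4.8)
The plan is to produce this structure by Cisinski's general machinery for constructing homotopical structures on presheaf categories, rather than by checking the model-category axioms by hand. Since $\mathbf{dSets} = \mathbf{Sets}^{\mathbf{\Omega}^{\mathrm{op}}}$ is a presheaf category it is locally presentable, so any model structure we generate cofibrantly will be combinatorial for free. The cofibrations are prescribed to us: the normal monomorphisms are the weakly saturated class generated by the set of boundary inclusions $\{\partial\Omega[T] \subseteq \Omega[T]\}$, so they form a cofibrantly generated class. The substantive work is therefore to produce the trivial cofibrations (equivalently, the fibrations and weak equivalences) in such a way that the fibrant objects turn out to be exactly the $\infty$-operads and the weak equivalences turn out to be exactly the operadic equivalences.

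First I would fix an exact cylinder on $\mathbf{dSets}$ by choosing a dendroidal interval $J$, that is, an object equipped with maps $\eta \sqcup \eta \to J \to \eta$ whose two endpoint inclusions are normal monomorphisms, chosen compatibly with the Boardman--Vogt tensor product and so that $i^* J$ is the Joyal interval on simplicial sets; the cylinder functor is then $X \mapsto X \otimes J$. I would take the inner horns to provide the basic trivial cofibrations: let the inner anodyne maps be the weakly saturated class generated by the inner horn inclusions $\Lambda^e[T] \subseteq \Omega[T]$ (the union of all faces of $\Omega[T]$ except the inner face $\partial_e\Omega[T]$). Following Cisinski's axiomatics I would then enlarge this to a class of anodyne extensions stable under the cylinder, in the sense that the pushout--product of an anodyne map with an endpoint inclusion of $J$ is again anodyne, with the endpoint inclusions $\eta \to J$ themselves anodyne so that $J$ witnesses invertible homotopies. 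Cisinski's theorem then yields a cofibrantly generated model structure whose cofibrations are the normal monomorphisms, whose fibrant objects are precisely those with the right lifting property against all anodynes, and whose weak equivalences are the maps inducing homotopy equivalences on internal mapping objects into fibrant targets.

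Having produced the structure I would identify its three features. The identification of the fibrant objects requires showing that lifting against the full anodyne class is, on objects over the terminal dendroidal set, equivalent to lifting against inner horns alone, so that fibrant objects are exactly the $\infty$-operads; this reduces to the pushout--product statements above, and it is here that the combinatorics of $\Omega[T] \otimes J$ enters, through a shuffle decomposition of the tensor into its non-degenerate dendrices. I expect this to be the main obstacle: it is the direct analogue of Lurie's inner-anodyne analysis for the Joyal structure, but considerably heavier because of the branching of trees and the nontriviality of the Boardman--Vogt tensor. Granting that the fibrant objects are the $\infty$-operads, I would match the Cisinski weak equivalences with the operadic equivalences by comparing the homotopical weak equivalences with the stated definition via $i^*\mathbf{Hom_{dSets}}(-, Z)$ into $\infty$-operads $Z$, passing to cofibrant (normal) replacements. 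Finally, for the last assertion I would use the isomorphism $\mathbf{dSets}/\eta \simeq \mathbf{sSets}$: every object over the normal object $\eta$ is normal, so the cofibrations become all monomorphisms; the linear trees $L_n = i([n])$ have boundary inclusions and inner horns restricting to the boundary and inner horn inclusions of simplices, and $i^* J$ is the Joyal interval, so the fibrant objects of the slice correspond exactly to the $\infty$-categories. Since a model structure on $\mathbf{sSets}$ with the monomorphisms as cofibrations and the $\infty$-categories as fibrant objects is uniquely the Joyal model structure, the two coincide.
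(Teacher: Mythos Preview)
The paper does not prove this theorem at all: it is stated in the Preliminaries as a result ``established by Cisinski and Moerdijk in \cite{moerdijkcisinski}'' and used as a black box throughout. There is therefore no proof in the paper to compare your proposal against.

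That said, your sketch is a fair outline of the strategy Cisinski and Moerdijk actually use in the cited paper. They do build the model structure via Cisinski's general theory of model structures on presheaf categories, taking the normal monomorphisms as cofibrations and the class generated by inner horn inclusions (together with the interval $J = N_d(0 \cong 1)$) as the localizing set. A few points in your outline deserve sharpening. First, the cylinder you want is $X \mapsto J \otimes X$ with $J$ the dendroidal nerve of the contractible groupoid on two objects; you should check it is an \emph{exact} cylinder in Cisinski's sense, which uses that $\eta \sqcup \eta \to J$ is a normal monomorphism and that $- \otimes J$ preserves normal monos. Second, the identification of fibrant objects is not merely a matter of the pushout--product stability of inner anodynes: one must also show that lifting against the $J$-anodyne closure does not impose more than the inner Kan condition, and this uses a characterization of the $J$-equivalences in an $\infty$-operad (essentially that $J$-homotopy equivalences between maps into an $\infty$-operad agree with equivalences in the underlying $\infty$-category). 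Third, your final step is correct but slightly understated: the uniqueness of a model structure on $\mathbf{sSets}$ with prescribed cofibrations and fibrant objects is not automatic in general; one uses that both structures are Cisinski model structures determined by the same interval and the same generating anodynes under $i^*$.
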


We will refer to this model structure as the \emph{Cisinski-Moerdijk model structure}. \par 
We close this section with a short remark on terminology. The use of the term \emph{cofibered category}, as in the introduction, clashes badly with the use of the terms cofibration and cofibrant, which will occur all over this text. We will therefore stick with the slightly awkward alternative \emph{opfibered category} and also \emph{opfibered operad}. This problem will disappear after the first chapter, when we switch to the term \emph{coCartesian fibration}.

\newpage

% section 1: A Grothendieck construction for operads
\section{A Grothendieck construction for operads}

Operads can be regarded as a generalization of categories in which morphisms are allowed to have multiple inputs. For this reason, they are sometimes referred to as \emph{symmetric multicategories}. It should not be a surprise that many concepts from category theory have a corresponding generalization to operads. In this chapter we take the Grothendieck construction, which establishes an equivalence between $\mathbf{Cat}$-valued pseudofunctors on a fixed category $C$ and opfibered categories over $C$, and establish a version of it applying to (weak) algebras over operads. Then we investigate its left adjoint and show that by a suitable restriction of the codomain the Grothendieck construction becomes an equivalence. The left adjoint will in later chapters be generalized to the setting of $\infty$-operads to establish a suitable Grothendieck construction and associated equivalence there. The material in this first chapter will not come as a surprise and is very similar to the categorical case. For this reason (and for the sake of brevity) details of proofs are often not provided. Starting from chapter 2 we will rigorously develop the generalization of all these concepts in the setting of $\infty$-operads. \par

\subsection{The Grothendieck construction}

Let $\underline{\mathbf{Cat}}$ be the operad induced by the category $\mathbf{Cat}$ under the Cartesian product of categories, i.e. $\underline{\mathbf{Cat}}$ has as sets of operations
\begin{equation*}
\underline{\mathbf{Cat}}(C_1, \ldots, C_n; C) = \mathbf{Fun}(C_1 \times \cdots \times C_n, C)
\end{equation*}
If $S$ is an operad in $\mathbf{Sets}$, we will refer to a morphism of operads $S \longrightarrow \underline{\mathbf{Cat}}$ as an \emph{$S$-algebra in $\mathbf{Cat}$}. \par
Given an algebra $\mathcal{F}: S \longrightarrow \underline{\mathbf{Cat}}$ we would like to invoke a generalization of the Grothendieck construction in ordinary category theory to obtain an operad $\int_S\mathcal{F}$ over $S$ associated to $\mathcal{F}$. This can be done as follows. Define the colours of $\int_S\mathcal{F}$ by
\begin{equation*}
\mathrm{col}\Bigl(\int_S\mathcal{F}\Bigr) = \coprod_{s \in \mathrm{col}(S)} \mathrm{ob}(\mathcal{F}(s))
\end{equation*}
There is an obvious projection $\pi_{\mathcal{F}}: \mathrm{col}(\int_S\mathcal{F}) \longrightarrow \mathrm{col}(S)$. Given colours $b_1, \ldots, b_n, b$ of $\int_S\mathcal{F}$, an operation $(b_1, \ldots, b_n) \longrightarrow b$ is a pair $(\sigma, f)$, where
\begin{equation*}
\sigma: (p(b_1), \ldots, p(b_n)) \longrightarrow p(b)
\end{equation*}
is an operation of $S$ and
\begin{equation*}
f: \mathcal{F}(\sigma)(b_1, \ldots, b_n) \longrightarrow b
\end{equation*}
is a morphism in the category $\mathcal{F}(p(b))$. One easily verifies that the obvious composition law makes $\int_S\mathcal{F}$ into an operad. Clearly $\pi_{\mathcal{F}}$ extends to a morphism of operads $\pi_{\mathcal{F}}: \int_S\mathcal{F} \longrightarrow S$. \par
As in ordinary category theory, it will turn out that we also need to consider `weak' algebras $\mathcal{F}: S \longrightarrow \underline{\mathbf{Cat}}$. To do this, we regard $\underline{\mathbf{Cat}}$ as a \emph{bioperad}. The definition of a bioperad is the direct generalization of the definition of a bicategory to the setting of operads. Phrased differently, we can think of bioperads as `weak operads in groupoids'. One can now also regard $S$ as a bioperad with only trivial 2-cells. A \emph{weak $S$-algebra with values in $\mathbf{Cat}$} is a morphism of bioperads $\mathcal{F}: S \longrightarrow \underline{\mathbf{Cat}}$. Note that in the special case where $S$ is a category, such a weak algebra is usually referred to as a pseudofunctor. \par
Now suppose given such a weak algebra $\mathcal{F}$. We construct $\int_S\mathcal{F}$ in exactly the same way as above. The only thing that requires extra work is the verification that $\int_S\mathcal{F}$ is really an operad. This is a bit tedious, but entirely analogous to the similar verification in ordinary category theory. We refer the reader to \cite{vistoli} for details of the categorical case. \par
The rest of this section will be devoted to a construction which will be of use in everything that follows. Suppose we are given a colour $s$ of our operad $S$. We would like to construct the `algebra $\underline{s}(-)$ corepresented by $s$' which should generalize the corresponding notion from ordinary category theory. This algebra can then be thought of as the `free $S$-algebra at $s$ on one generator'. We define it by
\begin{equation*}
\underline{s}(-): S \longrightarrow \mathbf{Sets}: c \mapsto \coprod_{n \in \mathbb{Z}_{\geq 0}} S(\underbrace{s, \ldots, s}_{\mathrm{n \, times}};c) / \Sigma_n
\end{equation*}
The following is the generalization of the (co)Yoneda lemma to operads in $\mathbf{Sets}$:

\begin{lemma}
\label{lemma:multiyoneda}
Given an algebra $\mathcal{F}: S \longrightarrow \underline{\mathbf{Sets}}$ and a colour $s$ of $S$ there is a bijection
\begin{equation*}
\mathbf{Nat}(\underline{s}(-), \mathcal{F}) \simeq \mathcal{F}(s)
\end{equation*}
given by evaluation at $\mathrm{id}_s$.
\end{lemma}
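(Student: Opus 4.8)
The plan is to exhibit an explicit inverse to the evaluation map, the entire content of the lemma being packaged in the fact that $\underline{s}(-)$ is \emph{freely generated} by the single element $\mathrm{id}_s \in \underline{s}(s)$. First I would record how $\underline{s}(-)$ acquires its $S$-algebra structure: for an operation $\tau \in S(c_1, \ldots, c_k; c)$ and classes $[\phi_i] \in \underline{s}(c_i)$ represented by $\phi_i \in S(s, \ldots, s; c_i)$, the structure map is $\underline{s}(\tau)([\phi_1], \ldots, [\phi_k]) = [\tau \circ (\phi_1, \ldots, \phi_k)]$. The key preliminary observation is the identity $[\phi] = \underline{s}(\phi)(\mathrm{id}_s, \ldots, \mathrm{id}_s)$ for every $\phi \in S(s, \ldots, s; c)$, which is immediate from the unit axiom $\phi \circ (\mathrm{id}_s, \ldots, \mathrm{id}_s) = \phi$. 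This says precisely that every element of $\underline{s}(c)$ is obtained from the generator $\mathrm{id}_s$ by applying a single operation of $S$.

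Injectivity of evaluation then follows at once. If two natural transformations $\alpha, \beta: \underline{s}(-) \to \mathcal{F}$ agree on $\mathrm{id}_s$, then for any $[\phi] \in \underline{s}(c)$ the naturality square for the operation $\phi$ gives $\alpha_c([\phi]) = \mathcal{F}(\phi)(\alpha_s(\mathrm{id}_s), \ldots, \alpha_s(\mathrm{id}_s))$, and likewise for $\beta$; since these right-hand sides depend only on the common value $\alpha_s(\mathrm{id}_s) = \beta_s(\mathrm{id}_s)$, we conclude $\alpha = \beta$.

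For surjectivity I would construct, for each $x \in \mathcal{F}(s)$, a natural transformation $\alpha^x$ by the formula forced by the previous paragraph, namely $\alpha^x_c([\phi]) := \mathcal{F}(\phi)(x, \ldots, x)$ for $\phi \in S(s, \ldots, s; c)$ with $n$ inputs. That $\alpha^x_s(\mathrm{id}_s) = x$ is exactly the unit-preservation of $\mathcal{F}$. The naturality square for an operation $\tau$ reduces, after substituting $\underline{s}(\tau)([\phi_1], \ldots, [\phi_k]) = [\tau \circ (\phi_1, \ldots, \phi_k)]$, to the fact that the algebra $\mathcal{F}$ respects operadic composition in $\underline{\mathbf{Sets}}$: the map $\mathcal{F}(\tau \circ (\phi_1, \ldots, \phi_k))$ is the composite of $\mathcal{F}(\tau)$ with the $\mathcal{F}(\phi_i)$, and evaluating this composite on the appropriate consecutive blocks of copies of $x$ produces exactly $\mathcal{F}(\tau)(\alpha^x_{c_1}([\phi_1]), \ldots, \alpha^x_{c_k}([\phi_k]))$.

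The one step that genuinely forces the definition of $\underline{s}(-)$ to involve the $\Sigma_n$-quotient — and hence the main point to check — is that $\alpha^x_c$ is well defined on passing to the quotient. If $\phi' = \sigma^*\phi$ for some $\sigma \in \Sigma_n$, then, because $\mathcal{F}$ is equivariant and the $\Sigma_n$-action on the operations $\underline{\mathbf{Sets}}(X, \ldots, X; Y)$, i.e.\ on maps $X \times \cdots \times X \to Y$, simply permutes the input factors, we have $\mathcal{F}(\sigma^*\phi) = \sigma^* \mathcal{F}(\phi)$; evaluating at the constant tuple $(x, \ldots, x)$ then yields the same value, since permuting identical arguments changes nothing. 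Granting this, $\alpha^x$ is a well-defined natural transformation with $\alpha^x_s(\mathrm{id}_s) = x$, so evaluation is surjective, and the assignment $x \mapsto \alpha^x$ is visibly inverse to evaluation at $\mathrm{id}_s$, completing the proof.
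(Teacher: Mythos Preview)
Your proof is correct and complete; it is exactly the standard Yoneda argument adapted to the operadic setting, and you have identified the one genuinely new ingredient --- well-definedness on the $\Sigma_n$-quotient --- and dispatched it cleanly. The paper does not actually supply a proof of this lemma (the author remarks that in this chapter ``details of proofs are often not provided'' and treats the result as a routine generalization of the categorical Yoneda lemma), so there is nothing to compare against beyond noting that your argument is precisely the ``usual Yoneda-style argument'' the author invokes later in the proof of Proposition~1.2.
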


Defining
\begin{equation*}
\underline{s}(-) \times_{\Sigma} X := \coprod_{n \in \mathbb{Z}_{\geq 0}} S(\underbrace{s, \ldots, s}_{\mathrm{n \, times}};-) \times_{\Sigma_n} X^n
\end{equation*}
this is the special case $X = \ast$ of the following result:

\begin{lemma}
\label{lemma:multiyoneda2}
Given an algebra $\mathcal{F}: S \longrightarrow \underline{\mathbf{Sets}}$, a colour $s$ of $S$ and a set $X$ there is a bijection
\begin{equation*}
\mathbf{Nat}(\underline{s}(-) \times_{\Sigma} X, \mathcal{F}) \simeq \mathbf{Sets}(X, \mathcal{F}(s))
\end{equation*}
given by evaluation at $\mathrm{id}_s$.
\end{lemma}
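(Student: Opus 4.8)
The statement is a co-Yoneda-type result, and the conceptual content is that $\underline{s}(-) \times_{\Sigma} X$ is the \emph{free $S$-algebra in $\mathbf{Sets}$ obtained by placing the set $X$ at the colour $s$}; the asserted bijection is then the universal property of this free algebra, with evaluation at $\mathrm{id}_s$ serving as the unit of the corresponding adjunction. The plan is to prove it directly by exhibiting an explicit inverse to evaluation and checking the two maps are mutually inverse. First I would spell out the $S$-algebra structure on $A := \underline{s}(-) \times_{\Sigma} X$: an element of $A(c)$ is a class $[\alpha, \vec{x}]$ of a pair $(\alpha, \vec{x})$ with $\alpha \in S(s, \ldots, s; c)$ ($n$ copies of $s$) and $\vec{x} = (x_1, \ldots, x_n) \in X^n$, where two pairs are identified under the diagonal $\Sigma_n$-action, via $\sigma^{*}$ on $S(s^{n}; c)$ and coordinate permutation on $X^{n}$. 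For an operation $\beta\colon (c_1, \ldots, c_k) \to c$ of $S$, the structure map of $A$ sends $([\alpha_1, \vec{x}_1], \ldots, [\alpha_k, \vec{x}_k])$ to $[\beta \circ (\alpha_1, \ldots, \alpha_k), (\vec{x}_1, \ldots, \vec{x}_k)]$; that this descends to the $\Sigma_n$-coinvariants and satisfies the algebra axioms is a direct check using the equivariance and associativity axioms of $S$. Evaluation at $\mathrm{id}_s$ sends a morphism $\Phi\colon A \to \mathcal{F}$ to the map $x \mapsto \Phi_s([\mathrm{id}_s, x])$, read off from the $n = 1$ summand $S(s;s) \times X$ of $A(s)$.

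The observation driving both directions is that every class $[\alpha, \vec{x}] \in A(c)$ is the image of the tuple $([\mathrm{id}_s, x_1], \ldots, [\mathrm{id}_s, x_n]) \in A(s)^{n}$ under the structure map of $A$ associated to the operation $\alpha\colon (s, \ldots, s) \to c$. Hence for any algebra morphism $\Phi$, naturality forces
\begin{equation*}
\Phi_c([\alpha, \vec{x}]) = \mathcal{F}(\alpha)\bigl(g(x_1), \ldots, g(x_n)\bigr), \qquad g := \Phi_s([\mathrm{id}_s, -]).
\end{equation*}
This simultaneously shows that evaluation is injective and dictates the inverse: given $g\colon X \to \mathcal{F}(s)$, define $\Phi^{g}_c([\alpha, \vec{x}]) := \mathcal{F}(\alpha)(g(x_1), \ldots, g(x_n))$.

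It then remains to verify that $\Phi^{g}$ is well defined and is a genuine morphism of $S$-algebras, and that $g \mapsto \Phi^{g}$ and evaluation are mutually inverse. Well-definedness on the $\Sigma_n$-coinvariants is precisely the equivariance axiom for the operad map $\mathcal{F}$ (it equates $\mathcal{F}(\sigma^{*}\alpha)$ applied to permuted arguments with $\mathcal{F}(\alpha)$ applied to the original ones); naturality of $\Phi^{g}$ against an operation $\beta$ is precisely the composition axiom $\mathcal{F}(\beta \circ (\alpha_1, \ldots, \alpha_k)) = \mathcal{F}(\beta) \circ (\mathcal{F}(\alpha_1), \ldots, \mathcal{F}(\alpha_k))$; and evaluating $\Phi^{g}$ at $\mathrm{id}_s$ returns $\mathcal{F}(\mathrm{id}_s) \circ g = g$ by the unit axiom. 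So the whole argument is a mechanical translation of the three operad-morphism axioms into the three facts to be checked. The one genuinely fiddly point, and where I expect to spend the most care, is the bookkeeping of the $\Sigma_n$-conventions, so that the identification $(\alpha, \vec{x}) \sim (\sigma^{*}\alpha, \sigma \cdot \vec{x})$ lines up with the equivariance axiom on the nose; none of the remaining steps present any real obstacle. Finally, specializing to $X = \ast$ recovers Lemma \ref{lemma:multiyoneda}.
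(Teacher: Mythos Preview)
Your argument is correct and is exactly the standard Yoneda-style verification one would write down: reduce every element to an image of tuples of $[\mathrm{id}_s,x_i]$'s, read off injectivity, define the inverse by the forced formula, and check well-definedness, naturality, and the unit using the three operad-morphism axioms. The paper does not actually supply a proof of this lemma; it is stated without argument in a section where the author explicitly says details are omitted because the material is routine and parallels the categorical case, so there is nothing to compare against beyond noting that your approach is the intended one.
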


\subsection{The left adjoint to the Grothendieck construction}

\begin{definition}
Denote by $[S, \underline{\mathbf{Cat}}]_{weak}$ the 2-category which has as objects the weak $S$-algebras in $\mathbf{Cat}$, as 1-morphisms the (weak) natural transformations and as 2-morphisms the modifications of those.
\end{definition}

\begin{definition}
For a fixed operad $S$ we define a 2-category $\mathbf{Oper_{Sets}}/S$ with objects operad morphisms $X \longrightarrow S$, morphisms commutative triangles
\[
\xymatrix{
X \ar[rr]\ar[dr] & & Y \ar[dl] \\
& S &
}
\]
and 2-morphisms natural transformations of operad morphisms projecting to the trivial natural transformation of $\mathrm{id}_S$ to itself.
\end{definition}

It is straightforward to verify that the Grothendieck construction introduced in the previous section will yield a 2-functor
\begin{equation*}
\int_S: [S, \underline{\mathbf{Cat}}]_{weak} \longrightarrow \mathbf{Oper_{Sets}}/S
\end{equation*}
This functor turns out to have a left adjoint, which we will describe in this section. It is this left adjoint that will later be generalized to the setting of $\infty$-operads. \par
We will first generalize our setup of `corepresentable algebras' from the first section of this chapter. Suppose we are given an operad morphism $p: X \longrightarrow S$. Define the set
\begin{equation*}
C(X) := \{(x_1, \ldots, x_n) \, | \, n \geq 0, \, x_i \in \mathrm{col}(X)\}
\end{equation*}
Now suppose $s$ is a colour of $S$. We define a category $p/s$ which has as objects
\begin{equation*}
\mathrm{ob}(p/s) := \coprod_{n \geq 0}\Bigl(\coprod_{(x_1, \ldots, x_n) \in C(X)} S(p(x_1), \ldots, p(x_n) ; s)\Bigr)/\Sigma_n
\end{equation*}
Given two such objects $[(p(x_1), \ldots, p(x_n)) \rightarrow s]$ and $[(p(y_1), \ldots, p(y_m)) \rightarrow s]$ a morphism between them is an equivalence class represented by a tuple of $m$ operations $(\xi_1 ,\ldots , \xi_m)$ of $X$ such that $\xi_i$ has output $y_i$ and the sets of inputs of the different $\xi_i$'s are disjoint and their union equals $\{x_1, \ldots, x_n\}$. Furthermore, after applying $p$, these $\xi_i$'s should make the obvious diagram commute. The so-called \emph{straightening functor} is then described by
\begin{equation*}
St_S(p): S \longrightarrow \underline{\mathbf{Cat}}: s \mapsto p/s
\end{equation*}
This assignment is seen to extend to a 2-functor
\begin{equation*}
St_S: \mathbf{Oper_{Sets}}/S \longrightarrow [S, \underline{\mathbf{Cat}}]_{weak}
\end{equation*}
The definition of this functor might seem somewhat elaborate at first, but (described sketchily) it is nothing but the $S$-algebra constructed as follows. We demand that the category $St_S(p)(s)$ contain all colours of $X$ lying over $s$ and we want this category to have a morphism for each operation of $X$ with codomain lying over $s$. Taking the freely generated $S$-algebra with these `generators' and imposing the obvious relations due to $\Sigma$-equivariance we arrive at the straightening functor described above. 

\begin{remark}
Note that if the morphism $p: X \longrightarrow S$ is simply the inclusion of the trivial operad on a colour of $S$, this construction reproduces the corepresentable algebra of the beginning of this chapter. In this light, the following result can also be seen as a jazzed up version of the Yoneda lemma. It should not come as a surprise that the strategy of proof is similar.
\end{remark}

\begin{proposition}
There is an adjunction
\[
\xymatrix@R=40pt@C=40pt{
St_S: \mathbf{Oper_{Sets}}/S \ar@<.5ex>[r] & [S, \underline{\mathbf{Cat}}]_{weak}: \int_S\ar@<.5ex>[l]
}
\]
\end{proposition}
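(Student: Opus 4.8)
The plan is to establish the adjunction by producing an equivalence of Hom-categories
\begin{equation*}
\mathbf{Hom}_{[S,\underline{\mathbf{Cat}}]_{weak}}\bigl(St_S(p), \mathcal{F}\bigr) \simeq \mathbf{Hom}_{\mathbf{Oper_{Sets}}/S}\bigl(p, \textstyle\int_S\mathcal{F}\bigr),
\end{equation*}
natural in the object $p: X \to S$ of $\mathbf{Oper_{Sets}}/S$ and in the weak algebra $\mathcal{F}$. Since both sides are $2$-categorical Hom-categories, this amounts to matching $1$-morphisms with $1$-morphisms and $2$-cells with $2$-cells; as the preceding remark suggests, the whole argument is a bookkeeping exercise in the spirit of the (co)Yoneda Lemma \ref{lemma:multiyoneda}.

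First I would unwind the right-hand side. Using the explicit description of $\int_S\mathcal{F}$, a morphism $\Phi: X \to \int_S\mathcal{F}$ lying over $S$ is the same as the following data: for every colour $x$ of $X$ an object $a_x \in \mathrm{ob}(\mathcal{F}(p(x)))$, and for every operation $\xi: (x_1, \ldots, x_n) \to x$ of $X$ a morphism $f_\xi: \mathcal{F}(p(\xi))(a_{x_1}, \ldots, a_{x_n}) \to a_x$ in $\mathcal{F}(p(x))$. The requirement that $\Phi$ respect operadic composition, units and $\Sigma$-actions translates into the corresponding coherence conditions on the family $(a_x, f_\xi)$; here the first coordinate of $\Phi$ is forced to be $p$ precisely because $\Phi$ is a map over $S$.

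Next I would unwind the left-hand side by exploiting the presentation of $St_S(p)$. The key observation is that $St_S(p)$ is \emph{freely generated} as a weak $S$-algebra by the data of $X$: every object of the category $St_S(p)(s) = p/s$ is of the form $St_S(p)(\sigma)(\gamma_{x_1}, \ldots, \gamma_{x_n})$, where $\gamma_x := [\mathrm{id}_{p(x)}: (p(x)) \to p(x)]$ is the tautological generator attached to a colour $x$ of $X$ and $\sigma: (p(x_1), \ldots, p(x_n)) \to s$ is an operation of $S$; and every morphism of $p/s$ is generated under the $S$-action and composition by the morphisms $\xi: St_S(p)(p(\xi))(\gamma_{x_1}, \ldots, \gamma_{x_n}) \to \gamma_x$ coming from single operations $\xi$ of $X$. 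Consequently a weak natural transformation $\eta: St_S(p) \to \mathcal{F}$ is determined, freely and without further coherence choices, by its values on these generators. Setting $a_x := \eta(\gamma_x)$ and letting $f_\xi$ be the image under $\eta$ of the generating morphism attached to $\xi$ (composed with the coherence isomorphism $\eta \circ St_S(p)(\sigma) \cong \mathcal{F}(\sigma) \circ \eta$) recovers exactly the data $(a_x, f_\xi)$ describing a map $\Phi$ over $S$; conversely such data assemble into a natural transformation $\eta$. This gives the correspondence on objects of the two Hom-categories, and the identical analysis applied to modifications on the left and to natural transformations over $S$ on the right promotes it to the desired equivalence; naturality in $p$ and $\mathcal{F}$ is then straightforward.

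The main obstacle is the careful handling of the weak (coherence) data together with the $\Sigma_n$-equivariance. Because both $St_S(p)$ and $\mathcal{F}$ are only weak algebras, the statement that $\eta$ is freely determined by its values on generators must be verified at the weak level: one has to check that the associativity and unit coherence cells built into $St_S(p)$ translate precisely into the operadic composition and unit axioms demanded of $\Phi$, and that no independent coherence datum survives. Dually, the quotients by $\Sigma_n$ appearing both in $p/s$ and in the operations of $\int_S\mathcal{F}$ must be shown to correspond, so that the assignment $\eta \mapsto (a_x, f_\xi)$ is well defined on equivalence classes and $\Sigma$-equivariant. Once this coherence bookkeeping is in place, the naturality of the displayed equivalence (equivalently, the triangle identities for the resulting unit and counit) holds essentially by construction, exactly as in the proof of Lemma \ref{lemma:multiyoneda2}.
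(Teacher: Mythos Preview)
Your proposal is correct and follows essentially the same approach as the paper: both arguments unwind a morphism $X \to \int_S\mathcal{F}$ over $S$ into the data $(a_x, f_\xi)$, observe that $St_S(p)$ is the free weak $S$-algebra on precisely these generators and generating morphisms, and conclude by a Yoneda-style identification. Your write-up is in fact more detailed than the paper's own sketch, which leaves the coherence bookkeeping to the reader.
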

\begin{proof}
We have to exhibit a natural equivalence of categories
\begin{equation*}
\gamma: [S, \underline{\mathbf{Cat}}]_{weak}(St(p), \mathcal{F}) \longrightarrow \mathbf{Oper_{Sets}}/S\bigl(p, \int_S \mathcal{F}\bigr)
\end{equation*}
In order to avoid too much tedious verification, we will give a sketch of how this is done and leave the details to the reader. In fact, the proof is already apparent from our `generators and relations' description of the straightening functor above. A morphism of operads in $\mathbf{Oper_{Sets}}/S\bigl(p, \int_S \mathcal{F}\bigr)$ is uniquely determined by the following data:
\begin{itemize}
\item[(1)] The induced map on colours, i.e. for each colour $x$ of $X$ lying over some $s$ in $S$ an assigment of a colour in $\mathcal{F}(s)$
\item[(2)] For each operation $\xi$ of $X$ lying over some $\sigma$ of $S$ the assignment of an operation of $\int_S \mathcal{F}$ lying over $\sigma$ (having in- and outputs determined by (1)). By our definition of the latter operad, this means we are assigning to $\xi$ a morphism in $\mathcal{F}(p(x))$, where $x$ denotes the output of $\xi$.
\end{itemize}
We defined $St(p)$ as the freely generated $S$-algebra determined by precisely these data, so the proof will now follow from the usual Yoneda-style argument. $\Box$
\end{proof}

\subsection{Opfibered operads and the essential image of the Grothendieck construction}

In this section we will define opfibered operads and show how to obtain weak algebras from them. We then show how this gives a quasi-inverse to the Grothendieck construction. \par
Let $X$ and $S$ be operads in $\mathbf{Sets}$ and let $p: X \longrightarrow S$ be a morphism between them. We will first study the notion of a \emph{$p$-coCartesian operation of $X$}. The definition of such an operation can be given without using any terminology borrowed from forestry, but it is slightly more convenient (and useful for generalization in later sections) to formulate it in terms of dendroidal sets, i.e. in the language of trees. Let $T$ be the tree drawn below, where the vertex $v$ has $n$ inputs and the vertex $w$ has $m$ inputs:
\[
\xymatrix@R=10pt@C=12pt{
&&&&&& \\
&&&*=0{\bullet}\ar@{-}[ul]\ar@{-}[ur]\ar@{}[u]|{\cdots}_(-.1)v&&&\\
&&&&&&\\
&&&*=0{\bullet}\ar@{-}\ar@{-}[ull]\ar@{-}[uu]^(.5)e\ar@{-}[urr]\ar@{}[ul]|{\cdots}\ar@{}[ur]|{\cdots}&&&\\
&&&\ar@{-}[u]_(1)w&&&\\
&&&&&&
}
\]
Recall that the horn $\Lambda^v \Omega[T]$ is the union of the corolla with vertex $v$ with the corolla obtained by contracting along the edge $e$.

\begin{definition}
Let $T$ be as above, where $m \geq 1$ is arbitrary. An operation $\xi: (x_1, \ldots, x_n) \longrightarrow x$ of $X$ is called \emph{$p$-coCartesian} if any diagram of the form
\[
\xymatrix{
C_n \ar[dr]^{\xi}\ar[d]_v & \\
\Lambda^v\Omega[T] \ar[d]\ar[r] & N_d(X) \ar[d]^{N_d(p)} \\
\Omega[T] \ar[r]\ar@{-->}[ur] & N_d(S)
}
\]
has a unique dotted lift as indicated. Here $v$ denotes (slightly abusively) the map sending the vertex of $C_n$ to the vertex $v$ of $T$.
\end{definition}

\begin{remark}
In the special case where $X$ and $S$ are categories (i.e. both have only unary operations), our notion of $p$-coCartesian operation coincides with the usual notion of $p$-coCartesian morphism.
\end{remark}

We can in fact also characterize coCartesian operations in a different fashion:

\begin{proposition}
Let $p: X \longrightarrow S$ be a morphism of operads. An operation $\xi: (x_1, \ldots, x_n) \longrightarrow x$ of $X$ is $p$-coCartesian if and only if, for all tuples of colours $(y_1, \ldots, y_k)$ of $X$ and every colour $z$ of $X$, the following diagram (for arbitrary $0 \leq i \leq k$) is a pullback square:
\[
\xymatrix{
X(\mathbf{y}; z) \ar[r]\ar[d] & X(\mathbf{y_x}; z)\ar[d] \\
S(\mathbf{p(y)}; p(z)) \ar[r] & S(\mathbf{p(y_x)}; p(z))
}
\]
Here we have used the following abbreviations:
\begin{eqnarray*}
\mathbf{y} & := & (y_1, \ldots, y_{i-1}, x, y_i, \ldots, y_k) \\
\mathbf{y_x} & := & (y_1, \ldots, y_{i-1}, x_1, \ldots, x_n, y_i, \ldots, y_k) \\
\mathbf{p(y)} & := & (p(y_1), \ldots, p(y_{i-1}), p(x), p(y_i), \ldots, p(y_k)) \\
\mathbf{p(y_x)} & := & (p(y_1), \ldots, p(y_{i-1}), p(x_1), \ldots, p(x_n), p(y_i), \ldots, p(y_k))
\end{eqnarray*}
The top arrow in the square is induced by precomposing with $\xi$.  
\end{proposition}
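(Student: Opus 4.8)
The plan is to unwind both conditions into explicit statements about tuples of operations and check that they literally coincide. Since the dendroidal nerve $N_d$ is fully faithful and $\Omega(T)$ is the free operad on the corollas at its two vertices, a map $\Omega[T]\to N_d(S)$ is the same datum as a composable pair $(\sigma_v,\sigma_w)$ of operations of $S$: the $v$-part $\sigma_v$ with inputs $(p(x_1),\dots,p(x_n))$ and the $w$-part $\sigma_w$ whose distinguished input is $p(x)$ together with $m-1$ further inputs. Under this dictionary the outer face $\partial_v$ recovers $\sigma_w$, the outer face $\partial_w$ recovers $\sigma_v$, and the inner face $\partial_e$ recovers the operadic composite $\sigma_w\circ_e\sigma_v$. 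First I would record this correspondence together with the description of the horn: because $\Lambda^v\Omega[T]$ is the union of the $v$-corolla $C_n$ and the contracted corolla $\partial_e\Omega[T]$ glued exactly along the edges $x_1,\dots,x_n$, a map $\Lambda^v\Omega[T]\to N_d(X)$ is precisely a pair $(\xi,\zeta)$, where $\zeta$ is an operation of $X$ whose inputs begin with $x_1,\dots,x_n$ and which agrees with $\xi$ on those colours.

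With this dictionary in hand the lifting problem becomes transparent. A commutative square as in the definition supplies $\xi$, the operation $\zeta$ of $X$ (the image of $\partial_e$), and a composable pair $(\sigma_v,\sigma_w)$ of $S$ subject to $\sigma_v=p(\xi)$ and $\sigma_w\circ_e p(\xi)=p(\zeta)$. A dotted lift $\Omega[T]\to N_d(X)$ is a composable pair in $X$ whose $v$-part is forced to equal $\xi$; hence a lift amounts exactly to a choice of $w$-part $\tau$ satisfying $p(\tau)=\sigma_w$ and $\tau\circ_e\xi=\zeta$. Therefore ``every such square admits a unique lift'' says precisely that the assignment $\tau\mapsto(\tau\circ_e\xi,\,p(\tau))$ is a bijection from $X(\mathbf{y};z)$ onto the set of compatible pairs $(\zeta,\sigma_w)$ with $p(\zeta)=\sigma_w\circ_e p(\xi)$, i.e. onto the fibre product $X(\mathbf{y_x};z)\times_{S(\mathbf{p(y_x)};p(z))}S(\mathbf{p(y)};p(z))$. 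Running this over all choices of the surrounding colours and the output $z$, the bijection condition is exactly the assertion that the displayed square is a pullback, so the two directions of the equivalence fall out simultaneously.

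Two points of bookkeeping remain. The definition pins the distinguished input $e$ of $w$ in a fixed planar position, whereas the proposition substitutes $\xi$ into an arbitrary slot $i$ among inputs $y_1,\dots,y_k$; I would reconcile these using the $\Sigma$-equivariance of $X$ and $S$ (equivalently, by choosing the planar representative of $T$ that places $e$ in slot $i$), so that the index $i$ and the colours $y_1,\dots,y_k$ correspond to the $m-1=k$ non-distinguished inputs of $w$. The degenerate case $m=1$ (so $k=0$, $T=L_2$ linear and $\partial_e$ a single corolla) is included and yields the unary instance $\tau\colon x\to z$.

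The main obstacle is organizational rather than conceptual: correctly matching the overlap of the two faces comprising the horn, so that exactly the shared colours $x_1,\dots,x_n$ and nothing more are identified, and verifying carefully that the inner face $\partial_e$ of a $T$-operation genuinely computes the operadic composite. It is this last identification that upgrades ``unique lift'' into an honest pullback, as opposed to a mere weak pullback or a surjection. Once the dictionary between faces of $\Omega[T]$ and operadic composites is in place, the equivalence is a formal consequence of the unwinding above.
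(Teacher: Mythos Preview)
Your proof is correct. The paper does not actually supply a proof of this proposition: it sits in the expository first chapter, where the author explicitly warns that ``details of proofs are often not provided,'' and indeed the statement is immediately followed by the next proposition without any argument. Your unwinding is precisely the intended one: identifying maps $\Omega[T]\to N_d(X)$ with composable pairs via the freeness of $\Omega(T)$, recognizing that $\Lambda^v[T]=\partial_w\Omega[T]\cup\partial_e\Omega[T]$ so that a horn map records exactly $(\xi,\zeta)$, and then observing that the unique-lift condition for the pair $(\zeta,\sigma_w)$ subject to $p(\zeta)=\sigma_w\circ_e p(\xi)$ is literally the universal property of the pullback. One cosmetic slip: in your final paragraph you write ``$T=L_2$ linear'' for the case $m=1$, but $T$ is linear only when additionally $n=1$; for general $n$ the tree is $C_n\star[0]$. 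This does not affect the argument.
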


We collect the following basic facts about coCartesian operations.

\begin{proposition}\label{prop:coCartesian} Let $p: X \longrightarrow S$ be as before.
\begin{itemize}
\item[(1)] A $p$-coCartesian operation $x \longrightarrow y$ in $X$ is an isomorphism if and only if its image in $S$ is an isomorphism
\item[(2)] Given $p$-coCartesian operations $(x^i_1,\ldots,x^i_{k_i}) \longrightarrow x^i$, where $1 \leq i \leq n$ for some $n \in \mathbb{N}$, and a $p$-coCartesian operation $(x^1, \ldots, x^n) \longrightarrow y$, the composite
    \begin{equation*}
    (x^1_1,\ldots,x^1_{k_1},x^2_1,\ldots,x^2_{k_2}, \ldots, x^n_{k_n}) \longrightarrow y
    \end{equation*}
    is $p$-coCartesian. In the language of trees; grafting coCartesian corollas onto the leaves of a coCartesian corolla again produces a coCartesian corolla.
\item[(3)] If an $n$-ary operation $\xi$ is $p$-coCartesian, then so is $\sigma^*(\xi)$ for any $\sigma \in \Sigma_n$ 
\end{itemize}
\end{proposition}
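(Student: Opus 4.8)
The plan is to deduce all three statements from the pullback criterion established in the proposition immediately above, which characterises $p$-coCartesian operations by the requirement that a whole family of squares built from precomposition be pullbacks. Working with this algebraic criterion rather than with the horn-lifting definition keeps each of the three arguments to a short manipulation of pullback squares, and it lets me reuse the same tool throughout.

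For (1), one direction is immediate: a morphism of operads preserves isomorphisms, so if $\xi$ is invertible then so is its image $p(\xi)$. For the converse I would specialise the pullback criterion to the empty context, so that for every colour $z$ the square with top edge $\xi^*\colon X(y;z) \to X(x;z)$ and bottom edge $p(\xi)^*\colon S(p(y);p(z)) \to S(p(x);p(z))$ is a pullback. If $p(\xi)$ is an isomorphism then precomposition with $p(\xi)$ is a bijection, so the bottom edge is a bijection; since the square is a pullback, the top edge $\xi^*$ is then a bijection for every $z$. These bijections assemble into a natural isomorphism of corepresentable functors $X(y;-) \xrightarrow{\sim} X(x;-)$, so by the Yoneda lemma applied to the underlying category of unary operations of $X$ the operation $\xi$ is forced to be an isomorphism.

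For (2), I would verify directly that the composite operation $\zeta$ satisfies the pullback criterion. The key observation is that precomposition with $\zeta$ factors as precomposition with the outer operation $(x^1,\ldots,x^n) \to y$ followed, one input at a time, by precomposition with each inner operation $(x^i_1,\ldots,x^i_{k_i}) \to x^i$. Each such factor sits in a square that is a pullback by the coCartesian-ness of the operation concerned; here it is essential that the criterion permits the inserted operation to occupy an arbitrary position inside an arbitrary context, since when we precompose with $\xi_i$ the ambient context already contains the remaining inner inputs together with the outer entries $y_1,\ldots,y_k$. Pasting these squares horizontally and invoking the pasting lemma for pullbacks shows that the square attached to $\zeta$ is itself a pullback, so $\zeta$ is $p$-coCartesian.

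For (3), the squares governing $\sigma^*(\xi)$ differ from those governing $\xi$ only by a permutation of the relevant inputs in the right-hand column. The $\Sigma_n$-action supplies bijections on the pertinent sets of operations of $X$ and of $S$, and these are compatible with $p$ by its equivariance, so the square for $\sigma^*(\xi)$ is isomorphic, as a commutative square, to the square for $\xi$; since the latter is a pullback, so is the former. I expect the only point that really needs care to be the bookkeeping in (2): one must set up the factorisation of precomposition so that every intermediate square matches a genuine instance of the pullback criterion, with its arbitrary context and insertion position correctly accounted for. $\Box$
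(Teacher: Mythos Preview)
Your proposal is correct. All three arguments via the pullback characterisation are sound: the Yoneda step in (1), the pasting-of-pullbacks in (2), and the isomorphism-of-squares in (3) each go through without difficulty, and your care about the ambient context in (2) is exactly what is needed to make the pasting honest.

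The paper, however, gives no proof at all for this proposition. It appears in the first chapter, which the author explicitly flags as expository and analogous to the classical categorical case, with the remark that ``details of proofs are often not provided.'' So there is no approach to compare against: you have supplied a proof where the paper simply records the statement as a basic fact and moves on. Your choice to route everything through the pullback criterion (rather than the horn-lifting definition) is a clean way to do it, since that criterion is precisely designed to make these closure properties transparent.
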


\begin{definition}
Let $p: X \longrightarrow S$ be a map of operads. We will say \emph{$X$ is opfibered over $S$ (by $p$)} if for any operation $\sigma: (s_1, \ldots, s_n) \longrightarrow s$ of $S$ and colours $x_1, \ldots, x_n$ of $X$ such that $p(x_i) = s_i$, there exists a coCartesian operation $\xi: (x_1, \ldots, x_n) \longrightarrow x$ of $X$ which projects to $\sigma$. The phrase `by $p$' is usually omitted, which should not cause confusion.
\end{definition}

\begin{remark}
\label{remark:uniquelift}
Suppose $X$ is opfibered over $S$. Using Proposition \ref{prop:coCartesian} one sees that coCartesian lifts are unique up to unique vertical isomorphism once the domain of the lift is specified.
\end{remark}

\begin{definition}
If $p: X \longrightarrow S$ and $q: Y \longrightarrow S$ are operads opfibered over $S$, a \emph{morphism $p \longrightarrow q$ of operads opfibered over S} is a morphism of operads $X \longrightarrow Y$ compatible with the projections to $S$ which sends $p$-coCartesian operations of $X$ to $q$-coCartesian operations of $Y$. The (strict) 2-category with objects opfibered operads over $S$, morphisms as just specified and 2-morphisms natural transformations of operad morphisms projecting to the trivial natural transformation of $\mathrm{id}_S$ to itself is denoted $\mathbf{opFib}(S)$. There is an obvious inclusion
\begin{equation*}
\mathbf{opFib}(S) \longrightarrow \mathbf{Oper_{Sets}}/S
\end{equation*}
\end{definition}

\begin{remark}
The Grothendieck construction actually factors through $\mathbf{opFib}(S)$ and hence yields a 2-functor
\begin{equation*}
\int_S : [S, \underline{\mathbf{Cat}}]_{weak} \longrightarrow \mathbf{opFib}(S)
\end{equation*}
Indeed, let $\mathcal{F} \in [S, \underline{\mathbf{Cat}}]_{weak}$ and let $\sigma$ be an operation of $S$. Then the operations $(\sigma, \mathrm{id}_{\sigma(x_1, \ldots, x_n)}): (x_1, \ldots, x_n) \longrightarrow \sigma(x_1, \ldots, x_n)$ will be coCartesian lifts of $\sigma$ to $\int_S \mathcal{F}$.
\end{remark}

We will need the notion of a cleavage.

\begin{definition}
A \emph{cleavage} of an opfibered operad $p: X \longrightarrow S$ is a class $K$ of $p$-coCartesian morphisms of $X$ such that for any $\sigma: (s_1, \ldots, s_n) \longrightarrow s$ of $S$ and any tuple of colours $(x_1, \ldots, x_n)$ of $X$ mapping to $(s_1, \ldots, s_n)$, there is a unique morphism $(x_1, \ldots, x_n) \longrightarrow \sigma_!(x_1, \ldots, x_n)$ in $K$ which projects to $\sigma$. We also demand that $K$ be closed under the actions of the symmetric groups. In other words, a cleavage is simply a choice of $p$-coCartesian lift for each morphism of $S$ and specified lift of the domain.
\end{definition}

\begin{convention}
From now on we will tacitly assume that a choice of cleavage has been made for every opfibered operad.
\end{convention}

\begin{remark}
Note that a cleavage of an opfibered operad $p: X \longrightarrow S$ induces a unique factorization of any operation of $X$ into a $p$-coCartesian operation followed by a \emph{vertical morphism}, i.e. a morphism projecting to an identity morphism in $S$.
\end{remark}

Now we proceed to defining the pseudo-inverse to the Grothendieck construction
\begin{equation*}
\Phi: \mathbf{opFib}(S) \longrightarrow [S, \underline{\mathbf{Cat}}]_{weak}
\end{equation*}
alluded to in the beginning of this section. Suppose we are given an opfibered operad $p: X \longrightarrow S$. We begin by defining the weak algebra $\Phi(p)$ on colours. For $s \in S$, let $\eta_s \subseteq S$ denote the trivial operad on the colour $s$ and set
\begin{equation*}
\Phi(p)(s) := \eta_s \times_{S} X
\end{equation*}
Strictly speaking this is a bad definition, since the fiber on the right-hand side is only defined up to isomorphism. Hence for definiteness we identify this fiber with the category which has as objects all colours of $X$ projecting to $s$ and as morphisms the unary operations of $X$ projecting to the identity of $s$. We will denote this fiber by $X_s$ when no confusion can arise. \par 
We have fixed a cleavage of our opfibered operad. Suppose we have an operation $\sigma: (s_1, \ldots, s_n) \longrightarrow s$ of $S$. We define a functor
\begin{equation*}
\sigma_!: \prod_{i=1}^n \Phi(p)(s_i) \longrightarrow \Phi(p)(s)
\end{equation*}
which sends each object $(x_1, \ldots, x_n)$ to the object $\sigma_!(x_1, \ldots, x_n)$ defined by our cleavage, and each morphism $(f_1, \ldots, f_n)$ to the unique morphism $\sigma_!(f_1, \ldots, f_n)$ rendering the following diagram commutative:

\[
\xymatrix{
(x_1, \ldots, x_n) \ar[d]_{(f_1, \ldots, f_n)}\ar[r] & \sigma_!(x_1, \ldots, x_n) \ar@{-->}[d]^{\sigma_!(f_1, \ldots, f_n)} \\
(y_1, \ldots, y_n) \ar[r] & \sigma_!(x_1, \ldots, x_n)
}
\]

This arrow is indeed unique since the top horizontal arrow is $p$-coCartesian. We set
\begin{equation*}
\Phi(p)(\sigma) = \sigma_!
\end{equation*}
Similar to the categorical case this will not define a strict algebra $\Phi(p): S \longrightarrow \underline{\mathbf{Cat}}$. Indeed, suppose we are given morphisms $\sigma^i: (s^i_1,\ldots,s^i_{k_i}) \longrightarrow s^i$ in $S$, where $1 \leq i \leq n$ for some $n \in \mathbb{N}$, and a morphism $\tau: (s^1, \ldots, s^n) \longrightarrow s$, also in $S$. Then the functor $\tau_! \circ (\sigma^1_!, \ldots, \sigma^n_!)$ is not necessarily equal to $(\tau \circ (\sigma^1, \ldots, \sigma^n))_!$. However, using Proposition \ref{prop:coCartesian} we see that both functors are built using coCartesian morphisms lying over the same morphisms of $S$. Since coCartesian lifts with specified domain are unique up to unique isomorphism, there is a canonical natural isomorphism between these functors. Using these natural isomorphisms, $\Phi(p)$ acquires the structure of a weak algebra. \par 
It is now straightforward to extend the assignment $p \longrightarrow \Phi(p)$ to a 2-functor
\begin{equation*}
\Phi: \mathbf{opFib}(S) \longrightarrow [S, \underline{\mathbf{Cat}}]_{weak}
\end{equation*}

\begin{remark}
It may seem that our definition of $\Phi$ depends on our choice of cleavages. However, using the fact that coCartesian lifts (with specified domain) are unique up to unique vertical isomorphism, one sees that different choices will yield naturally isomorphic weak algebras and moreover these natural isomorphisms are uniquely determined. Hence the choice of cleavage is insubstantial.
\end{remark}

The main result of this section is:

\begin{theorem}
\label{thm:grothconstoperads}
There is an adjoint equivalence of bicategories
\[
\xymatrix@R=40pt@C=40pt{
\int_S : [S, \underline{\mathbf{Cat}}]_{weak}\ar@<.5ex>[r] & \mathbf{opFib}(S): \Phi\ar@<.5ex>[l]
}
\]
\end{theorem}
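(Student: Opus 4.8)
The plan is to exhibit the unit and counit of the asserted adjoint equivalence directly and to verify that they are pointwise equivalences, following closely the template of the classical Grothendieck correspondence (as in \cite{vistoli}). Since the two $2$-functors $\int_S$ and $\Phi$ have already been constructed, what remains is to produce pseudonatural transformations
\begin{equation*}
\eta: \mathrm{id}_{[S, \underline{\mathbf{Cat}}]_{weak}} \Longrightarrow \Phi \circ \int_S, \qquad \varepsilon: \int_S \circ\, \Phi \Longrightarrow \mathrm{id}_{\mathbf{opFib}(S)},
\end{equation*}
to check that each component is an equivalence, and to arrange that $\eta$ and $\varepsilon$ satisfy the triangle identities up to coherent invertible modification. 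I would work with the canonical cleavage throughout, exploiting that the choice of cleavage is insubstantial.

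For the transformation $\eta$, fix a weak algebra $\mathcal{F}$ and compute $\Phi(\int_S \mathcal{F})$ using the canonical cleavage of $\int_S \mathcal{F}$ given by the coCartesian lifts $(\sigma, \mathrm{id})$. On colours, the fibre $(\int_S \mathcal{F})_s$ has as objects the colours of $\int_S \mathcal{F}$ over $s$ — that is, the objects of $\mathcal{F}(s)$ — and as morphisms the vertical operations, which are exactly the morphisms of $\mathcal{F}(s)$; so $\Phi(\int_S \mathcal{F})(s)$ is canonically identified with $\mathcal{F}(s)$. On an operation $\sigma$, the pushforward functor $\sigma_!$ produced by $\Phi$ sends $(x_1, \ldots, x_n)$ to the chosen lift $\mathcal{F}(\sigma)(x_1, \ldots, x_n)$, so $\sigma_! = \mathcal{F}(\sigma)$, and the canonical coherence isomorphisms of $\Phi(\int_S \mathcal{F})$ reproduce those of $\mathcal{F}$. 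Hence $\eta_{\mathcal{F}}$ is an isomorphism of weak algebras, and with the canonical cleavage it is in fact the identity, so this direction is strict.

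For the transformation $\varepsilon$, fix an opfibered operad $p: X \to S$ with its chosen cleavage. The operad $\int_S \Phi(p)$ has colour set $\coprod_s \mathrm{ob}(X_s) = \mathrm{col}(X)$, and an operation $(b_1, \ldots, b_n) \to b$ is a pair $(\sigma, f)$ with $f: \sigma_!(b_1, \ldots, b_n) \to b$ vertical. I would define $\varepsilon_p: \int_S \Phi(p) \to X$ to be the identity on colours and to send $(\sigma, f)$ to the composite $f \circ \chi_\sigma$, where $\chi_\sigma$ is the cleavage lift of $\sigma$. That this respects composition is precisely Proposition \ref{prop:coCartesian}(2), since grafting coCartesian operations again yields a coCartesian operation and the vertical parts compose compatibly; equivariance follows from Proposition \ref{prop:coCartesian}(3). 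Conversely, the unique factorization of any operation of $X$ into a coCartesian operation from the cleavage followed by a vertical morphism shows $\varepsilon_p$ is bijective on operations, hence an isomorphism of operads over $S$; and by construction it carries the coCartesian operations $(\sigma, \mathrm{id})$ to the cleavage lifts, so it is a morphism in $\mathbf{opFib}(S)$ and in fact an equivalence there.

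The main obstacle is not either comparison individually but the $2$-categorical coherence tying them together. One must check that $\eta$ and $\varepsilon$ are genuinely pseudonatural — compatible with the weak natural transformations and modifications in $[S, \underline{\mathbf{Cat}}]_{weak}$ and with the $2$-morphisms of $\mathbf{opFib}(S)$ — and that the invertible structure $2$-cells are tracked correctly through the canonical isomorphisms witnessing the non-strictness of $\Phi(p)$. This bookkeeping, comparing the coherence data of $\Phi(p)$ (built from uniqueness of coCartesian lifts, cf.\ Remark \ref{remark:uniquelift}) against the composition in $\int_S$, is exactly the tedious but formal verification carried out in the categorical case. Once $\eta$ and $\varepsilon$ are seen to be pointwise equivalences, a standard argument promotes the resulting biequivalence to an adjoint equivalence, either by invoking the general fact that any equivalence of bicategories can be upgraded to an adjoint one or by directly exhibiting the triangle modifications; I would do the latter, since with the canonical cleavage $\eta$ is strict and the triangle identities reduce to the defining property of the factorization used for $\varepsilon$.
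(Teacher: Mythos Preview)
Your proposal is correct and follows essentially the same route as the paper: both identify the unit $\mathrm{id} \Rightarrow \Phi \circ \int_S$ as (canonically) an isomorphism via the canonical cleavage, and both describe the counit $\int_S \circ\, \Phi \Rightarrow \mathrm{id}$ by $(\sigma, f) \mapsto f \circ \tilde\sigma$ and invoke the unique coCartesian-then-vertical factorization to see it is fully faithful. The only organizational difference is that the paper first establishes the adjunction directly by writing down an explicit equivalence of hom-categories $\gamma: \mathbf{opFib}(S)(\int_S \mathcal{F}, p) \to [S,\underline{\mathbf{Cat}}]_{weak}(\mathcal{F}, \Phi(p))$ and its pseudo-inverse $\delta$, and only then checks that the resulting unit and counit are isomorphisms; you instead construct the unit and counit as equivalences and then promote to an adjoint equivalence. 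Both orderings are standard and the substantive verifications coincide.
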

\begin{proof}
We will start by exhibiting the adjunction. Suppose we are given an opfibered operad $p: X \longrightarrow S$ and a weak multifunctor $\mathcal{F}: S \longrightarrow \underline{\mathbf{Cat}}$. We want to construct a (weakly natural) equivalence of categories
\begin{equation*}
\gamma: \mathbf{opFib}(S)\Bigl(\int_S \mathcal{F}, p\Bigr) \longrightarrow [S, \underline{\mathbf{Cat}}]_{weak}(\mathcal{F}, \Phi(p))
\end{equation*}
Suppose we are given a morphism $\phi: \int_S \mathcal{F} \longrightarrow p$ of opfibered operads. For each colour $s$ of $S$ this induces a morphism of the corresponding fibers, which we denote
\begin{equation*}
\phi_s: \mathcal{F}(s) \longrightarrow X_s
\end{equation*}
We would like these maps to constitute a weak natural transformation $\gamma(\phi)$. If we are given an operation $\alpha: (s_1, \ldots, s_n) \longrightarrow s$ of $S$, we have to investigate if the following square is commutative up to an invertible 2-cell:
\[
\xymatrix@C=50pt{
\mathcal{F}(s_1) \times \ldots \times \mathcal{F}(s_n) \ar[r]^{\phi_{s_1}\times\ldots\times\phi_{s_n}}\ar[d]_{\mathcal{F}(\alpha)} & X_{s_1} \times \ldots \times X_{s_n} \ar[d]^{\alpha_!} \\
\mathcal{F}(s) \ar[r]_{\phi_s} & X_s
}
\]
Consider an object $(x_1, \ldots, x_n)$ in the category in the top left corner. Then the object $\alpha(x_1, \ldots, x_n) \in \mathcal{F}(s)$ is the codomain of the coCartesian operation $(\alpha, \mathrm{id}_{(x_1, \ldots, x_n)})$ in $\int_S \mathcal{F}$. The operation $\phi\bigl((\alpha, \mathrm{id}_{(x_1, \ldots, x_n)})\bigr)$ will be a $p$-coCartesian operation of $X$, whose codomain is $\phi(\alpha(x_1, \ldots, x_n))$. Since coCartesian operations are unique up to unique vertical isomorphism, we get a uniquely determined isomorphism $\phi(\alpha(x_1, \ldots, x_n)) \longrightarrow \alpha_!(\phi_{s_1}(x_1),\ldots,\phi_{s_n}(x_n))$. These isomorphisms constitute the required 2-cell. \par
Now that we have defined $\gamma$ on objects, it is straightforward to define it on morphisms. Details are left to the reader. We will show $\gamma$ is an equivalence of categories by exhibiting an explicit pseudo-inverse
\begin{equation*}
\delta: [S, \underline{\mathbf{Cat}}]_{weak}(\mathcal{F}, \Phi(p)) \longrightarrow \mathbf{opFib}(S)\Bigl(\int_S \mathcal{F}, p\Bigr)
\end{equation*}
Suppose we are given a weak natural transformation $\theta: \mathcal{F} \longrightarrow \Phi(p)$. If $x$ is a colour of $\int_S \mathcal{F}$ we define
\begin{equation*}
\delta(\theta)(x) := \theta_{\pi_{\mathcal{F}}(x)}(x)
\end{equation*}
Suppose $(\sigma, \mathrm{id}_x): (x_1, \ldots, x_n) \longrightarrow x$ is an operation of $\int_S\mathcal{F}$, so that $\sigma: (s_1, \ldots, s_n) \longrightarrow s$ is an operation of $S$ (i.e. we have $\pi_\mathcal{F}(x_i) = s_i$ and similarly for $x$). The chosen cleavage of $p$ determines a $p$-coCartesian lift $\tilde \sigma$ of $\sigma$ to $X$ with domain $(\theta(x_1), \ldots, \theta(x_n))$. We have a 2-cell of $\theta$ as indicated in the following diagram:
\[
\xymatrix@C=50pt{
\mathcal{F}(s_1) \times \ldots \times \mathcal{F}(s_n) \ar[r]^{\theta_{s_1}\times\ldots\times\theta_{s_n}}\ar[d]_{\mathcal{F}(\sigma)} & X_{s_1} \times \ldots \times X_{s_n} \ar[d]^{\Phi(p)(\sigma)} \\
\mathcal{F}(s) \ar[r]_{\theta_s} & X_s
}
\]
This 2-cell provides us with an isomorphism $h_\sigma: \Phi(p)(\sigma)(\theta(x_1), \ldots, \theta(x_n)) \longrightarrow \theta(x)$ in the fiber $X_s$. We define
\begin{equation*}
\delta(\theta)\bigl((\sigma, \mathrm{id}_x)\bigr) := h_\sigma \circ \tilde\sigma
\end{equation*}
For a general morphism $(\sigma, f)$ of $\int_S\mathcal{F}$ we set
\begin{equation*}
\delta(\theta)\bigl((\sigma, f)\bigr) := \theta_x(f) \circ \delta(\theta)\bigl((\sigma, \mathrm{id}_x)\bigr)
\end{equation*}
One verifies explicitly that $\delta$ is pseudo-inverse to $\gamma$. \par
We now wish to show that our adjunction is indeed an adjoint equivalence. It is straightforward to check that the unit
\begin{equation*}
\epsilon: \mathrm{id}_{[S, \underline{\mathbf{Cat}}]_{weak}} \longrightarrow \Phi \circ \int_S
\end{equation*}
is a natural isomorphism of 2-functors. The counit
\begin{equation*}
\eta: \int_S \circ \, \Phi \longrightarrow \mathrm{id}_{\mathbf{opFib}(S)}
\end{equation*}
can be described as follows. On colours it is the identity and operations of the form $(\sigma, f)$ are mapped to $f \circ \tilde\sigma$, where the notation $\tilde \sigma$ has the same meaning as above. Given a cleavage of an opfibered operad, the factorization of an operation into a coCartesian operation followed by a vertical morphism exists and is unique. Hence the counit is full and faithful and we conclude that it is also a natural isomorphism of 2-functors. This completes the proof. $\Box$
\end{proof}

We end this section with a brief discussion of operads opfibered in groupoids.

\begin{definition}
Let $p: X \longrightarrow S$ be a map of operads. We say \emph{$X$ is opfibered in groupoids over $S$ by $p$} if $X$ is opfibered over $S$ by $p$ and every operation of $X$ is $p$-coCartesian. We denote by $\mathbf{opFib}_{\mathbf{Gpd}}(S)$ the full subcategory of $\mathbf{opFib}(S)$ on the operads opfibered in groupoids over $S$.
\end{definition}

Let us define the 2-category of weak $S$-algebras taking values in groupoids.

\begin{definition}
Denote by $[S, \mathbf{\underline{Gpd}}]_{weak}$ the full subcategory of $[S, \mathbf{\underline{Cat}}]_{weak}$ on the weak $S$-algebras $A$ such that $A(s)$ is a groupoid for every colour $s$ of $S$.
\end{definition}

One immediately verifies that for any $A \in [S, \mathbf{\underline{Gpd}}]_{weak}$ the Grothendieck construction $\int_S A$ is opfibered in groupoids over $S$. In fact, we have the following result:

\begin{theorem}
\label{thm:grothconstopergroupoids}
The restrictions of $\int_S$ and $\Phi$ give an adjoint equivalence of bicategories
\[
\xymatrix@R=40pt@C=40pt{
\int_S : [S, \underline{\mathbf{Gpd}}]_{weak}\ar@<.5ex>[r] & \mathbf{opFib}_{\mathbf{Gpd}}(S): \Phi\ar@<.5ex>[l]
}
\]
\end{theorem}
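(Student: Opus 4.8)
The plan is to deduce this from the adjoint equivalence already established in Theorem \ref{thm:grothconstoperads} by the cheapest possible route: simply verify that the two functors $\int_S$ and $\Phi$ carry the relevant full sub-2-categories into one another. Since $[S, \underline{\mathbf{Gpd}}]_{weak}$ and $\mathbf{opFib}_{\mathbf{Gpd}}(S)$ are by definition \emph{full} sub-2-categories of $[S, \underline{\mathbf{Cat}}]_{weak}$ and $\mathbf{opFib}(S)$, once I know the two functors restrict to these subcategories on objects, their behaviour on 1- and 2-morphisms is inherited automatically, and the unit $\epsilon$ and counit $\eta$ of Theorem \ref{thm:grothconstoperads}---which are natural isomorphisms---restrict to natural isomorphisms between the restricted functors. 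The triangle identities continue to hold because they held in the ambient 2-categories. Thus no new coherence data needs to be produced, and the restricted pair is an adjoint equivalence as soon as the two containments are checked.

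First I would check that $\int_S$ sends $[S, \underline{\mathbf{Gpd}}]_{weak}$ into $\mathbf{opFib}_{\mathbf{Gpd}}(S)$, i.e. that every operation of $\int_S A$ is $p$-coCartesian when each $A(s)$ is a groupoid. An arbitrary operation of $\int_S A$ has the form $(\sigma, f)$ with $f$ a morphism in the fibre $A(s)$, and as recorded in the construction of $\Phi$ it factors as the canonical coCartesian lift $(\sigma, \mathrm{id})$ followed by the vertical morphism $f$. Because $A(s)$ is a groupoid, $f$ is an isomorphism. A vertical isomorphism is itself $p$-coCartesian: in the pullback criterion for coCartesian operations both horizontal arrows are precomposition with isomorphisms (namely $f$ and $p(f) = \mathrm{id}_s$), hence bijections, so every such square is trivially a pullback. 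Combining this with the composition stability of Proposition \ref{prop:coCartesian}(2), the composite $(\sigma, f)$ is coCartesian, so $\int_S A$ is opfibered in groupoids.

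Second I would check that $\Phi$ sends $\mathbf{opFib}_{\mathbf{Gpd}}(S)$ into $[S, \underline{\mathbf{Gpd}}]_{weak}$. For $p: X \longrightarrow S$ opfibered in groupoids, the value $\Phi(p)(s)$ is the fibre $X_s$, whose morphisms are precisely the unary operations of $X$ projecting to $\mathrm{id}_s$. By hypothesis every operation of $X$ is $p$-coCartesian, so each such morphism is a coCartesian operation lying over the identity of $s$; since $\mathrm{id}_s$ is an isomorphism of $S$, Proposition \ref{prop:coCartesian}(1) forces it to be an isomorphism of $X$. Hence every morphism of $X_s$ is invertible, $X_s$ is a groupoid, and $\Phi(p)$ takes values in groupoids.

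With both containments in place the theorem follows immediately. The only content-bearing steps are the two verifications above, and neither is a genuine obstacle; the mild point to keep in mind is the $\int_S$ direction, where it is not enough that $f$ be an isomorphism---one must additionally invoke that vertical isomorphisms are coCartesian together with Proposition \ref{prop:coCartesian}(2). All the real work was already carried out in proving the unrestricted equivalence.
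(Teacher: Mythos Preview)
Your proposal is correct and matches the paper's intended argument: the paper gives no explicit proof of this theorem, merely noting beforehand that ``one immediately verifies'' $\int_S$ lands in operads opfibered in groupoids and then stating the result, so you have simply filled in the obvious restriction argument (including the $\Phi$ direction, which the paper does not even mention). Your use of Proposition~\ref{prop:coCartesian}(1) and (2) for the two containments is exactly what is needed.
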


\newpage

% section 2: Left fibrations and topological algebras
\section{Left fibrations and topological algebras}
\label{section:leftfib}
In this section we will generalize Theorem \ref{thm:grothconstopergroupoids} to $\infty$-operads using dendroidal sets. We will define the notion of a \emph{left fibration of dendroidal sets}, which corresponds to an operad opfibered in groupoids. For a dendroidal set $S$ we will define an $\infty$-category $\mathbf{LFib}(S)$ of left fibrations over $S$. Using heuristics derived from the results of the previous section, we might expect that this $\infty$-category is in fact the $\infty$-category of $S$-algebras valued in $\infty$-groupoids or, equivalently, spaces. This expectation turns out to be valid; we will make these statements precise and formulate the main result of this section in Theorem \ref{thm:straighteningleftfib}. \par
Most results in this section will be stated without proofs in order to make the exposition more accessible. In the next section we will be dealing with the greater generality of algebras over $\infty$-operads valued in $\infty$-categories. Once we prove the relevant results there, it will not be hard to derive the results of this section from them. That is what we will do in section \ref{subsection:leftfibrations}. \par 

\subsection{Left fibrations and the covariant model structure}
\begin{definition}
Let $p: X \longrightarrow S$ be a map of dendroidal sets. Then $p$ is a \emph{left fibration} if the following conditions are satisfied:
\begin{itemize}
\item[(i)] $p$ is an inner fibration
\item[(ii)] For any corolla $\sigma$ of $S$ having inputs $\{s_1, \ldots, s_n\}$ (note that the set of inputs could be empty) and colors $\{x_1, \ldots, x_n\}$ of $X$ satisfying $p(x_i) = s_i$ for $1 \leq i \leq n$, there exists a corolla $\xi$ of $X$ with inputs $\{x_1, \ldots, x_n \}$ such that $p(\xi) = \sigma$
\item[(iii)] For any tree $T$ with at least two vertices and any leaf vertex $v$ of $T$, there exists a lift in any diagram of the form
\[
\xymatrix{
\Lambda^v[T] \ar[r]\ar[d] & X \ar[d]^p \\
\Omega[T] \ar[r]\ar@{-->}[ur] & S
}
\]
\end{itemize}
\end{definition}

\begin{remark}
\label{rmk:simplicialleftfib}
If $p: X \longrightarrow S$ is a left fibration, then the induced map $i^*p: i^*X \longrightarrow i^*S$ is a left fibration of simplicial sets, i.e. it has the right lifting property with respect to horn inclusions $\Lambda^n_i \longrightarrow \Delta^n$ for $0 \leq i < n$. In particular, if $i^*S$ is a Kan complex, then $i^*X$ is a Kan complex as well by a fundamental result of Joyal (see Proposition 1.2.5.1 of \cite{htt}). Also, if $q: K \longrightarrow L$ is a left fibration of simplicial sets, then $i_!(q)$ is a left fibration of dendroidal sets. 
\end{remark}

The following lemma tells us that our notion of left fibration indeed generalizes the notion of an operad opfibered in groupoids:

\begin{lemma}
\label{lemma:opfibgrpdsleftfib}
Let $p: X \longrightarrow S$ be a map of operads. Then $p$ exhibits $X$ as an operad opfibered in groupoids over $S$ if and only if $N_d(p): N_d(X) \longrightarrow N_d(S)$ is a left fibration of dendroidal sets.
\end{lemma}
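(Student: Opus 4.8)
The plan is to unwind everything through the operadic description of the dendroidal nerve. For any operad $Y$ one has $N_d(Y)_T = \mathrm{Oper}(\Omega(T),Y)$, so a $T$-dendrix of $N_d(Y)$ is a coloring of the edges of $T$ by colors of $Y$ together with an operation of $Y$ at each vertex respecting these colors; since $\Omega(T)$ is free, these vertex-operations are chosen independently. In particular the $C_n$-dendrices of $N_d(X)$ are exactly the $n$-ary operations of $X$, and a general $T$-dendrix is determined by its vertex corollas -- this is the strict dendroidal Segal condition, i.e. $N_d(X)$ and $N_d(S)$ have \emph{unique} fillers for all inner horns. From this, condition (i) is automatic: any inner-horn lifting problem against $N_d(p)$ has a unique filler in $N_d(X)$, and that filler lies over the prescribed dendrix of $N_d(S)$ by uniqueness downstairs, so $N_d(p)$ is always an inner fibration. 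Condition (ii) unwinds directly: it asserts that for every operation $\sigma$ of $S$ and every lift $x_1,\dots,x_n$ of its inputs there is an operation of $X$ over $\sigma$ with those inputs, which is the existence part of the opfibration condition (and, once all operations are coCartesian, the same as existence of coCartesian lifts).

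It then remains to identify condition (iii) with the statement that every operation of $X$ is $p$-coCartesian. First I would record that the $2$-vertex leaf horns $\Lambda^v\Omega[T]\subseteq\Omega[T]$ are precisely the horns occurring in the definition of a coCartesian operation. Hence, assuming every operation of $X$ is coCartesian, these $2$-vertex horns admit (unique) fillers by coCartesianness of the operation at $v$; for a general tree with leaf vertex $v$ I would induct on the number of vertices, noting that $\Lambda^v[T]$ already pins down the operation at every vertex except the one $w$ directly below $v$, that the inner face contracting the output edge of $v$ records the composite of the operations at $v$ and $w$, and that coCartesianness of the operation at $v$ determines $w$'s operation uniquely and compatibly with the image in $S$; the Segal condition reassembles the vertex-operations into the desired filler of $\Omega[T]$. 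This establishes ``coCartesian $\Rightarrow$ (iii)''.

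For the converse I would assume (iii). Applying $i^*$ and Remark \ref{rmk:simplicialleftfib}, $i^*N_d(p)$ is a left fibration of simplicial sets; its fiber over a color $s$ is $i^*\bigl(\eta_s\times_{N_d(S)}N_d(X)\bigr)=N(X_s)$, the nerve of the fiber category $X_s$ of vertical unary operations, and fibers of left fibrations are Kan, so $X_s$ is a groupoid and every vertical unary operation of $X$ is invertible. Now let $\xi$ be an arbitrary operation of $X$. Using the pullback characterization of coCartesian operations from the previous chapter, $\xi$ is coCartesian iff, given any operation $\psi$ of $X$ through which $\xi$ factors and any prescribed image $\rho$ in $S$, there is a \emph{unique} operation $\phi$ of $X$ over $\rho$ whose relevant composite with $\xi$ equals $\psi$. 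Existence of $\phi$ is exactly a leaf-horn filling, hence holds by (iii); uniqueness I would deduce from the groupoidal fibers, since two such solutions agree both after projecting to $S$ and after composing with $\xi$, and Proposition \ref{prop:coCartesian} lets one compare them by a vertical operation that is forced to be an identity. Thus every operation is coCartesian, and combined with (ii) this exhibits $X$ as opfibered in groupoids over $S$.

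The main obstacle is precisely this last uniqueness: leaf-horn filling guarantees only the \emph{existence} of fillers, whereas a coCartesian operation demands a \emph{unique} lift relative to the prescribed image in $S$. My plan resolves it by extracting the groupoidal structure of the fibers from the simplicial reduction of Remark \ref{rmk:simplicialleftfib} and then transporting uniqueness to multi-ary operations through the grafting and invertibility statements of Proposition \ref{prop:coCartesian} -- mirroring the simplicial fact that a left fibration is a coCartesian fibration all of whose edges are coCartesian. This existence-to-uniqueness step is the genuinely delicate point, and is also why the systematic development of these arguments is deferred to the coCartesian-fibration machinery of the later sections.
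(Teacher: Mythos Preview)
Your forward direction is essentially correct and aligns with the paper's approach. One small imprecision: the claim that $\Lambda^v[T]$ pins down every vertex except $w$ holds only for two-vertex trees and for the three-vertex line $v\text{--}w\text{--}u$ with $u$ a non-unary root; in all other trees with at least three vertices the horn already contains $w$'s corolla and the Segal/$2$-coskeletality argument alone produces the filler. In the exceptional three-vertex case your use of coCartesianness of the operation at $v$ (specifically its \emph{uniqueness} clause) to recover $w$ is exactly what the paper does.

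The converse has a genuine gap at the uniqueness step. You correctly observe that the fibers $X_s$ are groupoids, and a further two-vertex horn filling does produce a vertical endomorphism $k$ with $k\phi_1=\phi_2$; but nothing in Proposition~\ref{prop:coCartesian} or the groupoid property forces $k=\mathrm{id}$. Concretely: take $S$ to be the category $s_0\xrightarrow{\sigma}s_1$, let $X_{s_0}$ be the trivial groupoid and $X_{s_1}=B(\mathbb{Z}/2)=\{1,\tau\}$, with a single arrow $\alpha\colon x_0\to x_1$ over $\sigma$ satisfying $\tau\alpha=\alpha$. The fibers are groupoids and every two-vertex leaf horn against $N_d(p)$ fills, yet $\alpha$ is not $p$-coCartesian (both $\mathrm{id}_{x_1}$ and $\tau$ solve the lifting problem with $\xi=\alpha$, $\psi=\alpha$, $\rho=\mathrm{id}_{s_1}$), and correspondingly the $\Lambda^3_0$-horn in $N(X)$ with edges $\alpha,\mathrm{id},\mathrm{id},\alpha,\alpha,\tau$ does not fill. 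The paper closes this gap by invoking condition~(iii) for trees with \emph{three} vertices directly: that is precisely where the left-fibration hypothesis supplies uniqueness, and you should use it there rather than attempting to route the argument through the fibers.
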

\begin{proof}
Assume that $p: X \longrightarrow S$ makes $X$ into an operad opfibered in groupoids. Then $N_d(p)$ is automatically an inner fibration. Property (ii) is satisfied, because for any operation of $S$ and prescribed lifts of its inputs we can pick a coCartesian operation of $X$ lying over it. Let $T$ be a tree with exactly two vertices, whose leaf vertex we denote by $v$. We need a lift in any diagram as described in (iii). This lift exists by the fact that every operation of $X$ is $p$-coCartesian. If $T$ has 3 vertices, one uses the uniqueness of coCartesian lifts (up to vertical isomorphism) to establish the existence of the necessary lifts. If $T$ has 4 or more vertices, lifts in diagrams of the form given above automatically exist since the nerve of an operad is 2-coskeletal. \par 
Conversely, suppose $N_d(p)$ is a left fibration. Given an operation of $S$ with inputs $s_1, \ldots, s_n$ and colours $x_1, \ldots, x_n$ of $X$ such that $p(x_i) = s_i$, property (ii) tells us that we can find a lift of this operation to $X$ with inputs $x_1, \ldots, x_n$. To check that every operation of $X$ is $p$-coCartesian, we use property (iii) for trees with two vertices to get the desired lifts and use trees with three vertices to assure uniqueness of those lifts. The reader is invited to spell out the details. $\Box$
\end{proof}

Let us make the following elementary observations:

\begin{proposition}
\label{prop:leftfibcomppullback}
A composition of left fibrations is a left fibration. Suppose we are given a pullback square
\[
\xymatrix{
X' \ar[d]\ar[r] & X \ar[d] \\
S' \ar[r] & S
}
\]
in which the right vertical map is a left fibration. Then the left vertical map is a left fibration as well.
\end{proposition}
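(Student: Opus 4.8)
The statement has two parts—stability under composition and stability under pullback—and both follow by reducing the defining conditions (i), (ii), (iii) of a left fibration to familiar lifting/existence properties and checking each is preserved. The overall strategy is to verify conditions (i)-(iii) one at a time for the composite (respectively the pulled-back map), exploiting that each condition is phrased in terms of a right lifting property against a specific class of maps, or an existence-of-corollas property that transports directly.

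For \emph{composition}, suppose $p\colon X\to S$ and $q\colon S\to R$ are left fibrations; I want to show $q\circ p\colon X\to R$ is one. Condition (i), being an inner fibration, is itself a right lifting property (against inner horn inclusions), and such classes are automatically closed under composition of the maps on the right, so this is immediate. Condition (iii) is likewise a right lifting property against the inclusions $\Lambda^v[T]\subseteq\Omega[T]$ for leaf vertices $v$, and the same formal closure applies. The only genuinely non-formal part is condition (ii): given a corolla $\sigma$ of $R$ and colours $x_1,\dots,x_n$ of $X$ lying over the appropriate inputs, I first use (ii) for $q$ to lift $\sigma$ to a corolla $\bar\sigma$ of $S$ (with inputs $p(x_1),\dots,p(x_n)$), and then use (ii) for $p$ to lift $\bar\sigma$ to a corolla $\xi$ of $X$ with inputs $x_1,\dots,x_n$; then $(q\circ p)(\xi)=\sigma$ as required. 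This two-step lifting of corollas is the substantive content, though it is routine.

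For \emph{pullback}, let the right vertical map $p\colon X\to S$ be a left fibration and form the pullback $X'=S'\times_S X$ with left vertical map $p'\colon X'\to S'$. Here the key observation is that all three defining conditions are preserved by base change. Conditions (i) and (iii) are right lifting properties, and maps satisfying a given right lifting property are stable under pullback by the standard adjunction argument: a lifting problem for $p'$ against $\Lambda^v[T]\subseteq\Omega[T]$ (or an inner horn) transposes, via the universal property of the pullback, into a lifting problem for $p$, whose solution induces the desired lift for $p'$. For condition (ii), a corolla $\sigma'$ of $S'$ pushes forward to a corolla $\sigma$ of $S$; colours of $X'$ over the inputs of $\sigma'$ are, by definition of the pullback on colours, pairs consisting of colours of $S'$ and colours of $X$ over the common image in $S$; applying (ii) for $p$ produces a corolla $\xi$ of $X$ over $\sigma$, and this together with $\sigma'$ assembles (again by the pullback property) into the required corolla $\xi'$ of $X'$ over $\sigma'$.

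\textbf{Main obstacle.} There is no serious obstacle; the result is formal once one recognizes conditions (i) and (iii) as right lifting properties (closed under composition and pullback for free) and checks that the existence condition (ii), being about lifting corollas along the projection on colours and operations, transports along both composition and base change. The one place demanding a little care is keeping the bookkeeping of inputs straight in condition (ii)—ensuring in the composition case that the intermediate corolla $\bar\sigma$ has exactly the inputs $p(x_i)$ needed to feed the second application of (ii), and in the pullback case that the pullback description of colours and corollas of $X'$ is used correctly. I would verify (ii) explicitly in both cases and dispatch (i) and (iii) by citing closure of right-lifting classes under composition and pullback.
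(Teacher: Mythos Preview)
Your proposal is correct and is exactly the standard argument one would expect; the paper itself omits the proof entirely, presenting the proposition as an ``elementary observation'' and leaving the verification to the reader. Your decomposition into (i), (ii), (iii), treating (i) and (iii) as formal right-lifting-property closures and handling (ii) by the two-step lift (for composition) and the pullback description of corollas (for base change), is precisely what the author has in mind.
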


Our goal in this section is to describe a model structure on the category $\mathbf{dSets}/S$ in which the fibrant objects are precisely the left fibrations with codomain $S$. We will first endow $\mathbf{dSets}/S$ with the structure of a simplicial category.

\begin{definition}
Given maps $X \longrightarrow S$ and $Y \longrightarrow S$, we define the simplicial set $\mathrm{Map}_S(X, Y)$ as follows: 
\begin{equation*}
\mathrm{Map}_S(X, Y)_n := \mathbf{dSets}/S(X \otimes i_!(\Delta^n), Y) 
\end{equation*}
where the map $X \otimes i_!(\Delta^n) \longrightarrow S$ is obtained by composing the projection map $X \otimes i_!(\Delta^n) \longrightarrow X$ with the map $X \longrightarrow S$.
\end{definition}

Note that $\mathrm{Map}_S(X, Y)$ satisfies the following universal property: for any simplicial set $K$ there is an isomorphism
\begin{equation*}
\mathbf{sSets}(K, \mathrm{Map}_S(X, Y)) \simeq \mathbf{dSets}/S(X \otimes i_!(K), Y) 
\end{equation*}
This isomorphism is natural in $K$. \par 
The mapping objects $\mathrm{Map}_S(X,Y)$ make $\mathbf{dSets}/S$ into a simpicial category. Also bear in mind that $\mathbf{dSets}/S$ is tensored and cotensored over $\mathbf{sSets}$, a fact we already used in the definition above.

\begin{definition}
We will call a map in $\mathbf{dSets}/S$ a \emph{covariant cofibration} if its underlying map of dendroidal sets is a cofibration. We will call a map $f: X \longrightarrow Y$ in $\mathbf{dSets}/S$ a \emph{covariant equivalence} if for any left fibration $Z \longrightarrow S$ and normalizations $X_{(n)}$ and $Y_{(n)}$ of $X$ resp. $Y$ the induced map
\begin{equation*}
\mathrm{Map}_S(Y_{(n)}, Z) \longrightarrow \mathrm{Map}_S(X_{(n)}, Z)
\end{equation*}
is a weak homotopy equivalence of simplicial sets.
\end{definition}

\begin{remark}
The condition that there exist normalizations $X_{(n)}$ and $Y_{(n)}$ of $X$ resp. $Y$ such that the induced map
\begin{equation*}
\mathrm{Map}_S(Y_{(n)}, Z) \longrightarrow \mathrm{Map}_S(X_{(n)}, Z)
\end{equation*}
is a weak homotopy equivalence is equivalent to the condition that for any choice of normalizations  $X_{(n)}$ and $Y_{(n)}$ the stated map is a weak homotopy equivalence.
\end{remark}

The proof of the following result will be given in section \ref{subsection:leftfibrations}.

\begin{theorem}
\label{thm:covmodelstruct}
There exists a model structure on $\mathbf{dSets}/S$ in which the cofibrations (resp. weak equivalences) are the covariant cofibrations (resp. covariant weak equivalences). This model structure is combinatorial, left proper and simplicial. In this model structure the fibrant objects are precisely the left fibrations over $S$.
\end{theorem}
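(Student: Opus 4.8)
The plan is to produce the model structure by a recognition theorem for combinatorial model categories, after first organizing the three lifting conditions defining a left fibration into a single weakly saturated class. Since $\mathbf{dSets}$ is a presheaf category, the slice $\mathbf{dSets}/S$ is locally presentable, and by the Preliminaries the covariant cofibrations (underlying normal monomorphisms) are the weakly saturated class generated by the \emph{set} of boundary inclusions $\partial\Omega[T] \subseteq \Omega[T]$, decorated with arbitrary maps of their targets to $S$. Thus the cofibrations are already generated by a set and local presentability is in hand; the real content lies in identifying the weak equivalences and fibrations.

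First I would introduce the class $\mathcal{L}$ of \emph{left anodyne maps over $S$}: the weakly saturated class in $\mathbf{dSets}/S$ generated by the inner horn inclusions $\Lambda^e[T] \to \Omega[T]$, the leaf inclusions $\bigsqcup_{i=1}^n \eta \to \Omega[C_n]$ of the input edges into an $n$-corolla (for every $n \geq 0$), and the leaf-vertex horn inclusions $\Lambda^v[T] \to \Omega[T]$ for trees $T$ with at least two vertices and $v$ a leaf vertex. Unwinding conditions (i), (ii), (iii) shows that $p: X \to S$ is a left fibration precisely when it has the right lifting property against $\mathcal{L}$; in particular the fibrant objects of the desired structure are exactly the left fibrations over $S$. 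Because $\mathcal{L}$ is generated by a set, the small object argument factors every map as a map in $\mathcal{L}$ followed by a left fibration.

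The essential combinatorial input is a pushout-product lemma: for a left anodyne map $i$ and a normal monomorphism $j$, the pushout-product $i \widehat{\otimes} j$, formed with respect to the Boardman--Vogt tensor product $\otimes$, is again left anodyne (and the pushout-product of two normal monomorphisms is a normal monomorphism). Granting this, the enrichment $\mathrm{Map}_S(-,-)$ together with the tensoring $X \mapsto X \otimes i_!(-)$ verifies the simplicial model category axiom (SM7), and, applied to mapping spaces into a left fibration, it shows that every left anodyne map is a covariant equivalence. This is the step I expect to be the main obstacle: the dendroidal tensor product interacts with horns and leaf inclusions far more intricately than the Cartesian product of simplicial sets does (cf.\ the analogous but simpler Proposition 2.1.2.6 in \cite{htt}), so proving stability of the generating left anodynes under pushout-product with boundary inclusions requires a careful filtration of $\Omega[T] \otimes \Omega[T']$ by normal subobjects and an inductive analysis of the inner edges created by grafting.

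Finally I would assemble the model structure via Jeff Smith's recognition theorem (as in \cite{htt}, Proposition A.2.6.13), checking that the covariant equivalences form an accessible class satisfying two-out-of-three and closed under retracts, that the cofibrations are generated by a set, and that the cofibrations which are covariant equivalences coincide with the saturation of $\mathcal{L}$. One inclusion is the pushout-product computation above; for the converse one factors such a map as a left anodyne followed by a left fibration and identifies the second factor, using the auxiliary lemma that a left fibration which is a covariant equivalence is a trivial fibration, to exhibit the original map as a retract of a left anodyne. Combinatoriality is then automatic, the simplicial axiom is the (SM7) statement already proved, and left properness follows from the mapping-space characterization of covariant equivalences together with the fact that cofibrations are monomorphisms, giving all the assertions of the theorem.
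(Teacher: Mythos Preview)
Your approach is genuinely different from the paper's. The paper does \emph{not} build the covariant model structure directly: it first constructs the coCartesian model structure on marked dendroidal sets $\mathbf{dSets}^+/S$ (Theorem~\ref{thm:coCartmodelstructure}), and then obtains the covariant structure on $\mathbf{dSets}/S$ by \emph{transfer} along the adjunction $u \dashv (-)^\sharp$ (Theorem~\ref{thm:covmodelstruct2}). All the hard combinatorics---the pushout-product lemma you correctly flag as the main obstacle---is carried out once in the marked setting (Proposition~\ref{prop:smashproduct}, Lemmas~\ref{lemma:cylindersubdivision} and~\ref{lemma:cylindersubdivision2}), and the covariant structure then falls out with almost no additional work. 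Your direct route, by contrast, avoids marked dendroidal sets entirely; this is cleaner if left fibrations are all one cares about, and it parallels exactly how the paper itself handles the coCartesian case via Smith's theorem. The cost is that the pushout-product analysis must be redone in the unmarked setting, though the arguments are essentially the same.

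There is, however, an organizational gap in your final paragraph. Smith's theorem (Proposition~\ref{prop:combinmodelcat}, or \cite[A.2.6.13]{htt}) does not take as input a description of the trivial cofibrations; its hypotheses are that $W$ is perfect, that $W$ is stable under pushout along cofibrations, and that maps with the right lifting property against all cofibrations lie in $W$. You list accessibility and two-out-of-three but omit the latter two conditions, substituting instead the claim that $C \cap W$ equals the saturation of $\mathcal{L}$. That claim is a \emph{consequence} of the model structure, not a hypothesis, and your retract argument for it is circular as stated: factoring a trivial cofibration $A \to B$ as a left anodyne followed by a left fibration $C \to B$, you need the auxiliary lemma that this left fibration is a trivial fibration, but $B$ need not be fibrant over $S$, so the definition of covariant equivalence gives no direct grip on $C \to B$. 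The correct order is to verify Smith's actual hypotheses (pushout-stability follows from your pushout-product lemma together with properness of the Kan model structure on mapping spaces; the $C$-injective condition is an easy section-plus-homotopy argument), obtain the model structure, and only then identify the fibrant objects as left fibrations via the observation that $\mathrm{Map}_S(B,Z) \to \mathrm{Map}_S(A,Z)$ is a trivial Kan fibration whenever $A \to B$ is a trivial cofibration between normal objects and $Z \to S$ is a left fibration.
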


We will refer to this model structure as the \emph{covariant model structure}. \par 
If we are given a morphism $f: S \longrightarrow T$ of dendroidal sets, this induces an adjunction
\[
\xymatrix@R=40pt@C=40pt{
f_!: \mathbf{dSets}/S \ar@<.5ex>[r] &  \mathbf{dSets}/T: f^* \ar@<.5ex>[l]
}
\]
An obvious question to ask is whether the covariant model structure behaves well with respect to this adjunction. The answer is yes, which we will also prove in section \ref{subsection:leftfibrations}:
\begin{proposition}
\label{prop:naturalitycovmodstruct}
Suppose we are given a morphism $f: S \longrightarrow T$ of dendroidal sets. Then the adjunction $(f_!, f^*)$ is a Quillen adjunction, which is a Quillen equivalence if $f$ is an operadic equivalence.
\end{proposition}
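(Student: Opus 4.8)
The plan is to prove the two assertions separately: that $(f_!,f^*)$ is a Quillen adjunction for every $f$, and that it becomes a Quillen equivalence once $f$ is an operadic equivalence. Throughout I will use that the adjunction is \emph{simplicial}. Unwinding the definition of the mapping objects and applying the universal property of the pullback $f^*Y = S\times_T Y$, a map $X\otimes i_!(\Delta^n)\to Y$ over $T$ whose structure map to $T$ factors through $S$ is exactly a map $X\otimes i_!(\Delta^n)\to S\times_T Y$ over $S$, so one obtains a natural isomorphism
\[
\mathrm{Map}_T(f_!X, Y)\;\cong\;\mathrm{Map}_S(X, f^*Y).
\]

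For the Quillen adjunction I would argue that $f_!$ is left Quillen. Since $f_!$ leaves the underlying map of dendroidal sets unchanged and covariant cofibrations are precisely the maps that are normal monomorphisms on underlying dendroidal sets, $f_!$ preserves covariant cofibrations; the same observation shows that $f_!$ carries a normalization over $S$ to a normalization over $T$. To see that $f_!$ preserves covariant equivalences (hence in particular trivial cofibrations), let $g\colon X\to X'$ be a covariant equivalence and $Z\to T$ a left fibration. By Proposition \ref{prop:leftfibcomppullback} the pullback $f^*Z\to S$ is again a left fibration, and the displayed isomorphism identifies the map $\mathrm{Map}_T(f_!X',Z)\to\mathrm{Map}_T(f_!X,Z)$ with $\mathrm{Map}_S(X',f^*Z)\to\mathrm{Map}_S(X,f^*Z)$, which is a weak homotopy equivalence because $g$ is a covariant equivalence tested against the left fibration $f^*Z$. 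Thus $f_!$ preserves cofibrations and all weak equivalences, and $(f_!,f^*)$ is a Quillen adjunction.

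For the Quillen equivalence, the point I would isolate is a single \emph{base-change invariance} statement: if $f$ is an operadic equivalence and $Z\to T$ is a left fibration, then the projection $S\times_T Z\to Z$ is a covariant equivalence in $\mathbf{dSets}/S$. Granting this, the standard criterion (derived unit and counit both weak equivalences) follows. Indeed, for a fibrant object $Z\to T$ the derived counit factors as $f_!\bigl((f^*Z)_{(n)}\bigr)\to f_!f^*Z\xrightarrow{p_2} Z$; the first map is $f_!$ applied to the covariant trivial fibration $(f^*Z)_{(n)}\to f^*Z$, hence a covariant equivalence by the previous paragraph, while $p_2$ is the projection above. The derived unit is handled symmetrically, using the adjunction isomorphism to transport the testing left fibrations across $f$ and the fact that, by the same invariance, every left fibration over $S$ is covariantly equivalent to one pulled back from $T$. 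To prove the invariance statement itself I would factor $f=p\circ j$, using combinatoriality of the Cisinski--Moerdijk structure, with $j$ a trivial cofibration (a normal monomorphism that is an operadic equivalence, by two-out-of-three) and $p$ a trivial fibration. Writing $S\times_T Z = S\times_{S'}(S'\times_T Z)$, where $S'\times_T Z\to S'$ is a left fibration by Proposition \ref{prop:leftfibcomppullback}, it suffices to treat the two factors.

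The trivial-fibration factor is formal: trivial fibrations are defined by a right lifting property and are therefore stable under pullback, so $S'\times_T Z\to Z$ is again a trivial fibration and a fortiori a covariant equivalence. The genuinely difficult case, and the step I expect to be the \textbf{main obstacle}, is the trivial cofibration $j$: one must show that pulling a left fibration back along $j$ produces a covariant equivalence $j^*Z\hookrightarrow Z$. My approach would be to identify the weakly saturated class of generating trivial cofibrations of the covariant model structure --- the dendroidal analogue of the left anodyne maps --- and to prove that this class is stable under pullback along left fibrations, so that $j^*Z\hookrightarrow Z$ is itself a covariant trivial cofibration. Establishing this stability requires a careful pushout-product analysis against the inner-horn and leaf-vertex-horn inclusions that define left fibrations, and it is here that the bulk of the work lies; this mirrors the corresponding results in Chapter 2 of \cite{htt}.
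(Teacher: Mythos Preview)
Your approach differs substantially from the paper's. The paper does not attempt a direct proof at the level of the covariant model structure at all. Instead, it first establishes the analogous naturality result for the \emph{coCartesian} model structure on $\mathbf{dSets}^+/S$ (Theorem~\ref{thm:naturalitycoCmodstruct}), whose proof runs through the heavy machinery developed in the body of the paper: the straightening/unstraightening Quillen equivalence of Theorem~\ref{thm:Grothendieckconstruction} together with the Berger--Moerdijk invariance results for algebras over simplicial operads handle the case of normal $S$ and $T$, and the general case is reduced to this via the normalization $-\times E_\infty$. The covariant model structure is then obtained by transfer along $(-)^\sharp$ (Theorem~\ref{thm:covmodelstruct2}), and since a map in $\mathbf{dSets}/S$ is a covariant weak equivalence, fibration, or cofibration precisely when its image under $(-)^\sharp$ is one in $\mathbf{dSets}^+/S$, the Quillen equivalence for $(f_!,f^*)$ on unmarked objects follows formally from the one on marked objects. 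No base-change or left-anodyne stability argument is ever needed.

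Your direct attack is a reasonable alternative strategy, but as written it has two genuine gaps beyond the obstacle you flag. First, the derived-unit argument is circular: the claim that ``every left fibration over $S$ is covariantly equivalent to one pulled back from $T$'' is precisely the statement that the derived unit is an equivalence on fibrant objects, so it cannot be invoked to prove itself. A cleaner route is to prove the Quillen equivalence for $j$ and $p$ separately: for a normal monomorphism $j$ the ordinary unit $X\to j^*j_!X$ is an isomorphism, so only the counit is at issue, while the trivial-fibration case needs its own argument (e.g.\ via a section of $p$). Second, the obstacle you identify is more serious than a pushout-product computation. Even if the saturated class generated by inner- and leaf-horn inclusions were stable under pullback along left fibrations, you would still need to know that every Cisinski--Moerdijk trivial cofibration lies in that saturated class, since that is all you know about $j$. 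In the simplicial setting the identification of left anodynes with covariant trivial cofibrations is already a nontrivial theorem; in the dendroidal setting it is not proved anywhere in the paper, and establishing it would be a substantial independent project --- which is exactly why the paper routes the argument through Theorem~\ref{thm:Grothendieckconstruction} instead.
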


Before we state the following result, let us introduce some notation. If $p: X \longrightarrow S$ is a map of dendroidal sets and $s$ is a colour of $S$, we will often write
\begin{equation*}
X_s := \{s\} \times_S X
\end{equation*}
for the fiber of $p$ over $s$. Using Remark \ref{rmk:simplicialleftfib} and Proposition \ref{prop:leftfibcomppullback} we see that the fibers of a left fibration are always Kan complexes. \par 
It turns out that the weak equivalences between fibrant objects of $\mathbf{dSets}/S$ are easily characterized:

\begin{proposition}
Suppose we are given a diagram
\[
\xymatrix{
X \ar[rd]_p\ar[rr]^f & & Y \ar[dl]^q \\
& S & 
}
\]
in which $p$ and $q$ are left fibrations. Then the following are equivalent:
\begin{itemize}
\item[(i)] The map $f$ is a covariant equivalence
\item[(ii)] The map $f$ is an operadic equivalence
\item[(iii)] For every color $s \in S$ the induced map of fibers $f_s: X_s \longrightarrow Y_s$ is a weak homotopy equivalence of Kan complexes
\end{itemize} 
\end{proposition}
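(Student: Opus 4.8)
The plan is to prove the cycle of implications by first establishing the equivalence (i)$\Leftrightarrow$(iii) and then folding (ii) in through the same fibrewise analysis. The guiding observation is that, for each colour $s$ of $S$, passing to the fibre $X_s = \{s\}\times_S X$ is the pullback functor $s^*$ along the inclusion $s\colon \eta\longrightarrow S$. By Proposition \ref{prop:naturalitycovmodstruct} the adjunction $(s_!,s^*)$ is a Quillen adjunction for the covariant model structures, so $s^*$ is right Quillen. Moreover the covariant model structure on $\mathbf{dSets}/\eta\simeq\mathbf{sSets}$ has all monomorphisms as cofibrations and, by Theorem \ref{thm:covmodelstruct}, the left fibrations over $\eta$ as its fibrant objects; these are exactly the Kan complexes (Remark \ref{rmk:simplicialleftfib}), and since a model structure is determined by its cofibrations together with its fibrant objects, this is the Kan--Quillen model structure. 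Because $X\to S$ and $Y\to S$ are left fibrations their fibres are Kan complexes (Remark \ref{rmk:simplicialleftfib} together with Proposition \ref{prop:leftfibcomppullback}).

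For (i)$\Rightarrow$(iii): since $f$ is a covariant equivalence between fibrant objects, Ken Brown's lemma applied to the right Quillen functor $s^*$ shows that $f_s = s^*(f)\colon X_s\to Y_s$ is a covariant equivalence in $\mathbf{dSets}/\eta$, hence a weak homotopy equivalence of Kan complexes by the identification above. This is the easy direction. The substantial direction is (iii)$\Rightarrow$(i). First I would reduce to the case that $X$ and $Y$ are normal, replacing them by normalizations, which affects neither the hypotheses nor the conclusion. Next I would factor $f$ in the covariant model structure as a trivial cofibration $j\colon X\to Y''$ followed by a covariant fibration $g\colon Y''\to Y$; since $Y\to S$ is fibrant and fibrations compose, $Y''\to S$ is again a left fibration, and by two-out-of-three it suffices to prove that $g$ is a covariant equivalence. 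The map $j$ is a covariant equivalence, hence fibrewise a weak equivalence by the already-established (i)$\Rightarrow$(iii); combined with hypothesis (iii) for $f$ this forces each $g_s$ to be a weak homotopy equivalence. Because $s^*$ is right Quillen it sends the covariant fibration $g$ to a Kan fibration $g_s$, so each $g_s$ is a trivial Kan fibration.

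The heart of the matter is then the following assertion, which I expect to be the main obstacle: \emph{a covariant fibration $g\colon Y''\to Y$ between left fibrations over $S$, all of whose fibre maps $g_s$ are trivial Kan fibrations, is a trivial covariant fibration.} I would prove this by exhibiting a lift against an arbitrary normal monomorphism $A\to B$ in $\mathbf{dSets}/S$, arguing by induction over a presentation of $B$ relative to $A$ by the boundary inclusions $\partial\Omega[T]\subseteq\Omega[T]$. The inductive step amounts to a single cell attachment; using the leaf-horn and corolla filling properties (iii) and (ii) defining left fibrations one reduces each such lifting problem either to a lift against an inner horn, solved because $g$ is an inner fibration, or to a lift into a fibre over a colour, which exists because $g_s$ is a trivial Kan fibration. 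This is the dendroidal analogue of Lurie's argument and is where the combinatorics of trees must be handled with care.

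Finally I would incorporate (ii). The key point is that passing to fibres is also right Quillen for the operadic (Cisinski--Moerdijk) slice model structure on $\mathbf{dSets}/S$: its left adjoint $s_!$ is postcomposition with $\eta\hookrightarrow S$, which preserves normal monomorphisms and, since being an operadic equivalence is a property of the underlying map of dendroidal sets alone, also preserves operadic trivial cofibrations. By the preliminaries this operadic structure restricts over $\eta$ to the Joyal model structure, in which weak equivalences between Kan complexes coincide with weak homotopy equivalences. Granting that every left fibration is fibrant for the operadic slice structure --- the dendroidal counterpart of the fact that coCartesian fibrations are categorical fibrations --- Ken Brown's lemma again yields (ii)$\Rightarrow$(iii). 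For the converse (i)$\Rightarrow$(ii), with $X$ and $Y$ normal a covariant equivalence between fibrant--cofibrant objects is a simplicial homotopy equivalence for the enrichment $\mathrm{Map}_S$; the endpoint inclusions $X\to X\otimes i_!(\Delta^1)$ are operadic equivalences, so such a map is in particular an operadic equivalence. Together with (i)$\Leftrightarrow$(iii) this closes the cycle.
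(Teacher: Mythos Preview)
Your route is genuinely different from the paper's, and while the outline is reasonable, two of the steps you flag as granted are precisely the hard content, and neither is proved.

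The paper's argument is a one-line reduction. In Section~\ref{subsection:leftfibrations} the covariant model structure is \emph{defined} by transfer from the coCartesian structure via $(-)^\sharp$, so $f$ is a covariant equivalence iff $f^\sharp\colon X^\sharp\to Y^\sharp$ is a marked equivalence in $\mathbf{dSets}^+/S$. Since $p$ and $q$ are left fibrations one has $X^\sharp = X^\natural$ and $Y^\sharp = Y^\natural$ as fibrant objects, and Proposition~\ref{prop:markedequivalence} (already established with real effort, via the fibre-sequence argument of Proposition~\ref{prop:fibersequencemappingspace}) immediately gives the equivalence of (i), (ii) and fibrewise categorical equivalence; for Kan-complex fibres the latter is (iii).

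Your approach instead attacks the implications directly inside the covariant model structure. The Ken Brown step for (i)$\Rightarrow$(iii) is fine. But your (iii)$\Rightarrow$(i) rests on the claim that a covariant fibration $g\colon Y''\to Y$ between left fibrations over $S$ with fibrewise trivial Kan fibrations is a trivial fibration. You say you would prove this by cell induction, reducing each $\partial\Omega[T]\subseteq\Omega[T]$ lift to either an inner-horn lift or a lift in a fibre; but that reduction is exactly the dendroidal analogue of a delicate argument, and the sketch you give does not actually carry it out. Similarly, for (ii)$\Rightarrow$(iii) you grant that left fibrations are fibrant for the operadic slice structure (the dendroidal analogue of ``left fibrations are categorical fibrations''); this is not proved anywhere in the paper and is not obvious. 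So what the paper buys by going through marked dendroidal sets is that both of these obstacles are absorbed into Proposition~\ref{prop:markedequivalence}, whose proof has already done the work. Your route would be more self-contained if completed, but as written it defers the substance to two unproven lemmas.
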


\subsection{The straightening functor}
In this section we will describe the relation between the category $\mathbf{dSets}/S$ and the category of algebras over the simplicial operad $\mathrm{hc}\tau_d(S)$ under the assumption that $S$ is cofibrant in the Cisinski-Moerdijk model structure, i.e. normal. The results of Berger and Moerdijk \cite{bergermoerdijk1}\cite{bergermoerdijk2} in particular yield the following:
\begin{theorem}
If $S$ is normal, so that $\mathrm{hc}\tau_d(S)$ is cofibrant, there exists a left proper simplicial model structure on the simplicial category $\mathrm{Alg}_{\mathrm{hc}\tau_d(S)}(\mathbf{sSets})$ of simplicial $\mathrm{hc}\tau_d(S)$-algebras in which a map of algebras is a weak equivalence (resp. a fibration) if and only if it is a pointwise weak equivalence (resp. a pointwise fibration).
\end{theorem}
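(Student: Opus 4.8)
The plan is to obtain this model structure by transfer along the free--forgetful adjunction
\[
F : \mathbf{sSets}^{\mathrm{col}(P)} \rightleftarrows \mathrm{Alg}_{P}(\mathbf{sSets}) : U, \qquad P := \mathrm{hc}\tau_d(S),
\]
where $\mathbf{sSets}^{\mathrm{col}(P)}$ is the category of colour-indexed collections of simplicial sets, equipped with the product of the Kan--Quillen model structure. This base is cofibrantly generated, left proper and simplicial, with generating cofibrations $I$ and generating trivial cofibrations $J$ obtained colourwise from the boundary and horn inclusions. The idea is to declare a map in $\mathrm{Alg}_{P}(\mathbf{sSets})$ to be a weak equivalence (resp. fibration) precisely when $U$ sends it to one --- so that these are exactly the pointwise notions --- and to take $FI$ (resp. $FJ$) as the candidate generating (trivial) cofibrations. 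I would then invoke the standard transfer (lifting) lemma in the form used by Schwede--Shipley and by Berger--Moerdijk. Its hypotheses split into a smallness condition and an acyclicity condition. Smallness is automatic: $\mathbf{sSets}$ is locally presentable and $U$ preserves filtered colimits, since an algebra structure is specified by maps out of finitely many operation-spaces in each arity; hence the small object argument applies to $FI$ and $FJ$.

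Verifying acyclicity is the only step that genuinely uses the hypothesis, and it is the main obstacle. Explicitly, one must show that every relative $FJ$-cell complex is a pointwise weak equivalence, equivalently that the pushout of a free map $F(j)$ along a map of algebras is a pointwise trivial cofibration whenever $j$ is a generating trivial cofibration of collections. First I would record that $P = \mathrm{hc}\tau_d(S)$ is a cofibrant object of $\mathbf{sOper}$ --- exactly the assertion flagged in the statement --- which holds because $S$ is normal, hence cofibrant in the Cisinski--Moerdijk structure, and $\mathrm{hc}\tau_d$ is the left adjoint of a Quillen pair and therefore preserves cofibrant objects. With $P$ cofibrant, each space of operations $P(c_1,\dots,c_n;c)$ carries a suitably free, hence $\Sigma_n$-cofibrant, $\Sigma_n$-action, and this is the property that makes the transfer work. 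Following Berger--Moerdijk, I would analyse the pushout $R \to R[j]$ adjoining $j$ to an algebra $R$ by means of the canonical filtration of its underlying collection, whose successive layers are assembled from the operation-spaces $P(c_1,\dots,c_n;c)$ tensored over $\Sigma_n$ with iterated pushout-product powers of $j$. Because $j$ is a trivial cofibration and each operation-space is $\Sigma_n$-cofibrant, passage to $\Sigma_n$-orbits preserves trivial cofibrations, so every layer --- and hence $R \to R[j]$ itself --- is a pointwise trivial cofibration. This establishes acyclicity and thus produces the transferred model structure.

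Finally, left properness and the simplicial enrichment follow formally. Left properness is inherited from the base: the cofibrations of $\mathrm{Alg}_{P}(\mathbf{sSets})$ have underlying cofibrations of collections, weak equivalences are detected pointwise, and a pushout along such a cofibration is, via the same filtration, controlled by a pushout of collections along a cofibration, where left properness already holds. The simplicial structure passes through the adjunction, since $F$ is compatible with tensoring by simplicial sets and $U$ with cotensoring, so the pushout-product axiom for $\mathrm{Alg}_{P}(\mathbf{sSets})$ reduces to that for $\mathbf{sSets}$. By construction the fibrations and weak equivalences are exactly the pointwise ones, which completes the argument; in practice one simply checks that the hypotheses of Berger--Moerdijk's admissibility theorem are met by the cofibrant operad $\mathrm{hc}\tau_d(S)$ in $\mathbf{sSets}$.
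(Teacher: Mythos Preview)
Your outline is correct and is precisely the Berger--Moerdijk transfer argument that the paper is invoking: the paper does not supply its own proof of this theorem but simply attributes it to \cite{bergermoerdijk1}\cite{bergermoerdijk2}, and what you have written is an accurate sketch of how that machinery applies here (cofibrancy of $\mathrm{hc}\tau_d(S)$ from normality of $S$, $\Sigma$-cofibrancy of the operation spaces, the cellular filtration to verify acyclicity, and the formal consequences for left properness and the simplicial enrichment).
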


We will now define the so-called \emph{straightening functor}
\begin{equation*}
St_S: \mathbf{dSets}/S \longrightarrow \mathrm{Alg}_{\mathrm{hc}\tau_d(S)}(\mathbf{sSets})
\end{equation*}
Note that we can also describe $\mathbf{dSets}/S$ as a presheaf category; indeed, we have
\begin{equation*}
\mathbf{dSets}/S \simeq \mathbf{Sets}^{(\int_{\mathbf{\Omega}}S)^{\mathrm{op}}}
\end{equation*} 
where $\int_{\mathbf{\Omega}}S$ is the category of elements of $S$. From this we conclude that $\mathbf{dSets}/S$ is generated under colimits by objects of the form $\Omega[T] \longrightarrow S$ for $T \in \mathbf{\Omega}$. Since the straightening functor is supposed to be a left adjoint, it will suffice to construct it on these generators and then extend its definition by a left Kan extension. \par 
First, consider the special case where $S = \Omega[T]$ and $p$ is the identity map of $\Omega[T]$. For any color $c$ of $T$ let $T/c$ denote the subtree of $T$ which consists of $c$ and `everything above $c$'. 

\begin{example}
If $T$ is the tree
\[
\xymatrix@R=10pt@C=12pt{
&&&&&&\\
&*=0{\bullet}\ar@{-}[u]&&*=0{\bullet}&&&\\
&&*=0{\bullet}\ar@{-}[ul]\ar@{-}[ur]&&*=0{\bullet}\ar@{-}[u]&&\\
&&&*=0{\bullet}\ar@{-}\ar@{-}[ul]^{c}\ar@{-}[ur]&&&\\
&&&\ar@{-}[u]&&&\\
&&&&&&
}
\]
then $T/c$ is the tree
\[
\xymatrix@R=10pt@C=12pt{
&&&&&&\\
&*=0{\bullet}\ar@{-}[u]&&*=0{\bullet}&&&\\
&&*=0{\bullet}\ar@{-}[ul]\ar@{-}[ur]&&&\\
&&\ar@{-}[u]^c&&&&\\
&&&&&&
}
\]
\end{example} 

Define the cube
\begin{equation*}
\Delta[T/c] := (\Delta^1)^{\mathrm{col}(T/c)\backslash \{c\}}
\end{equation*}
where the product on the right is understood to be $\Delta^0$ if the set occurring in the exponent is empty.
The $\mathrm{hc}\tau_d(\Omega[T])$-algebra $St_{\Omega[T]}(\mathrm{id}_{\Omega[T]})$ is given by
\begin{equation*}
St_{\Omega[T]}(\mathrm{id}_{\Omega[T]})(c) := \Delta[T/c]
\end{equation*} 
The structure maps
\[
\xymatrix{
\mathrm{hc}\tau_d(\Omega[T])(c_1, \ldots, c_n; c) \times St_{\Omega[T]}(\mathrm{id}_{\Omega[T]})(c_1) \times \cdots \times St_{\Omega[T]}(\mathrm{id}_{\Omega[T]})(c_n) \ar[d] \\ St_{\Omega[T]}(\mathrm{id}_{\Omega[T]})(c)
}
\]
are given by grafting trees, assigning length 1 to the newly arising inner edges $c_1, \ldots, c_n$. 
\par 

\begin{remark}
When comparing this definition to the ones given in chapter \ref{section:grothconstinftyoperads}, one will see that at several places there we use the opposites of the simplicial sets used here. In the present setting we do not have to be very nitpicky about the orientations of our simplices; a simplicial set and its opposite have isomorphic geometric realizations and are hence equivalent. However, when we wish to model algebras in $\infty$-categories the orientation of our simplices does matter, reflecting the fact that a category is in general not equivalent to its opposite.
\end{remark}

Now let $S$ be any dendroidal set and $p: \Omega[T] \longrightarrow S$ a map. We get a map $\mathrm{hc}\tau_d(p)$ of simplicial operads, which induces an adjunction
\[
\xymatrix@R=40pt@C=40pt{
\mathrm{hc}\tau_d(p)_!: \mathrm{Alg}_{\mathrm{hc}\tau_d(\Omega[T])}(\mathbf{sSets}) \ar@<.5ex>[r] &  \mathrm{Alg}_{\mathrm{hc}\tau_d(S)}(\mathbf{sSets}): \mathrm{hc}\tau_d(p)^* \ar@<.5ex>[l]
}
\]
We set
\begin{equation*}
St_S(p) := \mathrm{hc}\tau_d(p)_!(St_{\Omega[T]}(\mathrm{id}_{\Omega[T]}))
\end{equation*}
Functoriality in $p$ is given as follows. Suppose we are given maps
\[
\xymatrix{
\Omega[R] \ar[r]^f & \Omega[T] \ar[r]^p & S
}
\]
If $f$ is a face map of $T$, the map $St_S(f)$ is described by the inclusions
\begin{equation*}
\Delta[S/c] \simeq \Delta[S/c] \times \{0\}^{\mathrm{col}(T/f(c))\backslash f(\mathrm{col}(S/c))} \longrightarrow \Delta[T/c]
\end{equation*}
for colours $c$ of $R$. If $f$ is a degeneracy, it is clear how to define $St_S(f)$. \par 
Having defined the functor $St_S$ on all maps of the form $\Omega[T] \longrightarrow S$, we take a left Kan extension of $St_S$ to all of $\mathbf{dSets}/S$ to obtain a functor
\begin{equation*}
St_S: \mathbf{dSets}/S \longrightarrow \mathrm{Alg}_{\mathrm{hc}\tau_d(S)}(\mathbf{sSets}): X \longmapsto \varinjlim_{\Omega[T] \rightarrow X} St_S(\Omega[T] \rightarrow X)
\end{equation*}
Since $St_S$ preserves colimits, the adjoint functor theorem provides us with a right adjoint to the straightening functor. We call this right adjoint the \emph{unstraightening functor} and denote it $Un_S$. We will later see that the unstraightening functor can be promoted to a simplicial functor. \par 
The following is the main result of this chapter and will be proven in section \ref{subsection:leftfibrations}.

\begin{theorem}
\label{thm:straighteningleftfib}
Let $S$ be a normal dendroidal set. Then the adjunction
\[
\xymatrix@R=40pt@C=40pt{
St_S: \mathbf{dSets}/S \ar@<.5ex>[r] & \mathrm{Alg}_{\mathrm{hc}\tau_d(S)}(\mathbf{sSets}): Un_S \ar@<.5ex>[l]
}
\]
is a Quillen equivalence.
\end{theorem}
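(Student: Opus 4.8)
The plan is to follow the strategy of Lurie's proof of the analogous statement for simplicial sets (HTT, Section 2.2) and adapt each step to the dendroidal setting. The argument breaks into three parts: establishing the Quillen adjunction, proving a naturality property that allows the base $S$ to be varied, and reducing the equivalence to an explicit computation over representables. Throughout we use that, since $S$ is normal, every object of $\mathbf{dSets}/S$ is normal and hence cofibrant (recall that any dendroidal set admitting a map to a normal one is normal), so that a left Quillen functor out of $\mathbf{dSets}/S$ automatically preserves all weak equivalences.

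First I would verify that $(St_S, Un_S)$ is a Quillen adjunction. Since $St_S$ is a left adjoint it preserves colimits, and the covariant cofibrations are the weakly saturated class generated by the boundary inclusions $\partial\Omega[T] \hookrightarrow \Omega[T]$; thus it suffices to check that $St_S$ carries these generators to cofibrations of algebras, which reduces to an explicit analysis of $St_{\Omega[T]}(\partial\Omega[T] \to \Omega[T])$ in terms of the cubes $\Delta[T/c]$. For trivial cofibrations I would instead check that the right adjoint $Un_S$ preserves fibrations, combining the description of the fibrant objects as left fibrations (Theorem \ref{thm:covmodelstruct}) with the pointwise description of projective fibrations of algebras, so that the verification becomes a lifting-property computation.

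Next I would establish the key naturality isomorphism: for any map $f: S \to T$ of dendroidal sets there is a natural isomorphism $St_T \circ f_! \cong \mathrm{hc}\tau_d(f)_! \circ St_S$. On a representable $p: \Omega[R] \to S$ this is immediate from the defining formula $St_S(p) = \mathrm{hc}\tau_d(p)_!\, St_{\Omega[R]}(\mathrm{id})$ together with the functoriality $\mathrm{hc}\tau_d(fp)_! = \mathrm{hc}\tau_d(f)_!\,\mathrm{hc}\tau_d(p)_!$, and it extends to all of $\mathbf{dSets}/S$ because both composites preserve colimits. Now if $f$ is an operadic equivalence between normal dendroidal sets, then $(f_!, f^*)$ is a Quillen equivalence by Proposition \ref{prop:naturalitycovmodstruct}, while $\mathrm{hc}\tau_d(f)$ is a weak equivalence between cofibrant simplicial operads, so $(\mathrm{hc}\tau_d(f)_!, \mathrm{hc}\tau_d(f)^*)$ is a Quillen equivalence on algebra categories by the results of Berger and Moerdijk \cite{bergermoerdijk1}\cite{bergermoerdijk2}. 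The naturality square then shows that $St_S$ is a Quillen equivalence if and only if $St_T$ is; the property to be proven is therefore invariant under operadic equivalence of the base. Finally I would reduce to a representable base by a skeletal induction: writing $S$ as the transfinite composite of its skeleta $\mathrm{sk}_n S$, each stage is obtained from the previous by pushing out a coproduct of boundary inclusions $\partial\Omega[T] \hookrightarrow \Omega[T]$, and since $St_S$ preserves these pushouts and both model structures are left proper, a gluing argument reduces the theorem for $S$ to the theorem for the boundaries (treated by induction on dimension) and for the representables $\Omega[T]$. The base case $S = \Omega[T]$ is then checked directly by induction on the number of vertices of $T$, using the explicit cube description of $St_{\Omega[T]}(\mathrm{id}_{\Omega[T]})$ as the pointwise-contractible algebra $c \mapsto \Delta[T/c]$ and the fact that $\mathrm{id}_{\Omega[T]}$ is a left fibration.

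I expect the main obstacle to be the reduction step: showing that straightening is compatible with the homotopy pushouts appearing in the skeletal filtration. The difficulty is that $St_S$ only preserves strict colimits, so recognizing these as homotopy colimits requires controlling the cofibrancy of the relevant diagrams and invoking left properness with care; in Lurie's simplicial argument this is exactly where the most delicate combinatorics appear, and the dendroidal version is complicated further by the richer cellular structure of trees and by the geometry of the $W$-construction entering through $\mathrm{hc}\tau_d$. A secondary technical point is the explicit base-case analysis over $\Omega[T]$, where the inductive comparison of the cubes $\Delta[T/c]$ with the fibers of the straightened algebra must be carried out carefully.
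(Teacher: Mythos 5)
Your overall architecture (Quillen adjunction, base-change compatibility, reduction to representables, explicit computation over a tree) mirrors the skeleton of the paper's argument, but note that the paper does not prove this theorem directly: it first establishes the marked/coCartesian analogue (Theorem \ref{thm:Grothendieckconstruction}) via the model structure on $\mathbf{dSets^+}/S$ and then obtains the present statement in Section \ref{subsection:leftfibrations} by restricting along $(-)^\sharp$. A direct unmarked argument in the style of Lurie's Chapter 2 is a legitimate alternative route, but as written your proposal has two genuine gaps. The first is the base case over $\Omega[T]$: this is substantially more than a computation of $St_{\Omega[T]}(\mathrm{id}_{\Omega[T]})$. To verify the derived unit you must handle an \emph{arbitrary} fibrant object, i.e.\ an arbitrary left fibration $X \to \Omega[T]$, not just the identity. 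The paper's device for this is the theory of \emph{mapping trees} $M(A)$: every fibration over $\Omega[T]$ is equivalent to a mapping tree (Proposition \ref{prop:mappingtrees}), and the straightening of a mapping tree can be computed up to equivalence fibrewise (Lemma \ref{lemma:stmappingtree}). Without such an approximation result your induction on the vertices of $T$ has nothing to induct on. Relatedly, you never justify restricting attention to the derived unit: a Quillen equivalence also requires control of the derived counit, which the paper obtains by first proving that the derived unstraightening functor detects weak equivalences between fibrant algebras (via the case $S=\eta$ and restriction along colours). That reduction is absent from your outline.

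The second gap is the gluing step, which does not work as you describe. In the skeletal induction the \emph{base} $S$ changes, so you are not gluing objects in a fixed model category where left properness applies; you must show that the property ``$(St_S, Un_S)$ is a Quillen equivalence'' is closed under pushouts of bases along normal monomorphisms and under sequential colimits. The paper does this by passing to Dwyer--Kan localizations: both the fibrant-cofibrant slice categories and the localized categories of algebras convert a pushout of bases into a homotopy pullback of simplicial categories (Lemmas \ref{lemma:combinlocalization2} and \ref{lemma:homlimsimplcat}, together with the Kan-fibration statement \ref{lemma:restrKanfibration} for restriction of mapping spaces). This is the genuinely delicate part of the proof, and it is not a consequence of ``$St_S$ preserves pushouts'' plus left properness. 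Finally, be careful with your appeal to Proposition \ref{prop:naturalitycovmodstruct}: in this paper that statement is itself deduced from the straightening theorem (via Theorem \ref{thm:naturalitycoCmodstruct}), so invoking it as an input here is circular unless you supply an independent proof of base-change invariance.
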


Recall that if $\mathbf{C}$ is a (simplicial) model category, we use the notation $\mathbf{C}^\circ$ to denote its full (simplicial) subcategory on its fibrant-cofibrant objects. 

\begin{definition}
For any dendroidal set $S$ we define $\mathbf{LFib}(S)$, the $\infty$-category of left fibrations over $S$, by
\begin{equation*}
\mathbf{LFib}(S) := \mathrm{hc}N\bigl((\mathbf{dSets}/S)^\circ\bigr)
\end{equation*}
where $\mathbf{dSets}/S$ is equipped with the covariant model structure.
\end{definition}

Theorem \ref{thm:straighteningleftfib} and Proposition \ref{prop:naturalitycovmodstruct} allow us to interpret $\mathbf{LFib}(S)$ as the $\infty$-category of $S$-algebras in spaces.

\newpage

% section 3: coCartesian fibrations
\section{coCartesian fibrations}
\subsection{coCartesian corollas}
\label{subsection:coCartcorollas}

Let $p: X \longrightarrow S$ be a morphism of dendroidal sets. Suppose $T$ is a tree with at least two vertices, having a leaf corolla with $n$ inputs, whose vertex we denote by $v$:
\[
\xymatrix@R=10pt@C=12pt{
&&&&&& \\
&*=0{\bullet}\ar@{-}[ul]\ar@{-}[ur]^(-.2)v\ar@{}[u]|{\cdots}&&&&&\\
&&\ar@{--}[ul]&\ar@{}[u]|{\cdots\cdots}&\ar@{--}[ur]&&\\
&&&*=0{\bullet}\ar@{-}\ar@{-}[ul]\ar@{-}[ur]&&&\\
&&&\ar@{-}[u]&&&\\
&&&&&&
}
\]
Recall that the horn $\Lambda^v[T]$ is the union of all the faces of $\Omega[T]$, except for the face obtained by `chopping off' the vertex $v$ and its input leaves. Now suppose $\alpha \in X_{C_n}$ is an $n$-corolla of $X$. Such an $\alpha$ is called \emph{$p$-coCartesian} if any diagram of the form
\[
\xymatrix{
C_n \ar[dr]^{\alpha}\ar[d]_v & \\
\Lambda^v[T] \ar[d]\ar[r] & X \ar[d]^p \\
\Omega[T] \ar[r]\ar@{-->}[ur] & S
}
\]
has a dotted lift as indicated, for any tree $T$ matching the description given above. By slight abuse of notation we have denoted the inclusion of the leaf corolla by $v$. \par 
We note the following basic properties of coCartesian corollas:
\begin{proposition}
\begin{itemize}
\item[(1)] If a corolla $\xi \in X_{C_n}$ is $p$-coCartesian, then so is $\sigma^*\xi$ for any $\sigma \in \Sigma_n$
\item[(2)] If $X$ and $S$ are $\infty$-operads, $p$ is an inner fibration and $\xi$ is a $p$-coCartesian 1-corolla such that $p(\xi)$ is an equivalence in $S$, then $\xi$ is an equivalence in $X$ 
\item[(3)] If $p: X \longrightarrow Y$ and $q: Y \longrightarrow Z$ are maps of dendroidal sets and $\xi \in X_{C_n}$ is such that $\xi$ is $p$-coCartesian and $p(\xi)$ is $q$-coCartesian, then $\xi$ is $p \circ q$-coCartesian
\end{itemize}
\end{proposition}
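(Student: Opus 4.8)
I would prove the three parts independently, reserving the only genuinely homotopical input for part (2).

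For part (1) the argument is combinatorial. A permutation $\sigma \in \Sigma_n$ determines an automorphism $\tilde\sigma$ of any tree $T$ as in the definition, namely the one relabelling the $n$ input leaves of the leaf vertex $v$ according to $\sigma$ and fixing all other edges and vertices. This $\tilde\sigma$ fixes the face of $\Omega[T]$ chopping off $v$ and permutes the remaining faces among themselves, so it restricts to an automorphism of the horn $\Lambda^v[T]$; moreover $\tilde\sigma \circ v = v \circ \sigma$ as maps $C_n \to \Lambda^v[T]$. Hence, given a lifting problem for $\sigma^*\xi$ with maps $a \colon \Lambda^v[T] \to X$ and $b \colon \Omega[T] \to S$, the maps $a \circ \tilde\sigma^{-1}$ and $b \circ \tilde\sigma^{-1}$ constitute a lifting problem for $\xi$ (one checks that the leaf corolla now restricts to $\xi$ using $a \circ v = \xi \circ \sigma$). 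A solution exists because $\xi$ is $p$-coCartesian, and postcomposing it with $\tilde\sigma$ yields the required lift for $\sigma^*\xi$.

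Part (3) is the standard two-step composition argument, which applies verbatim because the definition here asks only for existence of lifts. Given a lifting problem for $\xi$ against the composite $q \circ p \colon X \to Z$, with top map $f \colon \Lambda^v[T] \to X$ and bottom map $g \colon \Omega[T] \to Z$, I would first use that $p(\xi)$ is $q$-coCartesian on the problem with top map $p \circ f$ and bottom map $g$, obtaining $h \colon \Omega[T] \to Y$ with $h|_{\Lambda^v[T]} = p \circ f$ and $q \circ h = g$. I would then use that $\xi$ is $p$-coCartesian on the problem with top map $f$ and bottom map $h$, obtaining $k \colon \Omega[T] \to X$ with $k|_{\Lambda^v[T]} = f$ and $p \circ k = h$. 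This $k$ solves the original problem, since $q \circ p \circ k = q \circ h = g$.

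For part (2) the plan is to reduce to Lurie's theory of coCartesian edges. Since $i^*(X)_1 = X_{C_1}$, the $1$-corolla $\xi$ is literally an edge of the $\infty$-category $i^*X$, and restricting the defining lifting property of $\xi$ to linear trees $T = L_m = i([m])$ with $m \geq 2$ recovers exactly the edge-lifting property defining coCartesian edges: by adjunction a lifting problem $\Lambda^m_0 \to i^*X$, $\Delta^m \to i^*S$ transposes to a dendroidal one against $i_!$ of the horn inclusion, which is a horn inclusion $\Lambda^v[L_m] \subseteq \Omega[L_m]$ of the form occurring in the definition. As $p$ is an inner fibration and $X, S$ are $\infty$-operads, the map $i^*p \colon i^*X \to i^*S$ is an inner fibration of $\infty$-categories, so $\xi$ is a coCartesian edge in the sense of \cite{htt}. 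Since $p(\xi)$ is an equivalence in $S$, its image is an equivalence in $i^*S$, and Lurie's result that a coCartesian edge lying over an equivalence is itself an equivalence then shows that $\xi$ is an equivalence in $i^*X$, which is the definition of $\xi$ being an equivalence in $X$. The main obstacle is precisely the orientation bookkeeping here: one must identify $i_!$ of the simplicial outer horn with the dendroidal horn $\Lambda^v[L_m]$ of the definition and match up the indexing, and the convention relating $\mathbf{\Delta}$ to linear trees may interchange $\Lambda^m_0$ with $\Lambda^m_m$ --- compare the remark on opposites in Section 2.2 --- so that $\xi$ is \emph{a priori} coCartesian for $i^*p$ or only for its opposite. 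This is harmless for the conclusion, however, since being an equivalence is insensitive to passing to opposites.
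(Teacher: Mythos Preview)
Your proposal is correct and follows precisely the approach the paper takes: parts (1) and (3) are the ``straightforward'' arguments the paper alludes to, and for part (2) you reduce to the underlying $\infty$-categories and invoke Lurie's Proposition 2.4.1.5, exactly as the paper does. Your orientation worry is in fact resolved in this paper's conventions---Remark \ref{rmk:simplicialleftfib} shows that $i^*$ of a dendroidal left fibration is a simplicial left fibration, so the leaf horn of $L_m$ corresponds to $\Lambda^m_0$ and no passage to opposites is needed---but as you observe, the conclusion would survive either convention.
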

\begin{proof}
Proofs of (1) and (3) are straightforward. The second property only concerns the underlying $\infty$-categories of $X$ and $S$. This property is already known and can for example be found as Proposition 2.4.1.5 of \cite{htt}. $\Box$
\end{proof}

As is the case for ordinary operads, coCartesian corollas are closed under composition, provided the base $S$ is an $\infty$-operad and $p$ is an inner fibration:

\begin{proposition}
\label{prop:composition}
Let $p: X \longrightarrow S$ be an inner fibration of dendroidal sets and assume $S$ is an $\infty$-operad. Let $V$ be a tree with two vertices; denote its root corolla by $w$, the other one by $v$. We denote the inner edge of $V$ by $e$ and its root edge by $r$. Now assume we are given a map $\Omega[V] \longrightarrow X$ which sends the two corollas with vertices $v$ and $w$ to $p$-coCartesian corollas of $X$. Then the image of $\partial_e \Omega[V]$ is also a coCartesian corolla of $X$.
\end{proposition}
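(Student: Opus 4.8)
The plan is to test the corolla $\xi := \partial_e(\Omega[V] \to X)$ directly against the defining lifting property. Write $v$ for the leaf corolla of $V$ (with $n$ inputs) and $w$ for the root corolla (with $m$ inputs, one of which is $e$), so that $\xi$ has $n+m-1$ inputs. Suppose we are given a tree $T$ with at least two vertices having a leaf corolla $u$ with $n+m-1$ inputs, together with maps $\phi\colon \Lambda^u[T] \to X$ and $\psi\colon \Omega[T] \to S$ with $p\phi = \psi|_{\Lambda^u[T]}$ and with the corolla $u$ sent by $\phi$ to $\xi$; we must produce a lift $\Omega[T] \to X$. The key move is to re-expand $u$ into $V$: let $\widehat{T}$ be the tree obtained from $T$ by replacing the corolla at $u$ by the two-vertex tree $V$. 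Then $e$ is an inner edge of $\widehat T$ whose contraction recovers $T$ (so $\partial_e\widehat T \cong T$), the corolla $v$ is a leaf corolla of $\widehat T$, and $\partial_v\widehat T =: T_w$ is the tree obtained from $T$ by replacing $u$ with the single corolla $w$, which is now a leaf corolla of $T_w$ with $e$ a leaf edge.

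First I would build the base map on $\widehat T$. The restriction of $\psi$ to the corolla $u$ equals $p(\xi) = \partial_e\zeta$, where $\zeta\colon\Omega[V]\to S$ is the composite of the given map $\Omega[V]\to X$ with $p$; hence $\psi$ and $\zeta$ agree on the corolla $\xi$ and glue to a single map on $\Omega[T]\cup\Omega[V]\subseteq\Omega[\widehat T]$. Since this inclusion is inner anodyne and $S$ is an $\infty$-operad, the glued map extends to $\overline{\psi}\colon\Omega[\widehat T]\to S$ with $\overline\psi|_{T}=\psi$.

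Next I would produce a map $\Omega[\widehat T]\to X$ covering $\overline\psi$ by filling the inner horn $\Lambda^e[\widehat T]\hookrightarrow\Omega[\widehat T]$ relative to $p$, which is legitimate since $e$ is inner and $p$ is an inner fibration. The work is in assembling the map $g\colon\Lambda^e[\widehat T]\to X$, i.e.\ compatible maps on every face of $\widehat T$ except $\partial_e\widehat T=T$. On the subtree $\Omega[V]$ the map is the given one, and on the faces $\partial_\epsilon\widehat T$ and $\partial_\ell\widehat T$ (for inner edges $\epsilon$ and other leaves $\ell$ of $T$) it is dictated by $\phi$, since their $e$-contractions are exactly the faces $\partial_\epsilon T,\partial_\ell T$ generating $\Lambda^u[T]$. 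The only remaining face is $T_w=\partial_v\widehat T$, and here I would invoke the coCartesian-ness of the corolla $w$ (its image $\gamma$): with test tree $T_w$, the horn $\Lambda^w[T_w]\to X$ assembled from $\phi$ and $\gamma$, together with the base $\overline\psi|_{T_w}$, admits a lift $\Omega[T_w]\to X$ supplying the missing face. Filling $\Lambda^e[\widehat T]$ then yields $G\colon\Omega[\widehat T]\to X$ with $pG=\overline\psi$, and $\tilde\phi:=G|_{\partial_e\widehat T}\colon\Omega[T]\to X$ is the desired lift: it covers $\overline\psi|_T=\psi$, and because every generating face of $\Lambda^u[T]$ is the $e$-contraction of a face of $\widehat T$ on which $G$ was prescribed by $\phi$, we get $\tilde\phi|_{\Lambda^u[T]}=\phi$.

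The hard part will be the combinatorial bookkeeping of the third paragraph: verifying that the pieces $\Omega[V]\to X$, $\phi$, and the auxiliary lift over $T_w$ are pairwise compatible on their overlaps and that together they cover all of $\Lambda^e[\widehat T]$, and likewise that $\Lambda^w[T_w]$ is correctly and completely filled by $\phi$ and $\gamma$. One also has to confirm the two structural inputs used above, namely that $\Omega[T]\cup\Omega[V]\hookrightarrow\Omega[\widehat T]$ is inner anodyne (so that the base extends over the $\infty$-operad $S$) and that $\Lambda^e[\widehat T]\hookrightarrow\Omega[\widehat T]$ is genuinely an inner horn inclusion. Both are routine once the face poset of $\widehat T$ is written out, but they are precisely what makes the argument non-circular: the one face we never need to know in advance is exactly $\partial_e\widehat T=T$, which is the face we are constructing.
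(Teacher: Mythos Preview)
Your overall architecture is exactly that of the paper: replace the leaf corolla $u$ by $V$ to form $\widehat T$, extend the base map over $S$ using an inner anodyne inclusion, assemble a map on the inner horn $\Lambda^e[\widehat T]$, fill it against the inner fibration $p$, and restrict to $\partial_e\widehat T = T$. So strategically you are on the right track.

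There is, however, a genuine gap. You only invoke the hypothesis that the corolla at $w$ is $p$-coCartesian; the coCartesian-ness of $v$ is never used. This is not an oversight you can repair by ``bookkeeping''. Write $r$ for the output edge of $w$ (equivalently, the output edge of $u$ in $T$); this is an inner edge of $T$ and of $\widehat T$. The face $\partial_r\widehat T$ is one of the faces of $\Lambda^e[\widehat T]$, and it is \emph{not} dictated by $\phi$: what $\phi$ gives you is $\partial_r T = \partial_e(\partial_r\widehat T)$, only the $e$-contraction of that face. Nor is $\partial_r\widehat T$ of the shape ``face of $T$ with $V$ grafted in'', since contracting $r$ merges $w$ with the vertex below, destroying the clean $u$-slot. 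The paper fills this face by forming the \emph{leaf} horn $\Lambda^v[\partial_r\widehat T]$ and lifting using that the image of $v$ is $p$-coCartesian; only after that does it fill $\partial_v\widehat T = T_w$ via $\Lambda^w[T_w]$, where the face $\partial_r T_w = \partial_v\partial_r\widehat T$ is now available. In your ordering you try to fill $T_w$ first, but $\Lambda^w[T_w]$ contains $\partial_r T_w$, which you do not yet possess from $\phi$ and $\gamma$ alone.

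A second, smaller point: for the faces $\partial_\epsilon\widehat T$ and $\partial_\ell\widehat T$ (with $\epsilon\neq r$), saying they are ``dictated by $\phi$'' skips a step. The map $\phi$ only lives on $\partial_\epsilon T$, not on $\partial_\epsilon\widehat T$; one must graft $\Omega[V]\to X$ onto $\phi|_{\partial_\epsilon T}$ and then extend over the inner anodyne inclusion $\Omega[\partial_\epsilon T]\cup\Omega[V]\hookrightarrow\Omega[\partial_\epsilon\widehat T]$, lifting against $p$. This is what the paper does (its Lemma~\ref{lemma:innerextension}), and it is also where compatibility of the various extensions on overlaps has to be checked.
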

\begin{proof}
Suppose $T$ is a tree with at least two vertices, having a leaf corolla with vertex $u$, and that we are given a diagram
\[
\xymatrix{
C_n \ar[dr]^{\partial_e \Omega[V]}\ar[d]_u & \\
\Lambda^u[T] \ar[d]\ar[r] & X \ar[d]^p \\
\Omega[T] \ar[r] & S
}
\]
We wish to produce a lift $\Omega[T] \longrightarrow X$ in this diagram. First, construct a new tree $\tilde T$ from $T$  by replacing the corolla with vertex $u$ by the tree $V$. Use Lemma \ref{lemma:innerextension} and the fact that $S$ is inner Kan to guarantee the existence of a map $\Omega[\tilde T] \longrightarrow S$ compatible with the given maps from $\Omega[T]$ and $\Omega[V]$ into $S$. \par
Now note that any face $f$ of $\Omega[T]$, except for the face $\partial_u \Omega[T]$, will induce a corresponding face of $\Omega[\tilde T]$, which we denote by $\tilde f$. Denote the set of these faces by $F(\tilde T)$. Observe that
\begin{equation*}
\Lambda^e[\tilde T] = \partial_v \Omega[\tilde T] \cup \partial_r \Omega[\tilde T] \cup F(\tilde T)
\end{equation*}
Our goal is to construct compatible maps from the three subsets indicated on the right-hand side into $X$. First assume $f$ is a face of $\Omega[T]$ different from $\partial_u \Omega[T]$. By applying Lemma \ref{lemma:innerextension} to the trees $f$ and $V$, we see that we get a map $F(\tilde T) \longrightarrow X$ compatible with the map $\Omega[\tilde T] \longrightarrow S$. \par
Now consider the horn $\Lambda^v[\partial_r\tilde T]$. It is given by the union of the face $\partial_e \partial_r \Omega[\tilde T]$ with the set of faces $\partial_r(F(\tilde T))$. By what we have just proved, we already have suitable maps from $\partial_r(F(\tilde T))$ into $X$. We also have $\partial_e \partial_r \Omega[\tilde T] = \partial_r \partial_e \Omega[\tilde T] = \partial_r \Omega[T]$, from which we already have a map into $X$ by assumption. Now, by the fact that the corolla with vertex $v$ is $p$-coCartesian, we get the following lift:
\[
\xymatrix{
\Lambda^v[\partial_r\tilde T] \ar[d]\ar[r] & X \ar[d]^p \\
\partial_r\Omega[\tilde T] \ar[r]\ar@{-->}[ur] & S
}
\]
Finally, consider the horn $\Lambda^w[\partial_v\tilde T]$. Observe that $\partial_r \partial_v \Omega[\tilde T] = \partial_v \partial_r \Omega[\tilde T]$, so on this face our map to $X$ is defined by what we just proved. The faces $\partial_v(F(\tilde T))$ make up the rest of this horn, so we see that our map is defined on all of $\Lambda^w[\partial_v\tilde T]$. Now using that the corolla with vertex $w$ is coCartesian, we find the lift
\[
\xymatrix{
\Lambda^w[\partial_v\tilde T] \ar[d]\ar[r] & X \ar[d]^p \\
\partial_v \Omega[\tilde T] \ar[r]\ar@{-->}[ur] & S
}
\]
We have now built a diagram
\[
\xymatrix{
\Lambda^e[\tilde T] \ar[d]\ar[r] & X \ar[d]^p \\
\Omega[\tilde T] \ar[r]\ar@{-->}[ur] & S
}
\]
in which the dotted lift exists by the fact that $p$ is an inner fibration. The dotted map gives us a map from $\partial_e \Omega[\tilde T] = \Omega[T]$ to $X$, which is a lift in our original diagram. $\Box$
\end{proof}

This proposition actually admits a converse as follows:

\begin{proposition}
\label{prop:composition2}
Let $p$ and $V$ be as before and again assume $S$ is an $\infty$-operad. Suppose we are given a map $\phi: \Omega[V] \longrightarrow X$ which sends the corolla with vertex $v$ and the corolla $\partial_e \Omega[V]$ to $p$-coCartesian corollas of $X$. Then the image of the corolla with vertex $w$ will again be a $p$-coCartesian corolla of $X$.
\end{proposition}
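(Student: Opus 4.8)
The plan is to mirror the horn-filling strategy used in the proof of Proposition \ref{prop:composition}, but with the roles of the relevant faces interchanged. To test that the corolla with vertex $w$ is $p$-coCartesian, I would take an arbitrary tree $T$ with at least two vertices having a leaf corolla $u$ whose number of inputs equals that of $w$, together with a lifting problem given by a map $\Lambda^u[T] \longrightarrow X$ sending $u$ to the corolla $\phi \circ \partial_v$ and a compatible map $\Omega[T] \longrightarrow S$; the goal is to produce a lift $\Omega[T] \longrightarrow X$. I would form the enlarged tree $\tilde T$ by grafting the corolla $v$ onto the leaf of $u$ corresponding to the edge $e$, so that $u$ becomes the $w$-shaped corolla sitting below $v$ and $e$ becomes an inner edge of $\tilde T$. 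The key combinatorial observations are that $\partial_v \Omega[\tilde T] = \Omega[T]$ (chopping off the grafted $v$ restores $T$) and that $\partial_e \Omega[\tilde T]$ is the tree $T'$ obtained from $T$ by replacing $u$ with the composite corolla $\partial_e \Omega[V]$.

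First I would extend the base map to $\Omega[\tilde T] \longrightarrow S$ using Lemma \ref{lemma:innerextension} and the fact that $S$ is an $\infty$-operad, exactly as in Proposition \ref{prop:composition}. Next, as there, every face of $T$ other than $\partial_u \Omega[T]$ induces a face of $\tilde T$ by grafting $v$; writing $F(\tilde T)$ for this collection, I would build a compatible map $F(\tilde T) \longrightarrow X$ by applying Lemma \ref{lemma:innerextension} to glue the corolla $\phi(v)$ onto the maps supplied by the given horn $\Lambda^u[T] \longrightarrow X$. Contracting the edge $e$ in these grafted faces then determines a map on all faces of $\partial_e \Omega[\tilde T] = T'$ except the one chopping off the composite corolla, and a short face-identity computation ($\partial_u \partial_v \Omega[\tilde T] = \partial_u \Omega[T]$) shows that this missing face is precisely $\partial_u \Omega[T]$, the face we are allowed to leave free. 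Since the composite corolla $\partial_e \Omega[V]$ is $p$-coCartesian by hypothesis and sits as a leaf corolla of $T'$, I would fill the resulting horn to obtain $\partial_e \Omega[\tilde T] = T' \longrightarrow X$.

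At this point the maps on $F(\tilde T)$ and on $\partial_e \Omega[\tilde T]$ together cover exactly $\Lambda^v[\tilde T]$, the horn omitting the face $\partial_v \Omega[\tilde T]$. Because $v$ is a leaf corolla of $\tilde T$ whose image $\phi(v)$ is $p$-coCartesian, this horn admits a lift $\Omega[\tilde T] \longrightarrow X$. Restricting this lift along $\partial_v \Omega[\tilde T] = \Omega[T]$ yields the desired map, and the face-identity analysis above guarantees that it restricts to the prescribed map on $\Lambda^u[T]$, so that it solves the original lifting problem.

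The main obstacle is bookkeeping the compatibility of the various pieces on their shared codimension-two faces, in particular ensuring that the composite face $T'$ built from the coCartesianness of $\partial_e \Omega[V]$ agrees with the grafted faces $F(\tilde T)$ after contracting $e$. This is exactly where the coherence of $\phi$ as a genuine map out of $\Omega[V]$ is used, so that the composite corolla really is the $e$-contraction of the $(v,w)$-configuration; it is the analogue of the compatibility verifications left to the reader in Proposition \ref{prop:composition}. One should also treat the small cases (for instance when $T$ has exactly two vertices) separately, but these present no real difficulty.
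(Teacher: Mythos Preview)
Your proposal is correct and follows essentially the same route as the paper's proof: form the enlarged tree $\tilde T$, extend the base map to $S$ via Lemma~\ref{lemma:innerextension}, build maps on the induced faces $F(\tilde T)$, fill $\partial_e\Omega[\tilde T]$ using the coCartesianness of the composite corolla, and finally fill $\Lambda^v[\tilde T]$ using the coCartesianness of $\phi(v)$.

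There is one organizational difference worth pointing out. The paper lists the decomposition $\Lambda^v[\tilde T] = \partial_e\Omega[\tilde T] \cup \partial_r\Omega[\tilde T] \cup F(\tilde T)$ and inserts an extra intermediate step: it fills $\partial_r\Omega[\tilde T]$ (where $r$ is the root edge of $V$, an inner edge of $\tilde T$) by an inner-horn argument $\Lambda^e[\partial_r\tilde T]$ before turning to $\partial_e\Omega[\tilde T]$. The reason is that the paper applies Lemma~\ref{lemma:innerextension} by grafting all of $V$ onto $\partial_w f$, and this requires $w$ to still be a leaf vertex of the face $f$; that fails precisely when $f = \partial_r T$. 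Your formulation instead grafts only the corolla $v$ onto the leaf $e$ of each face $f$, which makes perfect sense for $f = \partial_r T$ as well (since $e$ is still a leaf of $\partial_r T$). Consequently your $F(\tilde T)$ already contains $\partial_r\Omega[\tilde T]$ and your claim that $F(\tilde T) \cup \partial_e\Omega[\tilde T] = \Lambda^v[\tilde T]$ is correct without any separate step. This is a small but genuine streamlining of the paper's argument; both versions share the same residual bookkeeping about compatibility of the inner-anodyne extensions on overlapping faces, which you rightly identify as the only real obstacle.
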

\begin{proof}
The strategy is similar to the proof of the previous proposition. Again, suppose $T$ is a tree with at least two vertices, having a leaf corolla with a vertex we will also call $w$, and that we are given a diagram
\[
\xymatrix{
C_n \ar[dr]^{\phi(w)}\ar[d]_w & \\
\Lambda^w[T] \ar[d]\ar[r] & X \ar[d]^p \\
\Omega[T] \ar[r] & S
}
\]
We wish to produce a lift $\Omega[T] \longrightarrow X$ in this diagram. Construct a new tree $\tilde T$ from $T$  by replacing the corolla with vertex $w$ by the tree $V$. Use Lemma \ref{lemma:innerextension} and the fact that $S$ is inner Kan to guarantee the existence of a map $\Omega[\tilde T] \longrightarrow S$ compatible with the given maps from $\Omega[T]$ and $\Omega[V]$ into $S$. \par
Again, any face $f$ of $\Omega[T]$ except for the face $\partial_w \Omega[T]$ will induce a corresponding face of $\Omega[\tilde T]$, denoted by $\tilde f$. We denote the set of these faces by $F(\tilde T)$. We will now consider the horn $\Lambda^v[\tilde T]$. Observe that
\begin{equation*}
\Lambda^v[\tilde T] = \partial_e \Omega[\tilde T] \cup \partial_r \Omega[\tilde T] \cup F(\tilde T)
\end{equation*}
Our goal is to construct compatible maps from the three subsets indicated on the right-hand side into $X$. First assume $f$ is a face of $\Omega[T]$ different from $\partial_v \Omega[T]$. By applying Lemma \ref{lemma:innerextension} to the trees $f$ and $V$, we see that we get a map $\tilde f \longrightarrow X$ compatible with the map $\Omega[\tilde T] \longrightarrow S$. Hence we can define $\phi$ on all of $F(\tilde T)$. \par
Now consider the horn $\Lambda^e[\partial_r \tilde T]$. It consists of the face $\partial_v \partial_r \Omega[\tilde T]$ together with the faces $\partial_r F(\tilde T)$. By what we just showed, we already have suitable maps from the latter faces into $X$. Also, we have $\partial_v \partial_r \Omega[\tilde T] = \partial_r \partial_v \Omega[\tilde T] = \partial_r \Omega[T]$, from which we already have a map to $X$ by assumption. Since $e$ is an inner edge of $\partial_r \tilde T$ and $p$ is an inner fibration, we get a lift as indicated in the following diagram:
\[
\xymatrix{
\Lambda^e[\partial_r \tilde T] \ar[d]\ar[r] & X \ar[d]^p \\
\partial_r \Omega[\tilde T] \ar[r]\ar@{-->}[ur] & S
}
\]
We denote the corolla obtained from the corollas $v$ and $w$ by contracting along the inner edge $e$ by $w \circ v$. We continue by considering the horn $\Lambda^{w \circ v}[\partial_e \tilde T]$. It consists of the faces $\partial_e F(\tilde T)$, on which our map to $X$ has already been defined, and the face $\partial_r \partial_e \Omega[\tilde T] = \partial_e \partial_r \Omega[\tilde T]$. Since we just extended our map to $\partial_r \Omega[\tilde T]$, it is in particular defined on this latter face. Hence we have a map $\Lambda^{w \circ v}[\partial_e \tilde T] \longrightarrow X$. By assumption, the image of $w \circ v$ is $p$-coCartesian, so we can find a lift in the following diagram:
\[
\xymatrix{
\Lambda^{w \circ v}[\partial_e \tilde T] \ar[d]\ar[r] & X \ar[d]^p \\
\partial_e \Omega[\tilde T] \ar[r]\ar@{-->}[ur] & S
}
\]
Putting everything together, we have built a diagram
\[
\xymatrix{
\Lambda^v[\tilde T] \ar[d]\ar[r] & X \ar[d]^p \\
\Omega[\tilde T] \ar[r]\ar@{-->}[ur] & S
}
\]
The lift exists since the image of $v$ is a $p$-coCartesian corolla by assumption. The restriction of this lift to $\partial_v \Omega[\tilde T] = \Omega[T]$ will serve as a lift in the original diagram, completing the proof. $\Box$
\end{proof}

As is to be expected, coCartesian lifts of corollas with specified inputs are unique up to a contractible space of choices. Suppose $X$ and $S$ are dendroidal sets and $p: X \longrightarrow S$ is a map between them. Let $\sigma$ be a corolla of $S$ with leaves $s_1, \ldots, s_k$ and root $s$ and let $x_1, \ldots, x_k \in X$ be such that $p(x_i) = s_i$ for $1 \leq i \leq k$. Then we define a simplicial set $\mathbf{coCart}_p(\sigma, \{x_1, \ldots, x_k\})$ as follows. We have inclusions
\begin{equation*}
\iota_n: [n] \longrightarrow C_k \star [n-1]
\end{equation*}
which send $\{0\}$ to the root of $C_k$ and $\{i\}$ to $\{i-1\}$ for $i \geq 1$. The join operation $\star$ above is given by putting a vertex at the root of $C_k$ and attaching the linear tree $[n-1]$ to that. We adopt the convention $[-1] = \emptyset$ and let $C_k \star \emptyset$ be just $C_n$. Using the maps $\iota_\bullet$ we obtain a simplicial set
\begin{equation*}
Q: [n] \mapsto \mathbf{dSets}(\Omega[C_k \star [n-1]], X)
\end{equation*}

Now let $\mathbf{coCart}_p(\sigma, \{x_1, \ldots, x_k\})$ be the simplicial subset of $Q$ consisting of maps $f$ satisfying the following conditions:
\begin{itemize}
\item[(1)] The inputs of $f(C_k)$ are $x_1, \ldots, x_k$ and $p \circ f (C_k) = \sigma$
\item[(2)] The corolla $f(C_k)$ is $p$-coCartesian
\item[(3)] The simplex $f(\iota_n([n]))$ is contained in the fiber $X_s := \eta_s \times_{S} X$ over $s$
\end{itemize}
It is clear that the vertices of $\mathbf{coCart}_p(\sigma, \{x_1, \ldots, x_k\})$ are exactly the coCartesian lifts of $\sigma$ with the specified leaves. Note that the embedding $\Delta^{\bullet} \hookrightarrow \Omega[C_k] \star \Delta^{\bullet-1}$ induces, by pullback, a map of simplicial sets
\begin{equation*}
\mathbf{coCart}_p(\sigma, \{x_1, \ldots, x_k\}) \longrightarrow \mathbf{dSets}(\Delta^\bullet, X) = i^*(X)
\end{equation*}
which will actually factor through the fiber $i^*(X_s)$. Note that, since $X_s$ allows a map to $\eta$, it can be identified with a simplicial set. We will therefore forget about the distinction between $i^*(X_s)$ and $X_s$ from now on.

\begin{proposition}
\label{prop:uniquecoCartlift}
If $p: X \longrightarrow S$  is a map of dendroidal sets, $\sigma$ is a corolla of $S$ as above and $x_1, \ldots, x_k \in X$ are such that $p(x_i) = s_i$ for $1 \leq i \leq k$, then $\mathbf{coCart}_p(\sigma, \{x_1, \ldots, x_k\})$ is a contractible Kan complex.
\end{proposition}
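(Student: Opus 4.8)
The plan is to show that the projection $\mathbf{coCart}_p(\sigma, \{x_1,\ldots,x_k\}) \longrightarrow \Delta^0$ is a trivial Kan fibration; since a trivial fibration is in particular a Kan fibration, and the total space of a trivial fibration over a point is a contractible Kan complex, this will give the whole statement at once. Equivalently, I must solve every lifting problem for the boundary inclusions $\partial\Delta^n \hookrightarrow \Delta^n$ with $n \geq 0$. The case $n=0$ only asks that the simplicial set be inhabited, i.e. that at least one $p$-coCartesian lift of $\sigma$ with leaves $x_1,\ldots,x_k$ exist; for an arbitrary $p$ this may fail, so this is the single point where the statement should be understood as \emph{empty or contractible}. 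The real content is therefore the case $n \geq 1$, and here everything will be driven by the defining lifting property of a $p$-coCartesian corolla.

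Write $\mathcal{C}_n := C_k \star [n-1]$, a tree with $n+1$ vertices: the leaf corolla $v := C_k$ on top of a linear spine $\iota_n([n])$ hanging from its root. My first task is combinatorial: to identify, through the cosimplicial structure of $n \mapsto \mathcal{C}_n$ (a décalage of the join), which faces of $\Omega[\mathcal{C}_n]$ realize the simplicial operators. I would check that the $i$-th coface is given by the inner face contracting the root edge $\iota_n(\{0\})$ of $C_k$ when $i=0$, by the inner face contracting the spine edge $\iota_n(\{i\})$ when $0<i<n$, and by the outer face chopping off the root vertex when $i=n$. The decisive observation is that none of these is the outer face $\partial_v$ that chops off the leaf corolla $v$ itself. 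A map $\partial\Delta^n \to \mathbf{coCart}_p(\sigma,\{x_i\})$ therefore assembles to exactly a map $\Lambda^v[\mathcal{C}_n] \to X$ out of the horn at the leaf corolla $v$, and the defining property of a $p$-coCartesian corolla, applied to the tree $T=\mathcal{C}_n$ (which has $n+1 \geq 2$ vertices) and its leaf corolla $v$, produces the required filler $\Omega[\mathcal{C}_n] \to X$. This identification $\partial\Delta^n \leftrightarrow \Lambda^v[\mathcal{C}_n]$ is the crux of the proposition.

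To invoke that lifting property I must also supply the accompanying map $\Omega[\mathcal{C}_n]\to S$, and here condition (3) does the work: the image in $S$ of the spine $\iota_n([n])$ is forced to be the degenerate simplex on the color $s$, so the canonical map sending $v$ to $\sigma$ and everything below it to degeneracies at $s$ is well defined, restricts correctly along $\Lambda^v[\mathcal{C}_n] \hookrightarrow \Omega[\mathcal{C}_n]$, and serves as the bottom edge of the square. It then remains to see that the filler is itself a simplex of $\mathbf{coCart}_p(\sigma,\{x_i\})$: condition (1) is inherited from the face chopping off the root vertex (which retains $v$), condition (2) holds because the corolla of the filler is literally the $p$-coCartesian corolla already supplied along the horn, and condition (3) holds since, by construction of the bottom map, the spine of the filler projects to a degeneracy at $s$ and hence lands in $X_s$. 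With the boundary lifting property established for all $n \geq 1$ (and nonemptiness for $n=0$), Kan-ness and contractibility follow formally from the trivial-fibration conclusion.

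The step I expect to be the genuine obstacle is the combinatorial bookkeeping of the second paragraph: pinning down the décalage cosimplicial structure on $n \mapsto C_k \star [n-1]$ precisely enough to be certain that (a) the special coface $d^0$ is the inner contraction of the edge $\iota_n(\{0\})$ rather than a face that removes the corolla, and (b) the union of all coface images is \emph{exactly} the leaf-corolla horn $\Lambda^v[\mathcal{C}_n]$ --- with no face missing and no extra face present. Getting the orientation of the spine and the role of the cone-point edge $\iota_n(\{0\})$ right is delicate, and it is precisely this accounting that converts the abstract coCartesian lifting property into the concrete fillers needed. Once this identification is secured, the remaining verifications are routine.
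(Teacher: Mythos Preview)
Your proposal is correct and follows essentially the same strategy as the paper's proof: both identify a map $\partial\Delta^n \to \mathbf{coCart}_p(\sigma,\{x_i\})$ with a leaf horn $\Lambda^v[C_k\star[n-1]] \to X$ and then fill it using the $p$-coCartesian property of the corolla at $v$. You are simply more explicit than the paper about the cosimplicial bookkeeping (your identification of the cofaces $d^0,\ldots,d^n$ with the inner contractions along $\iota_n(0),\ldots,\iota_n(n-1)$ and the root-chopping outer face is correct), and you rightly flag the $n=0$ case as requiring the existence of at least one coCartesian lift, a hypothesis the paper's statement leaves implicit.
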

\begin{proof}
Suppose we are given a map $\partial \Delta^n \longrightarrow \mathbf{coCart}_p(\sigma, \{x_1, \ldots, x_k\})$. Denoting the vertex of $C_k$ by $v$, one easily sees that this data is equivalent to a map
\begin{equation*}
\Lambda^v[C_k \star [n-1]] \longrightarrow X
\end{equation*}
By the fact that $C_k$ is mapped to a $p$-coCartesian corolla, we can extend our map to a map $\Omega[C_k] \star \Delta^{n-1} \longrightarrow X$ satisfying the conditions listed above. Hence, we have found an extension of our original map to a map $\Delta^n \longrightarrow \mathbf{coCart}_p(\sigma, \{x_1, \ldots, x_k\})$. $\Box$
\end{proof}

\begin{corollary}
\label{cor:uniquecoCartlift}
Let $p: X \longrightarrow S$ be an inner fibration, and let $\sigma$ be as in the proposition. Let $\alpha, \beta \in \mathbf{coCart}_p(\sigma, \{x_1, \ldots, x_k\})$. Set $T = C_k \star [0]$ and denote the root of $C_k$ by $r$. Then there exists a map $\Omega[T] \longrightarrow X$ mapping $C_k$ to $\alpha$, the inner face of $T$ to $\beta$ and $\{r\} \star [0]$ to an equivalence in the $\infty$-category $X_{s}$. In other words, $\beta$ equals a composition of $\alpha$ with an equivalence.
\end{corollary}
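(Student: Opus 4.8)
The plan is to read the statement off almost directly from Proposition \ref{prop:uniquecoCartlift}. Write $K := \mathbf{coCart}_p(\sigma, \{x_1, \ldots, x_k\})$, which by that proposition is a contractible Kan complex, and recall that its vertices are exactly the coCartesian lifts of $\sigma$ with the prescribed inputs; in particular $\alpha$ and $\beta$ are two vertices of $K$. Since $K$ is connected and every edge of a Kan complex is invertible (so that ``joined by an edge'' is a symmetric relation on vertices), there is a $1$-simplex $f \in K_1$ whose two endpoints are $\alpha$ and $\beta$, and using the symmetry I may arrange the orientation as I wish.

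Next I would unwind the data of such an $f$. By definition $K_1 \subseteq Q_1 = \mathbf{dSets}(\Omega[C_k \star [0]], X)$, so $f$ is precisely a map $\Omega[T] \longrightarrow X$ satisfying conditions (1)--(3). The cosimplicial object $[n] \mapsto C_k \star [n-1]$ has, in degree $1$, exactly two coface maps $\Omega[C_k] \to \Omega[C_k \star [0]]$: one picks out the top leaf corolla (with output the inner edge $r$), the other the contraction $\partial_r\Omega[T]$ (with output the root of $T$). Hence the two faces of $f$ are the restrictions $f(C_k)$ and $f(\partial_r\Omega[T])$, both of which are vertices of $K$, and I arrange the orientation so that $f(C_k) = \alpha$ and $f(\partial_r\Omega[T]) = \beta$. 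Condition (3) forces the image of $\iota_1([1])$, which is exactly the bottom corolla $\{r\} \star [0]$, to lie in the fibre $X_s$. Thus $f$ is already a map of the required shape, and it remains only to see that this bottom corolla is an equivalence.

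For the final point I would use the projection $K \longrightarrow X_s$ constructed just before Proposition \ref{prop:uniquecoCartlift}, which by construction sends $f$ to the $1$-simplex $f(\{r\} \star [0])$. Because $p$ is an inner fibration, $X_s = \eta_s \times_S X$ is the fibre of an inner fibration over $\eta_s$ and is therefore an $\infty$-category, so that the notion of equivalence in $X_s$ is meaningful. The edge $f$ is invertible in the Kan complex $K$; applying the homotopy-category functor to $K \longrightarrow X_s$ carries the isomorphism $[f]$ of the groupoid $\mathrm{h}K$ to an isomorphism of $\mathrm{h}X_s$, i.e. to an equivalence of $X_s$. Therefore $\{r\} \star [0]$ is sent to an equivalence, and the contraction $\beta = f(\partial_r\Omega[T])$ is exhibited as the composite of $\alpha = f(C_k)$ with this equivalence, as claimed.

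The one genuinely fiddly step is the bookkeeping of the cosimplicial structure in the second paragraph: one must check that the two faces of the edge $f$ really are the top corolla and the inner contraction (and not some outer face), and that the induced edge of $X_s$ runs between the outputs of $\alpha$ and $\beta$ in the correct direction. This is a direct unravelling of the definitions of $\iota_\bullet$ and of $C_k \star [\bullet - 1]$; once those identifications are pinned down the argument is formal, the real content being supplied by the contractibility in Proposition \ref{prop:uniquecoCartlift}.
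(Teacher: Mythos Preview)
Your proposal is correct and follows essentially the same route as the paper: both arguments pick a $1$-simplex in the contractible Kan complex $K$ joining $\alpha$ to $\beta$, unravel it as a map $\Omega[C_k \star [0]] \to X$, and then argue that the bottom edge lands in an equivalence of $X_s$ because it is the image of an invertible edge of a Kan complex. The paper phrases this last step as ``the image of $K$ lies in the maximal Kan complex of $X_s$,'' while you phrase it via the homotopy-category functor $\mathrm{h}K \to \mathrm{h}X_s$; these are the same observation, and your version is if anything slightly more careful about the bookkeeping.
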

\begin{proof}
The proposition tells us that $\mathbf{coCart}_p(\sigma, \{x_1, \ldots, x_k\})$ maps to a connected Kan complex in $X_s$. Hence this is a subcomplex of the maximal Kan complex of $X_s$, all of whose 1-simplices are equivalences. By connectedness there is a 1-simplex from $\alpha$ to $\beta$ in $\mathbf{coCart}_p(\sigma, \{x_1, \ldots, x_k\})$, which is necessarily an equivalence. $\Box$
\end{proof}

\subsection{coCartesian fibrations}
\begin{definition}
A morphism $p: X \longrightarrow S$ of dendroidal sets is a \emph{coCartesian fibration} if the following conditions are satisfied:
\begin{itemize}
\item[(1)] The map $p$ is an inner fibration of dendroidal sets
\item[(2)] For any corolla $\sigma$ of $S$ with input colours $s_1, \ldots, s_n$ and output $s$ and colours $x_1, \ldots, x_n$ of $X$ satisfying $p(x_i) = s_i$, there exists a $p$-coCartesian corolla $\xi$ of $X$ with input colours $x_1, \ldots, x_n$ such that $p(\xi) = \sigma$
\end{itemize}
\end{definition}

\begin{remark}
Note that a left fibration is precisely a coCartesian fibration $p$ in which every corolla is $p$-coCartesian.
\end{remark}

We note the following properties of coCartesian fibrations, which are easily proven:
\begin{proposition}
\label{prop:pullbackfibration}
A composition of coCartesian fibrations is again a coCartesian fibration. Furthermore, if we are given a pullback square
\[
\xymatrix{
X' \ar[d]\ar[r] & X \ar[d] \\
S' \ar[r] & S
}
\]
in which the right vertical map is a coCartesian fibration, then the left vertical map is so as well.
\end{proposition}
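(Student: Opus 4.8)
The plan is to verify, for each of the two assertions, the two clauses in the definition of a coCartesian fibration separately: first the inner fibration condition (1), then the corolla-lifting condition (2). The inner fibration conditions are immediate from general nonsense about lifting properties. Indeed, inner fibrations are exactly the maps with the right lifting property against the inner horn inclusions $\Lambda^e[T] \subseteq \Omega[T]$, and any class of morphisms defined by a right lifting property against a fixed collection of maps is automatically closed under composition and under base change. Hence a composite of two inner fibrations is an inner fibration, and a pullback of an inner fibration is an inner fibration, which settles clause (1) in both cases.

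For the composition statement, let $p: X \longrightarrow Y$ and $q: Y \longrightarrow Z$ be coCartesian fibrations. Given a corolla $\sigma$ of $Z$ with inputs $z_1, \ldots, z_n$ and output $z$, together with colours $x_1, \ldots, x_n$ of $X$ satisfying $(q \circ p)(x_i) = z_i$, I would first invoke condition (2) for $q$ to obtain a $q$-coCartesian corolla $\tau$ of $Y$ lying over $\sigma$ with inputs $p(x_1), \ldots, p(x_n)$, and then condition (2) for $p$ to obtain a $p$-coCartesian corolla $\xi$ of $X$ lying over $\tau$ with inputs $x_1, \ldots, x_n$. By construction $\xi$ is $p$-coCartesian while $p(\xi) = \tau$ is $q$-coCartesian, so the third of the basic properties of coCartesian corollas recorded at the start of Section \ref{subsection:coCartcorollas} shows that $\xi$ is $(q \circ p)$-coCartesian. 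Since $(q \circ p)(\xi) = \sigma$ and $\xi$ has the prescribed inputs, this is exactly clause (2) for $q \circ p$.

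For the pullback statement, write $p': X' \longrightarrow S'$ for the left vertical map and $g: S' \longrightarrow S$, $\tilde{g}: X' \longrightarrow X$ for the two horizontal maps, so that $X' = S' \times_S X$. The essential point is that $p$-coCartesian corollas pull back to $p'$-coCartesian corollas. Given a corolla $\sigma'$ of $S'$ with inputs $s_1', \ldots, s_n'$ and output $s'$, and colours $x_1', \ldots, x_n'$ of $X'$ with $p'(x_i') = s_i'$, I would apply condition (2) for $p$ to the corolla $g(\sigma')$ of $S$ and the images $\tilde{g}(x_i')$, producing a $p$-coCartesian corolla $\xi$ of $X$ over $g(\sigma')$ with inputs $\tilde{g}(x_1'), \ldots, \tilde{g}(x_n')$. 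Since $p(\xi) = g(\sigma')$, the corollas $\sigma'$ and $\xi$ have the same image in $S$, and hence by the universal property of the fibre product they determine a corolla $\xi'$ of $X'$ with $p'(\xi') = \sigma'$ and the prescribed inputs.

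It remains to check that $\xi'$ is $p'$-coCartesian, and this is the one step that requires a genuine argument; I expect it to be the crux of the proof, though it is still routine. Given a tree $T$ with leaf corolla at a vertex $v$ and a lifting problem of the defining form for $\xi'$ against $p'$, I would use the universal property of the pullback, which identifies a map $\Omega[T] \longrightarrow X'$ with a pair of maps $\Omega[T] \longrightarrow S'$ and $\Omega[T] \longrightarrow X$ agreeing after composition to $S$. The outer square already furnishes the map $\Omega[T] \longrightarrow S'$, and postcomposing the data with $\tilde{g}$ converts the lifting problem into one for $\xi$ against $p$, whose bottom edge is the composite $\Omega[T] \longrightarrow S' \xrightarrow{g} S$. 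Because $\xi$ is $p$-coCartesian, this problem admits a solution $\Omega[T] \longrightarrow X$, which lies over the given map $\Omega[T] \longrightarrow S'$ by construction; the two maps then assemble, again by the universal property of the pullback, into the required lift $\Omega[T] \longrightarrow X'$. This verifies clause (2) for $p'$ and completes the plan.
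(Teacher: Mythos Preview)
Your proposal is correct and is exactly the kind of routine verification the paper has in mind: it states this proposition without proof, noting only that these properties ``are easily proven.'' Your argument---closure of inner fibrations under composition and base change via the right lifting property, property (3) of coCartesian corollas for the composition part, and the universal property of the pullback to transport coCartesian lifts for the base-change part---is the natural and complete way to fill this in.
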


The following proposition describes the relationship between coCartesian fibrations and opfibered operads and partly justifies thinking of coCartesian fibrations as `opfibered $\infty$-operads'.

\begin{proposition}
Let  $X$ and $S$ be operads in $\mathbf{Sets}$ and let $p: X \longrightarrow S$ be a morphism between them. Then $p$ exhibits $X$ as being opfibered over $S$ if and only if the induced map $N_d(p): N_d(X) \longrightarrow N_d(S)$ is a coCartesian fibration.
\end{proposition}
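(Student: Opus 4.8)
The plan is to run the same strategy as the proof of Lemma~\ref{lemma:opfibgrpdsleftfib}, exploiting that the dendroidal nerve of a $\mathbf{Sets}$-operad is $2$-coskeletal and has discrete sets of operations; throughout I identify a corolla of $N_d(X)$ with an operation of $X$. Condition~(1) is immediate: since a $T$-dendrex of a nerve is nothing more than a compatible choice of operation at each vertex of $T$, both $N_d(X)$ and $N_d(S)$ are $\infty$-operads whose inner horns admit \emph{unique} fillers, so $N_d(p)$ is automatically an inner fibration. The content of the proposition is thus the equivalence between the existence clause defining an opfibered operad and condition~(2) defining a coCartesian fibration, and for this it suffices to establish the following dictionary: an operation $\xi$ of $X$ is $p$-coCartesian in the sense of the first chapter (unique lifting against every two-vertex tree) if and only if the corresponding corolla of $N_d(X)$, placed at a leaf vertex $v$, is $N_d(p)$-coCartesian in the sense of Section~\ref{subsection:coCartcorollas} (existence of lifts against every tree with at least two vertices having $v$ as a leaf corolla).

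For the implication from the first-chapter notion to the dendroidal one I would induct on the number of vertices of the test tree $T$. The two-vertex case is exactly the defining lifting property. For larger trees I graft $\xi$ onto the vertex immediately below it: by Proposition~\ref{prop:composition} the resulting composite corolla is again coCartesian, and contracting this coCartesian pair reduces the lifting problem on $T$ to one on a tree with one fewer vertex. The only genuine work occurs for those faces that hide an internal vertex inside the omitted face $\partial_v\Omega[T]$ -- which already happens for suitable three-vertex trees -- where one reconstructs the hidden operation using the two-vertex lifting property and then checks, by applying that same uniqueness to the various composite corollas, that the reconstruction agrees on all overlapping faces; for trees with four or more vertices every vertex is exposed by a face other than $\partial_v\Omega[T]$, so $2$-coskeletality makes the lift automatic.

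For the converse I would use that a dendroidal coCartesian corolla admits an essentially unique lift: by Proposition~\ref{prop:uniquecoCartlift} the space $\mathbf{coCart}_{N_d(p)}(\sigma,\{x_1,\ldots,x_k\})$ is a contractible Kan complex, so by Corollary~\ref{cor:uniquecoCartlift} any two coCartesian lifts differ by an equivalence in the relevant fibre. The crucial point is that the fibres of $N_d(p)$ are nerves of ordinary categories and the objects of operations of $N_d(X)$ and $N_d(S)$ are plain sets; hence ``unique up to a contractible space of choices'' collapses to ``unique'', and this strict uniqueness is exactly the pullback characterisation of $p$-coCartesian operations from the first chapter. With the dictionary in hand the proposition becomes formal: $X$ is opfibered over $S$ precisely when every corolla of $S$ with a prescribed lift of its inputs admits a first-chapter coCartesian lift, and under the dictionary this is condition~(2) for $N_d(p)$.

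I expect the main obstacle to be reconciling the existence-only formulation of Section~\ref{subsection:coCartcorollas} with the unique-lifting formulation of the first chapter. The delicate point is the three-vertex bookkeeping in the forward direction -- tracking exactly which faces lie in $\Lambda^v[T]$ and using the two-vertex uniqueness to glue the reconstructed internal operation consistently -- together with, in the converse direction, the observation that discreteness of the nerve's objects of operations upgrades homotopy-uniqueness to genuine uniqueness.
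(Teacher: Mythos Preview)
Your overall strategy---establish a dictionary between the Chapter~1 notion of $p$-coCartesian and the dendroidal one, then read off the proposition---is exactly the paper's (the paper simply says ``analogous to Lemma~\ref{lemma:opfibgrpdsleftfib}''). The core pieces you identify (two-vertex trees as the base case, three-vertex bookkeeping for the hidden vertex, $2$-coskeletality for four or more vertices) are the right ones. Two points, however, need correction.

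\emph{Forward direction.} Your appeal to Proposition~\ref{prop:composition} is misplaced. That proposition says a composite of two coCartesian corollas is again coCartesian, but in your lifting problem only the leaf corolla $\xi$ is coCartesian; the operation at the vertex $w$ immediately below it is arbitrary. So the ``composite is coCartesian, reduce to a smaller tree'' induction does not get off the ground. You should simply drop that sentence: the argument you sketch afterwards---reconstruct the hidden vertex using the unique two-vertex lift and verify consistency on overlaps for three-vertex trees, then invoke $2$-coskeletality for larger trees---is complete on its own and is precisely what the paper intends.

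\emph{Converse direction.} Proposition~\ref{prop:uniquecoCartlift} and Corollary~\ref{cor:uniquecoCartlift} concern uniqueness of coCartesian \emph{lifts} of a given corolla $\sigma$ with prescribed inputs---that is, uniqueness of $\xi$. What the pullback characterisation of Chapter~1 requires is uniqueness of the \emph{horn filler} $\eta$ at the root vertex $w$ once $\xi$ and the composite $\zeta$ are fixed. These are different uniqueness statements and the first does not obviously imply the second; discreteness of the fibres collapses ``$\xi$ unique up to equivalence'' to ``$\xi$ unique'', but says nothing about $\eta$. The correct argument again mirrors Lemma~\ref{lemma:opfibgrpdsleftfib}: given two candidate fillers $\eta_1,\eta_2$ with $\eta_1\circ\xi=\eta_2\circ\xi=\zeta$ and $p(\eta_1)=p(\eta_2)$, assemble a horn $\Lambda^v[T']\to N_d(X)$ for a suitable three-vertex tree $T'$ (place $\xi$ at the leaf, use $\eta_1$ on one face, $\eta_2$ on another, and a degenerate edge to close up) over a compatible dendrex of $N_d(S)$; the dendroidal coCartesian property of $\xi$ supplies a filler, and since a dendrex of a nerve is determined by its vertex operations, comparing faces forces $\eta_1=\eta_2$.
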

\begin{proof}
The proof is entirely analogous to that of Lemma \ref{lemma:opfibgrpdsleftfib}. $\Box$ 
\end{proof}

\newpage

% section 4: The coCartesian model structure

\section{The coCartesian model structure}
In order to describe a version of the operadic Grothendieck construction in the setting of $\infty$-operads, we'd like to construct an $\infty$-category of coCartesian fibrations of dendroidal sets over a fixed dendroidal set $S$. In order to do this, we will consider \emph{marked dendroidal sets}, which are dendroidal sets equipped with a certain subset of their set of corollas, whose elements we will refer to as \emph{marked corollas}. Given a dendroidal set $S$ we will construct a (combinatorial) simplicial model category $\mathbf{dSets^+}/S$ of \emph{marked dendroidal sets over $S$} in which the fibrant objects will exactly be the coCartesian fibrations over $S$ (with coCartesian corollas in $X$ marked). \par
The outline of the approach is identical to the one of Lurie in the third chapter of \cite{htt}, who does all of this in the context of $\infty$-categories and marked simplicial sets. The results in this text are a direct generalization of Lurie's and reproduce his results. The difference is that the greater generality of dendroidal sets makes the explicit combinatorics more involved. \par

\subsection{Marked dendroidal sets}
\noindent Given a dendroidal set $X$, we denote by $cor(X)$ its set of corollas, i.e.

\begin{equation*}
cor(X) = \coprod_{n \in \mathbb{Z}_{\geq 0}} X_{C_n}
\end{equation*}

The category of \emph{marked dendroidal sets}, denoted $\mathbf{dSets^+}$, has as objects pairs $(X, \mathcal{E}_X)$, where $X$ is a dendroidal set and $\mathcal{E}_X$ is a subset of $cor(X)$ containing all degenerate 1-corollas and being closed under the actions of the symmetric groups. corollas contained in $\mathcal{E}_X$ will be called \emph{marked}. A morphism $f: (X, \mathcal{E}_X) \longrightarrow (Y, \mathcal{E}_Y)$ is a morphism $X \longrightarrow Y$ mapping marked corollas to marked corollas. We will often abuse notation and just write $X$ when actually the marked dendroidal set $(X, \mathcal{E}_X)$ is meant. \par
We have an obvious forgetful functor $u: \mathbf{dSets^+} \longrightarrow \mathbf{dSets}$. This functor has a left adjoint $(-)^{\flat}$ and a right adjoint $(-)^{\sharp}$. For a dendroidal set $X$, we will have $X^\flat = (X, s_0(X_\eta))$ (only degenerate 1-corollas are marked) and $X^\sharp = (X, cor(X))$ (all corollas are marked). Similar functors can be defined in the context of marked simplicial sets and we will use the same symbols to denote them when there is no danger of confusion. The canonical inclusion $i: \mathbf{\Delta} \longrightarrow \mathbf{\Omega}$ induces a restriction functor $i^*: \mathbf{dSets} \longrightarrow \mathbf{sSets}$. By Kan extension this functor has a left adjoint $i_!$ which is easily seen to be full and faithful. Clearly there are induced functors $j^*: \mathbf{dSets^+} \longrightarrow \mathbf{sSets^+}$ and $j_!: \mathbf{sSets^+} \longrightarrow \mathbf{dSets^+}$ fitting into the following diagram: \\

\[
\xymatrix@R=40pt@C=40pt{
\mathbf{dSets^+} \ar[d]^{j^*}\ar[r]^u & \mathbf{dSets} \ar@<2ex>[l]^{(-)^\sharp} \ar@<-2ex>[l]_{(-)^\flat}  \ar[d]^{i^*} \\
\mathbf{sSets^+} \ar[r]^u \ar@<1ex>[u]^{j_!} & \mathbf{sSets}\ar@<1ex>[u]^{i_!} \ar@<2ex>[l]^{(-)^\sharp} \ar@<-2ex>[l]_{(-)^\flat}
}
\]

In this diagram, we have identities $u \circ j_! = i_! \circ u$, $(-)^{\sharp/\flat} \circ i_! = j_! \circ (-)^{\sharp/\flat}$ and similar ones involving $i^*$ and $j^*$. We also have $u \circ (-)^{\sharp/\flat} = \mathrm{id}$, $i^*i_! = \mathrm{id}$ and $j^*j_! = \mathrm{id}$.\par
The tensor product on $\mathbf{dSets}$ (see \cite{dendroidalsets}) can be used to define a tensor product on $\mathbf{dSets^+}$. Indeed, set
\begin{equation*}
(X, \mathcal{E}_X) \otimes (Y, \mathcal{E}_Y) = (X \otimes_{\mathbf{dSets}} Y, \mathcal{E}_{X \otimes Y})
\end{equation*}
where
\begin{equation*}
\mathcal{E}_{X \otimes Y} = cor(\overline{\mathcal{E}_X} \otimes_{\mathbf{dSets}} \overline{\mathcal{E}_Y})
\end{equation*}
Here $\overline{\mathcal{E}_X}$ denotes the smallest dendroidal subset of $X$ containing $\mathcal{E}_X$; the dendroidal set $\overline{\mathcal{E}_Y}$ is defined similarly. Unraveling the definition, one sees that a marked corolla of the tensor product is either a marked corolla of one of the factors (tensored with a colour of the other) or a corolla obtained by contracting all the inner edges of a dendrex of $X \otimes Y$ as depicted below, where the `black' corollas are copies of a marked corolla of $X$ and the `white' corolla is a marked corolla of $Y$:

\[
\xymatrix@R=10pt@C=12pt{
&&&&&& \\
&*=0{\bullet}\ar@{-}[ul]\ar@{-}[ur]\ar@{}[u]|{\cdots}&&&&*=0{\bullet}\ar@{-}[ul]\ar@{-}[ur]\ar@{}[u]|{\cdots}&\\
&&&\ar@{}[u]|{\cdots\cdots}&&&\\
&&&*=0{\circ}\ar@{-}\ar@{-}[uull]\ar@{-}[uurr]&&&\\
&&&\ar@{-}[u]&&&\\
&&&&&&
}
\]

One verifies that the tensor product on $\mathbf{dSets^+}$ as just defined commutes with the formation of colimits in each variable separately.

\begin{remark}
The tensor product on $\mathbf{dSets^+}$ extends the tensor product on $\mathbf{sSets^+}$ defined by
\begin{equation*}
(X, \mathcal{E}_X) \otimes_{\mathbf{sSets^+}} (Y, \mathcal{E}_Y) = (X \times Y, \mathcal{E}_X \times \mathcal{E}_Y)
\end{equation*}
Indeed, it is known that the tensor product on $\mathbf{dSets}$ extends the usual product on $\mathbf{sSets}$ (see e.g. \cite{dendroidalsets}) and for two marked simplicial sets $X$ and $Y$ any element of $\mathcal{E}_X \times \mathcal{E}_Y$ can be described as an inner face of a 2-simplex whose two outer faces are a marked 1-simplex of $X$ and a marked 1-simplex of $Y$.
\end{remark}

Let $X$ and $Y$ be marked dendroidal sets. We define the internal hom-object $\mathbf{Hom}_{\mathbf{dSets}^+}(X,Y)$ by
\begin{eqnarray*}
\mathbf{Hom}_{\mathbf{dSets^+}}(X,Y)_T & = & \mathbf{dSets^+}(X \otimes \Omega[T]^\flat, Y) \\
\mathcal{E}_{\mathbf{Hom}_{\mathbf{dSets^+}}(X,Y)} & = & \coprod_{n \in \mathbb{Z}_{\geq 0}} \mathbf{dSets^+}(X \otimes \Omega[C_n]^\sharp, Y)
\end{eqnarray*}
One checks explicitly that $\mathbf{Hom}_{\mathbf{dSets}^+}(X,-)$ is right adjoint to $- \otimes X$, so that $\mathbf{dSets^+}$ acquires the structure of a closed symmetric monoidal category. \par
We also define
\begin{eqnarray*}
\mathrm{Map}^\flat(X,Y) & = & (u \circ j^*)(\mathbf{Hom}_{\mathbf{dSets^+}}(X,Y))
\end{eqnarray*}
and we let $\mathrm{Map}^\sharp(X,Y)$ be the simplicial subset of $\mathrm{Map}^\flat(X,Y)$ consisting of the simplices all of whose edges are marked in $j^*(\mathbf{Hom}_{\mathbf{dSets^+}}(X,Y))$. We can characterize these objects by universal mapping properties. Indeed, for $K$ a simplicial set we have natural isomorphisms
\begin{eqnarray*}
\mathbf{sSets}(K,\mathrm{Map}^\flat(X,Y)) & \simeq & \mathbf{dSets^+}((i_!(K))^\flat \otimes X, Y) \\
\mathbf{sSets}(K,\mathrm{Map}^\sharp(X,Y)) & \simeq & \mathbf{dSets^+}((i_!(K))^\sharp \otimes X, Y)
\end{eqnarray*}
For the second isomorphism we have used the fact that $(-)^\sharp: \mathbf{sSets} \longrightarrow \mathbf{sSets^+}$ is left adjoint to the functor taking an object $X \in \mathbf{sSets^+}$ to the simplicial set consisting of all simplices whose edges are marked in $X$. \par
Now observe that for any simplicial set $K$ there is a projection $i_!(K) \otimes u(X) \longrightarrow u(X)$ which on colours can be described as projection onto the second factor. (Note that this depends crucially on the fact that $i_!(K)$ has only unary corollas.) Hence, we can regard $i_!(K) \otimes u(X)$ (and similarly $i_!(K) \otimes u(Y)$) as living over $S$. Now define $\mathrm{Map}_S^\flat(X,Y)$ and $\mathrm{Map}_S^\sharp(X,Y)$ to be the simplicial subsets of $\mathrm{Map}^\flat(X,Y)$, respectively $\mathrm{Map}^\sharp(X,Y)$, which represent maps from $X$ to $Y$ compatible with the maps to $S$, i.e. which can be described by
\begin{eqnarray*}
\mathbf{sSets}(K,\mathrm{Map}_S^\flat(X,Y)) & \simeq & (\mathbf{dSets^+}/S^\sharp)((i_!(K))^\flat \otimes X, Y) \\
\mathbf{sSets}(K,\mathrm{Map}_S^\sharp(X,Y)) & \simeq & (\mathbf{dSets^+}/S^\sharp)((i_!(K))^\sharp \otimes X, Y)
\end{eqnarray*}
Both these mapping objects make the overcategory $\mathbf{dSets^+}/S^\sharp$ into a simplicial category. We will abuse notation and from now on write $\mathbf{dSets^+}/S$.

\begin{definition}
Suppose $p: X \longrightarrow S$ is a coCartesian fibration of dendroidal sets. We will denote by $X^\natural$ the marked dendroidal set whose marked corollas are precisely the coCartesian corollas of $X$.
\end{definition}

We have the following result:

\begin{proposition}
\label{prop:mappingobjects}
Let $X \longrightarrow S^\sharp$ be a map in $\mathbf{dSets^+}$ and let $Y \longrightarrow S$ be a coCartesian fibration. Suppose furthermore that the dendroidal set $u(X)$ is normal. Then $\mathrm{Map}_S^\flat(X,Y^\natural)$ is an $\infty$-category and $\mathrm{Map}_S^\sharp(X,Y^\natural)$ is the largest Kan complex contained in it.
\end{proposition}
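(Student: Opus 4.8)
The plan is to translate both assertions, via the defining adjunctions of $\mathrm{Map}^\flat_S$ and $\mathrm{Map}^\sharp_S$, into lifting problems against the marked map $p^\natural\colon Y^\natural \longrightarrow S^\sharp$, and then to solve those problems by a shuffle-filtration of the Boardman--Vogt tensor product. To prove that $\mathrm{Map}^\flat_S(X,Y^\natural)$ is an $\infty$-category I must fill inner horns $\Lambda^m_k \hookrightarrow \Delta^m$ with $0 < k < m$; by the isomorphism $\mathbf{sSets}(K,\mathrm{Map}^\flat_S(X,Y^\natural)) \simeq (\mathbf{dSets^+}/S^\sharp)((i_!K)^\flat \otimes X, Y^\natural)$ this is equivalent to producing a lift in every square
\[
\xymatrix{
(i_!\Lambda^m_k)^\flat \otimes X \ar[r]\ar[d] & Y^\natural \ar[d]^{p^\natural} \\
(i_!\Delta^m)^\flat \otimes X \ar[r]\ar@{-->}[ur] & S^\sharp
}
\]
Because $S^\sharp$ carries the maximal marking, compatibility of markings over $S$ imposes nothing on the bottom map, so the entire content is to extend the top map in such a way that marked corollas of the source continue to land on coCartesian corollas of $Y$.

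Next I would carry out the filtration. The hypothesis that $u(X)$ is normal guarantees that $(i_!K)^\flat \otimes X$ is normal, so the left-hand inclusion can be exhausted by attaching, one at a time, the non-degenerate dendrices of $(i_!\Delta^m)^\flat \otimes X$ that are absent from $(i_!\Lambda^m_k)^\flat \otimes X$. I would order these attachments by the shuffle decomposition of the tensor product of a linear tree with $X$ described in \cite{dendroidalsets}, so that each new dendrex is glued along a horn of some tree $\Omega[R]$. The essential bookkeeping is that, since $0 < k < m$, every edge produced by contraction in the simplicial direction is an \emph{inner} edge of $R$; the corresponding gluings are inner horn inclusions $\Lambda^e[R] \hookrightarrow \Omega[R]$ and lift because $p$ is an inner fibration. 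The remaining gluings take place along horns whose distinguished leaf corolla is forced to be marked in the source, hence must be carried to a coCartesian corolla of $Y$, and these lift by the coCartesian property of $p$ together with the uniqueness furnished by Proposition \ref{prop:uniquecoCartlift}, with Propositions \ref{prop:composition} and \ref{prop:composition2} ensuring that the filtration is compatible with grafting and contracting coCartesian corollas. I expect this combinatorial analysis to be the main obstacle, as tree shuffles are considerably more intricate than the simplicial products used in \cite{htt}.

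For the second assertion, $\mathrm{Map}^\sharp_S(X,Y^\natural)$ is by construction the subcomplex of $\mathrm{Map}^\flat_S(X,Y^\natural)$ spanned by the simplices all of whose edges are marked, hence is contained in it. Repeating the filtration argument with the maximal marking $(i_!\Delta^m)^\sharp$ in the simplicial direction shows that $\mathrm{Map}^\sharp_S(X,Y^\natural)$ is again an $\infty$-category: the extra markings are exactly what forces each distinguished corolla arising in an outer gluing of the shuffle filtration to be marked, hence to admit a coCartesian lift via Proposition \ref{prop:uniquecoCartlift}. A $1$-simplex of $\mathrm{Map}^\sharp_S(X,Y^\natural)$ is a map $(i_!\Delta^1)^\sharp \otimes X \to Y^\natural$ carrying the relevant corollas to $p$-coCartesian corollas lying over identities of $S$; since a coCartesian corolla lying over an equivalence is an equivalence, every edge of $\mathrm{Map}^\sharp_S(X,Y^\natural)$ is an equivalence, so this $\infty$-category is a Kan complex by Joyal's criterion. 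Finally, to identify it with the \emph{largest} Kan complex inside $\mathrm{Map}^\flat_S(X,Y^\natural)$, I would conversely show that every edge which is an equivalence in $\mathrm{Map}^\flat_S(X,Y^\natural)$ is marked, using Corollary \ref{cor:uniquecoCartlift} to exhibit the required coCartesian lift; then any Kan subcomplex, having only equivalences for edges, is contained in $\mathrm{Map}^\sharp_S(X,Y^\natural)$.
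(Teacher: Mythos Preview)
Your approach to the first assertion is workable but over-engineered, and the description contains a confusion. The inclusion $(i_!\Lambda^m_k)^\flat \otimes X \hookrightarrow (i_!\Delta^m)^\flat \otimes X$ for $0<k<m$ has underlying dendroidal map equal to the tensor of the inner anodyne $i_!\Lambda^m_k \hookrightarrow i_!\Delta^m$ with the normal object $u(X)$, which is already known to be inner anodyne \cite{moerdijkweiss2}; moreover, since the $\flat$-marking on the simplicial factor contributes only degenerate $1$-corollas, source and target carry \emph{identical} sets of marked corollas. Hence the lift exists simply because $p$ is an inner fibration---your ``remaining gluings'' along leaf horns requiring coCartesian lifts do not occur here. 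The paper exploits this by first identifying $\mathrm{Map}^\flat_S(X,Y^\natural)$ with a full simplicial subset of $\bigl(i^*\mathbf{Hom}_{\mathbf{dSets}}(u(X),Y)\bigr)_S$, which strips the markings entirely and reduces the first assertion to a one-line citation.

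Your outline for the second assertion has a genuine gap. You correctly observe that every edge of $\mathrm{Map}^\sharp_S(X,Y^\natural)$ restricts, at each colour $x$ of $X$, to a $p$-coCartesian $1$-corolla lying over an identity of $S$, hence to an equivalence in the fibre. But this shows only that the edge is a \emph{pointwise} equivalence; to invoke Joyal's criterion you need it to be an equivalence in the $\infty$-category $\mathrm{Map}^\sharp_S(X,Y^\natural)$ itself, and the passage from pointwise to global is precisely Corollary~\ref{cor:pointwiseequiv}, which the paper can prove only \emph{after} developing the marked anodyne machinery. Accordingly the paper defers the second assertion: it introduces the generating marked anodynes, proves their stability under pushout-product with cofibrations (Proposition~\ref{prop:smashproduct}), characterizes maps with the right lifting property against them (Proposition~\ref{prop:lifting} and Corollary~\ref{cor:liftwrtmarked}), and then deduces both Corollary~\ref{cor:pointwiseequiv} and the maximal-Kan-complex statement as formal consequences. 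Your shuffle filtration with $\sharp$-markings would, if carried out, reprove a fragment of Proposition~\ref{prop:smashproduct}, but neither that filtration nor Corollary~\ref{cor:uniquecoCartlift} supplies the pointwise-to-global step you still need.
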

\begin{proof}
Observe that $\mathrm{Map}_S^\flat(X,Y^\natural)$ can be identified with a full simplicial subset of $\bigl(i^*\mathbf{Hom}_{\mathbf{dSets}}(u(X),Y)\bigr)_S$, where the subscript $S$ is again meant to indicate maps compatible with the projections to $S$. Hence it suffices to show this latter simplicial set is an $\infty$-category. Let $0 < k < n$ and suppose we are given a diagram
\[
\xymatrix{
\Lambda^n_k \ar[r]\ar[d] & \bigl(i^*\mathbf{Hom}_{\mathbf{dSets}}(u(X),Y)\bigr)_S \\
\Delta^n \ar@{-->}[ur]
}
\]
By adjunction, finding the dotted lift in this diagram is equivalent to finding the dotted lift in
\[
\xymatrix{
i_!(\Lambda^n_k) \otimes u(X) \ar[r]\ar[d] & Y \ar[d] \\
i_!(\Delta^n) \otimes u(X) \ar@{-->}[ur]\ar[r] & S
}
\]
Since $u(X)$ is normal, the left vertical map is the smash product (or pushout-product) of the cofibration $\emptyset \longrightarrow u(X)$ and the inner anodyne extension $i_!(\Lambda^n_k) \longrightarrow i_!(\Delta^n)$. We know (see e.g. \cite{moerdijkweiss2}) that this is again inner anodyne. Since $Y \longrightarrow S$ is an inner fibration we conclude that the dotted lift exists. \par
The proof of the second claim will be given in the next section, after we have shown certain stability properties of marked anodynes. $\Box$
\end{proof}

\subsection{Marked anodyne morphisms}
In this section we will introduce \emph{marked anodyne} morphisms, which will serve as a convenient technical tool. Given a tree $T$, a \emph{leaf corolla} of $T$ is a corolla all of whose input edges are leaves of $T$.  We define the class of marked anodyne morphisms to be the smallest weakly saturated class of morphisms containing
\begin{itemize}
\item[(1)] All inner horn inclusions $\Lambda^e[T]^\flat \subseteq \Omega[T]^\flat$, where $e$ denotes an inner edge of $T$
\item[(2)] The inclusions
    \begin{equation*}
    (\Lambda^v[T], \mathcal{E}\cap cor(\Lambda^v[T])) \subseteq (\Omega[T], \mathcal{E})
    \end{equation*}
    where $T$ is a tree with at least two vertices, $v$ is the vertex of a leaf corolla and where $\mathcal{E}$ is the union of all degenerate 1-corollas of $T$ with the leaf corolla with vertex $v$
\item[($2^*$)] For all $n \geq 0$, the inclusion
    \begin{equation*}
    \coprod_{c \in \mathrm{in}(C_n)} \eta_c \subseteq \Omega[C_n]^\sharp
    \end{equation*}
    where $\mathrm{in}(C_n)$ denotes the set of input edges of $C_n$
\item[(3)] For a tree $T$ with two vertices $v$ and $w$, the inclusion
    \begin{equation*}
    (\Omega[T], \mathcal{E}) \subseteq \Omega[T]^\sharp
    \end{equation*}
    where $\mathcal{E}$ is the union of the degenerate 1-corollas of $T$ together with the two corollas whose vertices are $v$, resp. $w$
\item[(4)] For any simplicial Kan complex $K$, the inclusion $j_!(K^\flat) \subseteq j_!(K^\sharp)$
\end{itemize}

The first thing we want to do is to characterize maps having the right lifting property with respect to marked anodynes.

\begin{proposition}
\label{prop:lifting}
A map $p: X \longrightarrow Y$ in $\mathbf{dSets^+}$ has the right lifting property with respect to all marked anodynes if and only if
\begin{itemize}
\item[(a)] $u(p)$ is an inner fibration of dendroidal sets
\item[(b)] A corolla $\alpha$ of $X$ is marked if and only if $p(\alpha)$ is marked and $\alpha$ is $p$-coCartesian
\item[(c)] For every marked corolla $\beta$ of $Y$ with inputs $y_1, \ldots, y_n$ and colours $x_1, \ldots, x_n$ of $X$ such that $p(x_i) = y_i$ for $1 \leq i \leq n$, there exists a marked corolla $\alpha$ of $X$ with inputs $x_1, \ldots, x_n$ such that $p(\alpha) = \beta$
\end{itemize}
\end{proposition}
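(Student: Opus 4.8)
The plan is to exploit the standard fact that the maps having the right lifting property against a fixed set of morphisms always form a weakly saturated class. Consequently $p$ has the right lifting property against all marked anodynes if and only if it has the right lifting property against the five generating families (1), (2), ($2^*$), (3), (4), and it suffices to analyze these one at a time. I would prove both implications of the proposition at once by setting up a dictionary: family (1) corresponds to condition (a); family ($2^*$) corresponds to condition (c); family (2) corresponds to the assertion that every marked corolla of $X$ is $p$-coCartesian; and families (3) and (4) together account for the remaining content of (b), namely that a $p$-coCartesian corolla whose image is marked is itself marked.

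The easy matchings I would dispatch first. Since both markings occurring in $\Lambda^e[T]^\flat \subseteq \Omega[T]^\flat$ are flat, a lift in $\mathbf{dSets^+}$ is the same datum as a lift of underlying dendroidal sets, so lifting against family (1) is exactly the statement that $u(p)$ is an inner fibration, i.e. (a). Unwinding the adjunctions, a lift against $\coprod_{c \in \mathrm{in}(C_n)} \eta_c \subseteq \Omega[C_n]^\sharp$ is precisely the datum produced by condition (c): colours $x_i$ over the inputs of a marked corolla $\beta$ of $Y$, together with a marked lift $\alpha$ of $\beta$ having those inputs. The inclusions of family (2) are, by construction, the horns appearing in the definition of a $p$-coCartesian corolla, with the leaf corolla $v$ marked; hence lifting against them says exactly that each marked corolla of $X$ is $p$-coCartesian. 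Finally, one half of (b) is free: because $p$ is a morphism of marked dendroidal sets, a marked corolla of $X$ always has marked image in $Y$.

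The substance of the argument, and what I expect to be the main obstacle, is the other half of (b): that a $p$-coCartesian corolla $\alpha$ with $p(\alpha)$ marked is itself marked. For necessity I would use condition (c) (available from family ($2^*$)) to produce a marked, hence by family (2) also $p$-coCartesian, corolla $\alpha'$ with the same inputs and the same image as $\alpha$. Both $\alpha$ and $\alpha'$ then lie in the contractible Kan complex of $p$-coCartesian lifts of $p(\alpha)$ with the prescribed inputs (Proposition \ref{prop:uniquecoCartlift}), so by Corollary \ref{cor:uniquecoCartlift} they are related by an equivalence $h$ in the fibre over the root, realised by a map out of the two-vertex tree $C_k \star [0]$. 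Family (4), applied to the maximal Kan complex of that fibre (which contains $h$, since $h$ is an equivalence and the map to $Y$ lands in a fibre over a degenerate corolla), forces $h$ to be marked; family (3), which is closure of the marking under composition along a two-vertex tree, then forces the composite $\alpha$ to be marked.

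For sufficiency I would run this dictionary in reverse. Lifting against families (1), (2) and ($2^*$) follows immediately from (a), (b) and (c) together with the very definition of a $p$-coCartesian corolla. Lifting against family (3) is deduced from (b) once one knows that the composite of two $p$-coCartesian corollas is $p$-coCartesian (Proposition \ref{prop:composition}, with its converse Proposition \ref{prop:composition2}), and lifting against family (4) is deduced from (b) once one knows that equivalences are $p$-coCartesian. The delicate points are precisely these last two appeals: the composition lemmas are stated under the hypothesis that the base is an $\infty$-operad, whereas here $Y$ is arbitrary, so in the sufficiency direction for family (3) I would need either to solve the relevant lifting diagrams directly for general $Y$ or to reduce to the inner-Kan case. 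This bookkeeping around the composition lemmas, rather than any single conceptual step, is the crux of the proof, and is where the dendroidal combinatorics genuinely diverge from Lurie's simplicial argument.
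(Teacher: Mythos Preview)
Your proposal is correct and follows essentially the same route as the paper's proof: the same dictionary between the generating families and conditions (a)--(c), the same use of Corollary~\ref{cor:uniquecoCartlift} together with families (4) then (3) to force a $p$-coCartesian corolla with marked image to be marked, and the same appeals to (a), (b), (c) for the converse.

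The one point you flag as delicate---that Propositions~\ref{prop:composition} and~\ref{prop:composition2} assume an $\infty$-operad base---is resolved in the paper exactly by the ``reduce to the inner-Kan case'' option you mention, via the standard pullback trick: given a lifting square for family (3) with bottom map $\Omega[T]^\sharp \to Y$, pull $p$ back along this map to reduce to $Y = \Omega[T]^\sharp$, which is an $\infty$-operad; likewise for family (4) one reduces to $Y = j_!(K^\sharp)$, so that $X$ itself is an $\infty$-operad and Theorem~A.7 of \cite{moerdijkcisinski} applies to show equivalences are $p$-coCartesian. Once you note that conditions (a)--(c) are stable under pullback, this closes the gap you identified.
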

\begin{proof}
First assume $p$ has the right lifting property with respect to all marked anodynes. Then (a) is immediate from (1) and the fact that $(-)^\flat$ is left adjoint to $u$, and (c) is immediate from ($2^*$). Suppose now that $\alpha$ is an $n$-corolla of $X$ with root $r$. If $\alpha$ is marked then the right lifting property with respect to (2) implies that $\alpha$ is $p$-coCartesian. Conversely, suppose $p(\alpha)$ is marked and $\alpha$ is $p$-coCartesian. Use (c) to pick a marked $p$-coCartesian lift $\tilde \alpha$ of $p(\alpha)$ with the same inputs as $\alpha$. Now define a tree $T = C_n \star [0]$, where we denote the vertex of $C_n$ by $v$ and the root vertex by $w$, and let $\mathcal{E}_T$ be the set of degenerate corollas of $\Omega[T]$ together with the corollas with vertices $v$ and $w$. By Corollary \ref{cor:uniquecoCartlift} there is a map $(\Omega[T], \mathcal{E}_T) \longrightarrow X$ such that the corolla with vertex $v$ is mapped to $\tilde \alpha$, the corolla with vertex $w$ is mapped to an equivalence in the $\infty$-category $X_{p(r)}$ and the inner face $\partial_e \Omega[T]$ is mapped to $\alpha$. If we can show that the image of the 1-corolla with vertex $w$ is marked in $X$, we can deduce from (3) that $\alpha$ is marked. Now let $K$ be the maximal simplicial Kan complex of $u\circ j^*(X_{p(r)})$, whose 1-simplices are precisely the equivalences in $X_{p(r)}$. From (4) we deduce that every such equivalence is marked, hence also the image of the corolla with vertex $w$. \par
Conversely, assume that $p$ is a map satisfying (a), (b) and (c). The right lifting property with respect to (1), resp. ($2^*$), follows immediately from (a), resp. (c). The possibility of lifting with respect to the maps of (2) follows from (b). Considering (3) we can, by pulling back $p$ along the given map $\Omega[T]^\sharp \longrightarrow S$, reduce to the case where $S = \Omega[T]^\sharp$, which is an $\infty$-operad. We can then apply Proposition \ref{prop:composition}, which tells us coCartesian corollas are closed under composition, and conclude that $p$ has the right lifting property with respect to the maps of (3). Now suppose $K$ is a simplicial Kan complex. Again, we can reduce to the case where $S = j_!(K^\sharp)$. Since $p$ is an inner fibration, $X$ will be an $\infty$-operad. Giving a map $j_!(K^\flat) \longrightarrow X$ is the same as giving a map $K \longrightarrow u\circ j^*(X)$. Any such map will factor through the maximal Kan complex of $u\circ j^*(X)$ and hence it will map 1-simplices of $K$ to equivalences in $X$. From Theorem A.7 of \cite{moerdijkcisinski} we conclude that any equivalence is $p$-coCartesian and hence marked by (b). Therefore $p$ will have the right lifting property with respect to the maps of (4). $\Box$
\end{proof}

\begin{corollary}
\label{cor:liftwrtmarked}
A map $p: (X, \mathcal{E}_X) \longrightarrow Y^\sharp$ in $\mathbf{dSets^+}$ has the right lifting property with respect to all marked anodynes if and only if $u(p)$ is a coCartesian fibration and $(X, \mathcal{E}_X) = X^\natural$.
\end{corollary}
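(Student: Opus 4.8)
The plan is to derive this directly from Proposition \ref{prop:lifting}, specializing its characterization to the case in which the target carries the maximal marking $Y^\sharp$. First I would invoke Proposition \ref{prop:lifting}, which asserts that $p$ has the right lifting property with respect to all marked anodynes if and only if the three conditions (a), (b) and (c) hold. The entire argument then amounts to unwinding what these conditions say once every corolla of the target is marked, so the work is purely a matter of bookkeeping.

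For the forward direction, suppose $p$ has the right lifting property against all marked anodynes, so that (a), (b) and (c) hold. Since every corolla of $Y^\sharp$ is marked, the clause `$p(\alpha)$ is marked' in condition (b) is automatically satisfied, and (b) collapses to the statement that a corolla $\alpha$ of $X$ is marked if and only if it is $p$-coCartesian. Condition (a) gives that $u(p)$ is an inner fibration, while condition (c), read in the light of this identification of the marked corollas with the coCartesian ones, says exactly that every corolla of $Y$ admits a $p$-coCartesian lift with any prescribed lift of its inputs. Together these are the two defining conditions of a coCartesian fibration, so $u(p)$ is one; in particular $X^\natural$ is now a well-defined marked dendroidal set, and the reformulated condition (b) is precisely the identity $(X, \mathcal{E}_X) = X^\natural$.

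For the converse, assume $u(p)$ is a coCartesian fibration and $(X, \mathcal{E}_X) = X^\natural$. Then (a) holds, since a coCartesian fibration is in particular an inner fibration. Condition (b) holds because, again using that all corollas of $Y^\sharp$ are marked, the conjunction `$p(\alpha)$ is marked and $\alpha$ is $p$-coCartesian' reduces to `$\alpha$ is $p$-coCartesian', which by the hypothesis $(X, \mathcal{E}_X) = X^\natural$ is exactly the condition that $\alpha$ be marked in $X$. Finally (c) follows from the second axiom in the definition of a coCartesian fibration: given a corolla $\beta$ of $Y$ (necessarily marked) and colours $x_i$ of $X$ lying over its inputs, there is a $p$-coCartesian lift $\alpha$ of $\beta$ with inputs $x_1, \ldots, x_n$, and this $\alpha$ is marked precisely because $(X, \mathcal{E}_X) = X^\natural$. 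Applying Proposition \ref{prop:lifting} once more yields the desired right lifting property.

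I do not expect any serious obstacle here: the corollary is a formal consequence of Proposition \ref{prop:lifting}, and the only point requiring care is the observation that the maximal marking on $Y$ trivialises the marking hypotheses appearing in clauses (b) and (c), so that they degenerate exactly into the two conditions defining a coCartesian fibration together with the prescription $(X, \mathcal{E}_X) = X^\natural$.
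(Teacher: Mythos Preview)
Your proposal is correct and is exactly the intended argument: the paper states this result as an immediate corollary of Proposition \ref{prop:lifting} without further proof, and what you have written is precisely the specialization of conditions (a), (b), (c) to the case of a maximally marked target.
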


\begin{corollary}
\label{cor:markedanodyne3}
For a tree $T$ with two vertices $v$ and $w$ ($w$ denoting the root vertex), let $\mathcal{E}$ denote the union of the degenerate 1-corollas of $T$ together with the corolla $v$ and the corolla obtained by contracting the inner edge of $T$. Then the inclusion
\begin{equation*}
(\Omega[T], \mathcal{E}) \subseteq \Omega[T]^\sharp
\end{equation*}
is marked anodyne.
\end{corollary}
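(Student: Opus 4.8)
The plan is to use the lifting-property characterization of marked anodyne morphisms. Since the marked anodyne maps form the weakly saturated class generated by a set of morphisms, the small object argument identifies them with exactly those maps having the left lifting property against every morphism $p$ satisfying conditions (a), (b) and (c) of Proposition \ref{prop:lifting}; hence it suffices to solve every lifting problem of our inclusion against such a $p$. The key observation is that $(\Omega[T], \mathcal{E}) \subseteq \Omega[T]^\sharp$ is the identity on underlying dendroidal sets: both sides have underlying dendroidal set $\Omega[T]$, and the only corollas marked on the right but not the left are the root corolla $w$ and its images under the symmetric group action. Consequently, given a lifting problem with top map $g \colon (\Omega[T], \mathcal{E}) \to Y$ and bottom map $h \colon \Omega[T]^\sharp \to Z$ against such a $p \colon Y \to Z$, a solution exists precisely when the underlying map $g \colon \Omega[T] \to Y$ already carries $w$ (and hence each $\sigma^* w$) to a marked corolla of $Y$.

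First I would reduce to the case of an $\infty$-operad base. Replacing $p$ by its pullback $p' \colon Y' := Y \times_Z \Omega[T]^\sharp \to \Omega[T]^\sharp$ along $h$, we obtain a morphism whose underlying base $\Omega[T] = N_d(\Omega(T))$ is an $\infty$-operad, and which again satisfies (a), (b) and (c), since the class of maps with the right lifting property against marked anodynes is stable under base change. The pair $(g, f)$ induces a map $\bar g \colon (\Omega[T], \mathcal{E}) \to Y'$ with $p' \bar g = f$, and a lift in the original square is equivalent to a lift of $\bar g$ along $p'$; as in the first paragraph this amounts to showing $\bar g(w)$ is marked in $Y'$. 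Now the corollas $v$ and $w \circ v = \partial_e \Omega[T]$ lie in $\mathcal{E}$, so their images $\bar g(v)$ and $\bar g(w \circ v)$ are marked, hence $p'$-coCartesian by condition (b). Viewing $\bar g$ as a map $\Omega[V] \to u(Y')$ with $V = T$, Proposition \ref{prop:composition2} (whose hypotheses hold because $u(p')$ is an inner fibration and $\Omega[T]$ is an $\infty$-operad) shows that the image of the root corolla $w$ is $p'$-coCartesian. Since $p'(\bar g(w))$ is automatically marked in $\Omega[T]^\sharp$, condition (b) forces $\bar g(w)$ to be marked, and the same holds for every $\sigma^* w$ by the equivariance of coCartesian corollas recorded at the start of Section \ref{subsection:coCartcorollas}. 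Thus $\bar g$ extends to a map $\Omega[T]^\sharp \to Y'$, providing the desired lift.

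The only genuine content of the argument is the passage from the abstract lifting problem to the converse composition law for coCartesian corollas, Proposition \ref{prop:composition2}; everything else is bookkeeping with the lifting characterization of Proposition \ref{prop:lifting}. The point that requires care, and the reason for the pullback in the second paragraph, is that Proposition \ref{prop:composition2} is only available over an $\infty$-operad base, whereas the target $Z$ of a general such $p$ need not be one. Base-changing to $\Omega[T]^\sharp$ removes this obstacle, exactly as in the treatment of the generating marked anodyne of type (3) in the proof of Proposition \ref{prop:lifting}.
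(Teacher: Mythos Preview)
Your proof is correct and follows essentially the same approach as the paper's own (two-sentence) argument: show the inclusion has the left lifting property against every map satisfying conditions (a), (b), (c) of Proposition~\ref{prop:lifting} by invoking Proposition~\ref{prop:composition2}, then conclude it is marked anodyne via the small object and retract arguments. You spell out the pullback reduction to an $\infty$-operad base and the use of condition (b) in both directions, which the paper leaves implicit; apart from a harmless notational slip (you introduce the bottom map as $h$ but later write $(g,f)$), there is nothing to correct.
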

\begin{proof}
By Proposition \ref{prop:composition2} this inclusion has the left lifting property with respect to the maps satisfying conditions (a), (b) and (c) of the Proposition above. The conclusion follows from the fact that marked anodyne morphisms form a weakly saturated class. $\Box$
\end{proof}

\begin{definition}
A map $f: X \longrightarrow Y$ in $\mathbf{dSets^+}$ is called a \emph{cofibration} if the underlying map $u(f)$ is a normal monomorphism of dendroidal sets, i.e. a cofibration in the usual model structure on $\mathbf{dSets}$.
\end{definition}

\begin{remark}
Observe that the cofibrations in $\mathbf{dSets^+}$ are generated as a weakly saturated class by the boundary inclusions $\partial \Omega[T]^\flat \subseteq \Omega[T]^\flat$ and the inclusions $\Omega[C_n]^\flat \subseteq \Omega[C_n]^\sharp$ for $n \in \mathbb{Z}_{\geq 0}$.
\end{remark}

The following proposition will be crucial in what follows.

\begin{proposition}
\label{prop:smashproduct}
The class of marked anodyne morphisms is stable under smash products with arbitrary cofibrations, i.e. given a marked anodyne $f: A \longrightarrow B$ and a cofibration $g: X \longrightarrow Y$, the induced map
\begin{equation*}
(B \otimes X) \coprod_{A \otimes X} (A \otimes Y) \longrightarrow X \otimes Y
\end{equation*}
is again marked anodyne.
\end{proposition}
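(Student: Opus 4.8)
The plan is to reduce the assertion to a finite check on generators and then to verify those finitely many cases by a filtration argument on the tensor product of trees. The first step is the standard saturation reduction. Since the tensor product on $\mathbf{dSets^+}$ preserves colimits in each variable separately (as observed in its construction), for a fixed map $g$ the assignment carrying $f$ to the pushout--product $(B\otimes X)\coprod_{A\otimes X}(A\otimes Y)\to B\otimes Y$ sends pushouts, retracts and transfinite compositions of $f$ to pushouts, retracts and transfinite compositions of the output, and symmetrically for fixed $f$. Because the marked anodynes form a weakly saturated class, the class of all $f$ whose pushout--product with a fixed generating cofibration $g_0$ is marked anodyne is itself weakly saturated; if it contains the generating marked anodynes it is everything. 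Feeding this back in, for an arbitrary marked anodyne $f$ the class of all $g$ with $f$-pushout--product marked anodyne is weakly saturated and contains the generating cofibrations, hence all cofibrations. Thus it suffices to treat the case where $f$ is one of the generating marked anodynes (1), (2), ($2^*$), (3), (4) and $g$ is one of the generating cofibrations $\partial\Omega[T]^\flat\subseteq\Omega[T]^\flat$ or $\Omega[C_n]^\flat\subseteq\Omega[C_n]^\sharp$.

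This leaves a finite list of pairs. The only pair in which both factors are flat is the inner horn inclusion (1) against the boundary inclusion $\partial\Omega[T]^\flat\subseteq\Omega[T]^\flat$; here the marking on every tensor product in sight stays flat, so after applying $u$ the pushout--product is exactly the pushout--product of an inner anodyne map with a normal monomorphism of dendroidal sets. By the corresponding stability result in the theory of dendroidal sets, namely that such a pushout--product is again inner anodyne (see \cite{moerdijkweiss2}, \cite{dendroidalsets}), this underlying map is a transfinite composite of pushouts of inner horn inclusions, and restoring the flat markings exhibits it as built from maps of type (1). This case is therefore immediate, and the same inner-anodyne stability will also supply the "inner" steps needed in the remaining cases.

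All other pairs involve sharp markings coming either from the generator $f$ (in cases (2), ($2^*$), (3), (4)) or from the cofibration $\Omega[C_n]^\flat\subseteq\Omega[C_n]^\sharp$. For these I would analyze the tensor product $\Omega[R]\otimes\Omega[T]$ through its decomposition into shuffles of trees, as in the standard dendroidal theory, and construct a filtration of the pushout--product whose successive stages are pushouts along generating marked anodynes. The essential point is to track the marking on the tensor product prescribed in the definition above: one must check that at each stage precisely those corollas are marked which make the corresponding attaching map an instance of (2), ($2^*$), (3) or (4), so that each filtration step is genuinely marked anodyne. Where a sub-configuration lives entirely in the image of $j_!$ one can, using the compatibility of $j_!$ with the tensor products, fall back on the marked simplicial statement already established by Lurie; the genuinely dendroidal configurations must be handled directly via the horns appearing in Propositions \ref{prop:composition} and \ref{prop:composition2} and Corollary \ref{cor:markedanodyne3}.

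The main obstacle is exactly this combinatorial bookkeeping. In contrast to the simplicial situation, where a product of simplices is filtered by shuffles of two linear orders, here $\Omega[R]\otimes\Omega[T]$ is filtered by the considerably more intricate shuffles of two trees, and for each shuffle one must single out which newly created corollas carry a mark and arrange the nondegenerate dendrices into a filtration whose every attaching horn is of the prescribed coCartesian type. Choosing the order of this filtration correctly, and verifying that the marking predicted by the definition of $\otimes$ on $\mathbf{dSets^+}$ agrees with the marking demanded by the generators (2), (3) and ($2^*$), is where essentially all of the effort of the proposition lies.
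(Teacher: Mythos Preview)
Your reduction to generators and your treatment of case (1) against the boundary inclusion are exactly what the paper does. But from that point on you lump all nine remaining pairs into a single ``hard shuffle--filtration'' bucket, and this both overestimates the work and leaves the actual content undone.

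In the paper's case split, most of the remaining pairs are essentially free. Against $g:\Omega[C_k]^\flat\subseteq\Omega[C_k]^\sharp$ the pushout--product with an inner horn (1) or with a type-(2) leaf horn is an \emph{isomorphism}: the underlying map of dendroidal sets is an iso, and since the horn already contains every colour of the tree, the image already hits every marked corolla of the target. Against the boundary inclusion $\partial\Omega[S]^\flat\subseteq\Omega[S]^\flat$, the generators (3) and (4) are either isomorphic to $f$ itself (when $S=\eta$) or again isomorphisms (when $S$ has a vertex, because all marked corollas of $\Omega[T]^\sharp\otimes\Omega[S]^\flat$ already lie over $\partial\Omega[S]$). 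The pairs (3) and (4) against $\Omega[C_k]^\flat\subseteq\Omega[C_k]^\sharp$ are handled by a short argument showing the pushout--product is a composite of pushouts of type-(3) maps. The pair ($2^*$) against $\Omega[C_k]^\flat\subseteq\Omega[C_k]^\sharp$ is a two-step explicit check using type (3) and Corollary~\ref{cor:markedanodyne3}. None of these require any shuffle filtration.

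The only two genuinely hard cases are (2) and ($2^*$) against the boundary inclusion $\partial\Omega[S]^\flat\subseteq\Omega[S]^\flat$, and the paper defers these to separate appendix lemmas (Lemmas~\ref{lemma:cylindersubdivision} and~\ref{lemma:cylindersubdivision2}) which carry out exactly the shuffle--by--shuffle filtration you describe. Your proposal correctly identifies that this is where the work lies, but you have not actually performed it; saying ``I would analyze the tensor product through its shuffles'' is a plan, not a proof. To complete the argument you need to write down the filtration of $\Omega[C_n]^\sharp\otimes\Omega[S]^\flat$ (resp.\ $\Omega[T]^\diamondsuit\otimes\Omega[S]^\flat$) by adjoining shuffles in the partial order, refine each adjunction step by the number of vertices above the black vertex, and check that each attaching map is a pushout of a horn of type (1), (2) or ($2^*$) with the correct marking. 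This is the substance of the proposition and cannot be waved away.
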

\begin{proof}
By standard arguments involving weakly saturated classes it suffices to check this on generating families of marked anodynes and cofibrations. Hence we may assume $f$ belongs to one of the classes (1), (2), ($2^*$), (3) or (4) and we may suppose $g$ equals either a boundary inclusion $\partial \Omega[T]^\flat \subseteq \Omega[T]^\flat$ or an inclusion $\Omega[C_k]^\flat \subseteq \Omega[C_k]^\sharp$ for some $k \in \mathbb{Z}_{\geq 0}$. We need to check ten cases. \par
\textbf{(1a)} Suppose $f$ is an inner horn inclusion $\Lambda^e[T]^\flat \subseteq \Omega[T]^\flat$ and $g$ is the inclusion $\partial \Omega[S]^\flat \subseteq \Omega[S]^\flat$. Denoting the smash product of $f$ and $g$ by $f \wedge g$, we have that $f \wedge g = (u(f) \wedge u(g))^\flat$. In $\mathbf{dSets}$ inner anodynes are stable under smash products with normal monomorphisms (see \cite{dendroidalsets}). Hence $u(f) \wedge u(g)$ is inner anodyne and we conclude that $f \wedge g$ belongs to the weakly saturated class generated by (1). \par
\textbf{(1b)} Let $f$ be the same as in (1a) but set $g$ equal to $\Omega[C_k]^\flat \subseteq \Omega[C_k]^\sharp$. Then $f \wedge g$ is an isomorphism. Indeed, $u(f \wedge g)$ is clearly an isomorphism, and since $\Lambda^e[T]$ already contains all colours of $\Omega[T]$ the map $f \wedge g$ is surjective on marked corollas. \par
\textbf{(2a)} We assume $f$ is a morphism from the family (2) and $g$ equals $\partial \Omega[S]^\flat \subseteq \Omega[S]^\flat$. Checking that the smash product is marked anodyne is quite involved in this case; the combinatorics are deferred to Lemma \ref{lemma:cylindersubdivision2}. \par
\textbf{(2b)} Assume $f$ is a morphism from the class (2) and $g$ equals $\Omega[C_k]^\flat \subseteq \Omega[C_k]^\sharp$. Then $f \wedge g$ is an isomorphism by an argument similar to the one at (1b): the map $u(f \wedge g)$ is an isomorphism and all marked corollas of the tensor product $\Omega[T]^\flat \otimes \Omega[C_k]^\sharp$ are already contained in one of the summands of the pushout which is the domain of $f \wedge g$. \par
\textbf{(}$\mathbf{2^*}$\textbf{a)} Take $f$ as in ($2^*$) and let $g$ be  $\partial \Omega[S]^\flat \subseteq \Omega[S]^\flat$. Again, the combinatorics are somewhat involved and can be found in the proof of Lemma \ref{lemma:cylindersubdivision}. \par
\textbf{(}$\mathbf{2^*}$\textbf{b)} Let $f$ be the map
\begin{equation*}
\coprod_{c \in \mathrm{in}(C_n)} \eta_c \subseteq \Omega[C_n]^\sharp
\end{equation*}
and set $g$ equal to $\Omega[C_k]^\flat \subseteq \Omega[C_k]^\sharp$. It is again clear that the underlying map $u(f \wedge g)$ is an isomorphism. First assume $n, k > 0$. Denote the marked corollas of the domain of $f \wedge g$ by $\mathcal{E}_0$ and identify this domain with $(\Omega[C_n] \otimes \Omega[C_k], \mathcal{E}_0)$. If we denote the root of $C_n$ by $r$, the only corollas of $\Omega[C_n] \otimes \Omega[C_k]$ that are not yet in $\mathcal{E}_0$ are $r \otimes C_k$ and the corolla obtained by contracting all the inner edges of any one of the two shuffles of $\Omega[C_n] \otimes \Omega[C_k]$ (these two contractions are the same by the Boardman-Vogt relation). Denote the union $\mathcal{E}_0$ with this latter corolla by $\mathcal{E}_1$. By looking at the shuffle obtained by grafting copies of $C_k$ on the leaves of $C_n$, we see that the map
\begin{equation*}
(\Omega[C_n] \otimes \Omega[C_k], \mathcal{E}_0) \longrightarrow (\Omega[C_n] \otimes \Omega[C_k], \mathcal{E}_1)
\end{equation*}
can be obtained as a composition of pushouts of maps as in (3) and hence is marked anodyne. Now considering the shuffle where we graft a copy of $C_n$ onto each leaf of $C_k$, we see that the map
\begin{equation*}
(\Omega[C_n] \otimes \Omega[C_k], \mathcal{E}_1) \longrightarrow \Omega[C_n]^\sharp \otimes \Omega[C_k]^\sharp
\end{equation*}
can be obtained as a composition of pushouts of maps as in Corollary \ref{cor:markedanodyne3}. The cases where we allow $n$ and/or $k$ to be $0$ are easier and left to the reader. \par
\textbf{(3a)} Let $f$ be a map as in (3) and let $g$ be the inclusion $\partial \Omega[S]^\flat \subseteq \Omega[S]^\flat$ for some tree $S$. If $S$ has no vertices, the boundary $\partial \Omega[S]$ is empty and the map $f \wedge g$ is isomorphic to $f$ itself. If $S$ has at least one vertex the map $f \wedge g$ is an isomorphism; indeed, $u(f \wedge g)$ is an isomorphism and all marked corollas of $\Omega[T]^\sharp \otimes \Omega[S]^\flat$ are already contained in $\Omega[T]^\sharp \otimes \partial \Omega[S]^\flat$. \par
\textbf{(3b)} Let $f$ be the same, but take $g$ to be $\Omega[C_k]^\flat \subseteq \Omega[C_k]^\sharp$. In the domain of $f \wedge g$ all corollas which are obtained as a corolla of one of the factors tensored with a colour of the other are marked. Since all corollas of the tensor product can be obtained as `compositions' of such corollas, we see that $f \wedge g$ can be obtained as a composition of pushouts of maps of the form (3). \par
\textbf{(4a)} Let $K$ be a simplicial Kan complex and let $f$ be the inclusion $j_!(K^\flat) \subseteq j_!(K^\sharp)$. Set $g$ equal to $\partial \Omega[S]^\flat \subseteq \Omega[S]^\flat$. If $S$ has no vertices then $f \wedge g$ is isomorphic to $f$. Otherwise $f \wedge g$ is an isomorphism, just as in \textbf{(3a)}. \par
\textbf{(4b)} Let $f$ be as in \textbf{(4a)} and let $g$ be $\Omega[C_k]^\flat \subseteq \Omega[C_k]^\sharp$. By the same argument as in \textbf{(3b)} the smash product $f \wedge g$ can be obtained as a (possibly transfinite) composition of pushouts of maps of the form (3). $\Box$
\end{proof}

\begin{corollary}
\label{cor:exponential}
Let $X \longrightarrow S$ be a coCartesian fibration of dendroidal sets and let $K$ be a normal dendroidal set. Then the induced map
\begin{equation*}
\mathbf{Hom}_\mathbf{dSets}(K, X) \longrightarrow \mathbf{Hom}_\mathbf{dSets}(K, S)
\end{equation*}
is a coCartesian fibration and $\mathbf{Hom}_\mathbf{dSets^+}(K^\flat, X^\natural) = \mathbf{Hom}_\mathbf{dSets}(K,X)^\natural$.
\end{corollary}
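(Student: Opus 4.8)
The plan is to deduce both assertions simultaneously from the lifting characterization of coCartesian fibrations in Corollary \ref{cor:liftwrtmarked}, by transporting lifting problems across the tensor--hom adjunction on $\mathbf{dSets^+}$ and feeding in the stability statement of Proposition \ref{prop:smashproduct}. The point is that Corollary \ref{cor:liftwrtmarked} already packages the statement ``$u(q)$ is a coCartesian fibration and the domain carries the $(-)^\natural$ marking'' into the single condition that a map $q$ into a sharply-marked object have the right lifting property against all marked anodynes; so it suffices to verify this one lifting condition for the map of internal homs.

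First I would record the two identifications of markings that make Corollary \ref{cor:liftwrtmarked} applicable. On underlying dendroidal sets, a $T$-dendrex of $\mathbf{Hom}_{\mathbf{dSets^+}}(K^\flat, X^\natural)$ is a map $K^\flat \otimes \Omega[T]^\flat \longrightarrow X^\natural$ in $\mathbf{dSets^+}$. Since $K^\flat$ and $\Omega[T]^\flat$ have only degenerate $1$-corollas marked, and $\overline{\mathcal{E}_{K^\flat}} \otimes \overline{\mathcal{E}_{\Omega[T]^\flat}}$ is a disjoint union of copies of $\eta$, the marked corollas of $K^\flat \otimes \Omega[T]^\flat$ are all degenerate; as these necessarily map to degenerate $1$-corollas, which are marked in $X^\natural$, the marking condition is vacuous. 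Hence
\[
u\bigl(\mathbf{Hom}_{\mathbf{dSets^+}}(K^\flat, X^\natural)\bigr) = \mathbf{Hom}_\mathbf{dSets}(K, X),
\]
and likewise with $X$ replaced by $S$. Moreover, since every corolla of $S^\sharp$ is marked, any underlying map $K \otimes \Omega[C_n] \to S$ automatically respects markings, so every corolla of $\mathbf{Hom}_{\mathbf{dSets^+}}(K^\flat, S^\sharp)$ is marked, i.e. $\mathbf{Hom}_{\mathbf{dSets^+}}(K^\flat, S^\sharp) = \mathbf{Hom}_\mathbf{dSets}(K, S)^\sharp$. Thus the induced map $q \colon \mathbf{Hom}_{\mathbf{dSets^+}}(K^\flat, X^\natural) \longrightarrow \mathbf{Hom}_\mathbf{dSets}(K, S)^\sharp$ has codomain of the form $Y^\sharp$, exactly as required to invoke Corollary \ref{cor:liftwrtmarked}.

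Next I would check that $q$ has the right lifting property with respect to every marked anodyne $f \colon A \to B$. Writing $r \colon X^\natural \to S^\sharp$ for the given map and using that $\mathbf{Hom}_{\mathbf{dSets^+}}(K^\flat, -)$ is right adjoint to $- \otimes K^\flat$, a lifting problem for $f$ against $q = \mathbf{Hom}_{\mathbf{dSets^+}}(K^\flat, r)$ is equivalent to a lifting problem for the map $f \otimes \mathrm{id}_{K^\flat} \colon A \otimes K^\flat \to B \otimes K^\flat$ against $r$. Because $K$ is normal, the map $\emptyset \to K^\flat$ is a cofibration in $\mathbf{dSets^+}$, and $f \otimes \mathrm{id}_{K^\flat}$ is precisely the smash product of $f$ with this cofibration (the two terms $A \otimes \emptyset$ and $B \otimes \emptyset$ vanish). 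By Proposition \ref{prop:smashproduct} it is therefore marked anodyne. Since $X \to S$ is a coCartesian fibration, Corollary \ref{cor:liftwrtmarked} guarantees that $r \colon X^\natural \to S^\sharp$ has the right lifting property against all marked anodynes, so the transposed lift exists; hence $q$ has the right lifting property against $f$.

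Finally, applying Corollary \ref{cor:liftwrtmarked} to $q$ itself yields both conclusions at once: $u(q) = \mathbf{Hom}_\mathbf{dSets}(K, X) \to \mathbf{Hom}_\mathbf{dSets}(K, S)$ is a coCartesian fibration, and the domain carries exactly the $(-)^\natural$ marking, i.e. $\mathbf{Hom}_{\mathbf{dSets^+}}(K^\flat, X^\natural) = \mathbf{Hom}_\mathbf{dSets}(K, X)^\natural$. I do not expect any serious obstacle here, since all of the substance is already contained in Proposition \ref{prop:smashproduct} and Corollary \ref{cor:liftwrtmarked}; the only genuine work is the bookkeeping of the preliminary paragraph, namely identifying the markings on the two internal homs and confirming that $f \otimes \mathrm{id}_{K^\flat}$ is literally the smash product appearing in Proposition \ref{prop:smashproduct}. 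The role of the hypothesis that $K$ be normal is exactly to make $\emptyset \to K^\flat$ a cofibration, which is what licenses that last invocation.
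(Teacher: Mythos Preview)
Your proof is correct and follows essentially the same approach as the paper: transport the lifting problem across the tensor--hom adjunction, observe that $f \otimes \mathrm{id}_{K^\flat}$ is the smash product of the marked anodyne $f$ with the cofibration $\emptyset \to K^\flat$ so is marked anodyne by Proposition~\ref{prop:smashproduct}, and then apply Corollary~\ref{cor:liftwrtmarked} twice. Your write-up is in fact more careful than the paper's in making explicit the identifications $u\bigl(\mathbf{Hom}_{\mathbf{dSets^+}}(K^\flat, X^\natural)\bigr) = \mathbf{Hom}_\mathbf{dSets}(K, X)$ and $\mathbf{Hom}_{\mathbf{dSets^+}}(K^\flat, S^\sharp) = \mathbf{Hom}_\mathbf{dSets}(K, S)^\sharp$, which the paper asserts without elaboration.
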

\begin{proof}
The proof is a standard adjunction argument. Let $A \longrightarrow B$ be a marked anodyne in $\mathbf{dSets^+}$ and suppose we are given a commutative diagram
\[
\xymatrix{
A \ar[d]\ar[r] & \mathbf{Hom}_\mathbf{dSets^+}(K^\flat, X^\natural) \ar[d] \\
B \ar[r] & \mathbf{Hom}_\mathbf{dSets^+}(K^\flat, S^\sharp)
}
\]
By adjunction, finding a lift in this diagram is equivalent to finding a lift in
\[
\xymatrix{
A \otimes K^\flat \ar[d]\ar[r] & X^\natural \ar[d] \\
B \otimes K^\flat \ar[r] & S^\sharp
}
\]
The left vertical map is the smash product of $A \longrightarrow B$ with the cofibration $\emptyset \longrightarrow K^\flat$ and hence marked anodyne. By Corollary \ref{cor:liftwrtmarked} the lift exists. We conclude that the right vertical map in our original diagram has the right lifting property with respect to all marked anodynes. Note that we can identify $\mathbf{Hom}_\mathbf{dSets^+}(K^\flat, S^\sharp)$ with $\mathbf{Hom}_\mathbf{dSets}(K, S)^\sharp$. Applying Corollary \ref{cor:liftwrtmarked} again yields the conclusion. $\Box$
\end{proof}

From the proposition we also get the following corollary, which is interesting in its own right.

\begin{corollary}
\label{cor:pointwiseequiv}
Let $X$ be an $\infty$-operad and let $K$ be a normal dendroidal set. Then a 1-corolla of $\mathbf{Hom}_\mathbf{dSets}(K, X)$ is an equivalence if and only if for each colour of $K$ (i.e. each element of $K_\eta$) the induced 1-corolla of $X$ is an equivalence. In other words, equivalences in $\mathbf{Hom}_\mathbf{dSets}(K, X)$ are exactly the pointwise equivalences.
\end{corollary}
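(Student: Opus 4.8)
The plan is to deduce both characterizations of the relevant $1$-corollas from a single marked dendroidal set. Write $(X,\mathcal{W})$ for the marked dendroidal set whose marked corollas are exactly the equivalences of $X$; this is a legitimate marking, since equivalences are $1$-corollas, contain all degeneracies and are trivially $\Sigma_1$-stable. The first, and main, step is to check that the projection $(X,\mathcal{W}) \longrightarrow \ast^\flat$ to the terminal dendroidal set $\ast$, equipped with its flat marking, has the right lifting property with respect to all marked anodynes. I would verify this through Proposition \ref{prop:lifting}: condition (a) holds because $X \longrightarrow \ast$ is an inner fibration, i.e.\ $X$ is an $\infty$-operad; for (b) I use that a $1$-corolla of $X$ is coCartesian for the projection to $\ast$ precisely when it is an equivalence (one direction is property (2) of the first proposition of Section \ref{subsection:coCartcorollas}, applied with base $\ast$, in which every $1$-corolla is an equivalence; the other is Theorem~A.7 of \cite{moerdijkcisinski}), together with the observation that in $\ast^\flat$ only the degenerate $1$-corolla is marked, so that no corolla of arity $\neq 1$ of $(X,\mathcal{W})$ can be marked; condition (c) is witnessed by the degenerate lifts. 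It is essential to use $\ast^\flat$ rather than $\ast^\sharp$: since $X$ is merely an $\infty$-operad, $X \longrightarrow \ast$ is \emph{not} a coCartesian fibration (arbitrary colours of $X$ need not combine into an operation), so Corollary \ref{cor:liftwrtmarked} does not apply; the object $(X,\mathcal{W})$ is the dendroidal analogue of Lurie's $C^\natural$.

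Next I would transport this along the internal hom. By the closed monoidal adjunction $(-\otimes K^\flat) \dashv \mathbf{Hom}_{\mathbf{dSets^+}}(K^\flat,-)$, a lifting problem for $\mathbf{Hom}_{\mathbf{dSets^+}}(K^\flat,(X,\mathcal{W})) \longrightarrow \mathbf{Hom}_{\mathbf{dSets^+}}(K^\flat,\ast^\flat)=\ast^\flat$ against a marked anodyne $A \longrightarrow B$ transposes to a lifting problem for $(X,\mathcal{W}) \longrightarrow \ast^\flat$ against the smash product of $A \longrightarrow B$ with the cofibration $\emptyset \longrightarrow K^\flat$ (a cofibration because $K$ is normal). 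By Proposition \ref{prop:smashproduct} this smash product is again marked anodyne, so the transposed lift exists by the first step. Hence $\mathbf{Hom}_{\mathbf{dSets^+}}(K^\flat,(X,\mathcal{W})) \longrightarrow \ast^\flat$ also has the right lifting property with respect to all marked anodynes; its underlying dendroidal set is $H := \mathbf{Hom}_{\mathbf{dSets}}(K,X)$ (since $K^\flat \otimes \Omega[T]^\flat$ is flat), which is therefore an $\infty$-operad by condition (a). Running Proposition \ref{prop:lifting} for this map exactly as before identifies its marked $1$-corollas with the equivalences of $H$. On the other hand, by the definition of the marking on an internal hom, the marked $1$-corollas are the maps $K^\flat \otimes \Omega[C_1]^\sharp \longrightarrow (X,\mathcal{W})$; unwinding the formula for the tensor product of marked dendroidal sets, the marked corollas of $K^\flat \otimes \Omega[C_1]^\sharp$ are the degenerate ones together with the $1$-corollas $c \otimes C_1$ for $c$ a colour of $K$, so such a map (with underlying $1$-corolla $f$ of $H$) is marking-preserving if and only if the induced $1$-corolla $\mathrm{ev}_c(f)$ of $X$, obtained by restricting along $c\colon \eta \longrightarrow K$, is an equivalence for every colour $c$. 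Comparing the two descriptions of the marked $1$-corollas of $\mathbf{Hom}_{\mathbf{dSets^+}}(K^\flat,(X,\mathcal{W}))$ yields the corollary.

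The main obstacle is the first step, and specifically the commitment to the flat marking $\ast^\flat$: one must resist the clean criterion of Corollary \ref{cor:liftwrtmarked}, which would demand a coCartesian fibration that does not exist here, and instead run Proposition \ref{prop:lifting} by hand, where the real content is the identification of $p$-coCartesian $1$-corollas over $\ast$ with equivalences and the verification that no higher-arity corolla of $(X,\mathcal{W})$ is marked. Once this pointed object is set up correctly, the remaining work — the adjunction and smash-product transport and the combinatorial identification of the marked corollas of $K^\flat \otimes \Omega[C_1]^\sharp$ with the colourwise data of $K$ — is routine given Proposition \ref{prop:smashproduct} and the explicit description of the tensor product.
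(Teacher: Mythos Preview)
Your argument is correct and follows essentially the same route as the paper's proof: you introduce the marking $(X,\mathcal{W})$ by equivalences, verify via Proposition~\ref{prop:lifting} that $(X,\mathcal{W}) \longrightarrow \ast^\flat$ has the right lifting property with respect to marked anodynes (using Theorem~A.7 of \cite{moerdijkcisinski} to identify coCartesian $1$-corollas over $\ast$ with equivalences), transport this along the internal hom using the smash-product stability of marked anodynes, and then read off the two descriptions of the marked $1$-corollas of $\mathbf{Hom}_{\mathbf{dSets^+}}(K^\flat,(X,\mathcal{W}))$. The paper's proof is organized identically; the only cosmetic difference is that where you spell out the adjunction-and-smash-product transport explicitly, the paper simply refers back to ``the same adjunction argument as before'' (namely the one just used in Corollary~\ref{cor:exponential}).
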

\begin{proof}
Let $\mathcal{E}$ denote the set of equivalences of $X$. Note that the underlying dendroidal set of $\mathbf{Hom}_\mathbf{dSets^+}(K^\flat, (X, \mathcal{E}))$ is simply $\mathbf{Hom}_\mathbf{dSets}(K, X)$. The marked corollas of $\mathbf{Hom}_\mathbf{dSets^+}(K^\flat, (X, \mathcal{E}))$ correspond precisely to the maps $K \otimes \Omega[C_1] \longrightarrow X$ for which the induced 1-corolla of $X$ for each colour of $K$ is an equivalence. We wish to show these are precisely the equivalences of the $\infty$-operad $\mathbf{Hom}_\mathbf{dSets}(K,X)$. \par
Denote by $*$ the terminal object of $\mathbf{dSets}$, which is (isomorphic to) $N_d(\mathbf{Comm})$. Let $p: X \longrightarrow *$ denote the unique projection.  Theorem A.7 from the appendix of \cite{moerdijkcisinski} tells us that the $p$-coCartesian 1-corollas of $X$ are precisely the equivalences of $X$. One sees that the map $(X, \mathcal{E}) \longrightarrow *^\flat$ satisfies conditions (a), (b) and (c) of Proposition \ref{prop:lifting} and hence has the right lifting property with respect to all marked anodynes. By the same adjunction argument as before the induced map
\begin{equation*}
\bar p: \mathbf{Hom}_\mathbf{dSets^+}(K^\flat, (X,\mathcal{E})) \longrightarrow \mathbf{Hom}_\mathbf{dSets^+}(K^\flat, *^\flat) = *^\flat
\end{equation*}
will also have the right lifting property with respect to all marked anodynes. Hence the marked 1-corollas of $\mathbf{Hom}_\mathbf{dSets^+}(K^\flat, X^\natural)$ have to be the $\bar p$-coCartesian 1-corollas, i.e. (applying the Theorem again) the equivalences of $\mathbf{Hom}_\mathbf{dSets^+}(K^\flat, X^\natural)$. Combining this with our earlier observation about the marked corollas of this internal hom yields the conclusion. $\Box$
\end{proof}

We also get the following corollary, which was already claimed in Proposition \ref{prop:mappingobjects}.

\begin{corollary}
Let $(X, \mathcal{E}_X) \longrightarrow S^\sharp$ be a map in $\mathbf{dSets^+}$ and let $Y \longrightarrow S$ be a coCartesian fibration. Suppose furthermore that the dendroidal set $X$ is normal. Then $\mathrm{Map}_S^\sharp(X,Y^\natural)$ is the maximal Kan complex contained in the $\infty$-category $\mathrm{Map}_S^\flat(X,Y^\natural)$.
\end{corollary}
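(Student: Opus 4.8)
The plan is to reduce the statement to a single assertion about edges: that an edge $e$ of $\mathrm{Map}_S^\flat(X,Y^\natural)$ is marked if and only if it is an equivalence. This reduction is immediate, since $\mathrm{Map}_S^\flat(X,Y^\natural)$ is an $\infty$-category by the first part of Proposition \ref{prop:mappingobjects}, so that its maximal Kan complex consists of exactly those simplices all of whose edges are equivalences, whereas $\mathrm{Map}_S^\sharp(X,Y^\natural)$ consists of those simplices all of whose edges are marked. Once the biconditional for edges is proven, the two descriptions agree simplexwise and the two subcomplexes coincide.

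To analyze the marked edges I would first run the adjunction argument from the proof of Corollary \ref{cor:exponential} in the present setting. Lifting a marked anodyne $A \to B$ against $q \colon \mathbf{Hom}_{\mathbf{dSets^+}}(X,Y^\natural) \to \mathbf{Hom}_{\mathbf{dSets^+}}(X,S^\sharp) = \mathbf{Hom}_\mathbf{dSets}(u(X),S)^\sharp$ is equivalent to lifting $A \otimes X \to B \otimes X$ against $Y^\natural \to S^\sharp$. The former is marked anodyne by Proposition \ref{prop:smashproduct} (the map $\emptyset \to X$ is a cofibration because $u(X)$ is normal), and $Y^\natural \to S^\sharp$ has the right lifting property with respect to marked anodynes by Corollary \ref{cor:liftwrtmarked}. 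Hence $q$ has this right lifting property too, and applying Corollary \ref{cor:liftwrtmarked} to $q$ itself shows that $u(q)$ is a coCartesian fibration and that the marked corollas of $\mathbf{Hom}_{\mathbf{dSets^+}}(X,Y^\natural)$ are precisely the $u(q)$-coCartesian ones. Now $\mathrm{Map}_S^\flat(X,Y^\natural)$ is the fibre of $\bar q := i^* u(q)$ over the vertex $x_0$ given by the structure map $X \to S^\sharp$, and under this identification the marked edges of $\mathrm{Map}_S^\flat(X,Y^\natural)$ are exactly the $u(q)$-coCartesian $1$-corollas lying in this fibre.

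For the implication marked $\Rightarrow$ equivalence, I would note that a $u(q)$-coCartesian $1$-corolla is in particular $\bar q$-coCartesian: the left-horn inclusions $\Lambda^n_0 \subseteq \Delta^n$ defining $\bar q$-coCartesianness are sent by $i_!$ to the horns $\Lambda^v[L_n] \subseteq \Omega[L_n]$ for the linear trees $L_n$, which occur among the horns in the definition of a coCartesian corolla. Since $\bar q$ is an inner fibration (as $u(q)$ is), an edge lying in a fibre of $\bar q$ is $\bar q$-coCartesian if and only if it is an equivalence in that fibre, a standard consequence of Proposition 2.4.1.5 of \cite{htt}. Thus every marked edge is an equivalence in $\mathrm{Map}_S^\flat(X,Y^\natural)$.

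For the converse I would use the marked anodynes of type $(4)$. Let $e$ be an equivalence and let $J$ denote the nerve of the contractible groupoid on two objects. As $\Delta^1 \to J$ is anodyne and $e$ lies in the maximal Kan complex, $e$ extends to a map $J \to \mathrm{Map}_S^\flat(X,Y^\natural)$, that is, to a map $(i_!J)^\flat \otimes X \to Y^\natural$ over $S^\sharp$. Because $J$ is a Kan complex, the inclusion $(i_!J)^\flat \subseteq (i_!J)^\sharp$ is marked anodyne of type $(4)$, so smashing with the cofibration $\emptyset \to X$ gives a marked anodyne $(i_!J)^\flat \otimes X \to (i_!J)^\sharp \otimes X$ by Proposition \ref{prop:smashproduct}; using again that $Y^\natural \to S^\sharp$ has the right lifting property with respect to marked anodynes, I extend to $(i_!J)^\sharp \otimes X \to Y^\natural$ over $S^\sharp$. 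Restricting along $\Omega[C_1]^\sharp \hookrightarrow (i_!J)^\sharp$ (the image of the marked edge of $J$) exhibits $e$ as extending to $\Omega[C_1]^\sharp \otimes X \to Y^\natural$ over $S^\sharp$, which is precisely the condition that $e$ be marked, completing the biconditional and hence the proof. The step I expect to be the main obstacle is the bookkeeping in the second paragraph: carefully matching the marking on the internal hom and the passage to the fibre over the structure map with the $u(q)$-coCartesian corollas, together with the (formal but fiddly) comparison between the dendroidal notion of a coCartesian corolla restricted to linear trees and the simplicial notion of a coCartesian edge used when invoking Proposition 2.4.1.5 of \cite{htt}.
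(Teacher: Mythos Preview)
Your argument is correct, but it takes a longer path than the paper's. The paper observes that the marked simplicial set $j_!\bigl(\mathrm{Map}_S^\flat(X,Y^\natural),\mathrm{Map}_S^\sharp(X,Y^\natural)_1\bigr)$ is the pullback of $q$ along the map $\eta \to \mathbf{Hom}_{\mathbf{dSets^+}}(X,S^\sharp)$ classifying the structure map. Since the right lifting property with respect to marked anodynes is preserved under pullback, Corollary \ref{cor:liftwrtmarked} applies \emph{directly to the fibre map} $\mathrm{Map}_S^\flat(X,Y^\natural) \to \Delta^0$: this is a coCartesian fibration over a point with marked edges exactly the coCartesian ones, and coCartesian edges over a point are precisely the equivalences. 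Both implications fall out simultaneously.

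By contrast, you keep working with $q$ itself and treat the two directions separately. Your ``marked $\Rightarrow$ equivalence'' direction passes through the comparison of dendroidal $u(q)$-coCartesian corollas with simplicial $\bar q$-coCartesian edges (correct, since $i_!(\Lambda^n_0 \subset \Delta^n)$ is indeed among the leaf horns), and your ``equivalence $\Rightarrow$ marked'' direction is an independent lifting argument using $J$ and the type $(4)$ marked anodynes. Both steps are sound, and the $J$-argument is pleasant in that it makes explicit why type $(4)$ anodynes are in the generating list. But the paper's pullback trick avoids the dendroidal-versus-simplicial bookkeeping you flag at the end as the ``main obstacle'': once you pull back to $\eta$, everything lives in $\mathbf{sSets^+}$ and Corollary \ref{cor:liftwrtmarked} does all the work in one invocation.
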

\begin{proof}
We improve upon the proof of Proposition \ref{prop:mappingobjects} as follows. The structural map $X \longrightarrow S^\sharp$ induces a map $\eta \longrightarrow \mathbf{Hom}_{\mathbf{dSets^+}}(X, S^\sharp)$. Observe that the marked simplicial set $(\mathrm{Map}_S^\flat(X,Y^\natural),\mathrm{Map}_S^\sharp(X,Y^\natural)_1)$ fits into a pullback diagram
\[
\xymatrix{
j_!(\mathrm{Map}_S^\flat(X,Y^\natural),\mathrm{Map}_S^\sharp(X,Y^\natural)_1) \ar[r]\ar[d] & \mathbf{Hom}_{\mathbf{dSets^+}}(X, Y^\natural) \ar[d] \\
\eta \ar[r] & \mathbf{Hom}_{\mathbf{dSets^+}}(X, S^\sharp)
}
\]
We have just seen that the right vertical map has the right lifting property with respect to all marked anodynes, so the left vertical map will have this lifting property as well. By Corollary \ref{cor:liftwrtmarked} the map $\mathrm{Map}_S^\flat(X,Y^\natural) \longrightarrow \Delta^0$ is a coCartesian fibration and the set of coCartesian 1-simplices of $\mathrm{Map}_S^\flat(X,Y^\natural)$ is $\mathrm{Map}_S^\sharp(X,Y^\natural)_1$. We know that coCartesian 1-simplices over the point are exactly equivalences (again, one can use Theorem A.7 of \cite{moerdijkcisinski} or consult \cite{htt}). In other words, $\mathrm{Map}_S^\sharp(X,Y^\natural)$ consists of the simplices of $\mathrm{Map}_S^\flat(X,Y^\natural)$ all of whose edges are equivalences. This is precisely the maximal Kan complex contained in the latter simplicial set. $\Box$
\end{proof}

We conclude this section with the following lemma, which will be useful later.

\begin{lemma}
\label{lemma:Kanfibration}
Suppose $X, Y \in \mathbf{dSets^+}/S$ and let $f: X \longrightarrow Y$ be a cofibration in $\mathbf{dSets^+}/S$. Let $Z \longrightarrow S$ be a coCartesian fibration and let $f^*: \mathrm{Map}_S^\sharp(Y, Z^\natural) \longrightarrow \mathrm{Map}_S^\sharp(X, Z^\natural)$ be the induced map. Then $f^*$ is a Kan fibration.
\end{lemma}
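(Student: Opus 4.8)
The plan is to prove that $f^*$ is a \emph{left} fibration of simplicial sets and then to invoke the standard fact that a left fibration over a Kan complex is a Kan fibration (Lemma 2.1.3.3 of \cite{htt}). The base $\mathrm{Map}_S^\sharp(X, Z^\natural)$ is a Kan complex by the preceding corollary, so everything comes down to checking that $f^*$ has the right lifting property with respect to the horn inclusions $\Lambda^n_i \subseteq \Delta^n$ for $0 \le i < n$.

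First I would transpose the lifting problem. Using the universal property defining $\mathrm{Map}_S^\sharp$ together with the closed monoidal structure on $\mathbf{dSets^+}$, a lift of $f^*$ against $\Lambda^n_i \subseteq \Delta^n$ is the same datum as a lift of $Z^\natural \longrightarrow S^\sharp$ against the smash product of the cofibration $f$ with the inclusion
\[
u_{n,i}\colon (i_!\Lambda^n_i)^\sharp \longrightarrow (i_!\Delta^n)^\sharp .
\]
By Corollary \ref{cor:liftwrtmarked} the map $Z^\natural \to S^\sharp$ has the right lifting property against every marked anodyne, and by Proposition \ref{prop:smashproduct} the smash product of a marked anodyne with a cofibration is again marked anodyne. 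Hence it is enough to show that $u_{n,i}$ is marked anodyne whenever $0 \le i < n$. To see this, note that $i_!\Delta^n$ is the representable $\Omega[L_n]$ on the linear tree $L_n$, all of whose corollas are unary; consequently both source and target of $u_{n,i}$ carry all of their corollas as marked, so that $u_{n,i}$ merely adjoins the missing $i$-th face and the top dendrex. For an inner index $u_{n,i}$ is realized as a pushout of the flat inner horn inclusion of type $(1)$ along the map supplying the remaining markings (with a single map of type $(3)$ needed to mark the new composite corolla in the case $n=2$); for the outer index singled out by the leaf corolla — which, by Remark \ref{rmk:simplicialleftfib}, is one of the horns $\Lambda^n_i$ with $i<n$ — the same argument applies with the generator of type $(2)$ in place of $(1)$. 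In each case $u_{n,i}$ lies in the weakly saturated class of marked anodynes, so $f^*$ is a left fibration.

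The reason for routing the argument through the Kan complex structure of the base, rather than attempting to fill all horns directly, is the remaining horn $\Lambda^n_n$. Its associated map $u_{n,n}$ adjoins the \emph{root} outer face, and filling it against $Z^\natural \to S^\sharp$ would amount to producing a $p$-coCartesian lift with prescribed \emph{output}, something a coCartesian fibration does not supply. Thus $u_{n,n}$ is \emph{not} marked anodyne and this step is simply unavailable by the smash-product method; this is the main obstacle. It is circumvented by the opening observation: $f^*$ is a left fibration between Kan complexes (both $\mathrm{Map}_S^\sharp(Y,Z^\natural)$ and $\mathrm{Map}_S^\sharp(X,Z^\natural)$ being Kan by the preceding corollary), and a left fibration over a Kan complex is automatically a Kan fibration. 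This furnishes the missing lifts against $\Lambda^n_n$ as well and completes the proof.
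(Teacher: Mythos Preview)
Your proof is correct and follows essentially the same route as the paper: show that $f^*$ is a left fibration by transposing to a lifting problem against $Z^\natural \to S^\sharp$, observe that the relevant smash product is marked anodyne (Proposition \ref{prop:smashproduct} and Corollary \ref{cor:liftwrtmarked}), and conclude Kan fibration from the fact that the base is a Kan complex. The paper handles all left anodynes $g$ at once and simply asserts ``one easily verifies that $i_!(g)^\sharp$ is marked anodyne,'' whereas you work with the generating horn inclusions and sketch the verification case by case; your added remark explaining why $\Lambda^n_n$ cannot be handled directly is a helpful gloss the paper omits. One small point: in the $n=2$ leaf-horn case the extra marking step uses Corollary \ref{cor:markedanodyne3} rather than a generator of type $(3)$ proper, but this is a cosmetic distinction.
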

\begin{proof}
Let $g: A \subseteq B$ be a left anodyne inclusion of simplicial sets. We want to show that we can solve lifting problems of the form
\[
\xymatrix{
A \ar[d]\ar[r] & \mathrm{Map}_S^\sharp(Y, Z^\natural) \ar[d]^{f^*} \\
B \ar[r]\ar@{-->}[ur] & \mathrm{Map}_S^\sharp(X, Z^\natural)
}
\]
Indeed, the right vertical map will then be a left fibration over a Kan complex and hence a Kan fibration. By adjunction, this lifting problem is equivalent to the following one:
\[
\xymatrix{
i_!(A)^\sharp \otimes Y \coprod_{i_!(A)^\sharp \otimes X} i_!(B)^\sharp \otimes X \ar[r]\ar[d] & Z^\natural \ar[d] \\
i_!(B)^\sharp \otimes Y \ar[r]\ar@{-->}[ur] & S^\sharp
}
\]
One easily verifies that $i_!(g)^\sharp$ is marked anodyne. The left vertical map is the smash product of $i_!(g)^\sharp$ with the cofibration $f$ and so is marked anodyne as well. Therefore the right vertical map has the right lifting property with respect to this map. $\Box$
\end{proof}

\subsection{coCartesian equivalences}

Given a dendroidal set $X$, a \emph{normalization} of $X$ is a trivial fibration
\begin{equation*}
p: X_{(n)} \longrightarrow X
\end{equation*}
in the Cisinski-Moerdijk model structure on $\mathbf{dSets}$ such that $X_{(n)}$ is a normal dendroidal set. Note that a trivial fibration is in particular a coCartesian fibration in which every corolla of the domain is coCartesian. Now, if $(X, \mathcal{E}_X)$ is a marked dendroidal set, we will refer to the marked dendroidal set
\begin{equation*}
(X_{(n)}, p^{-1}(\mathcal{E}_X))
\end{equation*}
as a \emph{marked normalization} of $X$. \par
We fix once and for all a normalization of the terminal object $\ast$, i.e. a factorization of the unique map $\emptyset \longrightarrow \ast$ into a cofibration followed by a trivial fibration, and denote the resulting dendroidal set by $E_\infty$:
\[
\xymatrix@W=15pt{
\emptyset \ar@{>->}[r] & E_\infty \ar@{->>}^\sim[r] & \ast 
}
\]
Since any dendroidal set which admits a map to a normal dendroidal set is itself normal \cite{dendroidalsets}, $(X,\mathcal{E}_X) \times E_\infty^\sharp$ will be a marked normalization of $(X, \mathcal{E}_X)$. When needed, we will use this explicit normalization.

\begin{definition}
A morphism $X \longrightarrow Y$ in $\mathbf{dSets^+}/S$ is said to be a \emph{marked equivalence} if for any coCartesian fibration $Z \longrightarrow S$ there exists a choice of marked normalizations of $X$ and $Y$ fitting into a commutative square
\[
\xymatrix{
X_{(n)} \ar[r]\ar[d] & Y_{(n)} \ar[d] \\
X \ar[r] & Y
}
\]
such that the induced map
\begin{equation*}
\mathrm{Map}_S^\flat(Y_{(n)},Z^\natural) \longrightarrow \mathrm{Map}_S^\flat(X_{(n)},Z^\natural)
\end{equation*}
is an equivalence of $\infty$-categories.
\end{definition}

\begin{remark}
Actually, the property of being a marked equivalence implies the following: for \emph{any} choice of marked normalizations fitting into a commutative square as above, the induced map
\begin{equation*}
\mathrm{Map}_S^\flat(Y_{(n)},Z^\natural) \longrightarrow \mathrm{Map}_S^\flat(X_{(n)},Z^\natural)
\end{equation*}
is an equivalence of $\infty$-categories. \par 
Also, it will sometimes be convenient to reduce to the case where $S$ is normal. There is an obvious functor $\mathbf{dSets^+}/S \longrightarrow \mathbf{dSets^+}/S \times E_\infty$ given by taking the product with $E_\infty^\sharp$. Then a map $X \longrightarrow Y$ in $\mathbf{dSets^+}/S$ is a marked equivalence if and only if its image under this functor is a marked equivalence in $\mathbf{dSets^+}/S \times E_\infty$.
\end{remark}

This section will be devoted to several useful results about marked equivalences.

\begin{proposition}
\label{prop:markedanodyne}
Marked anodyne morphisms between objects in $\mathbf{dSets^+}/S$ whose underlying dendroidal sets are normal are marked equivalences.
\end{proposition}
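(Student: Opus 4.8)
The plan is to prove the stronger statement that for \emph{every} coCartesian fibration $Z \longrightarrow S$ the map
$$f^* : \mathrm{Map}_S^\flat(Y, Z^\natural) \longrightarrow \mathrm{Map}_S^\flat(X, Z^\natural)$$
induced by the marked anodyne $f: X \longrightarrow Y$ is a trivial fibration of simplicial sets. Since the underlying dendroidal sets $u(X)$ and $u(Y)$ are normal by hypothesis, the identity maps exhibit $X$ and $Y$ as their own marked normalizations, and with this choice the map on mapping objects appearing in the definition of a marked equivalence is exactly $f^*$. By Proposition \ref{prop:mappingobjects} both $\mathrm{Map}_S^\flat(X, Z^\natural)$ and $\mathrm{Map}_S^\flat(Y, Z^\natural)$ are $\infty$-categories, and a trivial fibration between $\infty$-categories is an equivalence of $\infty$-categories; so once $f^*$ is shown to be a trivial fibration we are done.

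To check that $f^*$ is a trivial fibration I would verify the right lifting property against each boundary inclusion $\partial\Delta^n \hookrightarrow \Delta^n$. Using the defining adjunction
$$\mathbf{sSets}\bigl(K, \mathrm{Map}_S^\flat(-, Z^\natural)\bigr) \;\simeq\; (\mathbf{dSets^+}/S^\sharp)\bigl((i_!(K))^\flat \otimes (-),\, Z^\natural\bigr),$$
such a lifting problem transposes into the problem of filling
$$\bigl(i_!(\partial\Delta^n)\bigr)^\flat \otimes Y \;\coprod_{(i_!(\partial\Delta^n))^\flat \otimes X}\; \bigl(i_!(\Delta^n)\bigr)^\flat \otimes X \longrightarrow \bigl(i_!(\Delta^n)\bigr)^\flat \otimes Y$$
over $S^\sharp$ with values in $Z^\natural$. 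The displayed left-hand map is precisely the pushout--product of the marked anodyne $f$ with the cofibration $(i_!(\partial\Delta^n))^\flat \longrightarrow (i_!(\Delta^n))^\flat$; its underlying dendroidal map $i_!(\partial\Delta^n) \longrightarrow i_!(\Delta^n)$ is a normal monomorphism, since linear trees carry no nontrivial automorphisms, so this is indeed a cofibration in $\mathbf{dSets^+}$.

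It then remains to invoke the two key inputs. By Proposition \ref{prop:smashproduct} the pushout--product of a marked anodyne with a cofibration is again marked anodyne. On the other hand, since $Z \longrightarrow S$ is a coCartesian fibration, Corollary \ref{cor:liftwrtmarked} guarantees that $Z^\natural \longrightarrow S^\sharp$ has the right lifting property with respect to all marked anodyne morphisms. Combining these, the transposed lifting problem admits a solution, so $f^*$ has the right lifting property against every boundary inclusion and is therefore a trivial fibration, completing the argument.

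The essential content is concentrated entirely in the two cited results: the stability of marked anodynes under pushout--products (Proposition \ref{prop:smashproduct}, whose proof absorbs the hard combinatorial lemmas) and the characterization of $Z^\natural \to S^\sharp$ as having the right lifting property against marked anodynes (Corollary \ref{cor:liftwrtmarked}). The only real work here is the adjunction bookkeeping that rephrases the trivial-fibration condition on $f^*$ as a lifting problem against a pushout--product; this is the step to set up carefully, but it presents no conceptual obstacle once the mapping-object adjunction is applied.
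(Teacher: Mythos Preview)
Your proof is correct and follows essentially the same approach as the paper's: both show that $f^*$ is a trivial fibration by transposing the lifting problem against simplicial monomorphisms into a lifting problem for $Z^\natural \to S^\sharp$ against the pushout-product of $f$ with a cofibration, and then invoke Proposition~\ref{prop:smashproduct} and Corollary~\ref{cor:liftwrtmarked}. The only cosmetic difference is that the paper lifts against an arbitrary inclusion $A \subseteq B$ of simplicial sets rather than specifically against boundary inclusions, but this is immaterial.
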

\begin{proof}
Suppose $X$ and $Y$ are marked dendroidal sets over $S^\sharp$. Let $f: X \longrightarrow Y$ be marked anodyne, suppose $u(X)$ and $u(Y)$ are normal and let $g: A \subseteq B$ be an inclusion of simplicial sets. Since the smash product $f \wedge j_!(g^\flat)$ is again marked anodyne, we deduce that for any coCartesian fibration $Z \longrightarrow S$ the induced map $\mathrm{Map}_S^\flat(Y,Z^\natural) \longrightarrow \mathrm{Map}_S^\flat(X,Z^\natural)$ has the right lifting property with respect to $g$. Hence it is a trivial fibration of simplicial sets. $\Box$
\end{proof}

Recall the following result (Lemma 3.1.3.2 from \cite{htt}):

\begin{lemma}
Let $f: C \longrightarrow D$ be a functor between $\infty$-categories. Then the following are equivalent:
\begin{itemize}
\item[(1)] The functor $f$ is an equivalence of $\infty$-categories
\item[(2)] For every simplicial set $K$, the induced functor $C^K \longrightarrow D^K$ induces a homotopy equivalence between the maximal Kan complexes contained in $C^K$ and $D^K$
\end{itemize}
\end{lemma}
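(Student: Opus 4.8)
The plan is to treat the two implications separately; throughout I write $\iota E$ for the maximal Kan complex contained in an $\infty$-category $E$, i.e.\ the simplicial subset of those simplices all of whose edges are equivalences, and I use freely the exponential identity $\mathrm{Fun}(K,E)^J \cong \mathrm{Fun}(K, E^J)$, valid for any simplicial set $J$ playing the role of a homotopy parameter.

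For $(1) \Rightarrow (2)$ I would first upgrade $f$ to a statement about functor categories. A categorical equivalence between $\infty$-categories admits an inverse $g$ up to natural equivalence, and $\mathrm{Fun}(K,-)$ carries a natural equivalence $\mathrm{id}_C \simeq g \circ f$ to a natural equivalence $\mathrm{id}_{C^K} \simeq g_* \circ f_*$ by the identity above; hence the induced functor $f_* : C^K \to D^K$ is again a categorical equivalence of $\infty$-categories. It then suffices to know that $\iota$ sends categorical equivalences between $\infty$-categories to homotopy equivalences of Kan complexes. This follows by the same mechanism: the interval $J = N(\{0 \cong 1\})$ is a contractible Kan complex, so a natural equivalence restricts to a $J$-homotopy between the maps induced on maximal Kan complexes, and $\iota(f_*)$ is a homotopy equivalence for every $K$.

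For the converse $(2) \Rightarrow (1)$ I would show $f$ is essentially surjective and fully faithful, which characterizes categorical equivalences. Essential surjectivity is the case $K = \Delta^0$: then $\iota C \to \iota D$ is a homotopy equivalence, hence a bijection on $\pi_0$, and $\pi_0(\iota E)$ is the set of equivalence classes of objects of $E$. For full faithfulness --- the main step --- I apply $\iota$ to the evaluation maps $C^{\Delta^1} \to C^{\partial\Delta^1} = C \times C$ to obtain the commutative square
\[
\xymatrix{
\iota(C^{\Delta^1}) \ar[r]\ar[d] & \iota(D^{\Delta^1}) \ar[d] \\
\iota C \times \iota C \ar[r] & \iota D \times \iota D
}
\]
The horizontal maps are homotopy equivalences by hypothesis (the cases $K = \Delta^1$ and $K = \partial\Delta^1$, using $\iota(C \times C) = \iota C \times \iota C$). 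The vertical maps are Kan fibrations: $C^{\Delta^1} \to C^{\partial\Delta^1}$ is a categorical fibration between $\infty$-categories, and $\iota$ carries such fibrations to Kan fibrations, a consequence of the special-outer-horn lifting enjoyed by categorical fibrations (the outer horns in question have all edges equivalences). Finally, the fiber of the left vertical map over a pair of objects $(x,y)$ is exactly the mapping space $\mathrm{Map}_C(x,y) = \{x\} \times_C C^{\Delta^1} \times_C \{y\}$, since any simplex lying over the constant pair $(x,y)$ has boundary values constant at $x$ and $y$ and is therefore automatically a pointwise equivalence. Comparing fibers in this map of Kan fibrations whose total and base maps are weak equivalences shows $\mathrm{Map}_C(x,y) \to \mathrm{Map}_D(fx, fy)$ is a homotopy equivalence for all $x,y$; this is full faithfulness, and together with essential surjectivity it yields (1).

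The hard part will be this fiber comparison: correctly identifying the fibers of $\iota$ of the arrow category with honest mapping spaces (the point-set check of which edges of $C^{\Delta^1}$ become equivalences and that $\iota$ commutes with the relevant pullbacks), together with the invocation that a map of Kan fibrations inducing weak equivalences on total spaces and on bases induces weak equivalences on fibers --- right properness of the Kan--Quillen model structure, or the five lemma applied to the long exact sequences of homotopy groups. Everything else is formal manipulation of the exponential adjunction and of $J$-homotopies.
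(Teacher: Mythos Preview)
The paper does not actually prove this lemma: it is quoted verbatim as Lemma~3.1.3.2 of \cite{htt} and used as a black box. Your argument, by contrast, is a self-contained proof and is correct.

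A couple of remarks on the one genuinely delicate point, the fiber identification in $(2)\Rightarrow(1)$. The equality
\[
\text{fiber of } \iota(C^{\Delta^1}) \longrightarrow \iota C \times \iota C \text{ over } (x,y) \;=\; \{x\}\times_C C^{\Delta^1}\times_C\{y\}
\]
rests on the pointwise criterion for equivalences in a functor $\infty$-category: an edge of $C^{\Delta^1}$ is an equivalence precisely when both its endpoints in $C$ are. You invoke this implicitly, and it is exactly what the paper proves (in greater generality) as Corollary~\ref{cor:pointwiseequiv}; with that in hand your computation is clean, since any edge lying over the constant pair $(x,y)$ has degenerate endpoint edges and is therefore automatically a pointwise equivalence. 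Likewise your claim that $\iota$ carries categorical fibrations between $\infty$-categories to Kan fibrations is correct: inner horns lift because categorical fibrations are inner fibrations, and outer horns lift by Joyal's special-horn lifting (the relevant edge is an equivalence since we are inside $\iota$), with the lift remaining in $\iota$ because for $n\geq 2$ every edge of $\Delta^n$ already lies in the horn, while for $n=1$ the isofibration condition provides a lift that is itself an equivalence.

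So your proof stands on its own. Compared with the reference, your route is perhaps more transparent in that it reduces directly to the fully-faithful-plus-essentially-surjective criterion and isolates exactly which values of $K$ (namely $\Delta^0$, $\partial\Delta^1$, $\Delta^1$) are doing the work; Lurie's argument in \cite{htt} passes instead through the machinery of marked simplicial sets.
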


This lemma allows us to prove the following result:

\begin{proposition}
\label{prop:equivKan}
Let $f: X \longrightarrow Y$ be a morphism in $\mathbf{dSets^+}/S$. Then the following are equivalent:
\begin{itemize}
\item[(1)] $f$ is a marked equivalence
\item[(2)] For every coCartesian fibration $Z \longrightarrow S$, the induced map
\begin{equation*}
\mathrm{Map}_S^\sharp(Y_{(n)}, Z^\natural) \longrightarrow \mathrm{Map}_S^\sharp(X_{(n)}, Z^\natural)
\end{equation*}
is a homotopy equivalence of Kan complexes
\end{itemize}
\end{proposition}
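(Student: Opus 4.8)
The plan is to deduce both implications from the lemma of Lurie recalled above (Lemma 3.1.3.2 of \cite{htt}), which characterizes equivalences of $\infty$-categories through the maximal Kan complexes of their powers, together with the corollary established immediately above, which identifies $\mathrm{Map}_S^\sharp(W, Z^\natural)$ with the maximal Kan complex of the $\infty$-category $\mathrm{Map}_S^\flat(W, Z^\natural)$ whenever $u(W)$ is normal. For the implication $(1) \Rightarrow (2)$ I would fix a coCartesian fibration $Z \longrightarrow S$ and marked normalizations $X_{(n)}, Y_{(n)}$. By definition of marked equivalence the functor $\mathrm{Map}_S^\flat(Y_{(n)}, Z^\natural) \longrightarrow \mathrm{Map}_S^\flat(X_{(n)}, Z^\natural)$ is an equivalence of $\infty$-categories (these are $\infty$-categories by Proposition \ref{prop:mappingobjects}). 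An equivalence of $\infty$-categories restricts to a homotopy equivalence of maximal Kan complexes (the easy direction of the lemma, taking $K = \Delta^0$), and by the corollary above these maximal Kan complexes are exactly $\mathrm{Map}_S^\sharp(Y_{(n)}, Z^\natural)$ and $\mathrm{Map}_S^\sharp(X_{(n)}, Z^\natural)$, which yields $(2)$.

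For the converse $(2) \Rightarrow (1)$ the task is to verify, for every coCartesian fibration $Z \longrightarrow S$, that $\mathrm{Map}_S^\flat(Y_{(n)}, Z^\natural) \longrightarrow \mathrm{Map}_S^\flat(X_{(n)}, Z^\natural)$ is an equivalence of $\infty$-categories. By the lemma it suffices to check that for every simplicial set $K$ the induced map on the maximal Kan complexes of the $K$-powers is a homotopy equivalence. I would first compute the power: using that $i_!$ carries products to tensor products and that flat markings are compatible with $\otimes$ on objects with only unary corollas, one gets a natural isomorphism $\mathrm{Map}_S^\flat(X_{(n)}, Z^\natural)^K \cong \mathrm{Map}_S^\flat(i_!(K)^\flat \otimes X_{(n)}, Z^\natural)$. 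Since $i_!(K)$ is normal, so is $i_!(K)^\flat \otimes X_{(n)}$, and the corollary above then identifies the maximal Kan complex of this power with $\mathrm{Map}_S^\sharp(i_!(K)^\flat \otimes X_{(n)}, Z^\natural)$.

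The crucial step is to re-express this $\sharp$-mapping object so that hypothesis $(2)$ becomes applicable. Writing $Z^{(K)} := \mathbf{Hom}_{\mathbf{dSets}}(i_!(K), Z)$ and $S^{(K)} := \mathbf{Hom}_{\mathbf{dSets}}(i_!(K), S)$, Corollary \ref{cor:exponential} (applied to the normal dendroidal set $i_!(K)$) shows that $Z^{(K)} \longrightarrow S^{(K)}$ is a coCartesian fibration and that $\mathbf{Hom}_{\mathbf{dSets^+}}(i_!(K)^\flat, Z^\natural) = (Z^{(K)})^\natural$. Let $W := S \times_{S^{(K)}} Z^{(K)}$ be the pullback of $Z^{(K)}$ along the constant section $S \longrightarrow S^{(K)}$ adjoint to the projection $i_!(K) \otimes S \longrightarrow S$; by Proposition \ref{prop:pullbackfibration} the map $W \longrightarrow S$ is a coCartesian fibration. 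Combining the tensor--hom adjunction in $\mathbf{dSets^+}$ with an analysis of the over-$S$ condition (a map over $S$ on the left corresponds to a map over the constant section, hence into $W$), I would obtain a natural isomorphism $\mathrm{Map}_S^\sharp(i_!(K)^\flat \otimes X_{(n)}, Z^\natural) \cong \mathrm{Map}_S^\sharp(X_{(n)}, W^\natural)$, and similarly for $Y$. Applying hypothesis $(2)$ to the coCartesian fibration $W$ then gives the required homotopy equivalence for each $K$, which by the lemma completes the proof.

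I expect the main obstacle to be this last identification: one must check that the coCartesian corollas of the pullback $W$ are precisely those detected on the $Z^{(K)}$-factor, so that $W^\natural = S \times_{S^{(K)}} (Z^{(K)})^\natural$ as marked objects, and keep careful track of normality and of the markings throughout (in particular the normality of $i_!(K)^\flat \otimes X_{(n)}$, on which the earlier corollary depends). Once the markings and base change are pinned down correctly, the remaining manipulations are routine adjunction bookkeeping.
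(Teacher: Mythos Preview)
Your proof is correct and follows the same strategy as the paper's. The only difference is organizational: the paper moves $K$ directly into the target via the internal hom, writing
\[
\mathrm{Map}_S^\flat(X_{(n)}, Z^\natural)^K \simeq \mathrm{Map}_S^\flat\bigl(X_{(n)}, (\mathbf{Hom_{dSets}}(i_!(K), Z))^\natural\bigr),
\]
and then invokes (2) for this new target, whereas you first move $K$ into the source and then pull back along the constant section $S \to S^{(K)}$ to obtain a coCartesian fibration $W \to S$ to which (2) applies verbatim. Your version is in fact slightly more careful at this step, since the paper's target $\mathbf{Hom_{dSets}}(i_!(K), Z)$ is naturally a coCartesian fibration over $S^{(K)}$ rather than over $S$, so the base change you spell out is exactly what is needed to apply hypothesis (2) literally. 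The concern you flag about $W^\natural$ agreeing with the pulled-back marking is routine: for the pullback of any coCartesian fibration a corolla is coCartesian precisely when its image in the original fibration is.
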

\begin{proof}
Since $\mathrm{Map}_S^\sharp(X_{(n)}, Z^\natural)$ is the maximal Kan complex contained in $\mathrm{Map}_S^\flat(X_{(n)}, Z^\natural)$, it is immediate that (1) implies (2). Now assume (2) and let $Z \longrightarrow S$ be an arbitrary coCartesian fibration. We need to show that
\begin{equation*}
\mathrm{Map}_S^\flat(Y_{(n)}, Z^\natural) \longrightarrow \mathrm{Map}_S^\flat(X_{(n)}, Z^\natural)
\end{equation*}
is an equivalence of $\infty$-categories. By the previous lemma it suffices to show that the induced map
\begin{equation*}
\mathrm{Map}_S^\flat(Y_{(n)}, Z^\natural)^K \longrightarrow \mathrm{Map}_S^\flat(X_{(n)}, Z^\natural)^K
\end{equation*}
induces a homotopy equivalence on maximal Kan complexes, for any simplicial set $K$. Now, by applying multiple adjunctions, we get
\begin{equation*}
\mathrm{Map}_S^\flat(X_{(n)}, Z^\natural)^K \simeq \mathrm{Map}_S^\flat(X_{(n)}, \mathbf{Hom_{dSets^+}}(i_!(K)^\flat, Z^\natural))
\end{equation*}
Applying Corollary \ref{cor:exponential} we find an isomorphism
\begin{equation*}
\mathrm{Map}_S^\flat(X_{(n)}, Z^\natural)^K \simeq \mathrm{Map}_S^\flat(X_{(n)}, (\mathbf{Hom_{dSets}}(i_!(K), Z))^\natural)
\end{equation*}
which is natural in $X_{(n)}$. By assumption $f_{(n)}$ induces a homotopy equivalence
\begin{equation*}
\mathrm{Map}_S^\sharp(Y_{(n)}, (\mathbf{Hom_{dSets}}(i_!(K), Z))^\natural) \longrightarrow \mathrm{Map}_S^\sharp(X_{(n)}, (\mathbf{Hom_{dSets}}(i_!(K), Z))^\natural)
\end{equation*}
between the maximal Kan complexes of the $\infty$-categories appearing on the right-hand side of this isomorphism. The result follows. $\Box$
\end{proof}

The following result provides further evidence of the relation between coCartesian fibrations and algebras and will be important in establishing the coCartesian model structure.

\begin{proposition}
\label{prop:markedequivalence}
Let $X \longrightarrow S$ and $Y \longrightarrow S$ be coCartesian fibrations of dendroidal sets. Suppose $f: X \longrightarrow Y$ is a map compatible with the maps to $S$ and mapping coCartesian corollas to coCartesian corollas. Then the following are equivalent:
\begin{itemize}
\item[(1)] The map $f$ induces a marked equivalence $X^\natural \longrightarrow Y^\natural$ in $\mathbf{dSets^+}/S$
\item[(2)] The map $f$ is an operadic equivalence
\item[(3)] The map $f$ induces a categorical equivalence $X_s \longrightarrow Y_s$ for each colour $s$ of $S$
\end{itemize}
\end{proposition}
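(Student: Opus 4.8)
The plan is to reduce to a normal base and then prove the two biconditionals $(1)\Leftrightarrow(3)$ and $(2)\Leftrightarrow(3)$, obtaining each via the pair of implications ``$(3)\Rightarrow(\text{marked/operadic equivalence})$'' and its fibrewise converse. First I would replace $S$ by $S\times E_\infty$: a map in $\mathbf{dSets^+}/S$ is a marked equivalence exactly when its image in $\mathbf{dSets^+}/(S\times E_\infty)$ is, and since $E_\infty\to\ast$ is an operadic equivalence the conditions (2) and (3) are insensitive to this base change as well. Thus I may assume $S$ normal, whence $X$ and $Y$ are automatically normal, being dendroidal sets admitting maps to the normal $S$. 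Since $X^\natural$ and $Y^\natural$ have the right lifting property against marked anodynes (Corollary \ref{cor:liftwrtmarked}), we are comparing two fibrant objects, and by Proposition \ref{prop:equivKan} condition (1) is equivalent to requiring that $f$ induce a homotopy equivalence of Kan complexes $\mathrm{Map}_S^\sharp(Y,Z^\natural)\to\mathrm{Map}_S^\sharp(X,Z^\natural)$ for every coCartesian fibration $Z\to S$.

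The substantial implication is $(3)\Rightarrow(1)$, and for it I would construct an explicit homotopy inverse to $f$ inside the marked category over $S$. Concretely, after fixing cleavages for $X$ and $Y$, I would build a map $g\colon Y^\natural\to X^\natural$ of coCartesian fibrations over $S$ preserving coCartesian corollas, together with homotopies $fg\simeq\mathrm{id}$ and $gf\simeq\mathrm{id}$ realised through marked cylinders $j_!((\Delta^1)^\sharp)\otimes(-)$. On colours the map $g$ is supplied fibrewise, since by (3) each $f_s\colon X_s\to Y_s$ is a categorical equivalence of $\infty$-categories and hence admits a homotopy inverse $g_s$. The real content is to promote these unrelated fibrewise inverses to a single map over $S$: given an operation of $S$ and the chosen $g_s$ on its inputs, the coCartesian lift in $X$ over that operation (existing by the coCartesian fibration property, and unique up to the \emph{contractible} space of choices of Proposition \ref{prop:uniquecoCartlift} and Corollary \ref{cor:uniquecoCartlift}) forces the value of $g$ on the operation compatibly with the coCartesian structure, and the same uniqueness assembles the fibrewise homotopies into global ones. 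Once $f$ is exhibited as a two-sided marked homotopy equivalence, applying $\mathrm{Map}_S^\sharp(-,Z^\natural)$ (which carries marked $(\Delta^1)^\sharp$-homotopies to simplicial homotopies, using Lemma \ref{lemma:Kanfibration} and Corollary \ref{cor:exponential}) yields homotopy equivalences of the relevant Kan complexes, giving (1). Forgetting the marking, the same homotopy equivalence is an operadic homotopy equivalence, which gives $(3)\Rightarrow(2)$.

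For the converses I would pass to the fibre over each colour $s$ by restricting along $\eta_s\to S$. Under the identification $\mathbf{dSets^+}/\eta_s^\sharp\simeq\mathbf{sSets^+}$, the fibrant object $X^\natural$ restricts to the marked simplicial set $(X_s,\mathcal{E})$ with $\mathcal{E}$ the equivalences of the $\infty$-category $X_s$ (here one uses that an inner fibration has $\infty$-categorical fibres over colours, together with the second property of coCartesian corollas in Section \ref{subsection:coCartcorollas}, that a coCartesian $1$-corolla lying over an equivalence is itself an equivalence). A marked equivalence, respectively an operadic equivalence, of total spaces that preserves coCartesian corollas then restricts to a categorical equivalence of fibres $X_s\to Y_s$; this reduces $(1)\Rightarrow(3)$ and $(2)\Rightarrow(3)$ to Lurie's theory of marked simplicial sets and the corresponding statement over a point in \cite{htt}.

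The main obstacle is the gluing step in $(3)\Rightarrow(1)$: coherently upgrading the a priori unrelated fibrewise homotopy inverses $g_s$ to a single coCartesian-corolla-preserving map $g$ over $S$ and to compatible global homotopies. This is precisely where the contractibility of the space of coCartesian lifts (Proposition \ref{prop:uniquecoCartlift}) does the essential work, and the argument parallels Lurie's proof of Proposition 3.3.1.5 in \cite{htt}, with the extra bookkeeping forced by operations of higher arity. A secondary technical point to watch is the base-change compatibility underlying the fibrewise reductions, for which the coCartesian-corolla-preservation hypothesis on $f$ is indispensable.
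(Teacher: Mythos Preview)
Your overall plan (normalise the base, then argue via a strong homotopy inverse) matches the paper, but the mechanism you propose for $(3)\Rightarrow(1)$ does not work as stated and is not what either the paper or Lurie actually does. You suggest building $g$ by choosing fibrewise inverses $g_s$ and then ``gluing'' them via uniqueness of coCartesian lifts. The difficulty is that for a coCartesian corolla $\xi\colon(y_1,\ldots,y_n)\to y$ in $Y$, the coCartesian lift in $X$ with inputs $g_{s_i}(y_i)$ lands in some $x'$, not in $g_s(y)$; producing a coherent identification $x'\simeq g_s(y)$ already uses the compatibility you are trying to establish, and Proposition~\ref{prop:uniquecoCartlift} only controls choices of coCartesian lifts with fixed inputs, not the interaction of the unrelated maps $g_s$ across different fibres. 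Extending this to all dendrices (not just corollas) compounds the problem. Your citation of Lurie's Proposition~3.3.1.5 is apt, but his proof is \emph{not} a direct gluing of fibrewise inverses: it is an induction over the simplices of the base, and the paper runs the same argument over the normal skeletal filtration of $S$. At each stage one is reduced to extending along an inclusion which is identified as a smash product of a cofibration with the marked anodyne $\{0\}\hookrightarrow i_!(\Delta^1)^\sharp$, and the lift is supplied by the right lifting property of $X^\natural\to\Omega[T]^\sharp$ (after invoking Lemma~\ref{lemma:extensionuptohomotopy} to allow replacing one inclusion by a weakly equivalent one). The fibrewise hypothesis enters only at the base of the induction $T=\eta$.

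There is a second gap in your treatment of $(2)\Rightarrow(3)$: you claim an operadic equivalence of total spaces restricts to a categorical equivalence of fibres, but operadic equivalences are not stable under arbitrary pullback, so this is not a formal ``restrict to a point'' argument. The paper handles $(2)\Leftrightarrow(3)$ separately, via the homotopy fibre sequence of mapping spaces in Proposition~\ref{prop:fibersequencemappingspace}: precomposition with a coCartesian lift identifies the fibre of $X^L(x_1,\ldots,x_k;x)\to S^L(\ldots)$ with a mapping space in the fibre $X_s$, and comparing the two fibre sequences for $X$ and $Y$ gives both directions at once. Only after $(2)\Leftrightarrow(3)$ is in hand does the paper run the skeletal induction to get $(2)\Rightarrow(1)$.
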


Before proving this proposition, we will need to introduce some new concepts. Suppose we are given a dendroidal set $X$ and colours $x_1, \ldots, x_k, x$ of $X$. We would like to have suitable model for `the space of operations' from $\{x_1, \ldots, x_k\}$ to $x$. One such model can of course be obtained by considering the simplicial operad $\mathrm{hc}\tau_d(X)$. However, we will introduce another model which is more convenient for our purposes. \par 
Recall from section \ref{subsection:coCartcorollas} the maps
\begin{equation*}
\iota_n: [n] \longrightarrow C_k \star [n-1]
\end{equation*}
and the simplicial set $Q$ defined by
\begin{equation*}
Q: [n] \mapsto \mathbf{dSets}(\Omega[C_k \star [n-1]], X)
\end{equation*}
Now let $X^L(x_1, \ldots, x_k; x)$ be the simplicial subset of $Q$ consisting of maps $f$ satisfying the following conditions:
\begin{itemize}
\item[(1)] The inputs of $f(C_k)$ are $x_1, \ldots, x_k$
\item[(2)] The simplex $f(\iota_n([n]))$ is contained in $\eta_x \subseteq X$, i.e. $f(\iota_n([n]))$ is a degenerate simplex at $x$
\end{itemize}
We first need to check that these simplicial sets behave reasonably under maps of dendroidal sets $X \longrightarrow Y$.

\begin{proposition}
Let $f: X \longrightarrow Y$ be an inner fibration of dendroidal sets and let $x_1, \ldots, x_k, x$ be colours of $X$. Then the induced map
\begin{equation*}
X^L(x_1, \ldots, x_k; x) \longrightarrow Y^L(f(x_1), \ldots, f(x_k); f(x))
\end{equation*} 
is a left fibration.
\end{proposition}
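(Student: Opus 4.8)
The plan is to verify the right lifting property directly against the left horn inclusions $\Lambda^n_i \hookrightarrow \Delta^n$ for $0 \le i < n$, translating each such lifting problem into a lifting problem against the inner fibration $f$ in $\mathbf{dSets}$. Write $T_n := C_k \star [n-1]$, so that an $n$-simplex of $X^L(x_1,\ldots,x_k;x)$ is a map $\Omega[T_n] \longrightarrow X$ sending the leaves of the corolla $C_k$ to $x_1,\ldots,x_k$ (condition (1)) and sending the spine $\iota_n(\Delta^n)$ to the degenerate simplex at $x$ (condition (2)). A commutative square with $\Lambda^n_i$ in the top left and $\Delta^n \to Y^L$ along the bottom is then the same datum as a map from a certain sub-dendroidal set $A_i \subseteq \Omega[T_n]$ into $X$, together with a map $\Omega[T_n] \longrightarrow Y$ compatible over $f$; the desired filler is exactly a map $\Omega[T_n] \longrightarrow X$ completing this. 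Thus everything reduces to identifying $A_i$ and showing $A_i \hookrightarrow \Omega[T_n]$ is inner anodyne.

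The key step is the combinatorial identification of the faces of $T_n$. Under $\iota_n$, the vertices $0,\ldots,n$ of $\Delta^n$ correspond to the edges $\iota_n(0),\ldots,\iota_n(n)$ along the spine, where $\iota_n(0)$ is the root edge of $C_k$ and $\iota_n(n)$ is the root of $T_n$. One checks that for $0 \le j \le n-1$ the simplicial face $d_j$ is the inner face of $\Omega[T_n]$ contracting the inner edge $\iota_n(j)$, while $d_n$ is the outer face chopping off the root vertex. The set $A_i$ is the union of the faces $d_j$ with $j \ne i$ (coming from the horn) together with the spine $\iota_n(\Delta^n)$, which is forced and fixed (degenerate at $x$) on every simplex of $X^L$ by condition (2). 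The crucial observation is that the spine, as a sub-dendroidal set, is precisely the outer face $\partial_v \Omega[T_n]$ chopping off the corolla vertex $v$ of $C_k$ — the one face of $T_n$ that never occurs as a simplicial face of $X^L$. Consequently $A_i$ is the union of all faces of $\Omega[T_n]$ except the inner face contracting $\iota_n(i)$; that is, $A_i = \Lambda^{\iota_n(i)}[T_n]$.

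Since $0 \le i < n$, the edge $\iota_n(i)$ is inner, so this is an inner horn inclusion and $A_i \hookrightarrow \Omega[T_n]$ is inner anodyne. As $f$ is an inner fibration the required lift $\Omega[T_n] \longrightarrow X$ exists, and any such lift automatically satisfies conditions (1) and (2) because these are already prescribed on $A_i$. This establishes the left fibration property.

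The main obstacle, and the only place requiring genuine care, is the bookkeeping in the second paragraph: one must verify that conditions (1) and (2) contribute exactly the missing corolla-chopping outer face and nothing more, so that $A_i$ is neither smaller nor larger than the inner horn $\Lambda^{\iota_n(i)}[T_n]$, and that the maps prescribed on the horn faces and on the degenerate spine agree on their overlaps (they do, since every horn face, lying in $X^L$, already restricts to the degenerate spine at $x$). It is also worth emphasizing where the hypothesis $i < n$ enters: had we allowed $i = n$, the omitted face $d_n$ would be the outer root-chopping face and $A_n$ would be an outer horn, which is not inner anodyne. This asymmetry is exactly what makes $X^L(x_1,\ldots,x_k;x) \longrightarrow Y^L(f(x_1),\ldots,f(x_k);f(x))$ a left fibration rather than a Kan fibration.
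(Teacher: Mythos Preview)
Your proof is correct and follows exactly the same approach as the paper's own argument: translating the simplicial left-horn lifting problem into the inner-horn lifting problem for $\Lambda^{\iota_n(i)}[C_k \star [n-1]] \hookrightarrow \Omega[C_k \star [n-1]]$ against the inner fibration $f$. The paper states this equivalence tersely in one line, whereas you carefully spell out why the face $\partial_v$ (the only face of $T_n$ not arising from a simplicial face of $Q$) is supplied by condition~(2), and why the prescribed data on the horn faces and the degenerate spine are compatible on overlaps; this extra bookkeeping is exactly what justifies the paper's ``one immediately sees'' step.
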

\begin{proof}
Let $0 \leq i < n$. One immediately sees that under the map $\iota_n$ the face $\partial_i\Delta^n$ corresponds to an inner face of $C_k \star [n-1]$, given by contracting the edge $\iota_n(i)$. Suppose we are given a diagram
\[
\xymatrix{
\Lambda^n_i \ar[r]\ar[d] & X^L(x_1, \ldots, x_k; x) \ar[d] \\
\Delta^n \ar[r] & Y^L(f(x_1), \ldots, f(x_k); f(x))
}
\]
Finding a lift in this diagram is equivalent to finding a lift in the corresponding diagram
\[
\xymatrix{
\Lambda^{\iota_n(i)}[C_k \star [n-1]] \ar[r]\ar[d] & X \ar[d]\\
\Omega[[C_k \star [n-1]] \ar[r] & Y
}
\]
This lift exists since $f$ is an inner fibration. $\Box$
\end{proof}

\begin{corollary}
If we assume $X$ to be an $\infty$-operad, then $X^L(x_1, \ldots, x_k; x)$ is a Kan complex. 
\end{corollary}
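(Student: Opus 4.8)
The plan is to deduce this immediately from the preceding proposition by applying it to the unique map $f: X \longrightarrow \ast$, where $\ast = N_d(\mathbf{Comm})$ denotes the terminal dendroidal set. The first step is to observe that, because $X$ is an $\infty$-operad, this map $f$ is an inner fibration: any lifting problem presenting an inner horn $\Lambda^e[T] \longrightarrow X$ against $\Omega[T] \longrightarrow \ast$ reduces, since $\ast$ is terminal and so the bottom map is unique, to extending $\Lambda^e[T] \longrightarrow X$ along the inner horn inclusion, which is possible precisely by the defining extension property of an $\infty$-operad.

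Next I would identify the target of the induced map. Since $\ast$ is terminal, the simplicial set $[n] \mapsto \mathbf{dSets}(\Omega[C_k \star [n-1]], \ast)$ is constant on a single point, and the two conditions cutting out $\ast^L$ are then vacuous; hence $\ast^L(f(x_1), \ldots, f(x_k); f(x)) \cong \Delta^0$. Applying the preceding proposition to $f$ therefore yields that
\[
X^L(x_1, \ldots, x_k; x) \longrightarrow \Delta^0
\]
is a left fibration of simplicial sets.

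Finally, a left fibration whose base is a Kan complex is automatically a Kan fibration, by the result of Joyal already invoked in Remark \ref{rmk:simplicialleftfib} (Proposition 1.2.5.1 of \cite{htt}). Taking the base to be the point $\Delta^0$, which is certainly a Kan complex, we conclude that the map above is a Kan fibration, and hence that $X^L(x_1, \ldots, x_k; x)$ is itself a Kan complex. I expect no genuine obstacle here: essentially all of the content is packaged into the preceding proposition, and the only things requiring verification are the two routine facts that $X \longrightarrow \ast$ is an inner fibration and that $\ast^L(\ldots)$ collapses to the point $\Delta^0$.
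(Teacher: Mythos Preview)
Your proof is correct and follows essentially the same approach as the paper: apply the preceding proposition to the inner fibration $X \longrightarrow \ast$, identify the target $\ast^L(\ldots)$ with $\Delta^0$, and then use that a left fibration over a Kan complex is a Kan fibration. The paper's proof is more terse, but the content is identical.
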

\begin{proof}
Denote the terminal object of $\mathbf{dSets}$ by $*$. All of its mapping spaces are isomorphic to the point $\Delta^0$. The map $X \longrightarrow *$ is an inner fibration, so 
\begin{equation*}
X^L(x_1, \ldots, x_k; x) \longrightarrow \Delta^0
\end{equation*}
is a left fibration. This is equivalent to this map being a Kan fibration (this holds for left fibrations over any Kan complex). The assertion follows. $\Box$ 
\end{proof}

\begin{corollary}
If $f: X \longrightarrow Y$ is an inner fibration of $\infty$-operads, the induced map 
\begin{equation*}
X^L(x_1, \ldots, x_k; x) \longrightarrow Y^L(f(x_1), \ldots, f(x_k); f(x))
\end{equation*} 
is a Kan fibration.
\end{corollary}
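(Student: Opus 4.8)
The plan is to deduce the statement formally from the two immediately preceding results, together with the standard fact that a left fibration of simplicial sets whose base is a Kan complex is automatically a Kan fibration. This is exactly the fact already used in the proof of the Corollary asserting that $X^L(x_1, \ldots, x_k; x)$ is a Kan complex, so no new external input is required.

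First I would note that an inner fibration of $\infty$-operads is in particular an inner fibration of dendroidal sets, so the hypotheses of the preceding Proposition are satisfied. Applying that Proposition, the induced map
\[
X^L(x_1, \ldots, x_k; x) \longrightarrow Y^L(f(x_1), \ldots, f(x_k); f(x))
\]
is a left fibration of simplicial sets; in particular it already admits lifts against all horn inclusions $\Lambda^n_i \subseteq \Delta^n$ with $0 \leq i < n$. Next I would apply the first Corollary above to the $\infty$-operad $Y$, which shows that the target $Y^L(f(x_1), \ldots, f(x_k); f(x))$ is a Kan complex. Finally, invoking that a left fibration over a Kan complex is a Kan fibration supplies precisely the remaining lifts (the right-horn, or last-vertex, case $i = n$), and we conclude that the displayed map is a Kan fibration.

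The argument is essentially formal, so there is no serious obstacle beyond correctly matching the hypotheses of the Proposition; the only substantive ingredient is the quoted general fact about simplicial sets, which is not reproved here. The one subtlety worth flagging is that one must know the \emph{target} rather than the source is a Kan complex, which is why the first Corollary is applied to $Y$; this is legitimate precisely because $Y$ is assumed to be an $\infty$-operad.
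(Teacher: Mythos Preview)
Your proof is correct and follows exactly the same route as the paper: the map is a left fibration by the preceding Proposition, its target is a Kan complex by the first Corollary applied to $Y$, and a left fibration over a Kan complex is a Kan fibration. Your remark that the Corollary must be applied to $Y$ rather than $X$ is precisely the point.
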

\begin{proof}
The simplicial set $Y^L(f(x_1), \ldots, f(x_k); f(x))$ is a Kan complex and the stated map is a left fibration. Again, this implies it is in fact a Kan fibration. $\Box$
\end{proof}

Of course, the most important property of our spaces of operations is the following.

\begin{proposition}
\label{prop:mappingspaceequiv}
A map of $\infty$-operads $f: X \longrightarrow Y$ is an operadic equivalence if and only if it is essentially surjective and the induced maps
\begin{equation*}
X^L(x_1, \ldots, x_k; x) \longrightarrow Y^L(f(x_1), \ldots, f(x_k); f(x))
\end{equation*}
are homotopy equivalences of simplicial sets for any tuple $(x_1, \ldots, x_k, x)$ of colours of $X$.
\end{proposition}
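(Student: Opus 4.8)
The statement is an operadic Whitehead theorem, so the plan is to model it on the $\infty$-categorical case while isolating the genuinely new, multi-ary, content in the spaces $X^L(x_1,\ldots,x_k;x)$. Let me call $f$ \emph{fully faithful} if it induces homotopy equivalences on all of these spaces. Essential surjectivity is easy in both directions: the functor $i^*$ is right Quillen from the Cisinski--Moerdijk structure to the Joyal structure, so it carries the operadic equivalence $f$ to a categorical equivalence $i^*X \to i^*Y$ of underlying $\infty$-categories, which is essentially surjective; conversely essential surjectivity is assumed. So the real content is the equivalence between ``operadic equivalence'' and ``fully faithful'' (given essential surjectivity). The crucial tool is the corollary proved just above: if $p$ is an inner fibration of $\infty$-operads then $p^L\colon X^L \to Y^L$ is a Kan fibration, so that for such $p$ the condition ``homotopy equivalence on all $X^L$'' is equivalent to ``$p^L$ is a trivial Kan fibration''.

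For the direction $(\Rightarrow)$ I would first record that, viewed as a functor on dendroidal sets equipped with a chosen tuple of colours, $X \mapsto X^L(x_1,\ldots,x_k;x)$ sends trivial fibrations to trivial Kan fibrations: a lifting problem for $p^L$ against $\partial\Delta^n \subseteq \Delta^n$ transposes to one for $p$ against a normal monomorphism into $\Omega[C_k \star [n-1]]$, which a trivial fibration solves. Ken Brown's lemma then upgrades this to the assertion that \emph{every} operadic equivalence between $\infty$-operads is fully faithful, which together with essential surjectivity finishes $(\Rightarrow)$.

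For $(\Leftarrow)$, factor $f$ as $X \xrightarrow{j} Z \xrightarrow{p} Y$ with $j$ a trivial cofibration and $p$ a fibration; then $Z$ is an $\infty$-operad, $j$ is an operadic equivalence (hence, by $(\Rightarrow)$, essentially surjective and fully faithful), and two-out-of-three for homotopy equivalences shows that $p$ is essentially surjective and fully faithful as well. It therefore suffices to prove the core lemma: \emph{a fibration $p\colon Z \to Y$ between $\infty$-operads that is essentially surjective and fully faithful is a trivial fibration}, since then $f = p \circ j$ is a composite of operadic equivalences. I would check the right lifting property of $p$ against the generating normal monomorphisms $\partial\Omega[T] \subseteq \Omega[T]$ by induction on $T$. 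The base cases are clean: for $T=\eta$ surjectivity on colours follows from essential surjectivity together with the isofibration property of $p$ (lift the witnessing equivalence), and for $T=C_k$ the required lift is exactly a $0$-simplex of $Z^L(z_1,\ldots,z_k;z)$ lying over the prescribed $0$-simplex of $Y^L$, which exists because $p^L$ is a trivial Kan fibration.

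The main obstacle is the inductive step for a tree $T$ with at least two vertices. The inner faces can be filled using that $p$ is an inner fibration (filling the inner horn $\Lambda^e[T]$), but the resulting extension need not restrict to the prescribed map on the remaining outer faces; reconciling the two requires controlling the boundary up to homotopy rel the Segal core $Sp[T] \subseteq \partial\Omega[T]$, which is precisely where full faithfulness must enter. I see two ways to finish. The self-contained route carries out this boundary correction by a secondary induction over the faces of $T$, using the trivial-Kan-fibration property of $p^L$ on corollas to straighten each outer face in turn; this is the combinatorial heart of the argument. Alternatively, one may invoke the Quillen equivalence $\mathrm{hc}\tau_d \dashv \mathrm{hc}N_d$ between dendroidal sets and simplicial operads: a fibration between $\infty$-operads corresponds to a fibration of fibrant simplicial operads, and a Dwyer--Kan equivalence that is a fibration is automatically a trivial fibration there, which transports back to the desired lemma. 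This second route depends on first identifying $X^L(x_1,\ldots,x_k;x)$ with the derived space of operations $\mathbb{R}\,\mathrm{hc}\tau_d(X)(x_1,\ldots,x_k;x)$, the exact operadic analogue of the comparison of mapping-space models in \cite{htt}, and it is this comparison that constitutes the principal technical hurdle.
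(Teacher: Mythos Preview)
The paper does not actually prove this proposition: the entire proof reads ``The proof is an adaptation of Theorem 3.11 of \cite{moerdijkcisinski2}.'' That result of Cisinski--Moerdijk is the Whitehead theorem for $\infty$-operads, phrased using a different model for the spaces of operations; the ``adaptation'' the paper has in mind is presumably the comparison of $X^L(x_1,\ldots,x_k;x)$ with the model used there --- exactly what you flag as the principal technical hurdle in your second route.

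Your outline is therefore already more detailed than the paper's own treatment, and the overall strategy is correct. The $(\Rightarrow)$ argument via Ken Brown works once one observes that $(-)^L$ is a genuine functor on the slice under $\coprod_i \eta$ (so that the usual path-object factorization between fibrant objects goes through). For $(\Leftarrow)$, the reduction to fibrations and the base cases $T=\eta$ and $T=C_k$ are fine. You are honest that the inductive step for trees with at least two vertices is where the real content lies and that neither of your two proposed completions is carried out in full; that is fair, and it is precisely the work the paper outsources to the reference. Either completion would suffice: the first is self-contained but combinatorially heavy (boundary correction rel the spine, face by face), while the second trades that combinatorics for the mapping-space comparison with $\mathrm{hc}\tau_d$.
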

\begin{proof}
The proof is an adaptation of Theorem 3.11 of \cite{moerdijkcisinski2}. $\Box$
\end{proof}

We now start working towards the proof of Proposition \ref{prop:markedequivalence}.

\begin{proposition}
\label{prop:fibersequencemappingspace}
Suppose $f: X \longrightarrow Y$ is an inner fibration of $\infty$-operads and suppose we are given colours $x_1, \ldots, x_k, x$ of $X$. Furthermore, suppose we have a corolla $\sigma$ of $Y$ whose leaves are $f(x_1), \ldots, f(x_k)$ and whose root is $f(x)$. If there exists an $f$-coCartesian corolla $\tilde\sigma$ of $X$ whose leaves are $x_1, \ldots, x_k$ such that $f(\tilde\sigma) = \sigma$, then there is a homotopy fiber sequence
\begin{equation*}
X_{f(x)}^L(\tilde x;x) \longrightarrow X^L(x_1, \ldots, x_k; x) \longrightarrow Y^L(f(x_1), \ldots, f(x_k); f(x))
\end{equation*}
where $\tilde x$ denotes the root of $\tilde \sigma$ and $X_{f(x)}$ is the fiber of $f$ over $f(x)$.
\end{proposition}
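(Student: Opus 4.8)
The plan is to reduce the statement to an explicit computation of the strict fibre of the right-hand map and then to identify that fibre, by means of the coCartesian corolla $\tilde\sigma$, with the mapping space $X_{f(x)}^L(\tilde x; x)$. By the two preceding corollaries the map
\[
\pi: X^L(x_1, \ldots, x_k; x) \longrightarrow Y^L(f(x_1), \ldots, f(x_k); f(x))
\]
is a Kan fibration whose target is a Kan complex. Consequently the strict fibre $\pi^{-1}(\sigma)$ over the vertex $\sigma$ is a model for the homotopy fibre, and we obtain at once a homotopy fibre sequence
\[
\pi^{-1}(\sigma) \longrightarrow X^L(x_1, \ldots, x_k; x) \longrightarrow Y^L(f(x_1), \ldots, f(x_k); f(x)).
\]
It therefore remains to exhibit a homotopy equivalence $\pi^{-1}(\sigma) \simeq X_{f(x)}^L(\tilde x; x)$. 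Unwinding the definitions, $\pi^{-1}(\sigma)$ is the simplicial set whose $n$-simplices are the dendrices $\Omega[C_k \star [n-1]] \longrightarrow X$ whose leaf corolla has inputs $x_1, \ldots, x_k$ and image $\sigma$ and whose trunk $\iota_n([n])$ is degenerate at $x$; that is, it is the space of operations of $X$ from $(x_1, \ldots, x_k)$ to $x$ lying over $\sigma$.

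To compare the two I would introduce an auxiliary simplicial set $W$ whose $n$-simplices are the dendrices $\Omega[C_k \star [n]] \longrightarrow X$ such that the leaf corolla is $p$-coCartesian with inputs $x_1, \ldots, x_k$ and image $\sigma$, with root $r$ lying in the fibre $X_{f(x)}$, and whose trunk below $r$ is an $n$-simplex of $X_{f(x)}^L(r; x)$; thus a point of $W$ consists of a coCartesian lift of $\sigma$ followed by a morphism $r \longrightarrow x$ inside the fibre. Restricting a dendrex to its leaf corolla defines a map $\beta: W \longrightarrow \mathbf{coCart}_p(\sigma, \{x_1, \ldots, x_k\})$, and contracting the inner edge $r$ -- that is, composing the coCartesian leaf corolla with the first stage of its trunk -- defines a map $\gamma: W \longrightarrow \pi^{-1}(\sigma)$, realised by an inner face operator and hence a genuine map of simplicial sets.

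The first comparison is handled by the contractibility of coCartesian lifts: the fibre of $\beta$ over the vertex $\tilde\sigma$ is by construction $X_{f(x)}^L(\tilde x; x)$, and since $\mathbf{coCart}_p(\sigma, \{x_1, \ldots, x_k\})$ is a contractible Kan complex by Proposition \ref{prop:uniquecoCartlift} and $\beta$ is a fibration (a lifting argument using that the leaf corolla is coCartesian), the inclusion of this fibre is a homotopy equivalence $X_{f(x)}^L(\tilde x; x) \xrightarrow{\ \sim\ } W$. The second comparison is the assertion that $\gamma$ is a trivial fibration: given a boundary of coCartesian-lift-plus-fibrewise-morphism data together with a full operation over $\sigma$ realising the corresponding composite, one must produce the missing lift. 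This is precisely a parametrised form of the factorisation of an operation over $\sigma$ into a coCartesian corolla followed by a fibrewise morphism, which follows from the coCartesian property together with Corollary \ref{cor:uniquecoCartlift} and the closure of coCartesian corollas under composition (Propositions \ref{prop:composition} and \ref{prop:composition2}). Composing the two equivalences yields $\pi^{-1}(\sigma) \simeq X_{f(x)}^L(\tilde x; x)$, as desired.

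The hard part will be the verification that $\gamma$ is a trivial fibration. The underlying geometric content -- that every operation lying over $\sigma$ with the prescribed inputs factors, uniquely up to a contractible space of choices, through the coCartesian corolla $\tilde\sigma$ -- is exactly the universal property encoded by coCartesianness; but turning this into solutions of the lifting problems against the boundary inclusions $\partial\Delta^m \subseteq \Delta^m$ requires building the appropriate enlarged trees (attaching copies of the two-vertex tree $V$ of Propositions \ref{prop:composition} and \ref{prop:composition2} along the trunk) and filling a sequence of inner and leaf horns, in the same spirit as the proof of Proposition \ref{prop:uniquecoCartlift}. This bookkeeping, rather than any conceptual difficulty, is where the real work lies.
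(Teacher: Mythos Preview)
Your strategy is essentially the paper's, but you have introduced an unnecessary complication that then makes you overestimate the difficulty of the final step.

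The paper's auxiliary object $Z$ fixes the leaf corolla to be $\tilde\sigma$ throughout: it takes $X^L(\tilde\sigma;x)$ (dendrices $\Omega[C_k\star[n]]\to X$ whose leaf corolla is literally $\tilde\sigma$ and whose trunk is degenerate at $x$) and pulls back along the inclusion $X^L_{f(x)}(\tilde x;x)\hookrightarrow X^L(\tilde x;x)$. With this choice, both comparison maps become single horn-filling problems. The map $Z\to X^L_{f(x)}(\tilde x;x)$ is the pullback of $\partial_{\tilde\sigma}\colon X^L(\tilde\sigma;x)\to X^L(\tilde x;x)$ (chop off the leaves); the lifting problem against $\partial\Delta^n\subset\Delta^n$ is exactly an inner horn $\Lambda^{\tilde x}[C_k\star[n]]\to X$, filled because $X$ is an $\infty$-operad. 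The map $Z\to\pi^{-1}(\sigma)$ (contract the inner edge $\tilde x$) has its lifting problem against $\partial\Delta^n\subset\Delta^n$ translate literally into a leaf horn $\Lambda^v[C_k\star[n]]\to X$ over $\Omega[C_k\star[n]]\to Y$, which is filled precisely because $\tilde\sigma$ is $f$-coCartesian. That is the whole argument; no enlarged trees, no appeal to Propositions~\ref{prop:composition} or~\ref{prop:composition2}.

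By allowing the leaf corolla in your $W$ to range over all coCartesian lifts, you force yourself to pass through the contractibility of $\mathbf{coCart}_p(\sigma,\{x_1,\ldots,x_k\})$, and your map $\beta$ as described (``restrict to the leaf corolla'') is not obviously a simplicial map to $\mathbf{coCart}_p$; moreover the fibre of any reasonable version of $\beta$ over $\tilde\sigma$ is the paper's $Z$, not $X^L_{f(x)}(\tilde x;x)$ itself, so you would still need the $\partial_{\tilde\sigma}$ step. None of this is wrong, but it obscures the point: once you pin down $\tilde\sigma$, the ``hard part'' you anticipate dissolves into the very definition of a coCartesian corolla.
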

\begin{proof}
Let $\phi$ denote the map
\begin{equation*}
X^L(x_1, \ldots, x_k; x) \longrightarrow Y^L(f(x_1), \ldots, f(x_k); f(x))
\end{equation*}
We already know $\phi$ is a Kan fibration, so it suffices to compute the homotopy type of the fiber $\phi^{-1}(\sigma)$. Now let us for the purposes of this proof define the `mapping space' $X^L(\tilde \sigma; x)$. This is the simplicial set whose $n$-simplices are maps
\begin{equation*}
\Omega[C_k \star [n]] \longrightarrow X
\end{equation*}
whose restrictions to $C_k$ and $[n]$ are $\tilde \sigma$ and the degenerate $n$-simplex at $x$ respectively. We denote by
\begin{equation*}
\partial_{\tilde \sigma}: X^L(\tilde \sigma; x) \longrightarrow X^L(\tilde x; x)
\end{equation*}
the map induced by chopping of the inputs of $\tilde \sigma$. We easily verify that this is in fact a trivial Kan fibration. Indeed, the data of a diagram
\[
\xymatrix{
\partial \Delta^n \ar[d]\ar[r] & X^L(\tilde \sigma; x) \ar[d] \\
\Delta^n \ar[r] & X^L(\tilde x; x)
}
\]
correspond precisely to a map
\begin{equation*}
\Lambda^{\tilde x}[C_k \star [n]] \longrightarrow X
\end{equation*}
which can be extended to $\Omega[C_k \star [n]]$ since $X$ is an $\infty$-operad. This extension also gives us the lift we need in the previous diagram. \par 
For simplicity of notation, we will write
\begin{equation*}
Z := X^L(\tilde \sigma; x) \times_{X^L(\tilde x; x)} X^L_{f(x)}(\tilde x; x)
\end{equation*} 
Suppose we are given an $n$-simplex of $Z$, corresponding to a $C_k \star [n]$-dendrex of $X$. By taking the inner face of this dendrex obtained by contracting the inner edge $\tilde x$, we obtain a map
\begin{equation*}
\partial_{\tilde x}: Z \longrightarrow \phi^{-1}(\sigma) 
\end{equation*}
We claim this map is also a trivial Kan fibration, which completes the proof. Indeed, consider a lifting problem
\[
\xymatrix{
\partial \Delta^n \ar[d]\ar[r] & Z \ar[d] \\
\Delta^n \ar[r]\ar@{-->}[ur] & \phi^{-1}(\sigma)
}
\]
Denoting the vertex of $C_k$ by $v$, this lifting problem is equivalent to the lifting problem
\[
\xymatrix{
\Omega[C_k]\ar[d]_v\ar[dr]^{\tilde \sigma} & \\
\Lambda^v[C_k \star [n]] \ar[r]\ar[d] & X \ar[d]^f\\
\Omega[C_k \star [n]] \ar@{-->}[ur]\ar[r] & Y
}
\]
This problem can be solved by the assumption that $\tilde \sigma$ is $f$-coCartesian. $\Box$
\end{proof}

\begin{proposition}
Suppose we are given a diagram
\[
\xymatrix{
X \ar[rr]^f\ar[dr]_{p\circ f} & & Y \ar[dl]^p \\
& S &
}
\]
where $X$, $Y$ and $S$ are $\infty$-operads and both $p$ and $p \circ f$ are coCartesian fibrations. Suppose that $f$ maps $p\circ f$-coCartesian corollas of $X$ to $p$-coCartesian corollas of $Y$. If for every colour $s$ of $S$ the induced map $X_s \longrightarrow Y_s$ is a categorical equivalence, then $f$ is a weak equivalence of $\infty$-operads.
\end{proposition}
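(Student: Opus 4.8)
The plan is to verify the two conditions of Proposition~\ref{prop:mappingspaceequiv}: that $f$ is essentially surjective and that for every tuple $(x_1,\ldots,x_k,x)$ of colours of $X$ the induced map
\[
X^L(x_1,\ldots,x_k;x) \longrightarrow Y^L(f(x_1),\ldots,f(x_k);f(x))
\]
is a homotopy equivalence of simplicial sets. Essential surjectivity is immediate: given a colour $y$ of $Y$, set $s = p(y)$; since $X_s \longrightarrow Y_s$ is a categorical equivalence it is essentially surjective, so $y$ is equivalent in $Y_s$ to some $f(x)$ with $x \in X_s$. As the inclusion $Y_s \hookrightarrow Y$ induces a functor of underlying $\infty$-categories, it carries this equivalence to an equivalence in $Y$, whence $y$ is equivalent to $f(x)$.

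For the second condition I would compare two fiber sequences, both obtained from Proposition~\ref{prop:fibersequencemappingspace}, rather than analysing $f$ directly. Write $s_i = p(f(x_i))$ and $s = p(f(x))$. Fix a corolla $\rho$ of $S$ with leaves $s_i$ and root $s$. Applying Proposition~\ref{prop:fibersequencemappingspace} to the inner fibration $p\circ f \colon X \longrightarrow S$ and to the inner fibration $p \colon Y \longrightarrow S$ — both are coCartesian fibrations, so the required coCartesian lifts of $\rho$ exist — yields homotopy fiber sequences
\[
X_s^L(\bar x;x) \longrightarrow X^L(x_1,\ldots,x_k;x) \longrightarrow S^L(s_1,\ldots,s_k;s)
\]
and
\[
Y_s^L(\tilde y;f(x)) \longrightarrow Y^L(f(x_1),\ldots,f(x_k);f(x)) \longrightarrow S^L(s_1,\ldots,s_k;s),
\]
where $\bar x$ is the root of a chosen $(p\circ f)$-coCartesian lift $\bar\rho$ of $\rho$ with leaves $x_i$, and $\tilde y$ is the root of a chosen $p$-coCartesian lift of $\rho$ with leaves $f(x_i)$. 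The map $f$ is a map of fiber sequences over the common base $S^L(s_1,\ldots,s_k;s)$, which is a Kan complex because $S$ is an $\infty$-operad; moreover both total spaces are Kan fibrations over it, since $X^L \longrightarrow S^L$ and $Y^L \longrightarrow S^L$ are left fibrations over a Kan complex.

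It therefore suffices to show that for every vertex $\rho$ of $S^L(s_1,\ldots,s_k;s)$ — that is, for every corolla $\rho$ of $S$ with the prescribed leaves and root — the induced map on fibers is a homotopy equivalence, after which the standard fiberwise criterion for a map of Kan fibrations over a common base yields that the map on total spaces is a homotopy equivalence. Since $f$ preserves coCartesian corollas, the image $f(\bar\rho)$ is a $p$-coCartesian lift of $\rho$ with leaves $f(x_i)$ and root $f(\bar x)$; by the uniqueness of coCartesian lifts (Corollary~\ref{cor:uniquecoCartlift}) there is an equivalence $\tilde y \simeq f(\bar x)$ in $Y_s$, and precomposition with it identifies the fiber map $X_s^L(\bar x;x) \longrightarrow Y_s^L(\tilde y;f(x))$, up to homotopy, with the map on mapping spaces induced by the fiber functor $X_s \longrightarrow Y_s$. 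As this functor is a categorical equivalence of $\infty$-categories, it induces homotopy equivalences on mapping spaces, and the argument is complete.

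The main obstacle is exactly this last identification: one must check that the homotopy fiber computed in Proposition~\ref{prop:fibersequencemappingspace} depends naturally enough on its input that $f$ genuinely realizes the fiber-functor map on mapping spaces, and that the equivalence supplied by Corollary~\ref{cor:uniquecoCartlift} does induce a homotopy equivalence after precomposition. Once this naturality is in hand, the passage from fiberwise equivalences to a global equivalence is routine for Kan fibrations over a fixed base.
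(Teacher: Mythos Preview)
Your proposal is correct and follows essentially the same line as the paper: verify essential surjectivity from the fiberwise equivalences, then compare the two homotopy fiber sequences of Proposition~\ref{prop:fibersequencemappingspace} and conclude by a five-lemma/fiberwise argument. The paper streamlines the one step you flag as an obstacle: rather than choosing an independent $p$-coCartesian lift with root $\tilde y$ and invoking Corollary~\ref{cor:uniquecoCartlift} to identify it with $f(\bar x)$, it simply takes $f(\bar\rho)$ as the lift on the $Y$-side from the outset (legitimate since $f$ preserves coCartesian corollas), so the comparison diagram commutes on the nose; the naturality of the fiber identification is then handled by reusing the explicit auxiliary spaces $Z_X$, $Z_Y$ from the proof of Proposition~\ref{prop:fibersequencemappingspace}.
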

\begin{proof}
Let $x_1, \ldots, x_n, x$ be colours of $X$ and let $\sigma$ be a corolla of $S$ with inputs $p(x_1), \ldots, p(x_n)$ and output $p(x)$. Let $\tilde \sigma$ be a $p \circ f$-coCartesian lift of $\sigma$ with inputs $x_1, \ldots, x_n$. We adopt the notations
\begin{eqnarray*}
Z_X & := & X^L(\tilde \sigma; x) \times_{X^L(\tilde x; x)} X^L_{(p \circ f)(x)}(\tilde x; x) \\
Z_Y & := & Y^L(f(\tilde \sigma); f(x)) \times_{Y^L(f(\tilde x); f(x))} Y^L_{p(x)}(f(\tilde x); f(x))
\end{eqnarray*}
where the right-hand sides are defined as in the previous proof. Recall that we have homotopy equivalences
\begin{eqnarray*}
Z_X & \longrightarrow & X^L_{(p \circ f)(x)}(\tilde x; x) \\
Z_Y & \longrightarrow & Y^L_{p(x)}(f(\tilde x); f(x))
\end{eqnarray*}
Using the fact that $f$ preserves coCartesian corollas we obtain a diagram
\[
\xymatrix{
Z_X \ar[r]\ar[d] & X^L(x_1, \ldots, x_k; x) \ar[r]\ar[d] & S^L(p(x_1), \ldots, p(x_k); p(x)) \ar@{=}[d] \\
Z_Y \ar[r] & Y^L(f(x_1), \ldots, f(x_k); f(x)) \ar[r] & S^L(p(x_1), \ldots, p(x_k); p(x))
}
\]
in which the rows are homotopy fiber sequences. The left vertical map is a homotopy equivalence by our assumption and the homotopy equivalences described above. Hence the middle vertical map is also a homotopy equivalence (this is basically an application of the five lemma). Thus $f$ is full and faithful. It is also essentially surjective by the assumption that it is fiberwise a categorical equivalence. Applying Proposition \ref{prop:mappingspaceequiv} it is thus an operadic equivalence. $\Box$
\end{proof}

For the purposes of the following proof, we introduce the notion of a \emph{strong homotopy}. Given two maps $f,\, g: X \longrightarrow Y$ of marked dendroidal sets over $S^\sharp$, we will say that they are strongly homotopic if there exists a lift indicated by the dotted arrow in the following diagram:
\[
\xymatrix{
X \coprod X \ar[d]\ar[r]^{f \coprod g} & Y \ar[d] \\
X \otimes i_!(\Delta^1)^\sharp \ar@{-->}[ur] \ar[r] & S^\sharp
}
\]
Here the left vertical map is induced by the two inclusions $\{0\} \rightarrow \Delta^1$ and $\{1\} \rightarrow \Delta^1$. Note that in the particular case that $X$ is normal, the map $u(Y) \longrightarrow S$ is a coCartesian fibration and $Y = u(Y)^\natural$, the maps $f$ and $g$ are strongly homotopic if and only if they are equivalent when viewed as objects of the $\infty$-category $\mathrm{Map}^\flat_S(X, Y)$.

\begin{proof}[Proof of Proposition \ref{prop:markedequivalence}]
We have just proven the equivalence between (2) and (3). For the rest of this proof, we are free to replace $X^\natural$, $Y^\natural$ and $S^\sharp$ by $X^\natural \times E_\infty^\sharp$, $Y^\natural \times E_\infty^\sharp$ and $(S \times E_\infty)^\sharp$ respectively. In particular, we are allowed to assume that $X$, $Y$ and $S$ are normal. \par 
By a standard argument, condition (1) of the proposition is equivalent to the existence of a strong homotopy inverse $Y \longrightarrow X$. One immediately sees that the existence of such a homotopy inverse implies (3). Thus, it suffices to prove that (2) implies (1). Our goal is to find a strong homotopy inverse to $f$. We will show how to obtain a strong left homotopy inverse to $f$; an analogous construction shows that $f$ will also admit a strong right homotopy inverse, completing the proof. \par 
Since $S$ is normal, it admits a normal skeletal filtration (see \cite{dendroidalsets}), i.e. $S$ can be obtained by a (possibly transfinite) composition of maps each of which is a pushout of a boundary inclusion $\partial \Omega[T] \longrightarrow \Omega[T]$ for some tree $T$. In other words, we can build $S$ by adjoining all its dendrices one by one (note the analogy with the usual method of building a simplicial set simplex by simplex). \par 
We will now construct a strong left homotopy inverse (say $g$) to $f$ by induction on a fixed normal skeletal filtration. That is, suppose $S_\alpha$ is a stage of the filtration for some ordinal $\alpha$ and suppose we have already constructed a map
\begin{equation*}
g_\alpha: Y^\natural \times_{S^\sharp} S_\alpha^\sharp \longrightarrow X^\natural \times_{S^\sharp} S_\alpha^\sharp
\end{equation*}
and a strong homotopy
\begin{equation*}
h_\alpha: (X^\natural \times_{S^\sharp} S_\alpha^\sharp) \otimes i_!(\Delta^1)^\sharp \longrightarrow X^\natural \times_{S^\sharp} S_\alpha^\sharp
\end{equation*}
from $g_\alpha \circ f$ to the identity. We wish to construct suitable maps $g_{\alpha+1}$ and $h_{\alpha+1}$, where the latter is a strong homotopy from $g_{\alpha+1} \circ f$ to the identity. Using this trick, we may reduce to the case where $S_{\alpha+1} = \Omega[T]$ and $S_\alpha = \partial \Omega[T]$. \par 
For convenience of notation, define
\begin{eqnarray*}
X_1 & := & X^\natural \times_{S^\sharp} \Omega[T]^\sharp \\
Y_1 & := & Y^\natural \times_{S^\sharp} \Omega[T]^\sharp \\
X_0 & := & X_1 \times_{\Omega[T]^\sharp} \partial \Omega[T]^\sharp \\
Y_0 & := & Y_1 \times_{\Omega[T]^\sharp} \partial \Omega[T]^\sharp \\
Z_0 & := & (X_1 \otimes \{0\}) \coprod_{X_0 \otimes \{0\}} (X_0 \otimes i_!(\Delta^1)^\sharp) \coprod_{X_0 \otimes \{1\}} (Y_0 \otimes \{1\}) \\
Z_1 & := & (X_1 \otimes i_!(\Delta^1)^\sharp) \coprod_{X_1 \otimes \{1\}} (Y_1 \otimes \{1\})
\end{eqnarray*}
Let us reformulate the problem. The data we are given defines a map
\begin{equation*}
\phi_0: Z_0 \longrightarrow X_1 
\end{equation*} 
and our goal is now to find a lift in the following diagram:
\[
\xymatrix{
Z_0 \ar[d]\ar[r] & X_1 \ar[d] \\
Z_1 \ar@{-->}[ur]\ar[r] & \Omega[T]^\sharp
}
\]
Note that if a lift in this diagram exists on the level of underlying dendroidal sets, the only thing we need to check in order for this map to respect the markings is that $\{x\} \otimes i_!(\Delta^1)$ is mapped to a marked 1-corolla of $X_1$ for each colour $x$ of $X_1$. Also, this condition is automatically satisfied if $\partial \Omega[T] \neq \emptyset$, i.e. if $T$ has at least one vertex. \par 
If $T = \eta$, we will have $X_0 = Y_0 = \emptyset$, $Z_0 = X_1 \otimes \{0\}$ and both $X_1$ and $Y_1$ will be $\infty$-categories in which the marked 1-simplices are exactly the equivalences. The lifting problem above now corresponds to finding a homotopy inverse $Y_1 \longrightarrow X_1$. This exists since $X_1 \longrightarrow Y_1$ is an equivalence of $\infty$-categories. \par 
Now consider a tree $T$ with at least one vertex. Any extension $Z_1 \longrightarrow X_1$ of $Z_0 \longrightarrow X_1$ will automatically be compatible with the maps to $\Omega[T]^\sharp$. Indeed, any map to $\Omega[T]^\sharp$ is completely determined by what it does on colours, and all the colours of $T$ are already contained in $\partial \Omega[T]$. By the comments made above, it suffices to construct this map on the level of underlying dendroidal sets. Since $X_1$ maps to $\Omega[T]^\sharp$ by an inner fibration, its underlying dendroidal set is an $\infty$-operad. Observing that the underlying map of $Z_0 \longrightarrow Z_1$ is a normal monomorphism between normal objects, we can apply Lemma \ref{lemma:extensionuptohomotopy} to deduce that we only need to construct the desired lift up to homotopy. Hence, we may replace the inclusion $u(Z_0 \longrightarrow Z_1)$ by the weakly equivalent inclusion $u(k)$ where $k$ is the map 
\begin{equation*}
(X_1 \otimes \{0\}) \coprod_{X_0 \otimes \{0\}} (X_0 \otimes i_!(\Delta^1)^\sharp) \longrightarrow X_1 \otimes i_!(\Delta^1)^\sharp
\end{equation*}
We observe that $k$ is the smash product of the cofibration $X_0 \subseteq X_1$ with the marked anodyne $\{0\} \subseteq i_!(\Delta^1)^\sharp$ and is therefore marked anodyne. Since $X_1 \longrightarrow \Omega[T]^\sharp$ has the right lifting property with respect to all marked anodynes the desired lift exists and we are done. $\Box$
\end{proof}

\subsection{The model structure}

The following result will establish a model structure on $\mathbf{dSets^+}/S$ which we will refer to as the \emph{coCartesian model structure}.

\begin{theorem}
\label{thm:coCartmodelstructure}
There exists a left proper combinatorial model structure on $\mathbf{dSets^+}/S$ in which the cofibrations are the maps whose underlying maps of dendroidal sets are cofibrations and the weak equivalences are the marked equivalences.
\end{theorem}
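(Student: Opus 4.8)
The plan is to invoke Lurie's recognition criterion for combinatorial model structures (Proposition A.2.6.13 of \cite{htt}). It builds a left proper combinatorial model structure on a presentable category from a class $W$ of weak equivalences and a set $C_0$ of generating cofibrations once one verifies: (1) $W$ is \emph{perfect} (contains the isomorphisms, satisfies two-out-of-three, is closed under retracts, and is an accessible subcategory of the arrow category closed under filtered colimits); (2) $W$ is stable under pushout along cofibrations; and (3) every map with the right lifting property against all of $C_0$ belongs to $W$. In our situation $\mathbf{dSets^+}/S$ is presentable, being a slice of a presheaf category; $W$ is the class of marked equivalences; and we take $C_0$ to consist of the boundary inclusions $\partial\Omega[T]^\flat \subseteq \Omega[T]^\flat$ and the inclusions $\Omega[C_n]^\flat \subseteq \Omega[C_n]^\sharp$, each equipped with an arbitrary structure map to $S^\sharp$. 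There is only a set of these, and they generate the cofibrations as a weakly saturated class.

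First I would treat condition (3). If $g \colon X \to Y$ has the right lifting property against every cofibration, then $u(g)$ is a trivial fibration of dendroidal sets and a corolla of $X$ is marked precisely when its image in $Y$ is. Replacing $g$ by $g \times \mathrm{id}_{E_\infty^\sharp}$—which again has this lifting property and whose source and target are marked normalizations—we may assume $X$ and $Y$ normal. A lift of $\mathrm{id}_Y$ against $g$ gives a section $s$, and a lift of the cofibration $X \coprod X \hookrightarrow X \otimes i_!(\Delta^1)^\sharp$ against $g$ gives a strong homotopy from $s \circ g$ to $\mathrm{id}_X$. Thus for every coCartesian fibration $Z \to S$ the map $s^*$ is a homotopy inverse to $g^* \colon \mathrm{Map}^\flat_S(Y, Z^\natural) \to \mathrm{Map}^\flat_S(X, Z^\natural)$, so $g$ is a marked equivalence.

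Condition (2) simultaneously delivers left properness. Given a pushout square with cofibration $A \to A'$, its cobase change $B \to B'$, a marked equivalence $A \to B$, and the induced map $A' \to B'$, I would first apply $(-) \times E_\infty^\sharp$, which preserves cofibrations and pushouts, to reduce to the case of normal objects. Fixing a coCartesian fibration $Z \to S$ and applying the contravariant functor $\mathrm{Map}^\sharp_S(-, Z^\natural)$ turns the pushout into a pullback of Kan complexes; by Lemma \ref{lemma:Kanfibration} the two maps coming from the cofibrations are Kan fibrations, and by Proposition \ref{prop:equivKan} the map coming from $A \to B$ is a homotopy equivalence. The map coming from $A' \to B'$ is then the base change of a homotopy equivalence along a Kan fibration, hence a homotopy equivalence by right properness of the Kan--Quillen model structure (all objects being fibrant). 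Proposition \ref{prop:equivKan} now identifies $A' \to B'$ as a marked equivalence.

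The main obstacle is condition (1), the perfectness of the marked equivalences: because the defining condition is contravariant in the source, closure under filtered colimits and accessibility are not formal. Here the plan is to build a fibrant replacement functor $R$ by the small object argument applied to the generating marked anodynes; by Corollary \ref{cor:liftwrtmarked} its values are exactly the coCartesian fibrations over $S$, and by Proposition \ref{prop:markedanodyne} each unit map $X \to R X$ is a marked equivalence. Two-out-of-three and closure under retracts are inherited from homotopy equivalences of Kan complexes through Proposition \ref{prop:equivKan}, and two-out-of-three shows that $f$ is a marked equivalence if and only if $R(f)$ is. Since $R(f)$ is a map of fibrant objects preserving marked (hence coCartesian) corollas, Proposition \ref{prop:markedequivalence} identifies it as a marked equivalence exactly when it is an operadic equivalence. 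The operadic equivalences are the weak equivalences of the combinatorial Cisinski--Moerdijk model structure and therefore form a perfect class, and $R$ is accessible and preserves sufficiently filtered colimits; hence the marked equivalences, being the preimage of a perfect class under $R$, are perfect. With conditions (1), (2) and (3) established, Proposition A.2.6.13 of \cite{htt} yields the required left proper combinatorial model structure.
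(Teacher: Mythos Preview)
Your proposal follows essentially the same strategy as the paper's proof: both invoke Smith's recognition theorem (the paper cites it as Proposition~\ref{prop:combinmodelcat}, identical in content to Lurie's A.2.6.13), and your arguments for conditions (2) and (3) match the paper's almost verbatim, including the reduction to normal objects via $(-)\times E_\infty^\sharp$, the use of Lemma~\ref{lemma:Kanfibration} and right properness for (2), and the section-plus-strong-homotopy argument for (3).

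There is one genuine gap in your treatment of condition (1). You invoke Proposition~\ref{prop:markedanodyne} to conclude that the unit $X \to RX$ is a marked equivalence, but that proposition is stated only for marked anodynes between objects whose underlying dendroidal sets are \emph{normal}. Without normality you cannot yet cite it; the improvement that \emph{all} marked anodynes are trivial cofibrations appears only as a remark \emph{after} the theorem is established. The paper handles this by normalizing first: it sets $X_{(n)} = X \times E_\infty^\sharp$, then applies the functorial marked-anodyne/fibrant factorization to $X_{(n)}$, observes that the target $Z_{X_{(n)}}$ remains normal (marked anodynes being normal monomorphisms), and only then invokes Proposition~\ref{prop:markedanodyne}. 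The composite functor $X \mapsto Z_{X_{(n)}}$ still preserves filtered colimits, so Lemma~\ref{lemma:perfectclass} applies. Your argument is easily repaired by inserting this normalization step before the fibrant replacement; once you do, the two proofs are essentially identical.
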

\begin{proof}
We will use Jeff Smith's machinery for combinatorial model categories, which is summarized in the Appendix. Using Proposition \ref{prop:combinmodelcat}, we reduce to checking the following things:
\begin{itemize}
\item[(1)] The class $W$ of weak equivalences is perfect
\item[(2)] Weak equivalences are stable under pushouts by cofibrations
\item[(3)] Any map having the right lifting property with respect to all cofibrations is a weak equivalence
\end{itemize}
We start by proving (1). Suppose $X \in \mathbf{dSets^+}/S$. Applying Quillen's small object argument, we can factor the map $X \longrightarrow S^\sharp$ into a marked anodyne followed by a map having the right lifting property with respect to all marked anodynes, i.e. we get a factorization
\begin{equation*}
X \longrightarrow Z_X^\natural \longrightarrow S^\sharp 
\end{equation*}
where $Z_X \longrightarrow S$ is a coCartesian fibration. Furthermore, this factorization can be constructed in a way that is functorial in $X$ and preserves filtered colimits. \par 
Now suppose we are given a morphism $f: X \longrightarrow Y$ in $\mathbf{dSets^+}/S$. We fix the normalizations $X_{(n)} =  X \times E_\infty^\sharp$ and $Y_{(n)} =  Y \times E_\infty^\sharp$. The map $f$ is a marked equivalence if and only if the induced map $f_{(n)}: X_{(n)} \longrightarrow Y_{(n)}$ is a marked equivalence. Now the factorization described above yields the following square:
\[
\xymatrix{
X_{(n)} \ar[r]^{f_{(n)}}\ar[d] & Y_{(n)} \ar[d] \\
Z_{X_{(n)}}^\natural \ar[r] & Z_{Y_{(n)}}^\natural
}
\]
Since marked anodynes are normal, both $Z_{X_{(n)}}$ and $Z_{Y_{(n)}}$ will be normal. Proposition \ref{prop:markedanodyne} implies that both vertical morphisms are marked equivalences. By applying the 2-out-of-3 property of marked equivalences to the diagram we see that $f_{(n)}$ is a marked equivalence if and only if the bottom arrow is a marked equivalence. By \ref{prop:markedequivalence} this is the case precisely if the underlying map $Z_{X_{(n)}} \longrightarrow Z_{Y_{(n)}}$ is an equivalence in the Cisinski-Moerdijk model structure on $\mathbf{dSets}$. Since products commute with filtered colimits our normalization will preserve filtered colimits and so does our factorization, as remarked above. We can now apply Lemma \ref{lemma:perfectclass} from the Appendix and the fact that the class of weak equivalences in the Cisinski-Moerdijk model structure is perfect to conclude that our class $W$ of marked equivalences is perfect. \par 
(2) Let $f: X \longrightarrow Y$ be a marked equivalence and let $g: X \longrightarrow V$ be a cofibration. Consider the pushout diagram
\[
\xymatrix{
X_{(n)} \ar[r]\ar[d] & Y_{(n)} \ar[d] \\
V_{(n)} \ar[r] & (V \coprod_X Y)_{(n)}
}
\]
For an arbitrary coCartesian fibration $Z \longrightarrow S$ this will give us a pullback square
\[
\xymatrix{
\mathrm{Map}^\sharp_S((V \coprod_X Y)_{(n)}, Z^\natural) \ar[r]\ar[d] & \mathrm{Map}^\sharp_S(Y_{(n)}, Z^\natural) \ar[d] \\
\mathrm{Map}^\sharp_S(V_{(n)}, Z^\natural) \ar[r] & \mathrm{Map}^\sharp_S(X_{(n)}, Z^\natural)
}
\]
The right vertical map is a homotopy equivalence by the assumption that $f$ is a marked equivalence and the bottom map is a Kan fibration by Lemma \ref{lemma:Kanfibration}. Since the Quillen model structure on $\mathbf{sSets}$ is right proper, the right vertical map will be a homotopy equivalence as well. By Proposition \ref{prop:equivKan} we conclude that $V \longrightarrow V \coprod_X Y$ is a marked equivalence. \par 
(3) Suppose we are given a morphism $f: X \longrightarrow Y$ in $\mathbf{dSets^+}/S$ which has the right lifting property with respect to all cofibrations. Again setting $X_{(n)} =  X \times E_\infty^\sharp$ and $Y_{(n)} =  Y \times E_\infty^\sharp$ we see that the induced map $f_{(n)}: X_{(n)} \longrightarrow Y_{(n)}$ also has the right lifting property with respect to all cofibrations. This means that the underlying map of dendroidal sets is a trivial fibration (in the Cisinski-Moerdijk model structure) and that a corolla of $X_{(n)}$ is marked if and only if its image under $f_{(n)}$ is marked. We obtain a map $g_{(n)}: Y_{(n)} \longrightarrow X_{(n)}$ by lifting in the following diagram of underlying dendroidal sets:
\[
\xymatrix{
\emptyset \ar[d]\ar[r] & u(X_{(n)}) \ar@{->>}[d] \\
u(Y_{(n)}) \ar@{-->}^{g_{(n)}}[ur] \ar@{=}[r] & u(Y_{(n)})
}
\]
This lift is immediately seen to respect markings. To show that $g_{(n)}$ is a strong homotopy inverse to $f_{(n)}$, we take a lift as in the diagram below:
\[
\xymatrix@C=90pt{
u(X_{(n)}) \coprod u(X_{(n)}) \ar[r]^{\mathrm{id}_{X_{(n)}} \coprod g_{(n)} \circ f_{(n)}} \ar[d] & u(X_{(n)}) \ar@{->>}[d] \\
u(X_{(n)}) \otimes i_!(\Delta^1) \ar@{-->}[ur]\ar[r] & u(Y_{(n)})
}
\]
Again one easily verifies that this lift respects markings in such a way that we obtain a map
\begin{equation*}
X_{(n)} \otimes i_!(\Delta^1)^\sharp \longrightarrow X_{(n)}
\end{equation*}
This shows $g_{(n)}$ is indeed strong homotopy inverse to $f_{(n)}$, from which we conclude that $f_{(n)}$ and hence also $f$ is a marked equivalence. $\Box$
\end{proof}

\begin{remark}
Note that we get the following improvement of Lemma \ref{prop:markedanodyne}: \emph{every} marked anodyne morphism is a trivial cofibration.
\end{remark}

This model structure enjoys several pleasant properties, which are stated in the following propositions.

\begin{proposition}
\label{prop:fibrantobjects}
The fibrant objects of $\mathbf{dSets^+}/S$ are precisely the objects isomorphic to some $Z^\natural \longrightarrow S^\sharp$, where $Z \longrightarrow S$ is a coCartesian fibration.
\end{proposition}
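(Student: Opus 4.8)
The plan is to show that the fibrant objects — the maps $X \to S^\sharp$ that are fibrations, $S^\sharp$ being terminal in $\mathbf{dSets^+}/S$ — coincide with the class $\{\,Z^\natural \to S^\sharp : Z\to S \text{ coCartesian}\,\}$, which by Corollary~\ref{cor:liftwrtmarked} is exactly the class of maps having the right lifting property (RLP) against all marked anodynes. The easy inclusion is as follows. If $X\to S^\sharp$ is fibrant, then it has the RLP against every trivial cofibration; by the remark following Theorem~\ref{thm:coCartmodelstructure} every marked anodyne is a trivial cofibration, so $X\to S^\sharp$ has in particular the RLP against all marked anodynes. Corollary~\ref{cor:liftwrtmarked} then forces $u(X)\to S$ to be a coCartesian fibration with $X=u(X)^\natural$, so $X\cong Z^\natural$ with $Z=u(X)$.

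For the converse I must show that a coCartesian fibration $Z\to S$ yields a \emph{fibrant} object $Z^\natural\to S^\sharp$. By Corollary~\ref{cor:liftwrtmarked} this map already has the RLP against marked anodynes; the real work is to upgrade this to the RLP against an arbitrary trivial cofibration $i\colon A\to B$. Since the model structure is combinatorial it suffices to test this against a set of generating trivial cofibrations, and I would reduce to the case in which $A$ and $B$ are \emph{normal}: the cofibrations are generated by the normal monomorphisms $\partial\Omega[T]^\flat\subseteq\Omega[T]^\flat$ and $\Omega[C_n]^\flat\subseteq\Omega[C_n]^\sharp$ between normal objects, and normality is preserved under the cell attachments building the relevant trivial cofibrations, so one may arrange that the test maps $i\colon A\to B$ have normal domain and codomain. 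In that situation every object equals its own normalization, so Proposition~\ref{prop:equivKan} applies directly.

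Granting this, fix such a trivial cofibration $i\colon A\to B$ with $A,B$ normal and consider the induced map $i^*\colon \mathrm{Map}_S^\sharp(B,Z^\natural)\longrightarrow \mathrm{Map}_S^\sharp(A,Z^\natural)$. By Lemma~\ref{lemma:Kanfibration} this is a Kan fibration (using only that $i$ is a cofibration and that $Z\to S$ is coCartesian), and by Proposition~\ref{prop:equivKan}, since $i$ is a marked equivalence and $A_{(n)}=A$, $B_{(n)}=B$, the map $i^*$ is moreover a homotopy equivalence. Hence $i^*$ is a trivial Kan fibration and in particular surjective on vertices. Now a vertex of $\mathrm{Map}_S^\sharp(A,Z^\natural)$ is precisely a map $A\to Z^\natural$ over $S^\sharp$, the map $i^*$ is restriction along $i$, and the structure map of $B$ in the slice is exactly the prescribed map $B\to S^\sharp$; so surjectivity of $i^*$ on vertices says that every $A\to Z^\natural$ over $S^\sharp$ extends along $i$ to a $B\to Z^\natural$ over $S^\sharp$. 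This is exactly a solution of the lifting problem, so $Z^\natural\to S^\sharp$ is a fibration.

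I expect the main obstacle to be precisely the reduction to normal (i.e.\ cofibrant) test objects $A,B$, equivalently the passage from ``RLP against marked anodynes'' to ``genuine fibration''. The difficulty is intrinsic: for a non-normal codomain $B$ the mapping space $\mathrm{Map}_S^\sharp(B,Z^\natural)$ is not homotopy invariant and Proposition~\ref{prop:equivKan} only controls the \emph{normalized} mapping spaces, while a lift defined on a normalization $B\times E_\infty^\sharp$ cannot in general be pushed back to $B$ (this would demand a section of the trivial fibration $B\times E_\infty^\sharp\to B$, i.e.\ cofibrancy of $B$). Everything else is routine bookkeeping: identifying the vertices of $\mathrm{Map}_S^\sharp(-,Z^\natural)$ with morphisms over $S^\sharp$, identifying $i^*$ with restriction, and invoking the standard fact that an acyclic Kan fibration is surjective on vertices.
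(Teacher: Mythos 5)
Your first direction and the lifting argument against trivial cofibrations \emph{between normal objects} both match the paper: fibrant $\Rightarrow$ RLP against marked anodynes $\Rightarrow$ the conclusion via Corollary~\ref{cor:liftwrtmarked} (the paper cites Proposition~\ref{prop:lifting} directly), and conversely, for a trivial cofibration $g\colon A\to B$ with $A,B$ normal, the map $\mathrm{Map}^\sharp_S(B,Z^\natural)\to\mathrm{Map}^\sharp_S(A,Z^\natural)$ is a trivial Kan fibration, hence surjective on vertices, which produces the lift.

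The gap is exactly where you suspected it, and your attempted repair does not close it. You assert that ``it suffices to test against a set of generating trivial cofibrations'' and that ``normality is preserved under the cell attachments building the relevant trivial cofibrations, so one may arrange that the test maps have normal domain and codomain.'' But the generating set of trivial cofibrations supplied by Smith's theorem comes from an accessibility argument and carries no a priori normality guarantee; the statement you actually need is that the class of trivial cofibrations is the weak saturation of the class of trivial cofibrations \emph{between normal objects}, and that is a claim requiring proof, not a consequence of cell attachments preserving normality (which only tells you that objects built \emph{from} normal objects stay normal, not that an arbitrary trivial cofibration with non-normal domain is built from such maps). The paper closes this with a retract argument: given a trivial cofibration $g\colon A\to B$, choose a normalization $B'\to B$, pull back to get a trivial cofibration $g'\colon A'\to B'$ between normal objects, push $g'$ out along $A'\to A$ to obtain $v\colon A\to C$, and then show that the induced map $w\colon C\to B$ has the right lifting property with respect to all cofibrations (this step itself requires a separate cube argument using the explicit normalization $B\times E_\infty^\sharp$ and a section of $Q'\to Q$). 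A section $s$ of $w$ then exhibits $g$ as a retract of $v$, which is a pushout of a trivial cofibration between normal objects, and weak saturation finishes the proof. Without this (or an equivalent) argument, your proof of the converse direction is incomplete.
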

\begin{proof}
Suppose $X \longrightarrow S^\sharp$ is fibrant. Then this map has the right lifting property with respect to trivial cofibrations, in particular with respect to marked anodynes. Proposition \ref{prop:lifting} implies that $u(X) \longrightarrow S$ is a coCartesian fibration and that $X = u(X)^\natural$. \par 
Now suppose we are given a coCartesian fibration $f: Z \longrightarrow S$. First, let $g: A \longrightarrow B$ be a trivial cofibration between normal objects. We aim to find a lift $\phi$ in the diagram
\[
\xymatrix{
A \ar[d]_g \ar[r] & Z^\natural \ar[d] \\ 
B \ar[r]\ar@{-->}[ur]^\phi & S^\sharp
}
\] 
Since $g$ is trivial, the map
\begin{equation*}
\mathrm{Map}^\sharp_S(B, Z^\natural) \longrightarrow \mathrm{Map}^\sharp_S(A, Z^\natural)
\end{equation*} 
is a trivial Kan fibration. In particular it is surjective on vertices. It follows that there exists a map $\phi$ serving as a lift in our diagram. \par 
The only thing left to show is that the class of trivial cofibrations is in fact the weak saturation of the class of trivial cofibrations between normal objects. This is done in the proof of Proposition 8.4.2 in \cite{dendroidalsets}. We include the argument here for completeness. Suppose $g: A \longrightarrow B$ is a trivial cofibration in $\mathbf{dSets^+}/S$ (we leave the maps to $S^\sharp$ implicit throughout this argument), where $A$ and $B$ are not required to be normal. Take a normalization $B'$ of $B$ and construct the pullback
\[
\xymatrix{
A' \ar[r]^{g'}\ar[d] & B' \ar[d] \\
A \ar[r]_g & B
}
\]
so that $A'$ is also a normalization of $A$. Take a pushout to obtain a diagram
\[
\xymatrix{
A' \ar[r]^{g'}\ar[d] & B' \ar[d]\ar[ddr] & \\
A \ar[r]^v\ar[drr]_g & C \ar[dr]_w & \\
& & B
}
\]
in which the square is actually still a pullback. Now suppose $w$ has the right lifting property with respect to cofibrations. Then we can find a section $s: B \longrightarrow C$ such that $w \circ s = \mathrm{id}_B$. This exhibits $g$ as a retract of $v$:
\[
\xymatrix{
A \ar@{=}[r]\ar[d]_g & A \ar@{=}[r]\ar[d]_v & A\ar[d]_g \\
B \ar[r]_s & C \ar[r]_w & B
}
\]
Since $v$ is a pushout of $g'$, a trivial cofibration between normal objects, the result follows. \par
It remains to show that $w$ has the desired property. Consider a cofibration $U \longrightarrow V$. Since we may assume this cofibration is either the inclusion $(\partial \Omega[T])^\flat \subseteq (\Omega[T])^\flat$ or the inclusion $(\Omega[C_1])^\flat \subseteq (\Omega[C_1])^\sharp$ we can in particular assume $U$ is normal. Consider a diagram of the form
\[
\xymatrix{
& B' \ar[d] \\
U \ar[d]\ar[r] & C \ar[d]^w \\
V \ar@{-->}[ur]\ar[r] & B
}
\]
We are supposed to find the dotted lift indicated. Now suppose we can find a lift of $U \longrightarrow C$ to a map $U \longrightarrow B'$. Using the explicit normalization $B' = B \times E_\infty^\sharp$ one sees that the map $B' \longrightarrow B$ has the right lifting property with respect to cofibrations, so we can find a map $V \longrightarrow B'$ rendering the diagram commutative. Composing with $B' \longrightarrow C$ does the trick. \par
We need to construct the lift $U \longrightarrow B'$. For this, we pull back the pushout square we had along $U \longrightarrow C$ to form the cube
\[
\xymatrix@R=20pt@C=20pt{
                   & Q' \ar[dl]\ar'[d][dd]\ar[rr] &             & U' \ar[dl]\ar[dd] \\
A' \ar[dd]\ar[rr] &                              & B' \ar[dd] &                   \\
                   & Q  \ar[dl]\ar'[r][rr]        &             & U  \ar[dl]        \\
A   \ar[rr]        &                              & C           &
}
\]
Then all the faces of this cube are pullbacks. Hence $Q'$ is a normalization of $Q$. Also $Q$ is normal since it admits a map to $U$. Now $Q' \longrightarrow Q$ is a trivial fibration of cofibrant objects and thus admits a section. Since the front face of the cube is a pushout, so is the back face (use that $\mathbf{dSets^+}$ is a quasi-topos, so that pullbacks commute with pushouts). Hence the pushout $U' \longrightarrow U$ of $Q' \longrightarrow Q$ also admits a section. The composition
\begin{equation*}
U \longrightarrow U' \longrightarrow B'
\end{equation*}
is the required lift. $\Box$
\end{proof}

Before proving the next proposition, we need the following:

\begin{lemma}
Let $S$ and $T$ be dendroidal sets and let $Z$ be a normal object of $\mathbf{dSets^+}/T$. Then the functor
\begin{equation*}
\mathbf{dSets^+}/S \longrightarrow \mathbf{dSets^+}/(S \otimes T): X \longmapsto X \otimes Z
\end{equation*}
preserves marked equivalences.
\end{lemma}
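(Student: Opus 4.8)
The plan is to exploit the tensor--hom adjunction together with the characterization of marked equivalences by mapping spaces (Proposition \ref{prop:equivKan}). Write $F = (-)\otimes Z \colon \mathbf{dSets^+}/S \to \mathbf{dSets^+}/(S\otimes T)$. Since the tensor product commutes with colimits in each variable, $F$ preserves colimits and so admits a right adjoint $G$, to be thought of as a relative internal hom $\mathbf{Hom}_T(Z,-)$. Being a right adjoint, $G$ preserves terminal objects, so it sends $(S\otimes T)^\sharp$ to $S^\sharp$ and descends to an adjunction over the bases.

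First I would show that $G$ sends fibrant objects to fibrant objects: if $W\to S\otimes T$ is a coCartesian fibration, then $G(W^\natural)$ has the form $V^\natural$ for a coCartesian fibration $V\to S$. By Corollary \ref{cor:liftwrtmarked} it suffices to check that $G(W^\natural)\to S^\sharp$ has the right lifting property against every marked anodyne $A\to B$. Transposing along $F\dashv G$, such a lifting problem is equivalent to one of $A\otimes Z\to B\otimes Z$ against $W^\natural\to (S\otimes T)^\sharp$. Now $A\otimes Z\to B\otimes Z$ is exactly the pushout--product of the marked anodyne $A\to B$ with the cofibration $\emptyset\to Z$ (here we use that $u(Z)$ is normal), hence marked anodyne by Proposition \ref{prop:smashproduct}; and $W^\natural\to (S\otimes T)^\sharp$ lifts against marked anodynes by Corollary \ref{cor:liftwrtmarked}. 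So the transposed lift exists, and hence so does the original one.

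Next, using the defining universal property of $\mathrm{Map}^\sharp$, associativity of $\otimes$, and the adjunction $F\dashv G$, I obtain for every $X\in\mathbf{dSets^+}/S$ a natural isomorphism
\begin{equation*}
\mathrm{Map}_{S\otimes T}^\sharp(X\otimes Z, W^\natural)\;\cong\;\mathrm{Map}_S^\sharp(X, G(W^\natural)).
\end{equation*}
Indeed, a map $K\to\mathrm{Map}_{S\otimes T}^\sharp(X\otimes Z, W^\natural)$ corresponds to a map $i_!(K)^\sharp\otimes(X\otimes Z)\to W^\natural$ over $(S\otimes T)^\sharp$; reassociating the source as $\bigl(i_!(K)^\sharp\otimes X\bigr)\otimes Z$ and transposing along $F\dashv G$ gives a map $i_!(K)^\sharp\otimes X\to G(W^\natural)$ over $S^\sharp$, i.e. a map $K\to\mathrm{Map}_S^\sharp(X, G(W^\natural))$. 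With this the case of \emph{normal} $X,Y$ is immediate: if $X$ and $Y$ are normal then so are $X\otimes Z$ and $Y\otimes Z$ (the map $\emptyset\to X\otimes Z$ is a pushout--product of cofibrations, hence a cofibration), so these objects are their own normalizations, and a marked equivalence $f\colon X\to Y$ induces, via the isomorphism above, the map $\mathrm{Map}_S^\sharp(Y, V^\natural)\to\mathrm{Map}_S^\sharp(X, V^\natural)$. Since $V\to S$ is a coCartesian fibration and $f$ is a marked equivalence, this is a homotopy equivalence by Proposition \ref{prop:equivKan}, so $f\otimes Z$ is a marked equivalence.

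Finally I would reduce the general case to the normal one. Given an arbitrary marked equivalence $f\colon X\to Y$, choose normalizations $X_{(n)}\to X$, $Y_{(n)}\to Y$ and a lift $f_{(n)}\colon X_{(n)}\to Y_{(n)}$, which is a marked equivalence between normal objects; by the previous paragraph $f_{(n)}\otimes Z$ is then a marked equivalence. To finish, it suffices to know that $X_{(n)}\otimes Z\to X\otimes Z$ (and likewise for $Y$) is again a normalization, i.e. that $(-)\otimes Z$ carries the trivial fibration $X_{(n)}\to X$ to a trivial fibration; granting this, $X_{(n)}\otimes Z$ and $Y_{(n)}\otimes Z$ may be used as normalizations of $X\otimes Z$ and $Y\otimes Z$ and the computation of the previous paragraph applies verbatim. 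This compatibility of $(-)\otimes Z$ with normalization is the step I expect to be the main obstacle: unlike the Cartesian product used for marked \emph{simplicial} sets, the Boardman--Vogt tensor admits no projection $B\otimes Z\to B$, so the statement is not formal and cannot be deduced from the pushout--product calculus alone. I would attack it by a skeletal induction on the normal object $Z$ --- the unit case $Z=\eta$ returns the trivial fibration $X_{(n)}\to X$, and the inductive step attaches cells $\partial\Omega[T]\to\Omega[T]$, reducing to the representable case together with a gluing argument for the resulting pushout squares --- where the explicit description of the corollas of a tensor product recorded in Section~4.1 must be used.
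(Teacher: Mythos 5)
Your argument for the case of normal $X$ and $Y$ is correct, and it takes a genuinely different route from the paper. You work on the test-object side: you transpose everything through the right adjoint $G$ of $(-)\otimes Z$, show that $G$ preserves fibrant objects by the same pushout-product/adjunction trick used in Corollary \ref{cor:exponential}, identify $\mathrm{Map}^\sharp_{S\otimes T}(X\otimes Z,W^\natural)$ with $\mathrm{Map}^\sharp_S(X,G(W^\natural))$, and invoke Proposition \ref{prop:equivKan}. The paper instead works on the source side: it replaces $f$ by a map between fibrant objects using marked anodynes (which $(-)\otimes Z$ preserves by Proposition \ref{prop:smashproduct}, since $Z$ is normal), normalizes, and uses that a marked equivalence of fibrant--cofibrant objects admits a strong homotopy inverse, strong homotopies being manifestly preserved by $(-)\otimes Z$ after reassociating $(X\otimes i_!(\Delta^1)^\sharp)\otimes Z\cong (X\otimes Z)\otimes i_!(\Delta^1)^\sharp$. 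Your route has the advantage of not needing the existence of the coCartesian model structure or the homotopy-inverse characterization of its equivalences; the paper's route avoids introducing $G$ altogether.

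Concerning the obstacle you flag for non-normal $X$ and $Y$: it is real, and you should be aware that the paper's own proof elides exactly the same point, since it tensors the normalizations $X'_{(n)}\to X'$ with $Z$ and implicitly treats $X'_{(n)}\otimes Z\to X'\otimes Z$ as again a normalization. Two comments. First, in every downstream use of this lemma one reduces to (trivial) cofibrations between normal objects, because trivial cofibrations are generated as a weakly saturated class by those between normal objects (proof of Proposition \ref{prop:fibrantobjects}); so the case you have completely proved is the one that carries the weight. Second, I would caution against aiming at the statement that $(-)\otimes Z$ preserves trivial fibrations: the relevant lifting problems do not transpose along $F\dashv G$, and a skeletal induction on $Z$ runs into the absence of a projection $B\otimes Z\to B$ just as squarely as the formal argument does. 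What is actually needed is weaker, namely that the single map $(X\times E_\infty^\sharp)\otimes Z\to X\otimes Z$ is a marked equivalence, after which two-out-of-three in the model structure of Theorem \ref{thm:coCartmodelstructure} finishes the reduction; but even this comparison of $(X\times E_\infty^\sharp)\otimes Z$ with the standard normalization $(X\otimes Z)\times E_\infty^\sharp$ requires an argument, so the honest conclusion is that your proof, like the paper's, is complete only under the normality hypothesis.
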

\begin{proof}
Let $f: X \longrightarrow Y$ be a marked equivalence in $\mathbf{dSets^+}/S$. We wish to show that $f \otimes \mathrm{id}_Z$ is a marked equivalence in $\mathbf{dSets^+}/(S \otimes T)$. Pick a marked anodyne $X \longrightarrow X'$ such that $X'$ is fibrant in $\mathbf{dSets^+}/S$ and subsequently pick a marked anodyne $X' \coprod_X Y \longrightarrow Y'$ such that $Y' \in \mathbf{dSets^+}/S$ is fibrant. The tensor products $X \otimes Z \longrightarrow X' \otimes Z$ and $(X' \coprod_X Y) \otimes Z \longrightarrow Y' \otimes Z$ are marked anodyne by Proposition \ref{prop:smashproduct}, using the fact that $Z$ is normal. Considering the diagram
\[
\xymatrix@C=50pt{
X \otimes Z \ar[r]^{\sim}\ar[d] & X' \otimes Z \ar[d]\ar[dr] & \\
Y \otimes Z \ar[r]^{\sim} & (X' \coprod_X Y) \otimes Z \ar[r]^{\sim} & Y' \otimes Z
}
\] 
we see that $X \otimes Z \longrightarrow Y \otimes Z$ is a marked equivalence if and only if $X' \otimes Z \longrightarrow Y' \otimes Z$ is so. The induced map on normalizations $f'_{(n)}: X'_{(n)} \longrightarrow Y'_{(n)}$ is a marked equivalence of fibrant-cofibrant objects and thus admits a strong homotopy inverse $g'_{(n)}$. The map $g'_{(n)} \otimes \mathrm{id}_Z$ is then a strong homotopy inverse to $f'_{(n)} \otimes \mathrm{id}_Z$, completing the proof. $\Box$ 
\end{proof}

\begin{corollary}
Let $f: A \longrightarrow B$ and $g: C \longrightarrow D$ be cofibrations in respectively $\mathbf{dSets^+}/S$ and $\mathbf{dSets^+}/T$. Then the smash product $f \wedge g$ is a cofibration in $\mathbf{dSets^+}/(S \otimes T)$ which is trivial if either $f$ or $g$ is.
\end{corollary}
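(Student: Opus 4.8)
The plan is to treat the two assertions separately: that $f \wedge g$ is a cofibration, and that it is a marked equivalence as soon as one of $f,g$ is. The cofibration statement is purely about underlying dendroidal sets. Since the forgetful functor $u$ has both adjoints it preserves colimits, and $u(X \otimes Y) = u(X) \otimes_{\mathbf{dSets}} u(Y)$ by definition of the marked tensor product; hence $u(f \wedge g) = u(f) \wedge u(g)$ is the pushout--product in $\mathbf{dSets}$ of the two normal monomorphisms $u(f)$ and $u(g)$. As normal monomorphisms are closed under pushout--product \cite{dendroidalsets}, this is again a normal monomorphism, so $f \wedge g$ is a cofibration.

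For the triviality statement, because $\otimes$ is symmetric monoidal the two cases are interchanged by the symmetry $S \otimes T \simeq T \otimes S$, so I may assume $f$ is a trivial cofibration and $g$ an arbitrary cofibration, and must show $f \wedge g$ is a marked equivalence (equivalently, by the first part, a trivial cofibration). Fixing $f$, the class of cofibrations $g$ for which $f \wedge g$ is a trivial cofibration is weakly saturated: the operation $g \mapsto f \wedge g$ carries pushouts, retracts and transfinite compositions of $g$ to the corresponding operations on $f \wedge g$, and the trivial cofibrations, being the maps with the left lifting property against fibrations, form a weakly saturated class. It therefore suffices to treat the generating cofibrations $\partial\Omega[T]^\flat \subseteq \Omega[T]^\flat$ and $\Omega[C_n]^\flat \subseteq \Omega[C_n]^\sharp$. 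The crucial observation is that the underlying dendroidal sets occurring here --- $\Omega[T]$, $\partial\Omega[T]$ and $\Omega[C_n]$ --- are all normal (a representable is normal because tree automorphisms act freely on its dendrices, and a subobject of a normal dendroidal set is normal). Thus I have reduced to the case where the objects $P, Q$ underlying $g$ are normal.

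With $g\colon P \to Q$ a cofibration between normal objects, I would next dispose of the case where the objects $A, B$ underlying $f$ are also normal. Here the preceding lemma applies directly: since $f$ is a marked equivalence and $P,Q$ are normal, both $f \otimes P$ and $f \otimes Q$ are marked equivalences. Writing the pushout--product through its defining pushout
\begin{equation*}
(B \otimes P) \coprod_{A \otimes P} (A \otimes Q) \longrightarrow B \otimes Q,
\end{equation*}
the leg $A \otimes Q \to (B \otimes P)\coprod_{A \otimes P}(A \otimes Q)$ is the pushout of the marked equivalence $f \otimes P$ along $A \otimes g$, which is a cofibration by the first part (using that $A$ is now normal, so $\emptyset \to A$ is a cofibration). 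Since the coCartesian model structure is left proper (Theorem \ref{thm:coCartmodelstructure}) this leg is a marked equivalence, and its composite with $f \wedge g$ equals $f \otimes Q$, again a marked equivalence; two-out-of-three then forces $f \wedge g$ to be a marked equivalence.

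Finally I would remove the normality hypothesis on $A, B$. For the fixed generating cofibration $g$, let $\mathcal{F}$ denote the class of trivial cofibrations $f$ with $f \wedge g$ a trivial cofibration; the previous paragraph shows $\mathcal{F}$ contains every trivial cofibration between normal objects. Because $(-) \wedge g$ preserves pushouts and retracts in the first variable and the trivial cofibrations are weakly saturated, $\mathcal{F}$ is closed under pushouts and retracts. By the argument in the proof of Proposition \ref{prop:fibrantobjects}, every trivial cofibration is a retract of a pushout of a trivial cofibration between normal objects, so $\mathcal{F}$ contains all trivial cofibrations, completing the reduction. The main obstacle throughout is exactly this normality bookkeeping: the preceding lemma can only be fed a normal tensor factor, so the real work is the two independent reductions --- of $g$ to the (automatically normal) generating cofibrations, and of $f$ to a trivial cofibration between normal objects via the retract-of-a-pushout decomposition --- after which left properness and two-out-of-three assemble the result.
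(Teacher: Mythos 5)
Your proof is correct and follows essentially the same route as the paper's: reduce to the case where all objects involved are normal (the paper does this in one stroke by citing that trivial cofibrations between normal objects generate all trivial cofibrations, you do it in two explicit steps for $g$ and $f$ separately), then apply the preceding lemma to see that $f\otimes P$ and $f\otimes Q$ are marked equivalences and conclude via left properness and two-out-of-three on the defining pushout square. The extra bookkeeping you supply is just an expansion of what the paper leaves implicit.
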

\begin{proof}
A map in $\mathbf{dSets^+}/(S \otimes T)$ is a cofibration if and only if the underlying map of dendroidal sets is, so the first statement follows from the corresponding statement for dendroidal sets \cite{dendroidalsets}. Now assume $f$ is trivial (the case where $g$ is trivial follows by symmetry). Since trivial cofibrations between normal objects generate all the trivial cofibrations as a weakly saturated class (cf. the proof of Proposition \ref{prop:fibrantobjects}) we may assume that the domain and codomain of both $f$ and $g$ are normal. Considering the diagram
\[
\xymatrix@C=50pt{
A \otimes C \ar[r]\ar[d]^\sim & A \otimes D \ar[d]^{\sim}\ar[dr]^{\sim} & \\ 
B \otimes C \ar[r] & B \otimes C \coprod_{A \otimes C} A \otimes D \ar[r]^{f \wedge g} & B \otimes D 
}
\]
we see that $f \wedge g$ is trivial. $\Box$ 
\end{proof}

\begin{remark}
There are two ways to make $\mathbf{dSets^+}/S$ into a simplicial category with simplicial tensoring and cotensoring. We can use as mapping objects $\mathrm{Map}_S^{\flat/\sharp}(X, Y)$ and for $K \in \mathbf{sSets}$ the tensoring and cotensoring can be given by
\begin{eqnarray*}
X \otimes K & := & X \otimes i_!(K)^{\flat/\sharp} \\
(X^K)_T & := & \mathbf{dSets}/S(i_!(K)^{\flat/\sharp} \otimes \Omega[T]^\flat, X) \quad, \quad \mathcal{E}_{X^K} := \coprod_{n \in \mathbb{Z}_{\geq 0}} \mathbf{dSets}/S(i_!(K)^{\flat/\sharp} \otimes \Omega[C_n]^\sharp, X)
\end{eqnarray*}
Here the structural map $X \otimes K \longrightarrow S$ is given by first projecting onto $X$ and then using the structural map of $X$. This projection onto the first factor is possible because $K$ is a simplicial set.
\end{remark}

\begin{proposition}
If we regard $\mathbf{dSets^+}/S$ as a simplicial category with mapping objects given by $\mathrm{Map}_S^\sharp(X,Y)$ and corresponding tensors and cotensors over $\mathbf{sSets}$, then $\mathbf{dSets^+}/S$ acquires the structure of a simplicial model category.
\end{proposition}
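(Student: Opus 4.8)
The plan is to verify Quillen's axiom SM7 in its pushout--product form; together with Theorem \ref{thm:coCartmodelstructure} and the tensoring/cotensoring described in the preceding Remark, this is exactly what is needed to make $\mathbf{dSets^+}/S$ into a simplicial model category over the Kan--Quillen model structure on $\mathbf{sSets}$. Concretely, I must show that for any cofibration $f\colon A\longrightarrow B$ in $\mathbf{dSets^+}/S$ and any monomorphism $g\colon K\longrightarrow L$ of simplicial sets, the induced map
\[
\bigl(A\otimes i_!(L)^\sharp\bigr)\coprod_{A\otimes i_!(K)^\sharp}\bigl(B\otimes i_!(K)^\sharp\bigr)\longrightarrow B\otimes i_!(L)^\sharp
\]
is a cofibration in $\mathbf{dSets^+}/S$, and is moreover a trivial cofibration whenever $f$ is a marked equivalence or $g$ is a weak homotopy equivalence.

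The key observation is that this map is precisely the smash product $f\wedge i_!(g)^\sharp$ to which the preceding Corollary applies, once we regard $i_!(g)^\sharp$ as a map in $\mathbf{dSets^+}/\eta$ (via the canonical structural map $i_!(L)^\sharp\longrightarrow\eta$) and use the identification $S\otimes\eta\cong S$. First I would record that $i_!(g)^\sharp$ is a cofibration: since $i_!$ is full and faithful and $i_!(K)$ has only unary, automorphism-free dendrices, $i_!(g)$ is a normal monomorphism. The preceding Corollary then gives at once that the pushout--product is a cofibration, and that it is trivial as soon as $f$ is a trivial cofibration.

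It remains to treat the case in which $g$ is a trivial cofibration in the Kan--Quillen model structure; by the Corollary this reduces to showing that $i_!(g)^\sharp$ is a trivial cofibration in $\mathbf{dSets^+}/\eta$, i.e. a marked equivalence. Here I would invoke Proposition \ref{prop:equivKan}: since $i_!(K)$ and $i_!(L)$ are normal (they map to the normal object $\eta$), it suffices to check that for every coCartesian fibration $Z\longrightarrow\eta$ the induced map $\mathrm{Map}^\sharp_\eta(i_!(L)^\sharp,Z^\natural)\longrightarrow\mathrm{Map}^\sharp_\eta(i_!(K)^\sharp,Z^\natural)$ is a homotopy equivalence of Kan complexes. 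A coCartesian fibration over $\eta$ is the same datum as an $\infty$-category $i^*Z$ with its equivalences marked, so the main step is to identify these mapping spaces. Using that $i_!$ carries the Cartesian product to the tensor product (as recorded in the Remark, cf. \cite{dendroidalsets}) and the compatibility of $(-)^\sharp$ with tensor, one computes $i_!(\Delta^n)^\sharp\otimes i_!(K)^\sharp\cong j_!\bigl((\Delta^n\times K)^\sharp\bigr)$; unwinding the definition of $\mathrm{Map}^\sharp_\eta$ then yields a natural isomorphism
\[
\mathrm{Map}^\sharp_\eta(i_!(K)^\sharp,Z^\natural)\;\cong\;\underline{\mathrm{Hom}}_{\mathbf{sSets}}\bigl(K,\,\mathrm{core}(i^*Z)\bigr),
\]
where $\mathrm{core}(i^*Z)$ denotes the maximal Kan complex contained in $i^*Z$. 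The point is that a map out of a $\sharp$-marked object into $Z^\natural$ over $\eta$ must send every edge to an equivalence, hence factors through the core. Since $\mathrm{core}(i^*Z)$ is a Kan complex and $g$ is a Kan-trivial cofibration, the restriction map on the right is a trivial Kan fibration, in particular a homotopy equivalence; this verifies the hypothesis of Proposition \ref{prop:equivKan} and shows $i_!(g)^\sharp$ is a marked equivalence.

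I expect the identification of $\mathrm{Map}^\sharp_\eta(i_!(K)^\sharp,Z^\natural)$ with a mapping space into the core to be the only genuinely nonformal step---everything else is a direct application of the preceding Corollary and the monoidality of $i_!$. The subtlety lies in checking that the markings match up: that the coCartesian $1$-corollas of $Z$ over $\eta$ are exactly the equivalences of $i^*Z$ (which follows from Theorem A.7 of \cite{moerdijkcisinski}, as already used in Corollary \ref{cor:pointwiseequiv}), and that a map from a fully-marked object factors through the core precisely when all of its edges are sent to equivalences. Once this bookkeeping is in place, the proof is complete.
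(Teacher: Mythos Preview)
Your proposal is correct and follows essentially the same approach as the paper: both reduce SM7 to the preceding Corollary for the cofibration and $f$-trivial cases, and for the case where $g$ is a Kan--Quillen weak equivalence both show $i_!(g)^\sharp$ is a marked equivalence over $\eta$ by identifying $\mathrm{Map}^\sharp_\eta(i_!(K)^\sharp,Z^\natural)$ with the simplicial mapping space into the maximal Kan complex of $Z$ and invoking Proposition~\ref{prop:equivKan}. The paper's argument is slightly terser (it works directly in $\mathbf{sSets^+}$ rather than passing through $i_!$ and $j_!$), but the content is the same.
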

\begin{proof}
Since the unit of the tensor product is cofibrant, we need only check that given a cofibration $f: X \longrightarrow Y$ of $\mathbf{dSets^+}/S$ and an inclusion of simplicial sets $g: A \subseteq B$ the smash product $f \wedge i_!(g)^\sharp$ is a cofibration which is trivial if either $f$ is a trivial cofibration or $g$ is a homotopy equivalence. The case where $f$ is trivial follows immediately from the corollary above (setting $T = \Omega[\eta]$). The case where $g$ is a homotopy equivalence of simplicial sets will also follow if we can show that $i_!(g)^\sharp$ is a marked equivalence in $\mathbf{dSets^+}/\Omega[\eta]$; in other words, in $\mathbf{sSets^+}$. \par 
Let $Z \longrightarrow \Delta^0$ be a coCartesian fibration of simplicial sets. This is simply an inner fibration, the coCartesian edges being precisely the equivalences in $Z$. We denote by $k(Z)$ the maximal Kan complex of $Z$. We have a natural isomorphism
\begin{equation*}
\mathrm{Map}_{\Delta^0}^\sharp(A^\sharp, Z^\natural) \simeq \mathbf{Hom_{sSets}}(A, k(Z)) 
\end{equation*}
and similarly for $B$. The map $g$ induces a homotopy equivalence from $ \mathbf{Hom_{sSets}}(B, k(Z))$ to $ \mathbf{Hom_{sSets}}(A, k(Z))$ by assumption, so it gives us a homotopy equivalence
\begin{equation*}
\mathrm{Map}_{\Delta^0}^\sharp(B^\sharp, Z^\natural) \longrightarrow \mathrm{Map}_{\Delta^0}^\sharp(A^\sharp, Z^\natural)
\end{equation*}
By Proposition \ref{prop:equivKan} we conclude that $i_!(g)^\sharp$ is a marked equivalence. $\Box$
\end{proof}

\begin{remark}
If we were to use the mapping objects $\mathrm{Map}_S^\flat(X, Y)$, we would obtain a model structure enriched over simplicial sets endowed with the Joyal model structure instead of the Quillen model structure.
\end{remark}

\begin{remark}
In the special case that $S = i_!(K)$ for a simplicial set $K$, we have an identification $\mathbf{dSets^+}/S = \mathbf{sSets^+}/K$. The coCartesian model structure constructed above then reproduces Lurie's coCartesian model structure as in \cite{htt}. In particular, by slicing over $\Delta^0$ we obtain a model structure on $\mathbf{sSets^+}$ in which the fibrant objects are the $\infty$-categories with equivalences marked.
\end{remark}

\begin{remark}
Using the same methods as in this chapter one can in fact also construct a model structure on $\mathbf{dSets^+}/\ast^\flat$, i.e. on marked dendroidal sets in which we are only allowed to mark 1-corollas. It is characterized by the fact that the cofibrations are the normal monomorphisms and the fibrant objects are the $\infty$-operads with equivalences marked. This model structure is simplicial and monoidal. It is easily checked that we have a monoidal Quillen equivalence
\[
\xymatrix@R=40pt@C=40pt{
\mathbf{dSets} \ar@<.5ex>[r]^{(-)^\flat} & \mathbf{dSets^+} \ar@<.5ex>[l]^u
}
\]
Because of the fact that the model structure on $\mathbf{dSets^+}$ is in fact simplicial, it is for some purposes more convenient than the model structure on $\mathbf{dSets}$ itself.
\end{remark}

\newpage

% section 5: The Grothendieck construction for $\infty$-operads
\section{The Grothendieck construction for $\infty$-operads}
\label{section:grothconstinftyoperads}

For a dendroidal set $S$, we established a simplicial combinatorial model structure on the category $\mathbf{dSets}^+/S$. We use this to define an $\infty$-category of coCartesian fibrations. Recall that for a (simplicial) model category $\mathbf{C}$ we denote by $\mathbf{C}^\circ$ its full (simplicial) subcategory on the fibrant-cofibrant objects.

\begin{definition}
The $\infty$-category $\mathbf{coCart}(S)$ of coCartesian fibrations over $S$ is defined by
\begin{equation*}
\mathbf{coCart}(S) := \mathrm{hc}N((\mathbf{dSets}^+/S)^\circ)
\end{equation*}
\end{definition}

We want to compare this category to the category of `$S$-algebras in $\infty$-categories'. Recall from \cite{htt} that the $\infty$-category of (small) $\infty$-categories, which we denote by $\mathbf{Cat}_\infty$, is equivalent (in the Joyal model structure) to the homotopy coherent nerve of the full simplicial subcategory of $\mathbf{sSets}^+$ on the fibrant-cofibrant objects, i.e.
\begin{equation*}
\mathbf{Cat}_\infty \simeq \mathrm{hc}N((\mathbf{sSets}^+)^\circ)
\end{equation*}
We want to describe a model for $S$-algebras in this category. The results of Berger and Moerdijk \cite{bergermoerdijk1}\cite{bergermoerdijk2} can be adapted to this setting to prove:

\begin{theorem}
If $S$ is normal, so that $\mathrm{hc}\tau_d(S)$ is cofibrant, there exists a left proper simplicial model structure on the (simplicial) category $\mathrm{Alg}_{\mathrm{hc}\tau_d(S)}(\mathbf{sSets}^+)$ of $\mathrm{hc}\tau_d(S)$-algebras in $\mathbf{sSets}^+$ in which a map of algebras is a weak equivalence (resp. a fibration) if and only if it is a pointwise weak equivalence (resp. a pointwise fibration).
\end{theorem}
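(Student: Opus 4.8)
The plan is to obtain this model structure by transfer along the free--forgetful adjunction, following the general operadic machinery of Berger and Moerdijk \cite{bergermoerdijk1}\cite{bergermoerdijk2}. Write $U \colon \mathrm{Alg}_{\mathrm{hc}\tau_d(S)}(\mathbf{sSets}^+) \longrightarrow \prod_c \mathbf{sSets}^+$ for the functor recording the underlying marked simplicial set at each colour, the product being indexed by the set of colours of $\mathrm{hc}\tau_d(S)$, and let $F$ be its left adjoint, the free-algebra functor. Since $\mathbf{sSets}^+$ is cocomplete and the operadic structure maps are given by colimits, $F$ exists; the forgetful functor $U$ preserves filtered colimits and reflects isomorphisms. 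The target product category is combinatorial with the projective model structure, whose generating cofibrations $I$ and generating trivial cofibrations $J$ are the colourwise copies of the generators of the coCartesian model structure on $\mathbf{sSets}^+$. The candidate structure on algebras is the one in which weak equivalences and fibrations are created by $U$, i.e.\ detected pointwise.

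To produce this structure I would invoke Kan's transfer criterion (as in \cite{hirschhorn}): the transferred model structure exists provided (a) the domains of $FI$ and $FJ$ are small---automatic, since everything in sight is locally presentable and $U$ preserves filtered colimits---and (b) every relative $FJ$-cell complex is sent by $U$ to a weak equivalence. Condition (b) carries the whole content of the theorem. To check it, fix a generating trivial cofibration $u \colon X \longrightarrow Y$ in the product category and analyse a pushout of algebras $A' = A \amalg_{FX} FY$. Using the standard filtration of such free extensions (as in \cite{bergermoerdijk2}), the map $A \longrightarrow A'$ is exhibited as a transfinite composite whose successive stages are pushouts of maps built out of the equivariant pushout-products of $u$ with the $\Sigma_n$-spaces of operations $\mathrm{hc}\tau_d(S)(c_1,\ldots,c_n;c)$, after passage to $\Sigma_n$-orbits.

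The key input is that $S$ is normal, so that $\mathrm{hc}\tau_d(S)$ is a cofibrant simplicial operad and hence $\Sigma$-cofibrant: its spaces of operations carry essentially free symmetric-group actions. Combining this freeness with the pushout-product and monoid axioms for $\mathbf{sSets}^+$---which hold because the coCartesian model structure was already shown to be a (simplicial, hence monoidal) model category in which moreover every object is cofibrant---one concludes that each stage of the filtration is again a pointwise trivial cofibration. Therefore $A \longrightarrow A'$ is a trivial cofibration, so $U(A \longrightarrow A')$ is a weak equivalence; passing to transfinite composites and retracts gives (b). \textbf{The main obstacle is precisely this verification:} one must confirm both that $\mathbf{sSets}^+$ satisfies the relevant monoidal and monoid axioms and that $\Sigma$-cofibrancy of $\mathrm{hc}\tau_d(S)$ is strong enough to force the orbit maps to remain trivial cofibrations. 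This is the heart of every algebra-over-operad transfer theorem, and it is the unique point where the normality hypothesis on $S$ is genuinely used.

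Finally, left properness and the simplicial enrichment are inherited almost formally. Left properness follows from left properness of $\mathbf{sSets}^+$ (immediate since all its objects are cofibrant) together with the same filtration analysis, which shows that cofibrations of algebras are in particular pointwise cofibrations. The simplicial structure is the one induced by the tensoring of $\mathbf{sSets}^+$ over $\mathbf{sSets}$, acting on algebras through the symmetric monoidal structure; the required compatibility (the simplicial analogue of the pushout-product axiom) reduces to the corresponding statement in $\mathbf{sSets}^+$, which we already have.
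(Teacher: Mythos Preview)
Your proposal is correct and is precisely the argument the paper has in mind: the paper does not give a proof at all but simply states that ``the results of Berger and Moerdijk \cite{bergermoerdijk1}\cite{bergermoerdijk2} can be adapted to this setting,'' and your sketch is exactly that adaptation---transfer along the free--forgetful adjunction, using $\Sigma$-cofibrancy of $\mathrm{hc}\tau_d(S)$ (from normality of $S$) together with the monoidal model category structure on $\mathbf{sSets}^+$. One small quibble: the parenthetical ``simplicial, hence monoidal'' is not a valid implication in general; you need that $\mathbf{sSets}^+$ is a \emph{Cartesian} monoidal model category (which it is, by Lurie \cite{htt}), not merely that it is simplicially enriched, but this does not affect the substance of your argument.
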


This enables us to do the following:

\begin{definition}
For a normal dendroidal set $S$ we define the $\infty$-category of $S$-algebras in $\mathbf{Cat}_\infty$ by
\begin{equation*}
\mathrm{Alg}_{S}(\mathbf{Cat}_\infty) := \mathrm{hc}N\bigl((\mathrm{Alg}_{\mathrm{hc}\tau_d(S)}(\mathbf{sSets}^+))^\circ\bigr)
\end{equation*}
\end{definition}

Our main result is the following:

\begin{theorem}
\label{thm:Grothendieckconstruction}
For a normal dendroidal set $S$, there is an equivalence of $\infty$-categories
\begin{equation*}
\mathbf{coCart}(S) \simeq \mathrm{Alg}_{S}(\mathbf{Cat}_\infty)
\end{equation*}
\end{theorem}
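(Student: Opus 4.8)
The plan is to realize the stated equivalence through a simplicial Quillen equivalence of model categories and then pass to homotopy coherent nerves. Concretely, I would first construct a \emph{marked straightening functor}
$$St_S^+ : \mathbf{dSets}^+/S \longrightarrow \mathrm{Alg}_{\mathrm{hc}\tau_d(S)}(\mathbf{sSets}^+)$$
refining the unmarked functor $St_S$ of Theorem~\ref{thm:straighteningleftfib}. As there, it suffices to define it on the generators $\Omega[T] \to S$ of $\mathbf{dSets}^+/S$ and extend by left Kan extension: on $\Omega[T]^\flat$ its value is $St_S(\Omega[T])$ with the minimal marking, while the passage $\Omega[C_n]^\flat \subseteq \Omega[C_n]^\sharp$ (declaring a corolla coCartesian) marks the corresponding edges of the cubes $\Delta[T/c]$. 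Since $St_S^+$ preserves colimits it admits a right adjoint $Un_S^+$, the \emph{marked unstraightening}, which, as in the unmarked case, can be promoted to a simplicial functor; this makes $(St_S^+, Un_S^+)$ a simplicial adjunction.

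Next I would verify that this is a Quillen adjunction. The generating cofibrations $\partial\Omega[T]^\flat \subseteq \Omega[T]^\flat$ and $\Omega[C_n]^\flat \subseteq \Omega[C_n]^\sharp$ are sent to pointwise monomorphisms, hence to cofibrations of algebras; and using the improvement recorded in the remark after Theorem~\ref{thm:coCartmodelstructure}, namely that every marked anodyne is a trivial cofibration, together with the stability results of Proposition~\ref{prop:smashproduct}, the generating trivial cofibrations are carried to pointwise trivial cofibrations. Thus $St_S^+$ is left Quillen.

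The heart of the argument, and the step I expect to be the main obstacle, is proving that $(St_S^+, Un_S^+)$ is a Quillen \emph{equivalence}. I would use the criterion that $Un_S^+$ reflects weak equivalences between fibrant objects and that the derived unit is a weak equivalence on cofibrant objects. For the first part, Proposition~\ref{prop:markedequivalence} is decisive: a map of coCartesian fibrations is a marked equivalence precisely when it is a fiberwise categorical equivalence, so once one identifies the fiber of $Un_S^+(A)$ over a colour $s$ with $A(s)$ up to equivalence, reflecting weak equivalences between fibrant (that is, pointwise fibrant) algebras becomes immediate. For the derived unit $X \to Un_S^+(St_S^+ X)^{\mathrm{fib}}$ I would argue by a double dévissage: a normal skeletal filtration of $S$ reduces to the case $S = \Omega[T]$, exploiting naturality of straightening in $S$ in the spirit of Proposition~\ref{prop:naturalitycovmodstruct}, while a skeletal filtration of $X$ reduces to the representable generators $\Omega[R]^\flat$ and $\Omega[C_n]^\sharp$, where the unit can be computed directly from the explicit cubes $\Delta[T/c]$. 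Left properness of the coCartesian model structure and perfectness of the class of marked equivalences, both established in the proof of Theorem~\ref{thm:coCartmodelstructure}, then propagate the statement across the pushouts and transfinite compositions of both filtrations by a homotopy-pushout (cube-lemma) argument. The genuine difficulty beyond the bookkeeping is controlling the markings under the Boardman--Vogt tensor product: the shuffles of trees make the explicit combinatorics of $St_S^+$ on tensor products substantially more involved than in Lurie's simplicial setting, exactly as foreshadowed in the introduction to Section~\ref{section:grothconstinftyoperads}.

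Finally, a simplicial Quillen equivalence restricts to a Dwyer--Kan equivalence between the simplicial categories of fibrant-cofibrant objects $(\mathbf{dSets}^+/S)^\circ$ and $\bigl(\mathrm{Alg}_{\mathrm{hc}\tau_d(S)}(\mathbf{sSets}^+)\bigr)^\circ$, and the homotopy coherent nerve carries such an equivalence to an equivalence of $\infty$-categories. Applying $\mathrm{hc}N$ therefore yields the desired equivalence $\mathbf{coCart}(S) \simeq \mathrm{Alg}_S(\mathbf{Cat}_\infty)$.
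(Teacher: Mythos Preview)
Your overall architecture matches the paper: construct a marked straightening adjunction $(St_S^+, Un_S^+)$, show it is Quillen, verify that $RUn_S^+$ is conservative, and then prove the derived unit is an equivalence. The paper also reduces first to $S = \Omega[T]$ and then handles that case separately. But there is a genuine gap in how you propose to treat the tree case. You want to run a skeletal filtration on $X$ and propagate the derived unit $X \to RUn_S^+\,St_S^+(X)$ across homotopy pushouts by a cube-lemma argument. This does not work: $St_S^+$ is left Quillen and preserves homotopy pushouts, but the composite involves the \emph{right} Quillen functor $RUn_S^+$, which has no reason to commute with homotopy colimits before you already know the adjunction is an equivalence. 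Left properness and perfectness of the weak equivalences control the domain side of the cube, not the codomain side; the induction simply does not close.

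The paper avoids this by introducing \emph{mapping trees} $M^+(A)$, the dendroidal analogue of Lurie's mapping simplices. For $S=\Omega[T]$ every fibrant $X$ is replaced by an equivalent $M^+(A)$ (Proposition~\ref{prop:mappingtrees}), and for these one can compute $St_{\Omega[T]}^+(M^+(A))$ colourwise and compare it to $\mathcal S(A(c)^\flat)$ directly (Lemma~\ref{lemma:stmappingtree}); this is where the explicit cubes actually do the work, and it bypasses any need to filter $X$. A second, smaller issue: your reduction in $S$ appeals to naturality ``in the spirit of Proposition~\ref{prop:naturalitycovmodstruct},'' but the coCartesian analogue of that naturality (Theorem~\ref{thm:naturalitycoCmodstruct}) is \emph{deduced from} the present theorem, so invoking it here would be circular. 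The paper instead passes to Dwyer--Kan localizations of the fibrant subcategories and shows, via Lemmas~\ref{lemma:combinlocalization2} and~\ref{lemma:homlimsimplcat} together with the Kan-fibration Lemma~\ref{lemma:restrKanfibration}, that both sides assemble into the same homotopy limit over a skeletal filtration of $S$.
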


The rest of this chapter is devoted to the proof of Theorem \ref{thm:Grothendieckconstruction}.

\subsection{The straightening functor}
We wish to construct a marked version of the straightening functor we introduced in section \ref{section:leftfib}. First, define the functor
\begin{equation*}
\mathrm{hc}\tau_d^{\mathrm{op}}: \mathbf{dSets} \longrightarrow \mathbf{sOper}
\end{equation*}
discussed in the prerequisites, which is described by
\begin{equation*}
\mathrm{hc}\tau_d^{\mathrm{op}}(S)(s_1, \ldots, s_n; s) := \bigl(\mathrm{hc}\tau_d(S)(s_1, \ldots, s_n; s)\bigr)^\mathrm{op}
\end{equation*}
Since a simplicial set and its opposite have isomorphic geometric realizations, the simplicial operad $\mathrm{hc}\tau_d^{\mathrm{op}}(S)$ is weakly equivalent to the simplicial operad $\mathrm{hc}\tau_d(S)$ and this change is insubstantial. In particular, it is irrelevant to the statement of Theorem \ref{thm:Grothendieckconstruction}. However, the functor $\mathrm{hc}\tau_d^{\mathrm{op}}$ is more convenient when defining the straightening functor here. \par 
We will first define a functor
\begin{equation*}
St_S: \mathbf{dSets}/S \longrightarrow \mathrm{Alg}_{\mathrm{hc}\tau_d^{\mathrm{op}}(S)}(\mathbf{sSets})
\end{equation*}
and take care of the markings later. In analogy with section \ref{section:leftfib}, we define
\begin{equation*}
St_{\Omega[T]}(\mathrm{id}_{\Omega[T]})(c) := \Delta[T/c]^\mathrm{op}
\end{equation*}
Composition is again given by grafting of trees, assigning length 1 to newly arising inner edges. For a map $p: \Omega[T] \longrightarrow S$ we set
\begin{equation*}
St_S(p) := \mathrm{hc}\tau_d^{\mathrm{op}}(p)_!\bigl(St_{\Omega[T]}(\mathrm{id}_{\Omega[T]})\bigr) 
\end{equation*}
Functoriality in $T$ works the same way it did in section 2. We can left Kan extend $St_S$ to obtain a functor
\begin{equation*}
St_S: \mathbf{dSets}/S \longrightarrow \mathrm{Alg}_{\mathrm{hc}\tau_d^{\mathrm{op}}(S)}(\mathbf{sSets}): X \longmapsto \varinjlim_{\Omega[T] \rightarrow X} St_S(\Omega[T] \rightarrow X)
\end{equation*}
We now proceed to defining the markings on the straightening functor. Suppose we are given a map of marked dendroidal sets
\begin{equation*}
p: (X, \mathcal{E}_X) \longrightarrow S^\sharp
\end{equation*}
An $n$-corolla $\xi$ of $X$ with root $x$ determines an inclusion
\begin{equation*}
((\Delta^1)^{\times n})^{\mathrm{op}} \subseteq St_S(p)(p(x))
\end{equation*}
We denote this $n$-cube by $\tilde \xi$. Also note that any corolla $\sigma$ of $S$ with inputs $s_1, \ldots, s_n$ and output $s$ defines a map
\begin{equation*}
\sigma_!: St_S(p)(s_1) \times \ldots \times St_S(p)(s_n) \longrightarrow St_S(p)(s)
\end{equation*}

\begin{definition}
We define the \emph{marked straightening functor}
\begin{equation*}
St^+_S: \mathbf{dSets^+}/S \longrightarrow \mathrm{Alg}_{\mathrm{hc}\tau_d^{\mathrm{op}}(S)}(\mathbf{sSets^+}) 
\end{equation*}
by
\begin{equation*}
St^+_S(p)(s) = (St_S(p)(s), \mathcal{E}_p(s))
\end{equation*}
Here $\mathcal{E}_p(s)$ is the set of edges of $St_S(p)(s)$ of the form
\begin{equation*}
\sigma_!(\tilde e)
\end{equation*}
where $\tilde e \in (\tilde\xi)_1$ for some marked corolla $\xi$ of $X$ and $\sigma$ is a corolla of $S$.
\end{definition}

\begin{remark}
Note that our choice of markings is the smallest choice containing all 1-simplices contained in cubes induced by coCartesian corollas of $X$ which is functorial in $s$.
\end{remark}

\begin{remark}
With the definition of the straightening functor in place, we will now omit the `op' on $\mathrm{hc}\tau_d^{\mathrm{op}}$ from the notation for the rest of this section to avoid extremely awkward-looking expressions. It should be clear from the context what is meant.
\end{remark}

A morphism $\phi: S \longrightarrow R$ of dendroidal sets gives us an adjoint pair
\[
\xymatrix@R=40pt@C=40pt{
\phi_!: \mathbf{dSets^+}/S \ar@<.5ex>[r] & \mathbf{dSets^+}/R: \phi^* \ar@<.5ex>[l]
}
\]
Similarly, a morphism $\psi: P \longrightarrow Q$ of simplicial operads yields an adjoint pair
\[
\xymatrix@R=40pt@C=40pt{
\psi_!: \mathrm{Alg}_{P}(\mathbf{sSets^+}) \ar@<.5ex>[r] & \mathrm{Alg}_{Q}(\mathbf{sSets^+}): \psi^* \ar@<.5ex>[l]
}
\]

For later reference, we record the following properties of the straightening functor, which are obvious from its construction:

\begin{proposition}
\label{prop:straightening}
\begin{itemize}
\item[(1)] $St_S^+$ preserves colimits
\item[(2)] For a morphism $\phi: S \longrightarrow R$ of dendroidal sets the following diagram commutes:
\[
\xymatrix@C=40pt{
\mathbf{dSets^+}/S \ar[r]^{St_S^+}\ar[d]_{\phi_!} & \mathrm{Alg}_{\mathrm{hc}\tau_d(S)}(\mathbf{sSets^+}) \ar[d]^{\mathrm{hc}\tau_d(\phi)_!} \\
\mathbf{dSets^+}/R \ar[r]_{St_R^+} & \mathrm{Alg}_{\mathrm{hc}\tau_d(R)}(\mathbf{sSets^+})
}
\]
\end{itemize}
\end{proposition}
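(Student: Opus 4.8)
The plan is to reduce both assertions to a check on the generating objects $\Omega[T] \longrightarrow S$ of $\mathbf{dSets^+}/S$, exploiting the fact that $St_S^+$ was built by left Kan extension from precisely these generators. Recall that $\mathbf{dSets}/S$ is a presheaf category, namely $\mathbf{Sets}^{(\int_{\mathbf{\Omega}}S)^{\mathrm{op}}}$, and is therefore generated under colimits by the representables $\Omega[T] \longrightarrow S$.

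For (1), the underlying unmarked functor $St_S$ was by definition the left Kan extension of its restriction to these representables, hence is automatically cocontinuous (this is the usual nerve--realization paradigm: a functor out of a presheaf category that is left Kan extended from the representables preserves all colimits). For the marked refinement one uses that the forgetful functor $u: \mathbf{dSets^+} \longrightarrow \mathbf{dSets}$ creates colimits and that the marking of a colimit is the union of the images of the markings of the constituents. Since $\mathcal{E}_p(s)$ was prescribed as the smallest functorial set of edges containing the images $\sigma_!(\tilde e)$ coming from marked corollas of $X$, this assignment commutes with forming unions of markings, so $St_S^+$ preserves colimits as well.

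For (2), both composites $\mathrm{hc}\tau_d(\phi)_! \circ St_S^+$ and $St_R^+ \circ \phi_!$ are colimit-preserving functors $\mathbf{dSets^+}/S \longrightarrow \mathrm{Alg}_{\mathrm{hc}\tau_d(R)}(\mathbf{sSets^+})$: the first because $St_S^+$ preserves colimits by (1) and $\mathrm{hc}\tau_d(\phi)_!$ is a left adjoint, the second because $\phi_!$ is a left adjoint and $St_R^+$ preserves colimits. Hence it suffices to verify that they agree on a generator $p: \Omega[T] \longrightarrow S$. On such an object $\phi_!(p) = \phi \circ p$, and I would compute
\[
St_R^+(\phi \circ p) = \mathrm{hc}\tau_d^{\mathrm{op}}(\phi \circ p)_!\bigl(St_{\Omega[T]}(\mathrm{id}_{\Omega[T]})\bigr),
\]
whereas the other composite yields $\mathrm{hc}\tau_d^{\mathrm{op}}(\phi)_!\,\mathrm{hc}\tau_d^{\mathrm{op}}(p)_!\bigl(St_{\Omega[T]}(\mathrm{id}_{\Omega[T]})\bigr)$. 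These coincide because the pushforward $(-)_!$ along maps of simplicial operads is functorial, so that $\mathrm{hc}\tau_d^{\mathrm{op}}(\phi)_! \circ \mathrm{hc}\tau_d^{\mathrm{op}}(p)_! = \mathrm{hc}\tau_d^{\mathrm{op}}(\phi \circ p)_!$, using that $\mathrm{hc}\tau_d^{\mathrm{op}}$ is itself a functor.

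The only genuine content, and the step I expect to require the most care, is the compatibility of the markings in both statements: one must check that the generated set $\mathcal{E}_p(s)$ is stable under the operadic pushforwards and matches the markings produced by $St_R^+$, and that forming these markings commutes with the colimits used in the Kan extension. Both verifications reduce to tracing a marked corolla through its associated cube $\tilde\xi$ and confirming that the ``smallest functorial'' prescription is preserved; since everything is ultimately determined by images of the generators, these checks are routine though slightly tedious.
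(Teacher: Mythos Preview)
Your proposal is correct and is precisely the unpacking of what the paper leaves implicit: the paper states these properties ``are obvious from its construction'' and gives no proof at all. Your argument via left Kan extension and reduction to generators is exactly the intended justification, and your closing remark that the only care needed is in tracking the markings is accurate.

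One small refinement worth noting: the category $\mathbf{dSets^+}/S$ is generated under colimits not just by the objects $\Omega[T]^\flat \longrightarrow S$ but also by the marked corollas $\Omega[C_n]^\sharp \longrightarrow S$ (cf.\ the generating cofibrations listed after the definition of cofibration in $\mathbf{dSets^+}$). So when you reduce (2) to generators you should in principle also check agreement on these; but since you have already verified that the underlying unmarked functors agree and that the marking prescriptions match under pushforward, this case is absorbed into your final paragraph. Otherwise the argument is complete.
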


We deduce that the straightening functor admits a right adjoint, unsurprisingly called the \emph{unstraightening functor} and denoted
\begin{equation*}
Un_S^+: \mathrm{Alg}_{\mathrm{hc}\tau_d(S)}(\mathbf{sSets^+}) \longrightarrow \mathbf{dSets^+}/S
\end{equation*}
It is this functor that implements the $\infty$-operadic version of the Grothendieck construction. The rest of this section is devoted to proving that $(St_S^+, Un_S^+)$ is a Quillen pair. \par 

\begin{lemma}
The straightening functor $St_S^+$ preserves cofibrations.
\end{lemma}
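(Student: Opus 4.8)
The plan is to reduce the claim to a statement about generating cofibrations, using the fact that $St_S^+$ is a left adjoint and hence preserves colimits, together with the characterization of cofibrations in $\mathbf{dSets^+}/S$ as the weakly saturated class generated by the boundary inclusions $\partial\Omega[T]^\flat \subseteq \Omega[T]^\flat$ and the markings $\Omega[C_n]^\flat \subseteq \Omega[C_n]^\sharp$. Since the cofibrations in the target category $\mathrm{Alg}_{\mathrm{hc}\tau_d(S)}(\mathbf{sSets^+})$ are defined pointwise (a map of algebras is a cofibration if it is one in each colour, by the transferred model structure), and since weakly saturated classes are closed under the operations a left adjoint respects, it suffices to verify that $St_S^+$ sends each of these two generating cofibrations to a pointwise cofibration of algebras.

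\textbf{Reduction to the case $S = \Omega[T]$.}
First I would exploit Proposition \ref{prop:straightening}(2): for any map $\phi\colon \Omega[T] \longrightarrow S$, straightening over $S$ factors as $St_S^+ \circ \phi_! = \mathrm{hc}\tau_d(\phi)_! \circ St_{\Omega[T]}^+$. The functor $\mathrm{hc}\tau_d(\phi)_!$ is an induction functor along a map of simplicial operads, and such functors preserve pointwise cofibrations (they are left Quillen for the projective-type model structures). Thus it is enough to analyze the behaviour of $St_{\Omega[T]}^+$ applied to the generating cofibrations living over a representable $\Omega[T]$. This is where the explicit formula $St_{\Omega[T]}(\mathrm{id})(c) = \Delta[T/c]^{\mathrm{op}} = ((\Delta^1)^{\mathrm{col}(T/c)\setminus\{c\}})^{\mathrm{op}}$ becomes the main computational tool.

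\textbf{Handling the two generating classes.}
For the boundary inclusions $\partial\Omega[T]^\flat \subseteq \Omega[T]^\flat$: since $St_S^+$ preserves colimits and $\partial\Omega[T]$ is the union of its faces, $St_S^+(\partial\Omega[T]^\flat)$ is computed as the corresponding colimit of cubes $\Delta[T/c]^{\mathrm{op}}$ indexed by the faces, and the induced map into $St_S^+(\Omega[T]^\flat)$ is, colour by colour, an inclusion of a subcomplex of a cube — hence a monomorphism of simplicial sets, i.e.\ a cofibration in $\mathbf{sSets^+}$ once one checks it respects the flat markings (it does, since on both sides only degenerate $1$-simplices are marked). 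For the marking inclusions $\Omega[C_n]^\flat \subseteq \Omega[C_n]^\sharp$: here the underlying map of dendroidal sets is the identity, so the underlying map of straightenings is the identity on each cube $\Delta[C_n/c]^{\mathrm{op}}$; the only change is that $St_S^+$ enlarges the marked edges, adding the $1$-simplices $\sigma_!(\tilde e)$ arising from the now-marked corolla. Since enlarging the set of marked edges (with the underlying map an identity) is visibly a cofibration of marked simplicial sets, this case is immediate from the definition of the marked straightening.

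\textbf{The main obstacle.}
The genuinely delicate point is verifying that the map $St_S^+(\partial\Omega[T]^\flat) \longrightarrow St_S^+(\Omega[T]^\flat)$ is injective in each colour — that is, that straightening the boundary does not accidentally produce identifications that fail to embed into the straightening of the whole tree. Because $St_S^+$ is a left Kan extension, its value on $\partial\Omega[T]$ is a colimit over the category of elements of $\partial\Omega[T]$, and one must check this colimit computes the evident honest subcomplex $\bigcup_{\text{faces } f} \Delta[f/c]^{\mathrm{op}}$ of the cube $\Delta[T/c]^{\mathrm{op}}$ rather than a quotient thereof. I expect this to follow from the face-map description $\Delta[f/c]^{\mathrm{op}} \simeq \Delta[f/c]^{\mathrm{op}} \times \{0\}^{\cdots} \hookrightarrow \Delta[T/c]^{\mathrm{op}}$ recorded earlier in the functoriality discussion, which exhibits each face's contribution as a genuine subcube and shows the transition maps are inclusions; the colimit of inclusions of subcubes of a fixed cube is again such a subcube, giving the required monomorphism. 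Once injectivity on underlying simplicial sets is established, compatibility with markings is routine, and the proof concludes.
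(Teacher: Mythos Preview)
Your overall strategy—reduce to generating cofibrations, use Proposition \ref{prop:straightening}(2) to pass to $S = \Omega[T]$, and then compute explicitly—is exactly the paper's approach. However, there is a genuine gap at the crucial step. You write that ``the cofibrations in the target category $\mathrm{Alg}_{\mathrm{hc}\tau_d(S)}(\mathbf{sSets^+})$ are defined pointwise (a map of algebras is a cofibration if it is one in each colour, by the transferred model structure).'' This is false: in the transferred (projective) model structure on algebras it is the \emph{fibrations} and \emph{weak equivalences} that are defined pointwise, while the cofibrations are only characterized by the left lifting property. A map of algebras that is a monomorphism at every colour need not be a projective cofibration, so your plan of exhibiting $St_{\Omega[T]}^+(\partial\Omega[T]^\flat)(c) \hookrightarrow St_{\Omega[T]}^+(\Omega[T]^\flat)(c)$ as an inclusion of subcubes does not by itself yield the conclusion. (The same confusion appears when you say $\mathrm{hc}\tau_d(\phi)_!$ ``preserves pointwise cofibrations'' because it is left Quillen; left Quillen gives preservation of projective cofibrations, which is what you actually need and use.)

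The paper closes this gap with a sharper computation: it observes that $St_{\Omega[T]}^+(\partial\Omega[T]^\flat)(c) \to St_{\Omega[T]}^+(\Omega[T]^\flat)(c)$ is an \emph{isomorphism} at every colour $c$ other than the root $r$, and a cofibration of marked simplicial sets at $r$. Given this, the left lifting property against trivial fibrations (which \emph{are} pointwise trivial fibrations) is immediate: at non-root colours there is nothing to lift, and the single remaining lifting problem at $r$ is solvable in $\mathbf{sSets^+}$. The concern you flag about the colimit over faces computing an honest subcomplex is therefore secondary; what actually matters is this isomorphism-away-from-the-root statement, which follows because for any $c \neq r$ the subtree $T/c$ already lies inside some face of $T$. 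Your treatment of the second generating family $\Omega[C_n]^\flat \subseteq \Omega[C_n]^\sharp$ is fine and matches the paper: the underlying map of simplicial sets is the identity, and an identity-on-underlying map automatically has the required lifting property.
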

\begin{proof}
Given a cofibration $f: X \longrightarrow Y$ over $S^\sharp$ we may use (2) of Proposition \ref{prop:straightening} to reduce to the case $S = u(Y)$. We can also assume $f$ is a generating cofibration of either the form $\partial \Omega[T]^\flat \subseteq \Omega[T]^\flat$ or $\Omega[C_n]^\flat \subseteq \Omega[C_n]^\sharp$. In the first case, when $T$ contains at least one vertex, a straightforward computation shows that
\begin{equation*}
St_{\Omega[T]}^+(\partial \Omega[T]^\flat)(c) \longrightarrow St_{\Omega[T]}^+(\Omega[T]^\flat)(c)
\end{equation*} 
is an isomorphism at each colour $c$ of $T$, except for $c$ the root of $T$. In this case it is a cofibration. It is now easy to deduce that $St_{\Omega[T]}^+(f)$ has the left lifting property with respect to all trivial fibrations, i.e. is a cofibration. In case $T = \eta$ the map $St_{\Omega[\eta]}^+(f)$ is simply the inclusion $\emptyset \subseteq \Delta^0$ of marked simplicial sets, which is of course a cofibration. In case $f$ equals an inclusion $\Omega[C_n]^\flat \subseteq \Omega[C_n]^\sharp$ then the underlying map $St_{\Omega[C_n]}(f)$ is an isomorphism, which implies it has the left lifting property with respect to trivial fibrations. $\Box$ 
\end{proof}

We first state the proof that $(St_S^+, Un_S^+)$ is a Quillen pair and treat the necessary technical lemmas after that:

\begin{proposition}
The adjunction
\[
\xymatrix@R=40pt@C=40pt{
St^+_S: \mathbf{dSets^+}/S \ar@<.5ex>[r] & \mathrm{Alg}_{\mathrm{hc}\tau_d(S)}(\mathbf{sSets}^+): Un^+_S\ar@<.5ex>[l]
}
\]
is a Quillen adjunction.
\end{proposition}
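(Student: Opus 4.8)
The plan is to verify that the left adjoint $St_S^+$ preserves cofibrations and trivial cofibrations; by the very definition of a Quillen adjunction this is what must be checked (and, by adjunction, it is equivalent to $Un_S^+$ preserving fibrations and trivial fibrations). Preservation of cofibrations is exactly the preceding lemma, so the whole statement reduces to showing that $St_S^+$ carries trivial cofibrations to weak equivalences of algebras; since $St_S^+$ already preserves cofibrations, such images are then automatically trivial cofibrations. I note in passing that the trivial-fibration half of the $Un_S^+$ formulation is automatic: because $St_S^+$ preserves cofibrations and the trivial fibrations of algebras are exactly the maps with the right lifting property against all cofibrations, a routine adjunction argument shows $Un_S^+$ preserves trivial fibrations with no further input.

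To cut the remaining problem down to a tractable class of maps I would exploit that $St_S^+$ preserves colimits (Proposition \ref{prop:straightening}(1)): the class of morphisms sent by $St_S^+$ to trivial cofibrations is then weakly saturated, since the trivial cofibrations form such a class. Combined with the fact, established inside the proof of Proposition \ref{prop:fibrantobjects}, that every trivial cofibration lies in the weak saturation of the trivial cofibrations between \emph{normal} (i.e. cofibrant) objects, it suffices to treat a trivial cofibration $f\colon X \to Y$ with $u(X)$ and $u(Y)$ normal. Such an $f$ is a cofibration and a marked equivalence, so, preservation of cofibrations being known, it remains only to prove that $St_S^+(f)$ is a pointwise weak equivalence. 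In other words, I must show that $St_S^+$ preserves marked equivalences between normal objects.

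The technical core is the claim that $St_S^+$ sends marked anodyne morphisms between normal objects to pointwise weak equivalences. This is verified on the generating families (1)--(4), and constitutes the lemmas that follow: each case reduces to an explicit combinatorial analysis of the cubes $\Delta[T/c]$ and of the grafting maps defining $St_S^+$, together with the stability of marked anodynes under smash products with cofibrations (Proposition \ref{prop:smashproduct}). Granting this, a general marked equivalence $f\colon X \to Y$ between normal objects is handled by functorial fibrant replacement: choosing marked anodynes $X \to Z_X^\natural$ and $Y \to Z_Y^\natural$ into coCartesian fibrations and an induced map $g\colon Z_X^\natural \to Z_Y^\natural$, the two-out-of-three property reduces the preservation of $St_S^+(f)$ to that of $St_S^+(g)$. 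Now $g$ is a marked equivalence between fibrant objects, hence by Proposition \ref{prop:markedequivalence} it admits a strong homotopy inverse; relying on the compatibility of $St_S^+$ with the tensoring by $i_!(\Delta^1)^\sharp$ (so that $St_S^+$ preserves strong homotopy equivalences) it follows that $St_S^+(g)$ admits a homotopy inverse and is therefore a pointwise weak equivalence.

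The main obstacle I anticipate is the combinatorial verification underlying the technical core, specifically the generating marked anodynes of types (2), ($2^*$) and (3): here one must track how $St_S^+$ interacts with the shuffles of tensor products of trees and with the contraction of inner edges, and check that the resulting maps of cubes assemble into trivial cofibrations of algebras. These are exactly the combinatorics deferred to the cylinder-subdivision lemmas, and they are substantially more delicate than in the simplicial case treated by Lurie, precisely because the Boardman--Vogt tensor product of dendroidal sets is far more intricate than the Cartesian product of simplicial sets.
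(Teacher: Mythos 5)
Your proposal is correct and follows essentially the same route as the paper: preservation of cofibrations from the preceding lemma, reduction of trivial cofibrations to the fibrant case via marked anodyne replacements together with the lemma that $St_S^+$ sends marked anodynes to trivial cofibrations, and then the strong-homotopy-inverse argument using the compatibility of $St_S^+$ with the tensoring by $i_!(\Delta^1)^\sharp$. The extra weak-saturation reduction to normal objects is harmless but unnecessary here, since $S$ is normal and hence every object of $\mathbf{dSets^+}/S$ is already normal.
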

\begin{proof}
We have already checked that $St^+_S$ preserves cofibrations, so it suffices to check that it preserves trivial cofibrations. We will actually verify that $St^+_S$ preserves weak equivalences. By the same argument used in the proof above, we are free to replace $S$ by $u(Y)$; we will do so throughout the rest of this section. \par 
Let $f: X \longrightarrow Y$ be a marked equivalence in $\mathbf{dSets^+}/S$. We pick a marked anodyne $X \longrightarrow X'$ such that $X'$ is fibrant and a marked anodyne $X' \coprod_X Y \longrightarrow Y'$ such that $Y'$ is fibrant. Since $St^+_S$ maps marked anodynes to trivial cofibrations (Lemma \ref{lemma:Stmarkedanodynes} below) we get a diagram
\[
\xymatrix@C=50pt{
St^+_S(X) \ar[r]^{\sim}\ar[d] & St^+_S(X') \ar[d]\ar[dr] & \\
St^+_S(Y) \ar[r]^{\sim} & St^+_S(X' \coprod_X Y) \ar[r]^{\sim} & St^+_S(Y')
}
\] 
We see that $St^+_S(f)$ is a weak equivalence if and only if the induced map $St^+_S(X') \longrightarrow St^+_S(Y')$ is, i.e. we may reduce to the case where $X$ and $Y$ are fibrant. In this case there exists a strong homotopy inverse $g$ of $f$. We will show that $St^+_S(g)$ is a left homotopy inverse to $St^+_S(f)$; a similar argument will show that it is a right homotopy inverse. \par 
We have a map $h: X \otimes i_!(\Delta^1)^\sharp \longrightarrow X$ such that the restriction of $h$ to $X \otimes \{0\}$ equals $\mathrm{id}_X$ and its restriction to $X \otimes \{1\}$ equals $g \circ f$. Now consider the diagram
\[
\xymatrix@C=50pt{
St^+_S(X) \ar[d]_{St^+_S(\mathrm{id}_X \otimes \{0\})} \ar@{=}[dr] & \\
St^+_S(X \otimes i_!(\Delta^1)^\sharp) \ar[r]_{St^+_S(h)} & St^+_S(X)
}
\] 
The vertical arrow is a weak equivalence by Lemma \ref{lemma:Stproduct} below, so $St^+_S(h)$ yields an isomorphism in the homotopy category of $\mathrm{Alg}_{\mathrm{hc}\tau_d(S)}(\mathbf{sSets}^+)$. Since $\mathrm{id}_X$ and $g \circ f$ can be interpreted as sections of $h$, they must both induce inverses of $St^+_S(h)$ in that homotopy category. Hence they are homotopic. $\Box$
\end{proof}

\begin{lemma}
\label{lemma:Stmarkedanodynes}
The functor $St_S^+$ maps marked anodyne morphisms to trivial cofibrations.
\end{lemma}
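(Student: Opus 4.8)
The plan is to reduce the statement to the five generating families of marked anodynes and then straighten each one explicitly. First I would note that, since $St_S^+$ preserves colimits by Proposition~\ref{prop:straightening}(1) and since the trivial cofibrations of $\mathrm{Alg}_{\mathrm{hc}\tau_d(S)}(\mathbf{sSets}^+)$ form a weakly saturated class, the class of maps $f$ with $St_S^+(f)$ a trivial cofibration is itself weakly saturated: closure under retracts holds because any functor preserves retracts, and closure under pushouts and transfinite composites follows from cocontinuity. Hence it suffices to treat the generators (1), (2), ($2^*$), (3) and (4). For each such generator $f\colon A\to B$ the underlying dendroidal set $u(B)$ is a representable tree, a coproduct of representables, or $i_!(K)$, all of which are normal, so $\mathrm{hc}\tau_d(u(B))$ is cofibrant. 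Using the naturality square of Proposition~\ref{prop:straightening}(2) together with the structural map $\phi\colon u(B)\to S$, and the fact that $\mathrm{hc}\tau_d(\phi)_!$ is left Quillen (its right adjoint $\mathrm{hc}\tau_d(\phi)^*$ preserves the pointwise fibrations and pointwise weak equivalences), I reduce in each case to the base $S=u(B)$. Since $St_S^+$ already preserves cofibrations, it then remains only to verify that each $St_{u(B)}^+(f)$ is a \emph{pointwise} (coCartesian) weak equivalence of marked simplicial sets.

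For the inner family (1), with $f\colon\Lambda^e[T]^\flat\subseteq\Omega[T]^\flat$, I would compute $St_{\Omega[T]}^+$ colourwise. Each value is assembled from the cubes $\Delta[T/c]^{\mathrm{op}}$, and contracting the inner edge $e$ affects only the colours lying below $e$; a direct inspection shows the induced map at each colour is the inclusion of a face of a cube, hence a trivial cofibration of flat-marked simplicial sets. No nondegenerate corolla is marked on either side, so the markings are inert here. This is the dendroidal counterpart of the inner part of Lurie's simplicial straightening computation.

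The essential content is the remaining families, where the markings become sharp. A key preliminary observation is that the straightening of the domains must be computed through the relevant \emph{free} algebras rather than colourwise; for instance in case ($2^*$), over $\Omega[C_n]$, the coproduct $\coprod_{c}\eta_c$ straightens to the free algebra on the inputs, whose value at the root is the single vertex of the $n$-cube obtained by applying the $n$-ary operation to the generators. Straightening the inclusion into $\Omega[C_n]^\sharp$ then amounts to including this vertex into the $n$-cube $((\Delta^1)^n)^{\mathrm{op}}$ with exactly the edges coming from the marked corolla $C_n$ declared marked, and I would exhibit this as an iterated pushout of marked cube inclusions of the type treated in Corollary~\ref{cor:markedanodyne3} and in case ($2^*$b) of Proposition~\ref{prop:smashproduct}, each of which is a trivial cofibration. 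Family (3) is handled in the same spirit, using Propositions~\ref{prop:composition} and~\ref{prop:composition2} to recognise the newly marked edges as equivalences. The genuinely delicate case is (2): here the leaf-horn combinatorics and the marking of the leaf corolla interact, and I expect this to be the main obstacle. My plan is to filter the cube $\Delta[T/c]^{\mathrm{op}}$ by the number of coordinates lying above the leaf vertex $v$ and to realise $St_{\Omega[T]}^+(f)$ as a transfinite composite of pushouts of marked cube inclusions, each a trivial cofibration; the bookkeeping of which edges acquire markings is the combinatorial heart, paralleling Lemma~\ref{lemma:cylindersubdivision2}.

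Finally, for family (4), with $K$ a Kan complex and $f\colon j_!(K^\flat)\subseteq j_!(K^\sharp)$, the underlying dendroidal set $i_!(K)$ carries only linear-tree dendrices, so $St_{i_!(K)}^+$ agrees with Lurie's simplicial marked straightening over $K$. Marking all edges of a Kan complex is a coCartesian equivalence, and its straightening is a pointwise trivial cofibration by the simplicial case in Chapter~3 of~\cite{htt}, which I would simply invoke. Assembling the four colourwise computations with the reduction of the first paragraph then completes the proof.
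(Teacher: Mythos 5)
Your global reduction --- weak saturation of the class of maps $f$ with $St_S^+(f)$ a trivial cofibration, restriction to the five generating families, and base change to $S=u(B)$ via Proposition \ref{prop:straightening}(2) --- matches the paper's strategy, and since $St_S^+$ preserves cofibrations it is indeed enough to check that each generator straightens to a pointwise weak equivalence. The genuine gap is in family (1), which you dismiss as routine. For the inner horn $\Lambda^e[T]^\flat\subseteq\Omega[T]^\flat$ the induced map of straightenings is an isomorphism at every colour except the root $r$; at $r$ the domain is the union of \emph{all} the subcubes of $\Delta[T/r]^{\mathrm{op}}$ contributed by the faces of $T$ present in the horn, not the inclusion of a single face of a cube. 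Writing $K=(\Delta^1)^{\mathrm{col}(T)\setminus\{e,r\}}$, the paper identifies the map at $r$ as a pushout of the inclusion of $M=K\times\{1\}\coprod_{\partial K\times\{1\}}\partial K\times\Delta^1$ into $K\times\Delta^1$, i.e.\ the pushout-product of $\partial K\subseteq K$ with an \emph{endpoint} inclusion of $\Delta^1$. With flat markings this is not a trivial cofibration of $\mathbf{sSets^+}$: weak equivalences between flat objects are categorical equivalences, and already for $K=\Delta^0$ the map is $\{1\}\subseteq\Delta^1$, which is no categorical equivalence. Your ``face of a cube, hence a trivial cofibration of flat-marked simplicial sets'' fails for the same reason: the cube and a face are weakly homotopy equivalent but not categorically equivalent, and the pointwise model structure on $\mathrm{Alg}_{\mathrm{hc}\tau_d(S)}(\mathbf{sSets^+})$ is the marked one, not the Quillen one. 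The paper's proof therefore marks the edges of $V\times\Delta^1$, where $V$ consists of the cube vertices whose support contains a colour above $e$, and invokes Lemma \ref{lemma:markedanodyne4}; your assertion that ``the markings are inert here'' is exactly where the argument breaks.

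The remaining families are closer to the mark but only sketched. Your description of ($2^*$) agrees with the paper's $\{0\}\to((\Delta^1)^{\times n})^\sharp$ at the root. For (3), Propositions \ref{prop:composition} and \ref{prop:composition2} concern coCartesian corollas of dendroidal sets and say nothing directly about the straightened values; the paper instead exhibits the map at the root as a composite of pushouts of the marked $2$-simplex inclusions $(\Lambda^2_1)^\sharp\coprod_{(\Lambda^2_1)^\flat}(\Delta^2)^\flat\to(\Delta^2)^\sharp$ and its $\Lambda^2_0$-analogue via Corollary \ref{cor:markedanodyne3}. For (2), which you rightly flag as the combinatorial heart, the paper reduces to a single pushout of the marked cube inclusion of Lemma \ref{lemma:markedanodyne4} (with $(\Delta^1)^n$ in place of $\Delta^1$ and $V$ the vertices whose support contains the output edge of $v$) rather than the transfinite filtration you propose; the decisive step --- specifying which edges are marked and why that makes the cube inclusion anodyne --- is precisely what is missing from your sketch.
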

\begin{proof}
We are free to check this only on generating marked anodynes. We need to check five cases. \par 
\textbf{(1)} Suppose $f: \Lambda^e[T]^\flat \subseteq \Omega[T]^\flat$ is an inner horn inclusion. If $c$ is a colour of $T$ other than its root then the map $St_{\Omega[T]}^+(f)(c)$ is an isomorphism. Let $r$ be the root of $T$. If we can prove that $St_{\Omega[T]}^+(f)(r)$ is a trivial cofibration of marked simplicial sets then it is straightforward to show that $St_{\Omega[T]}^+(f)$ has the left lifting property with respect to fibrations and is thus a trivial cofibration. Now define $C := \mathrm{col}(T)-\{e,r\}$ and denote by $K$ the cube $(\Delta^1)^C$. Inspection of the Boardman-Vogt $W$-construction shows that we have a map
\begin{equation*}
(K \times \Delta^1)^{\mathrm{op}} \longrightarrow St_{\Omega[T]}(\Omega[T])(r)
\end{equation*}
corresponding to the unique non-degenerate $T$-dendrex of $\Omega[T]$. \par
For the purposes of this proof, we will say that a colour $c$ of $T$ is above $e$ if $c \neq e$ and there exists a subtree of $T$ containing the colour $c$ and having $e$ as its root. Any vertex $v$ of $K$ defines a map $\bar v: C \longrightarrow \{0,1\}$. We let $V \subseteq K_0$ be the set of vertices $v$ of $K$ such that $\bar v^{-1}(\{1\})$ contains a colour of $T$ which is above $e$. We define $\mathcal{E}$ to be the union of the set of 1-simplices of $V \times \Delta^1$ with the set of degenerate 1-simplices of $K \times \Delta^1$. Define
\begin{equation*}
M := K \times \{1\} \coprod_{\partial K \times \{1\}} \partial K \times \Delta^1
\end{equation*} 
and set $\mathcal{E}' := \mathcal{E} \cap M_1$. There is a pushout diagram
\[
\xymatrix{
(M, \mathcal{E}')^{\mathrm{op}} \ar[d]\ar[r] & St^+_{\Omega[T]}(\Lambda^e[T]^\flat)(r) \ar[d]^{St^+_{\Omega[T]}(f)(r)} \\
(K \times \Delta^1, \mathcal{E})^{\mathrm{op}} \ar[r] & St^+_{\Omega[T]}(\Omega[T]^\flat)(r)
}
\]
The left vertical map is marked anodyne by Lemma \ref{lemma:markedanodyne4} below, so we conclude that $St^+_{\Omega[T]}(f)(r)$ is marked anodyne and hence a trivial cofibration. \par 
\textbf{(2)} Let $f$ be the inclusion
\begin{equation*}
(\Lambda^v[T], \mathcal{E}\cap cor(\Lambda^v[T])) \subseteq (\Omega[T], \mathcal{E})
\end{equation*}
where $T$ is a tree with at least two vertices, $v$ is the vertex of a leaf corolla and $\mathcal{E}$ is the set of all degenerate corollas of $T$ together with this leaf corolla. Again $St_{\Omega[T]}^+(f)(c)$ is an isomorphism for any colour $c$ of $T$ other than the root $r$. Denote the leaves of the corolla with vertex $v$ by $l_1, \ldots, l_n$. Set $C = \mathrm{col}(T) - \{l_1, \ldots, l_n, r\}$ and $K = (\Delta^1)^C$. Now let $V$ denote the set of vertices $w$ of $K$ such that $\bar w^{-1}(\{1\})$ contains the output edge of the vertex $v$. Set $\mathcal{E}$ to be the union of the set of 1-simplices of $V \times (\Delta^1)^n$ with the set of degenerate 1-simplices of $K \times (\Delta^1)^n$. Define
\begin{equation*}
M := K \times \{1\}^n \coprod_{\partial K \times \{1\}^n} \partial K \times (\Delta^1)^n
\end{equation*}
and set $\mathcal{E}' = \mathcal{E} \cap M_1$. The map $St_{\Omega[T]}^+(f)(r)$ is now a pushout of the map 
\begin{equation*}
(M, \mathcal{E}')^{\mathrm{op}} \longrightarrow (K \times (\Delta^1)^n, \mathcal{E})^{\mathrm{op}}
\end{equation*}
which is again marked anodyne by Lemma \ref{lemma:markedanodyne4}. \par
\textbf{($\mathbf{2^*}$)} Set $f$ to be the inclusion
\begin{equation*}
\coprod_{c \in \mathrm{in}(C_n)} \eta_c \subseteq \Omega[C_n]^\sharp
\end{equation*}
for some $n \geq 0$. The map $St_{\Omega[C_n]}^+(f)$ is an isomorphism at all the leaves of $C_n$ and can be identified with the map
\begin{equation*}
\{0\} \longrightarrow ((\Delta^1)^{\times n})^\sharp 
\end{equation*}
at the root of $C_n$. This map is easily seen to be marked anodyne. \par
\textbf{(3)} Let $T$ be a tree with two vertices $v$ and $w$, let $\mathcal{E}$ be the set of degenerate corollas of $\Omega[T]$ together with the corollas with vertices $v$ and $w$ and let $f$ be the inclusion $(\Omega[T], \mathcal{E}) \subseteq \Omega[T]^\sharp$. One verifies that $St_{\Omega[T]}^+(f)$ is an isomorphism at every colour of $T$ except for the root $r$. Suppose $v$ has $j$ leaves and $w$ has $k$ leaves. We can identify $St_{\Omega[T]}^+(f)(r)$ with a pushout of the map
\begin{equation*}
((\Delta^1)^j \times (\Delta^1)^k, \mathcal{E})^{\mathrm{op}} \subseteq \bigl(((\Delta^1)^j \times (\Delta^1)^k)^{\mathrm{op}}\bigr)^\sharp
\end{equation*}
where $\mathcal{E}$ is the union of all degenerate 1-simplices together with the 1-simplices of the simplicial set
\begin{equation*}
(\Delta^1)^j \times \{0\}^k \cup (\Delta^1)^j \times \{1\}^k \cup \{1\}^j \times (\Delta^1)^k
\end{equation*}
This map can be obtained by a composition of pushouts of the maps
\begin{equation*}
(\Lambda^2_1)^\sharp \coprod_{(\Lambda^2_1)^\flat} (\Delta^2)^\flat \longrightarrow (\Delta^2)^\sharp
\end{equation*}
and
\begin{equation*}
(\Lambda^2_0)^\sharp \coprod_{(\Lambda^2_0)^\flat} (\Delta^2)^\flat \longrightarrow (\Delta^2)^\sharp
\end{equation*}
The first of these is marked anodyne by definition, the second is marked anodyne by Corollary \ref{cor:markedanodyne3}. \par 
\textbf{(4)} Let $K$ be a Kan complex in simplicial sets and let $f$ be the inclusion $i_!(K)^\flat \subseteq i_!(K)^\sharp$. The story now only involves simplicial sets and one can copy the argument used by Lurie in the proof of Proposition 3.2.1.11 of \cite{htt}. $\Box$
\end{proof}

In the proof above, we used the following result:

\begin{lemma}
\label{lemma:markedanodyne4}
Let $K$ be a simplicial set and let $V$ be a set of vertices of $K$. Define $\mathcal{E}$ to be union of the set of degenerate simplices of $K \times (\Delta^1)^n$ with the set of 1-simplices of $V \times (\Delta^1)^n$. Define
\begin{equation*}
M := K \times \{0\}^n \coprod_{\partial K \times \{0\}^n} \partial K \times (\Delta^1)^n
\end{equation*}
Suppose that for each non-degenerate simplex $\sigma$ of $K$ the initial vertex of $\sigma$ belongs to $V$. Then the inclusion
\begin{equation*}
(M, \mathcal{E} \cap M_1) \subseteq (K \times (\Delta^1)^n, \mathcal{E})
\end{equation*}
is marked anodyne.
\end{lemma}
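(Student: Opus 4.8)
The plan is to realise the inclusion as a transfinite composite of pushouts of generating marked anodynes, by means of a prism filtration of $K \times (\Delta^1)^n$ relative to $M$. First I would reduce to the case $K = \Delta^m$. The assignment $L \mapsto \bigl( L\times\{0\}^n \cup \partial L \times (\Delta^1)^n \hookrightarrow L \times (\Delta^1)^n\bigr)$ is compatible with the skeletal filtration of $K$: writing $K$ as a transfinite composite of pushouts of boundary inclusions $\partial\Delta^m \hookrightarrow \Delta^m$ along its nondegenerate simplices $\sigma$, the map in question for $K$ becomes a transfinite composite of pushouts of the corresponding maps for the individual $\Delta^m$, with markings pulled back (so that on $\Delta^m \times (\Delta^1)^n$ the marked edges are the degenerate ones together with those lying over $\sigma^{-1}(V)$). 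Since marked anodynes form a weakly saturated class, it suffices to treat $K = \Delta^m$; and by hypothesis the initial vertex $0$ of each such $\sigma$ lies in $V$, so $0$ belongs to the pulled-back marking.

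Next I would carry out the prism filtration of $\Delta^m \times (\Delta^1)^n$. When $m = 0$ the map is the inclusion of the corner $\{0\}^n$ into the fully marked cube $\bigl((\Delta^1)^n\bigr)^{\sharp}$ — every edge lies over the unique vertex, which is in $V$ — and this is marked anodyne by the cube computations of the previous section. When $m \geq 1$, the nondegenerate simplices of $\Delta^m \times (\Delta^1)^n$ not already lying in $M$ are precisely those whose projection to $\Delta^m$ is surjective; in particular none is vertical, i.e. contained in a fibre $\{v\}\times(\Delta^1)^n$, since every such fibre sits inside $\partial\Delta^m \times (\Delta^1)^n \subseteq M$. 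Identifying nondegenerate simplices with chains in the poset $[m]\times[1]^n$ and ordering the maximal ones by the standard shuffle ordering, I would attach them one at a time, each step filling a single horn and bringing in one new lower-dimensional face.

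The key point is to identify the horn filled at each step. An attachment is the filler of an inner horn $\Lambda^N_k$ with $0 < k < N$, except when the attached chain begins with an edge lying over the vertex $0$ of $\Delta^m$ — necessarily the minimal vertex $(0,\vec{0})$ — in which case it is an outer horn $\Lambda^N_0$ whose initial edge $\{0,1\}$ is marked; this last fact is exactly what the hypothesis on initial vertices provides. The inner-horn steps are pushouts of $(\Lambda^N_k)^\flat \subseteq (\Delta^N)^\flat$: the only edge of $\Delta^N$ possibly absent from $\Lambda^N_k$ is, for $N=2$ and $k=1$, the edge $\{0,2\}$, and this is marked only for a vertical $2$-simplex, which does not occur here; hence every marked edge already lies in the horn, and the marked inner-horn inclusion $(\Lambda^N_k, \mathcal{E}')\subseteq(\Delta^N, \mathcal{E})$ is a further pushout of the flat one along the marking-adding inclusion $(\Lambda^N_k)^\flat \hookrightarrow (\Lambda^N_k, \mathcal{E}')$, so is again marked anodyne. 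The outer-horn steps are pushouts of the simplicial analogue of the generating marked anodyne $(2)$, namely $\Lambda^N_0$ with marked initial edge. Composing all of these shows that $(M, \mathcal{E}\cap M_1) \subseteq (K \times (\Delta^1)^n, \mathcal{E})$ is marked anodyne.

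The main obstacle is the combinatorics of the prism filtration of $\Delta^m \times (\Delta^1)^n$: producing an explicit well-ordering of the attached simplices under which each step is a single horn of the predicted type (inner, or outer with marked initial edge), and checking that the intermediate markings match up so that every elementary inclusion is literally a pushout of a generator. This bookkeeping — the precise analogue of the prism-decomposition arguments in Chapter~3 of \cite{htt} — is the technical heart of the lemma; the rest of the argument is formal manipulation of weakly saturated classes.
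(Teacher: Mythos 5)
Your reduction to $K = \Delta^m$ along the skeletal filtration is exactly the paper's first step, and your dictionary for the marked generators (flat inner horns, and $\Lambda^N_0$ with marked initial edge as the simplicial shadow of type (2)) is correct. The argument breaks at the prism-filling step. You propose to fill $\Delta^m \times (\Delta^1)^n$ relative to $M = \Delta^m \times \{0\}^n \cup \partial\Delta^m \times (\Delta^1)^n$ by attaching only the maximal shuffles, ``each step filling a single horn and bringing in one new lower-dimensional face''. For $n \geq 2$ no such filtration exists. Take $m = 1$, $n = 2$, so that $\Delta^1 \times (\Delta^1)^2 \cong N([1]^3)$: there are $6$ maximal shuffles, so your filtration would adjoin $6$ three-simplices and $6$ two-simplices; but $N([1]^3)$ has $18$ nondegenerate $2$-simplices, of which only $4$ lie in $M$ (two in each end square $\{\epsilon\} \times (\Delta^1)^2$; the edge $\Delta^1 \times \{0\}^2$ contributes none), leaving $14$ to be adjoined. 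Equivalently, every one of the six shuffles has at least three of its four codimension-one faces outside $M$, so the very first shuffle you try to attach is not the filler of any single horn. This is not deferred bookkeeping: the plan as stated cannot be completed, and repairing it by interleaving non-maximal simplices into the filtration is combinatorial work on the order of Lemmas \ref{lemma:cylindersubdivision} and \ref{lemma:cylindersubdivision2} in the appendix.

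The paper avoids this by reducing further: not only to $K = \Delta^m$, but to a single interval factor, i.e.\ to the inclusion $\Delta^m \times \{0\} \cup \partial\Delta^m \times \Delta^1 \subseteq \Delta^m \times \Delta^1$ with only $\{0\} \times \Delta^1$ marked. For the two-factor prism the classical decomposition into $m+1$ top-dimensional simplices really does have your single-horn property ($m$ inner horn fillings followed by one pushout of $\Lambda^{m+1}_0 \subseteq \Delta^{m+1}$ whose initial edge is the marked vertical edge $\{0\} \times \Delta^1$), and the $n$-cube is then handled by peeling off one $\Delta^1$ factor at a time. To salvage your argument, either perform that one-factor-at-a-time reduction first, or abandon the shuffle-only filtration in favour of one ranging over simplices of all dimensions.
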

\begin{proof}
We can filter the inclusion by working one simplex of $K$ at a time. With this method we reduce to proving the following: let $\mathcal{E}$ be the union of degenerate simplices of $\Delta^n \times \Delta^1$ with the 1-simplex $\{0\} \times \Delta^1$. Set
\begin{equation*}
M := \Delta^n \times \{0\} \coprod_{\partial \Delta^n \times \{0\}} \partial \Delta^n \times \Delta^1
\end{equation*}
Then the inclusion
\begin{equation*}
(M, \mathcal{E} \cap M_1) \subseteq (\Delta^n \times \Delta^1, \mathcal{E})
\end{equation*}
is marked anodyne. Indeed, using the standard subdivision of $\Delta^n \times \Delta^1$ into $(n+1)$-simplices, this inclusion can be obtained as a composition of $n$ inner anodynes followed by a pushout of a marked anodyne of type (2). $\Box$
\end{proof}

We now investigate the behaviour of the straightening functor with respect to the simplicial tensoring on $\mathbf{dSets^+}/S$. Recall that for $X \in \mathbf{dSets^+}/S$ and $K \in \mathbf{sSets}$ this tensoring is given by $X \otimes K = X \otimes i_!(K)^\sharp$. We have the following result:

\begin{lemma}
\label{lemma:Stproduct}
Let $S$ be normal. For $p: X \longrightarrow S$ in $\mathbf{dSets^+}/S$ and $K \in \mathbf{sSets}$ the natural map
\begin{equation*}
St^+_S(X \otimes K) \longrightarrow St^+_S(X) \otimes K
\end{equation*}
is a weak equivalence.
\end{lemma}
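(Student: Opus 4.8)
Assume for now that $u(X)$ is normal, so that $X$ and all objects constructed below are cofibrant; the general case is discussed at the end. The plan is to regard both sides as functors of $K$ and reduce to $K=\Delta^n$. Write $F(K):=St^+_S(X\otimes i_!(K)^\sharp)$ and $G(K):=St^+_S(X)\otimes K$, so the lemma asserts that the natural map $\theta_K\colon F(K)\to G(K)$ is a weak equivalence. Here $G$ preserves all colimits in $K$, as $St^+_S(X)\otimes-$ is a left adjoint; and $F$ preserves coproducts and pushouts along monomorphisms, since $i_!$, $X\otimes-$ and $St^+_S$ (Proposition \ref{prop:straightening}) preserve all colimits, while $(-)^\sharp$ preserves pushouts along monomorphisms because every corolla of such a pushout lies in the image of one of the two factors. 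First I would run the usual skeletal induction: $K=\varinjlim_m\operatorname{sk}_mK$, with $\operatorname{sk}_mK$ obtained from $\operatorname{sk}_{m-1}K$ by a pushout of $\coprod\partial\Delta^m\to\coprod\Delta^m$. Applying $F$ and $G$ turns this into a map of pushout squares whose left-hand legs are cofibrations (both functors send monomorphisms of simplicial sets to cofibrations); by the gluing lemma for the left proper model categories and stability of weak equivalences under the filtered colimit over $m$, it then suffices to check that $\theta$ is a weak equivalence on each $\Delta^m$, the faces $\partial\Delta^m$ being handled at the previous inductive stage.

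For the base case $K=\Delta^n$ I would use naturality of $\theta$ along the inclusion $i_v\colon\{v\}\hookrightarrow\Delta^n$ of a suitable vertex $v$. Since $X\otimes i_!(\Delta^0)^\sharp=X$ and $St^+_S(X)\otimes\Delta^0=St^+_S(X)$, the map $\theta_{\Delta^0}$ is the identity, so naturality yields a commuting triangle
\[
\xymatrix{
St^+_S(X)\ar[r]^-{St^+_S(\mathrm{id}_X\otimes i_v)}\ar[dr]_-{\mathrm{id}\otimes i_v} & F(\Delta^n)\ar[d]^-{\theta_{\Delta^n}}\\
& G(\Delta^n)
}
\]
The diagonal $\mathrm{id}\otimes i_v$ is the tensor of $St^+_S(X)$ with $\{v\}\hookrightarrow\Delta^n$; as $\Delta^n$ contracts onto any of its vertices, this is a simplicial homotopy equivalence in the simplicial algebra model category and hence a weak equivalence. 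By two-out-of-three, $\theta_{\Delta^n}$ is a weak equivalence as soon as the horizontal map $St^+_S(\mathrm{id}_X\otimes i_v)$ is one.

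It therefore remains to show $St^+_S(\mathrm{id}_X\otimes i_v)$ is a weak equivalence. I would choose $v$ to be the vertex for which the edge inclusion $\{e_v\}\hookrightarrow i_!(\Delta^n)^\sharp$ is marked anodyne; this is the dendroidal analogue of the fact that the inclusion of the final vertex into $(\Delta^n)^\sharp$ is marked anodyne, and it follows from the generating marked anodyne maps of type $(2^{*})$ together with $(1)$ and $(2)$, or equivalently by applying the colimit-preserving functor $j_!$ to the corresponding inclusion of marked simplicial sets. Since $u(X)$ is normal, $\mathrm{id}_X\otimes i_v$ is the smash product of the cofibration $\emptyset\to X$ with this marked anodyne, hence is itself marked anodyne by Proposition \ref{prop:smashproduct}; Lemma \ref{lemma:Stmarkedanodynes} then shows $St^+_S(\mathrm{id}_X\otimes i_v)$ is a trivial cofibration. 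This settles the lemma for normal $X$, which is exactly the case needed to deduce (via the criterion that a cofibration-preserving functor is left Quillen once it preserves weak equivalences between cofibrant objects) that $(St^+_S,Un^+_S)$ is a Quillen pair. The argument is not circular: it uses only Lemma \ref{lemma:Stmarkedanodynes}, Proposition \ref{prop:smashproduct} and the fact that $St^+_S$ preserves cofibrations, none of which depends on the present lemma.

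The hard part will be the passage beyond normal $X$. Proposition \ref{prop:smashproduct} requires $\emptyset\to X$ to be a cofibration, so the clean marked-anodyne argument (and the cofibrancy needed in the reduction) breaks down when $u(X)$ is not normal; and since $St^+_S$ is a left adjoint it does not preserve the trivial fibration $X\times E_\infty^\sharp\to X$, so one cannot simply transport the statement from a normalization of $X$. For the general statement I expect one must analyse $St^+_S$ directly: unwinding the definition through the Boardman--Vogt $W$-construction, the simplicial set $St^+_S(X\otimes i_!(\Delta^n)^\sharp)(s)$ is built from $St^+_S(X)(s)$ by adjoining the $\Delta^n$-coordinates with assigned lengths, and one constructs by hand a deformation retraction contracting those coordinates which is natural in $s$ and compatible with the algebra structure maps and the markings, valid for arbitrary $X$. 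Producing and verifying this explicit homotopy is the main technical obstacle.
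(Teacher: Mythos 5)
Your proof is correct, but it takes a genuinely different route from the paper's. The paper fixes $K$ and inducts over the dendrices of $X$: using that both sides preserve colimits in $X$, skeletal filtrations, and left properness of the algebra category, it reduces to $X=\Omega[T]^\flat$, then uses the (inner anodyne) spine inclusion to reduce further to corollas, reduces $K$ to $\Delta^1$ by the same device, and finishes with an explicit hand computation comparing $St^+(\Omega[C_n]^{\flat/\sharp}\otimes\Delta^1)$ with $St^+(\Omega[C_n]^{\flat/\sharp})\otimes\Delta^1$ at the root. You instead fix $X$, induct over the skeleta of $K$, and dispatch the base case $K=\Delta^n$ homotopy-theoretically: $X\to X\otimes i_!(\Delta^n)^\sharp$ is the smash product of $\emptyset\to X$ with the marked anodyne inclusion of the leaf vertex of $i_!(\Delta^n)^\sharp$, hence marked anodyne by Proposition \ref{prop:smashproduct}, so one leg of your triangle is a trivial cofibration by Lemma \ref{lemma:Stmarkedanodynes}, while the other is a trivial cofibration by SM7 for the simplicial model structure on algebras applied to the cofibrant object $St^+_S(X)$; two-out-of-three finishes. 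This avoids both the induction on $X$ and the explicit corolla computation, and the dependencies you invoke (Proposition \ref{prop:smashproduct}, Lemma \ref{lemma:Stmarkedanodynes}, preservation of cofibrations by $St^+_S$) are indeed all established prior to and independently of the present lemma, so there is no circularity. What the paper's computational route buys in exchange is a concrete description of what $St^+$ does to cylinders on corollas, which is occasionally useful elsewhere, but as a proof of this lemma your argument is the cleaner one.

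One correction: your closing paragraph, which you flag as "the main technical obstacle," addresses a case that never arises. The lemma hypothesizes that $S$ is normal, and every object of $\mathbf{dSets^+}/S$ comes equipped with a map to $S$; since any dendroidal set admitting a map to a normal dendroidal set is itself normal (as recorded in the preliminaries, and as the paper itself uses in the proof of Proposition \ref{prop:strovertree}), the underlying dendroidal set of $X$ is automatically normal. Your proof for normal $X$ is therefore already the complete proof, and no explicit deformation retraction for non-cofibrant $X$ is needed.
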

\begin{proof}
Both sides are compatible with the formation of colimits in $X$ and weak equivalences are closed under filtered colimits, which allows us to reduce to the case where $X$ has only finitely many nondegenerate dendrices. We work by induction on the maximal size of $T$-dendrices of $X$: first observe that the statement is trivial if $X$ has only $\eta$-dendrices (i.e. is just a discrete set). Now suppose the statement is true for dendroidal sets $X$ only having nondegenerate $T$-dendrices for trees $T$ with at most $n$ vertices, where $n \geq 1$ (we will prove the case $n = 1$ later, to establish the induction base). If a dendroidal set $X'$ has nondegenerate $T'$-dendrices for trees $T'$ with up to $n+1$ vertices, the set of which we denote by $\{\alpha_i, T'_i\}_{i \in I}$ with $T'_i$ denoting the shape of the dendrex, there is a pushout square
\[
\xymatrix{
St^+_S(\coprod_{i \in I}\partial \Omega[T'_i]^\flat \otimes K) \ar[r]\ar[d] & St^+_S(\mathrm{sk}_n(X') \otimes K) \ar[d] \\
St^+_S(\coprod_{i \in I} \Omega[T'_i]^\flat \otimes K) \ar[r] & St^+_S(X' \otimes K)
}
\]
which is in fact a homotopy pushout by left properness of $\mathrm{Alg}_{\mathrm{hc}\tau_d(S)}(\mathbf{sSets^+})$. By the inductive hypothesis, the maps 
\begin{eqnarray*}
St^+_S(\coprod_{i \in I}\partial \Omega[T'_i]^\flat \otimes K) & \longrightarrow & St^+_S(\coprod_{i \in I}\partial \Omega[T'_i]^\flat) \otimes K \\
St^+_S(\mathrm{sk}_n(X') \otimes K) & \longrightarrow & St^+_S(\mathrm{sk}_n(X')) \otimes K
\end{eqnarray*}
are weak equivalences. Hence it will suffice to show that the map
\begin{equation*}
St^+_S(\Omega[T'_i]^\flat \otimes K) \longrightarrow St^+_S(\Omega[T'_i]^\flat) \otimes K
\end{equation*}
is a weak equivalence. In other words, we may assume $X$ is of the form $\Omega[T]$. \par 
Now let $E(T)$ denote the set of inner edges of $T$. Define $\mathrm{spine}(T)$ to be a coequalizer as follows:
\begin{equation*}
\coprod_{e \in E(T)} \eta_e \rightrightarrows \coprod_{\alpha \in cor(T)} \Omega[\alpha] \rightarrow \mathrm{spine}(T)
\end{equation*}
where the two arrows come from the inclusion of each inner edge $e$ as the root of one corolla and a leaf of another. Now observe that
\begin{equation*}
\varsigma: \mathrm{spine}(T) \longrightarrow \Omega[T]
\end{equation*}
is inner anodyne, so that $\varsigma^\flat$ is marked anodyne. Applying Lemma \ref{lemma:Stmarkedanodynes} and a similar trick as above, we see that we may reduce to the case where $X$ is a corolla. We also still need to establish the induction base, so we will prove the statement in case $X$ is either a marked or unmarked corolla. Note that we may also apply the procedure just described to reduce to the case where $K = \Delta^1$. We need to check that
\begin{equation*}
St^+_S(\Omega[C_n]^{\flat/\sharp} \otimes \Delta^1) \longrightarrow St^+_S(\Omega[C_n]^{\flat/\sharp}) \otimes \Delta^1
\end{equation*} 
is a weak equivalence. First apply Proposition \ref{prop:straightening} to reduce to the case $S = \Omega[C_n]$. Now observe that the stated map is an isomorphism at the leaves of $C_n$. At the root of $C_n$ it is a matter of direct calculation. For convenience of the reader we describe the case where $n = 1$, the rest is similar. The marked simplicial set $St^+_{i_!(\Delta^1)}(\Omega[C_1]^{\flat/\sharp} \otimes \Delta^1)(1)$ may be described as
\[
\xymatrix@R=35pt@C=35pt{
*=0{\bullet}\ar[r]^{\sharp}&*=0{\bullet}&*=0{\bullet}\ar[l]_{\flat/\sharp} \\
*=0{\bullet}\ar[u]^{\flat/\sharp}\ar[ur]\ar[r]_{\sharp}&*=0{\bullet}\ar[u]&*=0{\bullet}\ar[u]_{\sharp}\ar[ul]\ar[l]^{\sharp}
}
\]
whereas $\bigl(St^+_{i_!(\Delta^1)}(\Omega[C_1]^{\flat/\sharp}) \otimes \Delta^1\bigr)(1)$ may be described as
\[
\xymatrix@R=35pt@C=35pt{
*=0{\bullet}\ar[r]^{\sharp}&*=0{\bullet}&*=0{\bullet}\ar[l]_{\flat/\sharp} \\
&*=0{\bullet}\ar[u]\ar[ul]^{\flat/\sharp}\ar[ur]_{\sharp}&
}
\]
The natural map between the two is indeed a marked equivalence in $\mathbf{sSets^+}$. $\Box$
\end{proof}

The straightening functor is not quite a simplicial functor: the preceding result shows that it is so only in a weak sense. However, observe that by playing with the present adjunctions we get the following chain of maps:
\begin{eqnarray*}
\mathbf{dSets^+}/S(X, Un^+_S(A^K)) & \simeq & \mathrm{Alg}_{\mathrm{hc}\tau_d(S)}(\mathbf{sSets^+})(St^+_S(X)\otimes K, A) \\ 
& \rightarrow & \mathrm{Alg}_{\mathrm{hc}\tau_d(S)}(\mathbf{sSets^+})(St^+_S(X \otimes K), A) \\
& \simeq & \mathbf{dSets^+}/S(X, Un^+_S(A)^K)
\end{eqnarray*}
This gives us a natural map $Un^+_S(A^K) \longrightarrow Un^+_S(A)^K$. From this we get the maps
\begin{eqnarray*}
\mathbf{sSets}(K, \mathrm{Map}_{\mathrm{Alg}_{\mathrm{hc}\tau_d(S)}(\mathbf{sSets^+})}(A, B)) & \simeq & \mathrm{Alg}_{\mathrm{hc}\tau_d(S)}(\mathbf{sSets^+})(A, B^K) \\
& \rightarrow & \mathbf{dSets^+}/S(Un^+_S(A),Un^+_S(B^K)) \\
& \rightarrow & \mathbf{dSets^+}/S(Un^+_S(A),Un^+_S(B)^K) \\
& \simeq & \mathbf{sSets}\bigl(K, \mathrm{Map}^\sharp_S(Un^+_S(A),Un^+_S(B))\bigr)
\end{eqnarray*}
and hence a natural map
\begin{equation*}
\mathrm{Map}_{\mathrm{Alg}_{\mathrm{hc}\tau_d(S)}(\mathbf{sSets^+})}(A, B) \longrightarrow \mathrm{Map}^\sharp_S(Un^+_S(A),Un^+_S(B))
\end{equation*}
This makes the unstraightening functor into a simplicial functor.

\subsection{Mapping trees}
Before being able to prove Theorem \ref{thm:Grothendieckconstruction} in the case where $S$ is a representable, i.e. of the form $\Omega[T]$, we will study so-called \emph{mapping trees}. These are dendroidal sets equipped with a map to $\Omega[T]$ such that for any coCartesian fibration over $\Omega[T]$ there exists a mapping tree equivalent to it. These mapping trees are artificial devices which will be useful later on, because it is possible to compute the effect of the straightening functor on these in an explicit fashion. \par  
Any tree $T \in \mathbf{\Omega}$ gives rise to an operad in $\mathbf{Sets}$, namely the free operad generated by that tree, which we denote by $\Omega(T)$. Suppose we are given an $\Omega(T)$-algebra $A$ in $\mathbf{sSets}$. In other words, we are given a simplicial set $A(c)$ for each edge $c$ of $T$ and a map $A(c_1) \times  \cdots \times A(c_n) \longrightarrow A(c)$ for each operation $(c_1, \ldots, c_n) \longrightarrow c$ of $\Omega(T)$ satisfying the necessary composition and symmetry identities. We define a dendroidal set $M(A)$, the \emph{mapping tree} of $A$, as follows. For a tree $R \in \mathbf{\Omega}$, an $R$-dendrex of $M(A)$ is determined by the following data:
\begin{itemize}
\item A map $\delta: R \longrightarrow T$ in $\mathbf{\Omega}$
\item For each leaf $l$ of $R$, a map $\lambda_l: \Delta^{\nu_l} \longrightarrow A(l)$, where $\nu_l$ is the number of vertices between $l$ and the root of $R$. To be more precise, for each leaf $l$ of $R$, there is a unique directed subgraph of $R$ from $l$ to the root of $R$, which can be identified with a linear tree. In other words, it is of the form $i_!(\Delta^{\nu_l})$ for some integer $\nu_l$, which is precisely the number of inner vertices of this graph
\end{itemize}
We need to explain how this defines a dendroidal set. Suppose $Q \longrightarrow R$ is a face map. Composition with $\delta$ gives a map $Q \longrightarrow T$. Also, $Q$ induces subtrees of the directed graphs $i_!(\Delta^{\nu_l})$ we used in the definition. If $Q$ is not a face map chopping off a leaf corolla, these induced subtrees correspond to identities, inner faces or possibly a face chopping of the root of these trees, and we simply take corresponding faces of the $i_!(\Delta^{\nu_l})$. Now suppose $Q$ chops off some leaf corolla of $R$ with inputs $l_1, \ldots, l_n$ and output $k$. Note that
\begin{equation*}
\nu_k + 1 = \nu_{l_1} = \ldots = \nu_{l_n}
\end{equation*}
Our subtree $Q$ defines the faces
\begin{equation*}
\Delta^{\nu_k} \longrightarrow \Delta^{\nu_{l_i}}
\end{equation*}
given by chopping of the initial vertex of $\Delta^{\nu_{l_i}}$. Composing these with the map defined by our $R$-dendrex we get a map
\begin{equation*}
(\Delta^{\nu_k})^n \longrightarrow A(l_1) \times \cdots \times A(l_n)
\end{equation*}
Using the algebra structure of $A$ we get a map
\begin{equation*}
(\Delta^{\nu_k})^n \longrightarrow A(k) 
\end{equation*}
Composing this with the diagonal
\begin{equation*}
\Delta^{\nu_k} \longrightarrow (\Delta^{\nu_k})^n
\end{equation*}
gives the structure map we need to define our $Q$-dendrex. One can now also easily treat the case where $Q$ is a degeneracy of $R$. Note that the mapping tree $M(A)$ comes equipped with a natural map to $\Omega[T]$. Its fiber over a colour $c$ of $T$ is isomorphic to the simplicial set $A(c)$. \par 
\begin{remark}
The definition of the mapping tree $M(A)$ may seem strange at first; the reader might want to convince him- or herself that it is actually quite close to what one would write down if one was to construct the `Grothendieck construction' of $A$ by hand. Close, but not quite it. The mapping tree will in general not be a coCartesian fibration over $\Omega[T]$. The use of these mapping trees comes from the fact that they are relatively easy to straighten and that they are `good enough' for our purposes, cf. Proposition \ref{prop:mappingtrees} below.
\end{remark}

To define a corolla of $M(A)$, we must give a corolla of $\Omega[T]$ and a 1-simplex of $A(l)$ for each leaf $l$ of this corolla. We will say that this corolla is marked if all these 1-simplices are degenerate and denote the set of these marked corolla's by $\mathcal{E}$. We define the marked dendroidal set
\begin{equation*}
M^+(A) := (M(A), \mathcal{E})
\end{equation*} 

The main result of this section is the following:

\begin{proposition}
\label{prop:mappingtrees}
Let $X \longrightarrow \Omega[T]$ be a coCartesian fibration. Then there exists a simplicial $\Omega(T)$-algebra $A$ and a commutative diagram
\[
\xymatrix{
M^+(A) \ar[rr]^\phi\ar[dr] & & X^\natural \ar[dl] \\
& \Omega[T]^\sharp & 
}
\]
such that $\phi$ is a marked equivalence in $\mathbf{dSets^+}/\Omega[T]$. Also, the underlying map of $\phi$ is an operadic equivalence of dendroidal sets.
\end{proposition}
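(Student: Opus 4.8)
The plan is to build a \emph{strict} simplicial $\Omega(T)$-algebra $A$ and the comparison map $\phi$ directly out of the coCartesian transport of $X$, to establish the operadic equivalence by a mapping-space computation, and then to upgrade this to a marked equivalence via a fibrant replacement. First I construct $A$. Since $\Omega(T)$ is the \emph{free} operad on the tree $T$, a strict algebra is determined freely by its values on edges together with a single multiplication map per vertex, with no coherence to impose. I set $A(c) := X_c$, the fibre of $p$ over the edge $c$, which is an $\infty$-category because $p$ is an inner fibration. For a vertex $v$ with inputs $e_1, \ldots, e_k$ and output $e_0$, let $\sigma_v$ be the corresponding corolla of $\Omega[T]$ and let $E_v \subseteq \mathrm{Map}_{\Omega[T]}(\Omega[C_k], X)$ be the subset of families whose corolla is $p$-coCartesian and whose legs lie in the fibres. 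Restriction to the leaves gives a map $E_v \to X_{e_1} \times \cdots \times X_{e_k}$ which, by the argument of Proposition \ref{prop:uniquecoCartlift}, is a trivial Kan fibration; choosing a section $s_v$ and evaluating at the root gives $\mu_v := \mathrm{ev}_{\mathrm{root}} \circ s_v$. By freeness these assemble into a strict algebra $A$, whose pushforward $\sigma_!$ along any operation agrees, up to homotopy, with coCartesian transport in $X$.

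Next I define $\phi : M^+(A) \to X^\natural$. An $R$-dendrex of $M(A)$ is a tree map $\delta : R \to T$ together with leaf data $\lambda_l : \Delta^{\nu_l} \to A(l) = X_l$; using the chosen sections $s_v$ one transports this data along $\delta$ into a genuine $R$-dendrex of $X$, and this assignment is natural in $R$ precisely because the face maps of $M(A)$ were defined using the algebra structure of $A$, which mirrors coCartesian transport in $X$. By construction $\phi$ lies over $\Omega[T]$ and restricts to the identity on each fibre $A(c) = X_c$. A corolla of $M(A)$ is marked exactly when its leaf $1$-simplices are degenerate, in which case the transported corolla is $p$-coCartesian (Corollary \ref{cor:uniquecoCartlift} and Proposition \ref{prop:composition}); hence $\phi$ sends marked corollas to marked corollas and is a morphism in $\mathbf{dSets^+}/\Omega[T]$.

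I then prove the \emph{operadic} equivalence, which is the second assertion. One checks that $M(A)$ is an $\infty$-operad (its inner horns are filled using the algebra structure of $A$), so Proposition \ref{prop:mappingspaceequiv} applies and it suffices to show $\phi$ is essentially surjective and induces homotopy equivalences on all mapping spaces $M(A)^L(x_1, \ldots, x_k; x) \to X^L(\phi(x_1), \ldots, \phi(x_k); \phi(x))$. Essential surjectivity is immediate, as every colour of $X$ lies in some fibre $A(c)$. For the mapping spaces, the projections determine a single discrete operation $(c_1, \ldots, c_k) \to c$ of $\Omega(T)$, and a direct inspection of the definition of $M(A)$ identifies $M(A)^L(x_1, \ldots, x_k; x)$ with the mapping space in the $\infty$-category $A(c)$ out of $\sigma_!(x_1, \ldots, x_k)$; on the $X$-side, Proposition \ref{prop:fibersequencemappingspace} exhibits $X^L(\ldots)$ as a homotopy fibre over the same operation, with fibre the corresponding mapping space in $X_c$. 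Since $A(c) = X_c$ and $A$'s pushforward agrees with coCartesian transport, $\phi$ matches the two and is an operadic equivalence.

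Finally I deduce the marked equivalence. Choose a fibrant replacement $\iota : M^+(A) \to Z^\natural$ by a marked anodyne map, so that $Z \to \Omega[T]$ is a coCartesian fibration (Proposition \ref{prop:fibrantobjects}) and, both objects being normal, $\iota$ is itself a marked equivalence (Proposition \ref{prop:markedanodyne}). Lifting $\phi$ against the trivial cofibration $\iota$ produces $\psi : Z^\natural \to X^\natural$ over $\Omega[T]^\sharp$ with $\psi \circ \iota = \phi$. The crucial point is that the fibre map $\iota_c : A(c) \to Z_c$ is a categorical equivalence: the fibre functor $\eta_c^* = \eta_c \times_{\Omega[T]} (-)$ preserves all colimits by local cartesian closedness of $\mathbf{dSets}$, so $\iota_c$ lies in the weakly saturated class generated by the fibres of the generating marked anodynes, each of which is readily checked to be a trivial cofibration in $\mathbf{sSets^+}$. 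Since $\iota_c$ is a categorical equivalence and $\phi_c = \psi_c \circ \iota_c$ is the identity, two-out-of-three gives that $\psi_c$ is a categorical equivalence for every $c$; hence $\psi$ is a marked equivalence by Proposition \ref{prop:markedequivalence}, and $\phi$ is one as well. The main obstacle is the first step: rigidifying the homotopy-coherent coCartesian transport of $X$ into a strict algebra $A$ and a strictly natural $\phi$. It is the freeness of $\Omega(T)$, which reduces the data to one choice per vertex, together with the contractibility of the spaces of coCartesian lifts, that makes this rigidification possible, while the identification of $M(A)$'s mapping spaces with fibrewise mapping spaces is the main computation.
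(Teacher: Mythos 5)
Your overall architecture (build $A$ and a fibrewise-equivalence $\phi$, deduce the operadic equivalence from a fibrewise mapping-space computation, then upgrade to a marked equivalence) matches the intended proof, but the first step contains a genuine gap: the construction of $\phi$. Setting $A(c) := X_c$ and choosing one section $s_v$ per vertex only tells you what $\phi$ should do to dendrices of $M(A)$ lying over the \emph{generating} corollas of $\Omega(T)$. A map of dendroidal sets must also be defined on dendrices lying over composite operations, and there the construction breaks down. Concretely, take $T$ linear with two vertices $v: a \to b$ and $w: b \to c$, and consider the $1$-simplex of $M(A)$ over the composite $w \circ v$ with leaf datum $h \in (X_a)_1$; its image under $\phi$ must be an edge of $X$ over the long edge of $\Omega[T]$ from the source of $h$ to $\mu_w\mu_v$ of its target, and no such edge is supplied by $s_v$ and $s_w$ — producing one requires choosing a composite in $X$, i.e.\ filling an inner horn, and these fillers must then be chosen compatibly for all higher dendrices so that $\phi$ strictly commutes with every face and degeneracy. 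The sentence ``this assignment is natural in $R$ because the face maps of $M(A)$ mirror coCartesian transport in $X$'' is exactly the assertion that this homotopy-coherent structure can be rigidified; that is the hard content of the statement (it is essentially a straightening problem in miniature), not something that follows from the freeness of $\Omega(T)$. A secondary issue in the same step: the claim that $E_v \to X_{e_1} \times \cdots \times X_{e_k}$ is a trivial Kan fibration is a \emph{relative} version of Proposition \ref{prop:uniquecoCartlift} (the inputs vary rather than being fixed) and needs its own argument.

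The paper circumvents precisely this obstruction in Lemma \ref{lemma:mappingtree} and Corollary \ref{cor:mappingtree}: it does not take $A(c) = X_c$, but builds $A$ and $\phi$ simultaneously by induction on the vertices of $T$, factoring the leaf data as a cofibration followed by a trivial fibration and then \emph{extending} along the marked anodyne $\coprod_j i_!(A(l_j))^\flat \to M^+(\Xi)$ using the lifting property of $X^\natural \to \Omega[T]^\sharp$. All the coherence you are trying to construct by hand is absorbed into a single lifting problem, at the cost that the fibres of $M^+(A)$ are only categorically equivalent to those of $X$ rather than equal to them. Your steps 3 and 4 are then close in spirit to Lemmas \ref{lemma:mappingtreeoperadicequiv} and \ref{lemma:mappingtreemarkedequiv} (your fibrant-replacement route to the marked equivalence via Proposition \ref{prop:markedequivalence} is a legitimate variant, though the claim that fibres of all five classes of generating marked anodynes are trivial cofibrations in $\mathbf{sSets^+}$ deserves a case check rather than ``readily checked''), but they rest on the unconstructed $\phi$, so the argument as written does not go through.
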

\begin{proof}
Apply Corollary \ref{cor:mappingtree} below to find suitable $A$ and $\phi$ such that $\phi$ induces categorical equivalences $A(c) \longrightarrow X_c$ for each colour $c$ of $\Omega[T]$. Subsequently apply Lemmas \ref{lemma:mappingtreeoperadicequiv} and \ref{lemma:mappingtreemarkedequiv} to establish the proposition. $\Box$
\end{proof}

\begin{lemma}
\label{lemma:mappingtree}
Suppose we are given a coCartesian fibration $X \longrightarrow \Omega[T]$. Let $\{l_1, \ldots, l_N\}$ be the set of leaves of $T$ and suppose we are given a simplicial set $K_{l_j}$ for each leaf $l_j$ of $T$. Define the map $\kappa$ to be the composition
\begin{equation*}
\coprod_{j=1}^N i_!(K_{l_j})^\flat \longrightarrow \coprod_{j=1}^N \eta_{l_j}^\sharp \longrightarrow \Omega[T]^\sharp
\end{equation*}
Finally, suppose we are given a commutative diagram
\[
\xymatrix{
\coprod_{j=1}^N i_!(K_{l_j})^\flat \ar_{\kappa}[dr]\ar[rr] & & X^\natural \ar[dl] \\
& \Omega[T]^\sharp &
}
\]
Then there exists a simplicial $\Omega(T)$-algebra $A$ and an extension of the above diagram to a diagram
\[
\xymatrix{
\coprod_{j=1}^N i_!(K_{l_j})^\flat \ar_{\kappa}[dr]\ar[r] & M^+(A) \ar[r]^\phi & X^\natural \ar[dl] \\
& \Omega[T]^\sharp &
}
\]
such that $\phi$ induces categorical equivalences on the (underlying simplicial sets of the) fibers over the colours of $\Omega[T]$.
\end{lemma}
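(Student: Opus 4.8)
The plan is to construct the $\Omega(T)$-algebra $A$ and the map $\phi$ simultaneously, by induction on the number of vertices of $T$, peeling off one leaf corolla at a time. Throughout I will arrange that on every edge $c$ of $T$ we have $A(c) = X_c$ (the fibre of $X$ over $c$) and that $\phi$ restricts to the \emph{identity} on each fibre $M(A)_c \cong A(c) = X_c$; this makes the asserted fibrewise categorical equivalences automatic. On a leaf $l$ the prescribed datum $\kappa$ gives a map $K_l \to X_l = A(l)$, and since $\phi$ is the identity on the fibre over $l$, composing the fibre inclusion $i_!(K_l)^\flat \to M^+(A)$ with $\phi$ recovers $\kappa$, so the given diagram is extended. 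The base case $T = \eta$ is immediate: under $\mathbf{dSets}/\eta \simeq \mathbf{sSets}$ we take $A = X$ and $\phi = \mathrm{id}$.

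The core of the argument is the treatment of a single top (leaf) vertex $v$, with input leaves $l_1,\dots,l_n$ and output edge $e$; write $\sigma_v \colon \Omega[C_n] \to \Omega[T]$ for the corresponding corolla. I must produce an honest simplicial \emph{multiplication} $\mu_v \colon X_{l_1} \times \cdots \times X_{l_n} \to X_e$ together with a coherent choice of coCartesian lifts that will define $\phi$ on every dendrex whose image in $\Omega[T]$ involves $v$. To this end I consider, in the notation of Section \ref{subsection:coCartcorollas}, the simplicial set $\widetilde{Q}$ of maps $f \colon \Omega[C_n \star [m-1]] \to X$ satisfying conditions (2) and (3) of the definition of $\mathbf{coCart}_p$ (that is, $f(C_n)$ is $p$-coCartesian over $\sigma_v$ and the tail lies in the fibre $X_e$), but \emph{without} fixing the inputs. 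Taking inputs defines a projection $\widetilde{Q} \to X_{l_1} \times \cdots \times X_{l_n}$ whose fibre over a vertex $(x_1,\dots,x_n)$ is exactly $\mathbf{coCart}_p(\sigma_v,\{x_1,\dots,x_n\})$; the lifting argument of Proposition \ref{prop:uniquecoCartlift} shows verbatim that this projection is a trivial Kan fibration. I then choose a section, whose two components furnish $\mu_v$ and the required values of $\phi$. Because the chosen corollas are coCartesian, $\phi$ sends marked corollas (those with degenerate leaf $1$-simplices) to coCartesian, hence marked, corollas of $X^\natural$.

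To assemble these into a global construction, let $T' = \partial_v T$ be the tree obtained by removing the corolla at $v$, so that $e$ becomes a leaf of $T'$, and let $X' \to \Omega[T']$ be the pullback of $X$, again a coCartesian fibration by Proposition \ref{prop:pullbackfibration}, with $X'_c = X_c$ for every edge $c$ of $T'$. I retain the original leaf data on the untouched leaves and install $K_e := X_{l_1} \times \cdots \times X_{l_n}$ with the map $\mu_v \colon K_e \to X_e$ as the new leaf datum at $e$. The inductive hypothesis yields an $\Omega(T')$-algebra $A'$ and a marked equivalence-to-be $\phi'$; I set $A(c) = A'(c)$ for the edges of $T'$, $A(l_i) = X_{l_i}$, and take the structure map at $v$ to be $\mu_v$, the remaining structure maps coming from $A'$. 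Since the top-layer construction and the recursive one both set $A(e) = X_e$ and are compatible on the fibre over $e$, the maps $\phi$ and $\phi'$ glue to a single map $\phi \colon M^+(A) \to X^\natural$ over $\Omega[T]^\sharp$.

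The main obstacle is the verification that $\phi$ so defined is genuinely a map of dendroidal sets, i.e.\ that the chosen coCartesian lifts are compatible with \emph{all} faces and degeneracies of $M(A)$ and that the gluing along $e$ is strictly consistent. This is precisely where the contractibility of the lift complexes $\mathbf{coCart}_p(\sigma_v,\{\cdot\})$ (Proposition \ref{prop:uniquecoCartlift}) and the closure of coCartesian corollas under composition (Propositions \ref{prop:composition} and \ref{prop:composition2}) are used: each face compatibility reduces to an extension problem of the type solved there, and the uniqueness up to a contractible space of choices (Corollary \ref{cor:uniquecoCartlift}) guarantees that the successive choices can be made coherently. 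Granting this bookkeeping, $\phi$ is the identity on fibres by construction, so it induces categorical equivalences on all fibres, completing the induction.
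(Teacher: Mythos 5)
There is a genuine gap, and it sits exactly where you write ``granting this bookkeeping.'' Your strategy is to set $A(c)=X_c$, choose a multiplication $\mu_v$ and coCartesian lifts vertex by vertex, and then assemble $\phi$ by hand. Two things go wrong. First, the auxiliary object $\widetilde{Q}$ does not fiber over $X_{l_1}\times\cdots\times X_{l_n}$ the way you claim: an $m$-simplex of $\widetilde{Q}$ is a single map $\Omega[C_n\star[m-1]]\to X$, whose inputs are merely colours of $X$, i.e.\ vertices of the fibers, so ``taking inputs'' does not define a map of simplicial sets to the product of the fibers. To obtain an honest simplicial map $\mu_v\colon X_{l_1}\times\cdots\times X_{l_n}\to X_e$ you must choose coCartesian lifts coherently over all higher simplices of the input fibers; the resulting lifting problems are against boundary inclusions in the input directions, a genuinely different and harder extension problem than the horn-filling of Proposition \ref{prop:uniquecoCartlift}, so that argument does not apply ``verbatim.'' Second, even with $\mu_v$ in hand, a map of dendroidal sets $M(A)\to X$ must assign actual dendrices of $X$ to dendrices of $M(A)$ with nested vertices, which forces the chosen lifts of stacked corollas to compose on the nose to the chosen lift of the composite corolla. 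Proposition \ref{prop:uniquecoCartlift} and Corollary \ref{cor:uniquecoCartlift} only give uniqueness up to a contractible space of choices, i.e.\ homotopy coherence, not the strict compatibility a morphism of dendroidal sets requires. This infinite hierarchy of strictification choices is precisely the content of the lemma, so it cannot be deferred.

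The paper's proof avoids all of this with one device you do not use: it factors each $K_{l_j}\to X_{l_j}$ as a cofibration followed by a trivial fibration to get $A(l_j)$ (rather than taking $A(l_j)=X_{l_j}$), forms the local algebra $\Xi$ with $\Xi(r)=\prod_j A(l_j)$ and the \emph{tautological} structure map, and then observes that $\coprod_j i_!(A(l_j))^\flat\to M^+(\Xi)$ is marked anodyne. Since $X^\natural\to\Omega[T]^\sharp$ has the right lifting property against marked anodynes (Corollary \ref{cor:liftwrtmarked}), a single lift produces the entire map $M^+(\Xi)\to X^\natural$ at once, with every face and composition compatibility automatic because the output is by construction a morphism of marked dendroidal sets. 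The induction then proceeds on $\partial_v T$ with $\Xi(r)$ installed as the new leaf datum, exactly as in your outline. To repair your proof you would either need to import this marked-anodyne lifting step, or carry out the coherence construction explicitly, which amounts to reproving that step by hand.
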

\begin{proof}
In case $T = \eta$ the result is trivial; indeed, just fix a factorization
\begin{equation*}
K_{\eta} \longrightarrow A \longrightarrow X 
\end{equation*}
where the first map is a cofibration and the second one a trivial fibration. We proceed by induction on the number of vertices of $T$. Fix a leaf corolla $\xi$ of $T$; we denote its vertex by $v$ and its root by $r$. Reindex the leaves such that $\{l_1, \ldots, l_n\}$ are the leaves of this corolla and $\{l_{n+1}, \ldots, l_N\}$ is the set of remaining leaves. For each $1 \leq j \leq n$ we fix a factorization
\begin{equation*}
K_{l_j} \longrightarrow  A(l_j) \longrightarrow X_{l_j}
\end{equation*}
into a cofibration followed by a trivial fibration. Labelling the colours of $C_n$ by the colours of $\alpha$, we define a simplicial $\Omega(C_n)$-algebra $\Xi$ by
\begin{eqnarray*}
\Xi(l_j) & = & A(l_j) \quad \text{for $1 \leq j \leq n$} \\
\Xi(r) & = & \prod_{j=1}^n A(l_j)
\end{eqnarray*}
with the obvious algebra structure. The map
\begin{equation*}
\coprod_{j=1}^n A(l_j)^\flat \longrightarrow M^+(\Xi)
\end{equation*}
is marked anodyne [...], so that we can find a map as indicated by the dotted arrow in the following diagram:
\[
\xymatrix{
\coprod_{j=1}^n i_!(A(l_j))^\flat \ar[d]\ar[r] & X^\natural \ar[d] \\
M^+(\Xi) \ar@{-->}[ur]\ar[r] & \Omega[T]^\sharp
}
\]
By the inductive hypothesis there exists a simplicial $\Omega(\partial_v T)$-algebra $A'$ and a diagram
\[
\xymatrix@C=50pt{
\Xi(r)^\flat \coprod \bigl(\coprod_{j=n+1}^N i_!(K_{l_j})^\flat\bigr) \ar[r]\ar[dr] & M^+(A') \ar[r]^{\phi '} & (X \times_{\Omega[T]} \Omega[\partial_v T])^\natural \ar[dl] \\
& \Omega[\partial_v T]^\sharp & 
}
\]
such that $\phi '$ induces categorical equivalences on the underlying simplicial sets of the fibers over $\Omega[\partial_v T]$. Considering the fiber over $r$ we get a map
\begin{equation*}
\Xi(r) = \prod_{j=1}^n A(l_j) \longrightarrow A'(r)
\end{equation*}
We join this map onto $A'$ to obtain a simplicial $\Omega(T)$-algebra $A$. The map $\phi '$ and the maps $A(l_j) \longrightarrow X_{l_j}$ together define a map
$\phi: M^+(A) \longrightarrow X^\natural$ which induces categorical equivalences on the fibers over $S$ and fits into a diagram as in the statement of the lemma. $\Box$
\end{proof}

By setting $K_{l_j} = \emptyset$ we obtain the following:

\begin{corollary}
\label{cor:mappingtree}
For any coCartesian fibration $X \longrightarrow \Omega[T]$ there exists a simplicial $\Omega(T)$-algebra $A$ and a commutative diagram
\[
\xymatrix{
M^+(A) \ar[rr]^\phi\ar[dr] & & X^\natural \ar[dl] \\
& \Omega[T]^\sharp & 
}
\]
such that the underlying map of $\phi$ induces categorical equivalences on the fibers over the colours of $\Omega[T]$.
\end{corollary}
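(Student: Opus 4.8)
The plan is to obtain this corollary as the special case of Lemma~\ref{lemma:mappingtree} in which every $K_{l_j}$ is taken to be the empty simplicial set. First I would observe that with this choice the domain of the map $\kappa$ degenerates: since the functor $i_!$ preserves initial objects one has $i_!(\emptyset) = \emptyset$, so that
\begin{equation*}
\coprod_{j=1}^N i_!(K_{l_j})^\flat = \emptyset.
\end{equation*}
Consequently the hypothesis diagram of Lemma~\ref{lemma:mappingtree}, which asks for a commutative triangle from $\coprod_{j=1}^N i_!(K_{l_j})^\flat$ to $X^\natural$ lying over $\Omega[T]^\sharp$, reduces to the datum of the unique morphism out of the initial object $\emptyset$. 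This morphism exists and commutes automatically, so the hypotheses of the lemma hold vacuously.

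Applying Lemma~\ref{lemma:mappingtree} then produces a simplicial $\Omega(T)$-algebra $A$ together with a map $\phi\colon M^+(A) \longrightarrow X^\natural$ compatible with the projections to $\Omega[T]^\sharp$ and inducing categorical equivalences on the underlying simplicial sets of the fibers over the colours of $\Omega[T]$. This is exactly the assertion of the corollary; the extension part of the lemma's conclusion, which records the (trivial) map $\emptyset \longrightarrow M^+(A)$, carries no information in this specialization. Since the corollary is thus a direct instance of the lemma, I do not expect any genuine obstacle: the only point requiring care is the elementary bookkeeping that the empty coproduct trivializes the input diagram, and this is immediate.
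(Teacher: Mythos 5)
Your proposal is correct and is exactly the paper's own argument: the corollary is obtained from Lemma~\ref{lemma:mappingtree} by setting each $K_{l_j} = \emptyset$, which trivializes the input diagram. Nothing further is needed.
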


The combinatorial situation occurring in the previous proof is one we'll have to consider several more times to carry out inductive arguments, so let us analyze it a little closer. We let $T$ be a tree with at least one vertex, we let $\alpha$ be a leaf corolla of this tree, whose leaves, root and vertex we label by $l_1, \ldots, l_n$, $r$ and $v$ respectively. Suppose we are given a simplicial $\Omega(T)$-algebra $A$. We denote the restriction of this algebra to $\Omega(\partial_v T)$ by $A'$ (if $T$ has only one vertex we take $\partial_v T$ to be the root of $T$ here). Now (identifying $\alpha$ with $C_n$) define an algebra $\Xi$ over $\Omega(\alpha)$ by
\begin{eqnarray*}
\Xi(l_j) & = & A(l_j) \quad \text{for $1 \leq j \leq n$} \\
\Xi(r) & = & \prod_{j=1}^n A(l_j)
\end{eqnarray*}
with the obvious algebra structure. One easily proves the following:

\begin{lemma}
\label{lemma:mappingtreetrick}
The map
\begin{equation*}
M(A') \coprod_{\Xi(r)} M(\Xi) \longrightarrow M(A)
\end{equation*}
is inner anodyne.
\end{lemma}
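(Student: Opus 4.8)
The plan is to identify the source of the map with a dendroidal subset of $M(A)$ and then exhaust the inclusion by inner horn fillings along the edge lying over the root $r$ of the leaf corolla $\alpha$. First I would check that the canonical map $M(A') \coprod_{\Xi(r)} M(\Xi) \to M(A)$ is a monomorphism, so that its target identifies the pushout with a dendroidal subset $D \subseteq M(A)$. Unwinding the definition of the mapping tree, a nondegenerate $R$-dendrex $(\delta \colon R \to T, \{\lambda_l\})$ of $M(A)$ lies in $D$ precisely when its image $\delta(R)$ is contained either in $\partial_v T$ or in $\alpha$. The only delicate point here is the fibre over $r$: the root fibre $\Xi(r) = \prod_j A(l_j)$ of $M(\Xi)$ is collapsed onto $A(r) = A'(r)$, and the pushout performs exactly this collapse along the algebra multiplication $\prod_j A(l_j) \to A(r)$, so that both sides have fibre $A(r)$ over $r$ and agree there.

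Next I would describe the complement. The dendrices of $M(A)$ not in $D$ are exactly those whose image $\delta(R)$ contains the vertex $v$ together with at least one vertex of $\partial_v T$; equivalently, $r$ is an inner edge of the subtree $\delta(R) \subseteq T$, with the image of $\alpha$ directly above it and a vertex of $\partial_v T$ directly below it. Each such complementary dendrex is of one of two kinds: it may contain the edge $e_r$ (over $r$) as an honest inner edge of $R$ (the \emph{expanded} form), or it may have the two vertices adjacent to $e_r$ merged into a single vertex mapping to the composite operation over $r$ (the \emph{contracted} form). Contracting $e_r$ — which on the leaf data $\lambda_l$ for leaves $l$ above $\alpha$ amounts to taking the appropriate inner face of $\Delta^{\nu_l}$ and applying the algebra structure — pairs each nondegenerate expanded dendrex $\rho$ with the nondegenerate contracted dendrex $\partial_{e_r}\rho$, and I expect this to be a perfect matching on the complementary dendrices.

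I would then filter $D \subseteq M(A)$ by adjoining the complementary dendrices in order of increasing complexity, adjoining each expanded dendrex $\rho$ together with its contraction $\partial_{e_r}\rho$ in a single step. Since $e_r$ is an inner edge of $R$, this step is a pushout of the inner horn inclusion $\Lambda^{e_r}[R] \hookrightarrow \Omega[R]$: every face of $\rho$ other than $\partial_{e_r}\rho$ is already present at the previous stage, because the outer faces (chopping off $\alpha$, hence removing $v$, or chopping off the bottom corolla) land in $D$ or reduce the complexity, while the remaining inner faces are complementary dendrices of strictly smaller complexity. As the inner anodynes form a weakly saturated class generated by the inner horn inclusions, it follows that $D \hookrightarrow M(A)$, and hence the map in the statement, is inner anodyne. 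The base case $T = C_n$ (a single vertex) is immediate: there are then no complementary dendrices and the map is an isomorphism.

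The main obstacle — and the only place genuine work is required — is the bookkeeping of this filtration: one must choose a complexity measure for which the pairing $\rho \leftrightarrow \partial_{e_r}\rho$ is a perfect matching on nondegenerate complementary dendrices and for which all non-$\partial_{e_r}$ faces of each $\rho$ really do occur at the previous stage. The subtlety lies in the interaction between contracting the tree edge $e_r$ and simultaneously taking inner faces of the simplices $\Delta^{\nu_l}$ recording the leaf data, together with the degeneracies these contractions can create. I would manage this by treating the leaf simplices over all leaves above $\alpha$ at once and inducting on the lexicographically ordered pair consisting of the number of vertices of $R$ and the total dimension of the leaf data, exactly mirroring the simplicial prototype in which one glues mapping simplices along a shared vertex.
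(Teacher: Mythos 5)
The paper itself gives no argument for this lemma (it is introduced with ``one easily proves the following''), so your proposal has to stand entirely on its own. Its overall shape is right: identifying the pushout with a dendroidal subset $D \subseteq M(A)$ consisting of the dendrices whose image subtree lies in $\partial_v T$ or in $\alpha$, observing that every complementary nondegenerate dendrex has $r$ occurring as an inner edge of its image, and exhausting the complement by pushouts of inner horns $\Lambda^{e}[R] \subseteq \Omega[R]$ is how such a statement is proved. The handling of the fibre over $r$ via the structure map is also correct.

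The gap is the central combinatorial claim that $\rho \mapsto \partial_{e_r}\rho$ is a perfect matching between ``expanded'' and ``contracted'' nondegenerate complementary dendrices; this is false, and no choice of complexity measure or ordering can repair that particular pairing. A contracted dendrex $\tau$ has in general \emph{many} expansions: to invert $\partial_{e_r}$ one must extend each leaf datum $\lambda_l \colon \Delta^{\nu_l} \to A(\delta(l))$ along an inner face to a simplex of one dimension higher, and there are many such extensions (moreover the chain of edges of $R$ lying over $r$ may have any length). So $\partial_{e_r}$ is badly non-injective, and the two sides of the proposed matching have different cardinalities. Concretely, take $T = L_2$ with edges $l, r, b$, and $A(l) = \Delta^1$, $A(r) = A(b) = \Delta^0$. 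Then the complementary nondegenerate cells of $M(A)$ correspond to the chains in the poset $[2] \times [1]$ meeting both column $0$ and column $2$; there are $12$ of them, of which $7$ are expanded (column $1$ occurs) and $5$ are contracted, so no bijection of the proposed kind exists, and any valid matching must pair some expanded cells with other expanded cells (via faces internal to the leaf data or to the chain over $r$). A further small inaccuracy: by the paper's definition of $M(A)$, the algebra structure enters only in outer faces chopping off leaf corollas; the inner face $\partial_{e_r}$ merely restricts each $\lambda_l$ along an inner face of $\Delta^{\nu_l}$. The correct filtration is the one familiar from the proof that the pushout-product of an inner horn inclusion with a monomorphism is inner anodyne: one must organize the complementary dendrices by the degeneracy of the leaf data in the direction of the chain over $r$ and pair each cell with a specific such degenerate extension, which is a genuinely different pairing from the one you propose, not a reordering of it.
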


In the following lemmas we will use simplicial operads to prove several results. This is not strictly necessary; the proofs can also be carried out within the language of dendroidal sets. However, the use of simplicial operads is slightly more convenient here, since the spaces of operations of these carry a well-defined composition operation.

\begin{lemma}
\label{lemma:mappingtreetrick2}
Assume we are given the situation as described right above Lemma \ref{lemma:mappingtreetrick} and assume we are given a vertex $x_i \in A(l_i)$ for each $1 \leq i \leq n$. The tuple $(x_1, \ldots, x_n)$ together with the corolla $\alpha$ determine a corolla $\xi$ of $M(A)$, whose root induces a vertex $x$ of $A(r)$. Now let $(y_1, \ldots, y_m, z)$ be a tuple of vertices of $M(A)$ such that all these vertices are not contained in any of the $A(l_i)$. Then precomposing with $\xi$ induces a homotopy equivalence of simplicial sets
\begin{equation*}
\mathrm{hc}\tau_d(M(A))(x, y_1, \ldots, y_m; z) \longrightarrow \mathrm{hc}\tau_d(M(A))(x_1, \ldots, x_n, y_1, \ldots, y_m; z) 
\end{equation*}
\end{lemma}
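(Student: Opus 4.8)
The plan is to exploit Lemma \ref{lemma:mappingtreetrick}, which exhibits $M(A)$ as assembled from the simpler pieces $M(A')$ and $M(\Xi)$, and to transport the computation of operation spaces into the associated simplicial operads. First I would record that all the dendroidal sets in sight ($M(A)$, $M(A')$, $M(\Xi)$ and the pushout $P := M(A') \coprod_{\Xi(r)} M(\Xi)$) are normal, since each admits a map to a representable, and that the inner anodyne $P \longrightarrow M(A)$ of Lemma \ref{lemma:mappingtreetrick} neither adds nor removes edges and so is bijective on colours. Because $\mathrm{hc}\tau_d$ is left Quillen from the Cisinski--Moerdijk model structure to the Berger--Moerdijk model structure on $\mathbf{sOper}$, it carries this trivial cofibration between cofibrant objects to a weak equivalence of simplicial operads which is the identity on colours; hence it induces homotopy equivalences on all operation spaces for fixed tuples of colours. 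This lets me replace $M(A)$ by $P$ throughout, and since $\mathrm{hc}\tau_d$ preserves colimits I may compute in
\[
\mathrm{hc}\tau_d(P) \;\simeq\; \mathrm{hc}\tau_d(M(A')) \coprod_{\mathrm{hc}\tau_d(\Xi(r))} \mathrm{hc}\tau_d(M(\Xi)).
\]

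Next I would analyse the two operation spaces in this pushout using the composition operation in simplicial operads (as advertised in the remark preceding the statement). The inner edge $r$ is a cut edge: it separates the corolla $\alpha$, carrying the leaves $l_1, \ldots, l_n$, from the remainder $\partial_v T$ of the tree. Consequently the colours $x_1, \ldots, x_n$ lie over the $l_i$ in the $M(\Xi)$-summand, while $y_1, \ldots, y_m$, $z$ and $x$ all lie in the $M(A')$-summand. Since the only operations of $M(\Xi)$ with an input over some $l_i$ are the corollas over $\alpha$, any operation of $\mathrm{hc}\tau_d(P)$ out of $(x_1, \ldots, x_n, y_1, \ldots, y_m)$ must factor through the shared colours $\Xi(r)$ as a composite of an operation in $\mathrm{hc}\tau_d(M(\Xi))$ landing in $\Xi(r)$ followed by an operation in $\mathrm{hc}\tau_d(M(A'))$; dually, an operation out of $(x, y_1, \ldots, y_m)$ uses no colour over any $l_i$ and therefore lies entirely in $\mathrm{hc}\tau_d(M(A'))(x, y_1, \ldots, y_m; z)$. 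The crucial point is that the multiplication of $\Xi$ is the identity map $\prod_j A(l_j) \to \Xi(r) = \prod_j A(l_j)$: this forces the space of operations of $\mathrm{hc}\tau_d(M(\Xi))$ from $(x_1, \ldots, x_n)$ to the colour $x' = (x_1, \ldots, x_n) \in \Xi(r)$ to be contractible with $\xi$ a distinguished vertex, and it identifies $x'$ with $x$ in the pushout.

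Putting these together, precomposition with $\xi$ becomes, up to the homotopy equivalences above, the map $\mathrm{hc}\tau_d(M(A'))(x, y_1, \ldots, y_m; z) \longrightarrow \mathrm{hc}\tau_d(P)(x_1, \ldots, x_n, y_1, \ldots, y_m; z)$ induced by grafting $\xi$ onto the input $x$; the factorisation of the preceding paragraph together with the contractibility of the $\Xi$-part shows this is an equivalence, with homotopy inverse obtained by composing along $\xi$ and discarding the contractible corolla data. I expect the main obstacle to be making the factorisation statement precise: controlling the operation spaces of a pushout of simplicial operads is in general intractable, and it is feasible here only because $r$ is a cut edge and because the multiplication of $\Xi$ is an isomorphism, so that the contribution of the $M(\Xi)$-summand collapses to a contractible space concentrated at $x'$. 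The careful bookkeeping of this collapse --- phrased either through the explicit $W$-construction on $P$ or through a filtration argument on the grafting --- is the technical heart of the proof.
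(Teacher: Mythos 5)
Your first move --- replacing $M(A)$ by the pushout $P = M(A') \coprod_{\Xi(r)} M(\Xi)$ via Lemma \ref{lemma:mappingtreetrick}, justified by the fact that $\mathrm{hc}\tau_d$ carries an inner anodyne between normal dendroidal sets to a weak equivalence of simplicial operads that is bijective on colours --- is exactly the paper's first step. Where the paper then simply asserts that for $P$ the precomposition map is an isomorphism of simplicial sets, you try to prove it is an equivalence by decomposing $\mathrm{hc}\tau_d(P)$ as a pushout of simplicial operads. Your structural observations there are sound: no dendrex of $P$ crosses the edge $r$ except through the fiber, so the operation space out of $(x_1,\ldots,x_n,y_1,\ldots,y_m)$ really is a coend, over the categorified fiber $\mathrm{hc}\tau_d(\Xi(r))$, of $\mathrm{hc}\tau_d(M(\Xi))(x_1,\ldots,x_n;w)\times\mathrm{hc}\tau_d(M(A'))(w,y_1,\ldots,y_m;z)$.

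The step you single out as crucial, however, is false as stated: $\mathrm{hc}\tau_d(M(\Xi))(x_1,\ldots,x_n;x')$ is \emph{not} contractible in general. Besides $\xi$ it contains every corolla whose defining $1$-simplices are loops at the $x_j$ in $A(l_j)$, together with all composites of $\xi$ with endomorphisms of the $x_j$ and of $x'$; it is equivalent to the endomorphism space of $x'$ in the homotopy-coherent categorification of $\prod_j A(l_j)$, whose $\pi_0$ is $\prod_j\pi_1(|A(l_j)|,x_j)$, so it is disconnected as soon as some $A(l_j)$ carries a nontrivial loop at $x_j$. What your coend actually requires is not contractibility at $x'$ but corepresentability: precomposition with $\xi$ should give a map of left modules $\mathrm{hc}\tau_d(\Xi(r))(x',-)\to\mathrm{hc}\tau_d(M(\Xi))(x_1,\ldots,x_n;-)$ over the fiber that is a levelwise weak equivalence (this is where the identity multiplication of $\Xi$ genuinely enters, since $M(\Xi)$ retracts onto its fiber over $r$); strict co-Yoneda then computes the first coend as $\mathrm{hc}\tau_d(M(A'))(x,y_1,\ldots,y_m;z)$, and one still has to check that a levelwise equivalence of modules induces an equivalence of these particular coends. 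So the architecture is right --- and arguably a more honest account than the paper's one-line ``isomorphism'', which on its face is only an equivalence for the same reason --- but the contractibility claim must be replaced by the corepresentability statement, and the homotopy-invariance of the coend that you defer is exactly where the remaining content sits.
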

\begin{proof}
By Lemma \ref{lemma:mappingtreetrick2} we may replace $M(A)$ by $M(A') \coprod_{\Xi(r)} M(\Xi)$. Observe that in this case the stated map is an isomorphism of simplicial sets. $\Box$
\end{proof}

\begin{lemma}
\label{lemma:mappingtreeoperadicequiv}
Let $X \longrightarrow \Omega[T]$ be a coCartesian fibration, let $A$ be a simplicial $\Omega(T)$-algebra and let
\begin{equation*}
\phi: M^+(A) \longrightarrow X^\natural
\end{equation*}
be a map compatible with the projections to $\Omega[T]$ which induces categorical equivalences on the fibers over $\Omega[T]$. Then the underlying map of $\phi$ is an operadic equivalence.
\end{lemma}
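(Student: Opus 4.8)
The plan is to verify the criterion of Proposition \ref{prop:mappingspaceequiv}. First note that both source and target are $\infty$-operads: $X$ is a coCartesian (hence inner) fibration over the $\infty$-operad $\Omega[T]$, so $X$ is an $\infty$-operad; and a direct inspection of the construction of $M(A)$ shows that $M(A) \longrightarrow \Omega[T]$ is an inner fibration, the fillers for inner horns being supplied by the composition in the algebra $A$ together with the inner fillers of the fibres $A(c)$. Here one uses that each $A(c)$ is an $\infty$-category, as it is in the construction producing these algebras (where $A(c) \longrightarrow X_c$ is a trivial fibration onto the $\infty$-category $X_c$). It therefore suffices to show that $\phi$ is essentially surjective and that it induces homotopy equivalences on all mapping spaces $M(A)^L(x_1, \ldots, x_k; x) \longrightarrow X^L(\phi(x_1), \ldots, \phi(x_k); \phi(x))$.

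Essential surjectivity is immediate. Every colour of $X$ lies in some fibre $X_s$, and since $\phi$ restricts to the essentially surjective map $M(A)_s = A(s) \longrightarrow X_s$, every such colour is equivalent inside $X_s$ — and hence inside $X$, a vertical equivalence being an equivalence — to a colour in the image of $\phi$.

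For the mapping spaces I would compare homotopy fibre sequences. Fix colours $x_1, \ldots, x_k, x$ of $M(A)$ lying over $s_1, \ldots, s_k, s$ of $T$. If $\Omega(T)$ carries no operation $(s_1, \ldots, s_k) \to s$, both mapping spaces are empty and there is nothing to prove, so assume such a corolla $\sigma$ of $\Omega[T]$ exists. The degenerate-leaf lift $\tilde\sigma$ of $\sigma$ (the corolla of $M(A)$ with the prescribed inputs and all leaf $1$-simplices degenerate) is a marked corolla of $M^+(A)$, so its image $\phi(\tilde\sigma)$ is a marked, i.e. $p$-coCartesian, corolla of $X^\natural$ with leaves $\phi(x_1), \ldots, \phi(x_k)$ and root $\phi(\tilde x)$, where $\tilde x$ is the root of $\tilde\sigma$. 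Applying Proposition \ref{prop:fibersequencemappingspace} to $q\colon M(A) \to \Omega[T]$ with the lift $\tilde\sigma$ and to $p\colon X \to \Omega[T]$ with the lift $\phi(\tilde\sigma)$ yields a map of homotopy fibre sequences
\[
\xymatrix{
M(A)_s^L(\tilde x; x) \ar[r]\ar[d] & M(A)^L(x_1, \ldots, x_k; x) \ar[r]\ar[d] & \Omega[T]^L(s_1, \ldots, s_k; s) \ar@{=}[d] \\
X_s^L(\phi(\tilde x); \phi(x)) \ar[r] & X^L(\phi(x_1), \ldots, \phi(x_k); \phi(x)) \ar[r] & \Omega[T]^L(s_1, \ldots, s_k; s)
}
\]
The right-hand vertical map is the identity, and the left-hand one is induced by the fibre-wise categorical equivalence $A(s) \to X_s$, hence is a homotopy equivalence. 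Comparing the associated long exact sequences (the five-lemma argument used elsewhere in the text) shows the middle map is a homotopy equivalence, as required.

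The step I expect to be the main obstacle is justifying that the degenerate-leaf corolla $\tilde\sigma$ is genuinely $q$-coCartesian in $M(A)$, which is needed to invoke Proposition \ref{prop:fibersequencemappingspace} on the top row and is exactly the point where the remark that $M(A)$ \emph{fails} in general to be a coCartesian fibration makes one cautious. I would settle it using Lemma \ref{lemma:mappingtreetrick2}, which asserts precisely that precomposition with such a corolla induces homotopy equivalences of the relevant mapping spaces — the mapping-space criterion for being coCartesian — together with the observation that the operation sets of the free operad $\Omega(T)$ are discrete, so that $\Omega[T]^L(s_1, \ldots, s_k; s)$ is homotopy discrete and the fibre sequence splits over its components. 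Alternatively, one can verify the horn-filling definition of $q$-coCartesian directly from the combinatorics, using Lemma \ref{lemma:mappingtreetrick} to replace $M(A)$ by $M(A') \coprod_{\Xi(r)} M(\Xi)$, where the analogous statement becomes an isomorphism.
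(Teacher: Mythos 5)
Your overall strategy --- reduce to Proposition \ref{prop:mappingspaceequiv} and compare two homotopy fibre sequences over the (homotopy discrete) mapping spaces of $\Omega[T]$ --- is genuinely different from the paper's argument. The paper instead proceeds by induction on the tree $T$: the inductive hypothesis for $\partial_v T$ reduces the claim to tuples meeting each leaf fibre $A(l_i)$ in exactly one vertex, and one then compares the precomposition map $\mathrm{hc}\tau_d(M(A))(x, x_{n+1}, \ldots ; y) \to \mathrm{hc}\tau_d(M(A))(x_1, \ldots, x_n, x_{n+1}, \ldots ; y)$ with its image in $X$; the former is an equivalence by Lemma \ref{lemma:mappingtreetrick2} and the latter because $\phi(\xi)$ is coCartesian in $X$. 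The point of that arrangement is that every appeal to coCartesian lifting happens in $X$, where it is available.

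Your version, by contrast, needs Proposition \ref{prop:fibersequencemappingspace} for the map $M(A) \to \Omega[T]$, and this is where the gap is. That proposition requires an honest $q$-coCartesian lift $\tilde\sigma$ in $M(A)$ (its proof uses the horn-filling property of $\tilde\sigma$ to show that $\partial_{\tilde x}\colon Z \to \phi^{-1}(\sigma)$ is a trivial fibration), and, as you anticipate, $M(A)$ is not a coCartesian fibration. Neither of your proposed repairs closes this. Lemma \ref{lemma:mappingtreetrick2} is not a ``mapping-space criterion for coCartesianness'': no such criterion for corollas of $\infty$-operads is established anywhere in the paper (only the discrete analogue of Section 1), and in any case that lemma only asserts equivalences for tuples $(y_1, \ldots, y_m, z)$ avoiding all the leaf fibres $A(l_i)$, which is weaker than what such a criterion would demand. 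The alternative of verifying horn-filling directly via Lemma \ref{lemma:mappingtreetrick} is also not obviously viable: that lemma says the inclusion $M(A') \coprod_{\Xi(r)} M(\Xi) \to M(A)$ is inner anodyne, which controls maps \emph{out of} $M(A)$ into inner fibrations but says nothing about lifting horns \emph{into} $M(A)$ itself. A secondary issue: both your top fibre sequence and your identification of its fibre with a mapping space of $A(s)$ require $M(A)$ to be an $\infty$-operad whose fibres $A(c)$ are $\infty$-categories, which is not part of the hypotheses of the lemma as stated (it holds in the intended application, where the $A(c)$ come with trivial fibrations onto the $X_c$, but it is an extra assumption you are using). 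I would restructure along the paper's lines: induct on $T$ and route all coCartesian lifting through $X$, using Lemma \ref{lemma:mappingtreetrick2} only in the precise form in which it is proved.
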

\begin{proof}
We work by induction on the size of $T$. If $T$ has no vertices the result is trivial. Therefore suppose $T$ has at least one vertex and fix a leaf corolla $\alpha$. We employ the notation described immediately above Lemma \ref{lemma:mappingtreetrick}, so that the leaves of $\alpha$ are $l_1, \ldots, l_n$. The inductive assumption is that the statement is valid for $\partial_v T$. \par 
We will check that for any tuple $(x_1, \ldots, x_N, y)$ of vertices of $M(A)$ the induced map
\begin{equation*}
\mathrm{hc}\tau_d(M(A))(x_1, \ldots, x_N; y) \longrightarrow \mathrm{hc}\tau_d(X)(\phi(x_1), \ldots, \phi(x_N); \phi(y))
\end{equation*}
is a homotopy equivalence of simplicial sets. Applying the inductive hypothesis, we see that we only have to check this in case the tuple $(x_1, \ldots, x_N)$ contains exactly one vertex in the fiber over each $l_i$ for $1 \leq i \leq n$ and $y$ is not contained in any such fiber. Reindexing if necessary, suppose that $x_i$ is a vertex over $l_i$ for $1 \leq i \leq n$. \par 
The degenerate 1-simplices at $x_1, \ldots, x_n$ and the corolla $\alpha$ determine a marked corolla $\xi$ of $M^+(A)$, whose root gives us a colour $x$ in the fiber over $r$. Now observe that we have a commutative diagram
\[
\xymatrix{
\mathrm{hc}\tau_d(M(A))(x, x_{n+1}, \ldots, x_N; y) \ar[r]\ar[d] & \mathrm{hc}\tau_d(M(A))(x_1, \ldots, x_N; y) \ar[d] \\
\mathrm{hc}\tau_d(X)(\phi(x), \phi(x_{n+1}), \ldots, \phi(x_N); \phi(y)) \ar[r] & \mathrm{hc}\tau_d(X)(\phi(x_1), \ldots, \phi(x_N); \phi(y))
}
\]
where the horizontal arrows are given by precomposing with $\xi$ and $\phi(\xi)$ respectively. Since $\phi(\xi)$ is a marked corolla of $X^\natural$, it is coCartesian and the bottom arrow is a homotopy equivalence. The left vertical arrow is a homotopy equivalence by the inductive hypothesis and the top horizontal arrow is a homotopy equivalence by Lemma \ref{lemma:mappingtreetrick2} above. We conclude that the right vertical map is a homotopy equivalence. $\Box$
\end{proof}

\begin{lemma}
\label{lemma:mappingtreemarkedequiv}
Let $p: X \longrightarrow \Omega[T]$ be a coCartesian fibration, let $A$ be a simplicial $\Omega(T)$-algebra and let
\begin{equation*}
\phi: M^+(A) \longrightarrow X^\natural
\end{equation*}
be a map compatible with the projections to $\Omega[T]$ such that the underlying map of $\phi$ is an operadic equivalence. Then $\phi$ is a marked equivalence in $\mathbf{dSets^+}/\Omega[T]$.
\end{lemma}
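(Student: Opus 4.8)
The plan is to reduce the statement to a comparison between two coCartesian fibrations and then invoke Proposition \ref{prop:markedequivalence}. Note first that $\Omega[T]$ is representable, hence normal, so $X$, $M(A)$, and any coCartesian fibration over $\Omega[T]$ are normal as well. Factor the structural map $M^+(A) \to \Omega[T]^\sharp$ as a marked anodyne $j\colon M^+(A) \to Z^\natural$ followed by a map having the right lifting property against all marked anodynes; by Corollary \ref{cor:liftwrtmarked} this exhibits $Z \to \Omega[T]$ as a coCartesian fibration with $Z$ normal, and by Proposition \ref{prop:markedanodyne} the map $j$ is a marked equivalence. Since $j$ is in particular a trivial cofibration and $X^\natural$ is fibrant (Proposition \ref{prop:fibrantobjects}), I can lift $\phi$ along $j$ to obtain a map $\psi\colon Z^\natural \to X^\natural$ over $\Omega[T]^\sharp$ with $\psi\circ j = \phi$. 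By the two-out-of-three property of marked equivalences, it now suffices to prove that $\psi$ is a marked equivalence.

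Since $\psi$ is a morphism between coCartesian fibrations preserving coCartesian corollas, Proposition \ref{prop:markedequivalence} reduces this to showing that the underlying map $u(\psi)$ is an operadic equivalence. We have $u(\phi) = u(\psi)\circ u(j)$, and $u(\phi)$ is an operadic equivalence by hypothesis; hence, by two-out-of-three for operadic equivalences, it is enough to show that $u(j)$ is an operadic equivalence. But $j\colon M^+(A) \to Z^\natural$ is precisely a map of the shape treated in Lemma \ref{lemma:mappingtreeoperadicequiv} — a map from the mapping tree $M^+(A)$ into the coCartesian fibration $Z^\natural$ — so it will suffice to verify that $j$ induces a categorical equivalence $A(c) \to Z_c$ on the fiber over each colour $c$ of $\Omega[T]$.

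This last point is the main obstacle. To handle it I would pass to fibers via the pullback functor $\iota_c^*\colon \mathbf{dSets^+}/\Omega[T] \to \mathbf{dSets^+}/\eta_c \cong \mathbf{sSets^+}$ along the inclusion $\eta_c \hookrightarrow \Omega[T]$. Because $\mathbf{dSets^+}$ is a quasi-topos, $\iota_c^*$ admits a right adjoint and therefore preserves all colimits, so $\iota_c^*(j)$ is again a transfinite composition of pushouts of the images under $\iota_c^*$ of the generating marked anodynes. A case-by-case inspection shows that each such image is a (coproduct of) marked anodyne morphism(s) of marked simplicial sets in the sense of Lurie: a cell lying over some $\delta\colon T' \to \Omega[T]$ contributes to the fiber only through the linear subtrees of $T'$ collapsing onto $c$, and the crucial observation is that within a fiber the only non-degenerate corollas that become marked are the images of coCartesian corollas lying over $\mathrm{id}_c$, which are equivalences — so no collapsing of non-invertible edges occurs and the fiberwise restriction really is marked anodyne. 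One computes along the way that $\iota_c^*(M^+(A)) = A(c)^\flat$ and $\iota_c^*(Z^\natural) = Z_c^\natural$, the $\infty$-category $Z_c$ with its equivalences marked. Thus $\iota_c^*(j)\colon A(c)^\flat \to Z_c^\natural$ is a trivial cofibration in the coCartesian model structure on $\mathbf{sSets^+}$ with fibrant target, and the characterization of equivalences via mapping into all $\infty$-categories (the lemma quoted before Proposition \ref{prop:equivKan}, i.e. Lemma 3.1.3.2 of \cite{htt}) yields that $A(c) \to Z_c$ is a categorical equivalence. With the fiberwise statement established, Lemma \ref{lemma:mappingtreeoperadicequiv} shows that $u(j)$ is an operadic equivalence, and chasing back through the previous two paragraphs completes the proof.
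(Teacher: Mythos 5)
Your reduction is sound up to a point: factoring $M^+(A) \to \Omega[T]^\sharp$ through a marked anodyne $j\colon M^+(A) \to Z^\natural$, lifting $\phi$ to $\psi$, and combining Proposition \ref{prop:markedanodyne}, Proposition \ref{prop:markedequivalence} and two-out-of-three correctly reduces the lemma to showing that $j$ induces categorical equivalences $A(c) \to Z_c$ on fibers. The gap is in your justification of that fiberwise claim. The fiber functor $\iota_c^*$ is right Quillen for the coCartesian model structures; it does not carry marked anodynes to marked anodynes, and your case-by-case assertion already fails for the generators of type ($2^*$): the restriction of $\coprod_{c \in \mathrm{in}(C_n)} \eta_c \subseteq \Omega[C_n]^\sharp$ to the fiber over the output colour of the attaching corolla is $\emptyset \to \Delta^0$, a cofibration but not a marked equivalence. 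Cells of exactly this type are attached by the small object argument (they are what force the existence of coCartesian lifts in $Z$), and they enlarge the fibers of $Z$ by new objects; your observation about "no collapsing of non-invertible edges" addresses markings but not this creation of objects. In general fibrant replacement in $\mathbf{dSets^+}/S$ does not preserve fibers up to equivalence (already over $\Delta^1$, the fibrant replacement of $\{0\} \to \Delta^1$ has nonempty fiber over $1$), so the statement that $A(c) \to Z_c$ is an equivalence for mapping trees is a genuine assertion needing proof --- one would have to argue, say, that each newly attached coCartesian lift has output equivalent to the output of the marked corolla of $M^+(A)$ over the same corolla of $\Omega[T]$ with the same inputs --- and your argument does not supply it.

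For comparison, the paper avoids fibers entirely: it tests $\phi$ against an arbitrary coCartesian fibration $q\colon Z \to \Omega[T]$, observes that $\mathrm{Map}^\flat_{\Omega[T]}(-, Z^\natural)$ sits as a full subcategory of $i^*\mathbf{Hom}_{\mathbf{dSets}}(-, Z)$ because a map into $\Omega[T]$ is determined by its effect on colours, notes that the comparison $i^*\mathbf{Hom}_{\mathbf{dSets}}(X, Z) \to i^*\mathbf{Hom}_{\mathbf{dSets}}(M(A), Z)$ is a categorical equivalence precisely because $u(\phi)$ is an operadic equivalence and $Z$ is an $\infty$-operad, and then checks that the two full subcategories correspond, using that every dendrex of $X$ is in the image of $\phi$ up to an equivalence, which necessarily projects to an identity in $\Omega[T]$. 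That route uses the operadic-equivalence hypothesis directly and sidesteps the fiberwise statement you would otherwise have to prove.
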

\begin{proof}
First observe that any map from a dendroidal set into $\Omega[T]$ is uniquely determined by what it does on colours. In particular, suppose we are given a map $Z \longrightarrow \Omega[T]$ and a map $f: X \otimes i_!(\Delta^n) \longrightarrow Z$. If the restrictions $X \otimes i_!(\{k\}) \longrightarrow Z$ are compatible with the projections to $\Omega[T]$ for all $0 \leq k \leq n$ then the map $f$ itself will be compatible with the projections to $\Omega[T]$. \par 
Now let $q: Z \longrightarrow \Omega[T]$ be any coCartesian fibration. We have a commutative diagram
\[
\xymatrix@C=40pt{
\mathrm{Map}^\flat_{\Omega[T]}(X^\natural, Z^\natural) \ar@{^(->}[r]\ar[d] & i^*\mathbf{Hom_{dSets}}(X, Z) \ar[d] \\
\mathrm{Map}^\flat_{\Omega[T]}(M^+(A), Z^\natural) \ar@{^(->}[r] & i^*\mathbf{Hom_{dSets}}(M(A), Z)
}
\]
The observation just made implies that the horizontal arrows are inclusions of full subcategories. Observe that, since $Z$ is an  $\infty$-operad, the right vertical arrow is a categorical equivalence by assumption. If we can show that any map of dendroidal sets $f: X \longrightarrow Z$ such that $f \circ \phi$ is in $\mathrm{Map}^\flat_{\Omega[T]}(M^+(A), Z^\natural)$ is itself contained in $\mathrm{Map}^\flat_{\Omega[T]}(X^\natural, Z^\natural)$, we can conclude that the left vertical arrow is also a categorical equivalence, establishing the lemma. \par 
Indeed, suppose $f$ is as described. Since $\phi$ is an operadic equivalence, any dendrex $\alpha$ of $X$ is in the image of $\phi$ up to an equivalence in $X$, i.e. there is a dendrex $\tilde \alpha$ of $M(A)$ such that $\phi(\tilde \alpha)$ is homotopic to $\alpha$. But any equivalence in $X$ projects to an equivalence in $\Omega[T]$, which is necessarily an identity. Also, $(f \circ \phi)(\tilde \alpha)$ will be homotopic to $f(\alpha)$. We obtain
\begin{equation*}
p(\alpha) = (p \circ \phi)(\tilde \alpha) = (q \circ f \circ \phi)(\tilde \alpha) = (q \circ f)(\alpha) 
\end{equation*}
which is what we were after. Compatibility of $f$ with markings can be shown in similar fashion, using the fact that coCartesian lifts are unique up to equivalence. $\Box$
\end{proof}

\subsection{Straightening over a tree}
Since any dendroidal set is a colimit of representables, it seems sensible to first prove Theorem \ref{thm:Grothendieckconstruction} in the case where $S$ is such a representable. In the next section we will deduce the full result from this by general arguments. \par 
Our first step is to consider the case where $S = \eta$. In this case we can identify both $\mathbf{dSets^+}/S$ and $\mathrm{Alg}_{\mathrm{hc}\tau_d(S)}(\mathbf{sSets^+})$ with $\mathbf{sSets^+}$. In this special case we use the abbreviated notation
\[
\xymatrix@R=40pt@C=40pt{
\mathcal{S}: \mathbf{sSets^+} \ar@<.5ex>[r] & \mathbf{sSets^+}: \mathcal{U} \ar@<.5ex>[l]
}
\]
for the Quillen adjunction induced by the straightening and unstraightening functors. Lurie proves the following result as Lemma 3.2.3.1 in \cite{htt}:

\begin{lemma}
\label{lemma:strovereta}
The Quillen pair $(\mathcal{S}, \mathcal{U})$ is a Quillen equivalence.
\end{lemma}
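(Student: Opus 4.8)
The plan is to apply the standard recognition criterion for Quillen equivalences. Since cofibrations in $\mathbf{sSets^+}$ are in particular monomorphisms, every object is cofibrant, so it would suffice to show that $\mathcal{S}$ reflects weak equivalences and that for each fibrant $Y$ (an $\infty$-category with its equivalences marked) the counit $\mathcal{S}\mathcal{U}(Y)\longrightarrow Y$ is a marked equivalence. The most economical route, however, is to prove the stronger statement that the total left derived functor $\mathbf{L}\mathcal{S}$ is an autoequivalence of $\mathrm{Ho}(\mathbf{sSets^+})$. Here I would use that $\mathcal{S}=St^+_\eta$ already preserves all weak equivalences (verified in the course of proving $(St^+_S,Un^+_S)$ is a Quillen adjunction), so that $\mathbf{L}\mathcal{S}=\mathcal{S}$ descends directly to homotopy categories.

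First I would reduce everything to representables. By Proposition \ref{prop:straightening}(1) the functor $\mathcal{S}$ preserves colimits, and the coCartesian model structure on $\mathbf{sSets^+}$ is left proper and combinatorial. Any marked simplicial set is built from its skeleta by transfinitely adjoining nondegenerate cells, i.e. by pushouts along the generating cofibrations $(\partial\Delta^n)^\flat\subseteq(\Delta^n)^\flat$ and $(\Delta^1)^\flat\subseteq(\Delta^1)^\sharp$. Applying $\mathcal{S}$ to such a filtration produces, by colimit-preservation together with left properness, a filtration of $\mathcal{S}(X)$ by homotopy pushouts; comparing it cell by cell with the tautological skeletal filtration of $X$ reduces the comparison of $\mathcal{S}(X)$ with $X$ to the case of the generating objects $(\Delta^n)^\flat$ and $(\Delta^n)^\sharp$.

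The heart of the argument is then the explicit computation of $\mathcal{S}$ on these objects. Unwinding the definitions one has $St_{\Delta^n}(\mathrm{id})(k)=\Delta[L_n/k]^{\mathrm{op}}=\bigl((\Delta^1)^{n-k}\bigr)^{\mathrm{op}}$, and $\mathcal{S}((\Delta^n)^\flat)$ is obtained by pushing this algebra forward along the collapse $\mathrm{hc}\tau_d(\Delta^n)\longrightarrow\mathrm{hc}\tau_d(\eta)$ to the trivial operad, i.e. by forming the coend that glues these cubes along the grafting maps. One checks that the result is naturally marked-equivalent to $(\Delta^n)^\flat$ (note that $(\Delta^n)^{\mathrm{op}}\cong\Delta^n$, so the opposites affect only the direction of the gluing maps, not the objects), while $\mathcal{S}((\Delta^n)^\sharp)$ is marked-equivalent to the weakly contractible object $(\Delta^n)^\sharp$. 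Lemma \ref{lemma:Stproduct} is the tool that controls the behaviour under the simplicial tensoring needed to identify these equivalences coherently. Since they are natural in $[n]\in\mathbf{\Delta}$, they assemble through the cellular induction into a natural equivalence exhibiting $\mathbf{L}\mathcal{S}$ as an autoequivalence of $\mathrm{Ho}(\mathbf{sSets^+})$ — the identity up to the harmless involution $(-)^{\mathrm{op}}$, which is itself a self-equivalence — whence the Quillen equivalence.

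The main obstacle is precisely this representable computation: tracking the cube structure together with the opposites while evaluating the pushforward to the trivial operad, and, more delicately, verifying that the resulting marked equivalences are natural with respect to every face and degeneracy map, so that the cell-by-cell comparison actually glues over the skeletal filtration. This is exactly the combinatorial bookkeeping carried out by Lurie in the proof of Proposition 3.2.1.11 and Lemma 3.2.3.1 of \cite{htt}; since it takes place entirely within $\mathbf{sSets^+}$, that argument may be imported essentially verbatim.
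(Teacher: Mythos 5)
Your proposal is correct and ends up in the same place as the paper: the paper offers no independent argument for this lemma, importing it directly as Lemma 3.2.3.1 of \cite{htt}, and your sketch — reduce to showing $\mathbf{L}\mathcal{S}$ is an autoequivalence, use colimit-preservation, left properness and the skeletal filtration to reduce to the generating (marked) simplices, compute the resulting cubes, and defer the naturality of the cell-by-cell comparisons to Lurie — is a faithful outline of exactly the cited proof. The step you rightly single out as the real obstacle (gluing the objectwise equivalences naturally over the filtration) is precisely what the citation supplies, so nothing is missing relative to the paper.
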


We will first use a standard trick to reduce proving that $(St^+_S, Un^+_S)$ is a Quillen equivalence to proving that the derived unit
\begin{equation*}
\mathrm{id} \longrightarrow RUn^+_S \circ LSt^+_S 
\end{equation*}
is a weak equivalence. Here $LSt^+_S$ (resp. $RUn^+_S$) denotes the left (resp. right) derived functor of the straightening (resp. unstraightening) functor. Indeed, a priori we need to prove that both the derived unit and the derived counit
\begin{equation*}
LSt^+_S \circ RUn^+_S \longrightarrow \mathrm{id}
\end{equation*}
are weak equivalences. Now assume that $RUn^+_S$ detects weak equivalences. Then in order to prove that the counit is a weak equivalence it is sufficient to prove that
\begin{equation*}
RUn^+_S \circ LSt^+_S \circ RUn^+_S \longrightarrow RUn^+_S 
\end{equation*}
is a weak equivalence. But this will follow if the unit is a weak equivalence. We therefore need to prove the following fact:

\begin{lemma}
The functor $RUn^+_S$ detects weak equivalences.
\end{lemma}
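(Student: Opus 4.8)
The plan is to prove the equivalent (and more usable) statement that $Un^+_S$ reflects weak equivalences between fibrant objects: if $f\colon A \longrightarrow B$ is a map of fibrant $\mathrm{hc}\tau_d(S)$-algebras whose unstraightening $Un^+_S(f)$ is a marked equivalence, then $f$ is a pointwise weak equivalence. Since $Un^+_S$ is right Quillen, its right derived functor is computed on fibrant objects, so this is precisely the content of "$RUn^+_S$ detects weak equivalences". Throughout I would use the standing assumption that $S$ is normal, so that the algebra model structure and the Quillen pair $(St^+_S, Un^+_S)$ are available.

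The heart of the argument is an explicit identification of the fibres of the unstraightening. For each colour $s$ of $S$, let $j_s\colon \eta \longrightarrow S$ denote the inclusion picking out $s$. Applying part (2) of Proposition \ref{prop:straightening} to $j_s$ gives a strict commutation $\mathrm{hc}\tau_d(j_s)_! \circ St^+_\eta = St^+_S \circ (j_s)_!$. Passing to right adjoints, and recalling that $Un^+_\eta = \mathcal{U}$, yields a natural isomorphism
\begin{equation*}
(j_s)^* \circ Un^+_S \;\cong\; \mathcal{U} \circ \mathrm{hc}\tau_d(j_s)^*.
\end{equation*}
Here $\mathrm{hc}\tau_d(j_s)^*$ is restriction along the trivial operad on $s$, that is, evaluation $A \mapsto A(s)$, while $(j_s)^*$ is pullback along $\eta \longrightarrow S$, that is, taking the fibre over $s$. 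Hence for every algebra $A$ one obtains a natural isomorphism of marked simplicial sets $(Un^+_S(A))_s \cong \mathcal{U}(A(s))$.

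Now assume $f\colon A \longrightarrow B$ is a map of fibrant algebras with $Un^+_S(f)$ a marked equivalence. Since $Un^+_S$ is right Quillen it sends $A$ and $B$ to fibrant objects, which by Proposition \ref{prop:fibrantobjects} have the form $Z^\natural$ for coCartesian fibrations $Z \longrightarrow S$, and $Un^+_S(f)$ automatically carries coCartesian corollas to coCartesian corollas. By the equivalence of conditions (1) and (3) in Proposition \ref{prop:markedequivalence}, $Un^+_S(f)$ is then a marked equivalence exactly when it induces categorical equivalences on all fibres. Combining this with the fibre formula of the previous step, the induced maps $\mathcal{U}(A(s)) \longrightarrow \mathcal{U}(B(s))$ are categorical equivalences for every colour $s$ of $S$.

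Finally, since $A$ and $B$ are fibrant, each $A(s)$ and $B(s)$ is fibrant (and, as objects of $\mathbf{sSets^+}$, automatically cofibrant), so $\mathcal{U}(A(s)) \longrightarrow \mathcal{U}(B(s))$ computes the derived functor $R\mathcal{U}$ applied to $A(s) \longrightarrow B(s)$. By Lemma \ref{lemma:strovereta} the pair $(\mathcal{S}, \mathcal{U})$ is a Quillen equivalence, so $R\mathcal{U}$ is an equivalence of homotopy categories and in particular conservative; hence each $A(s) \longrightarrow B(s)$ is a weak equivalence in $\mathbf{sSets^+}$. Thus $f$ is a pointwise weak equivalence, i.e. a weak equivalence of algebras, which is what we wanted. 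I expect the main obstacle to be the bookkeeping in the fibre identification: one must check that the right-adjoint transpose of Proposition \ref{prop:straightening}(2) identifies the strict fibre $(Un^+_S(A))_s$ with $\mathcal{U}(A(s))$ as marked objects on the nose, rather than merely up to weak equivalence, since everything downstream depends on this being a natural isomorphism compatible with the markings.
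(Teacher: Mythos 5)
Your proof is correct and follows essentially the same route as the paper: both identify the fibre of the unstraightening via the right-adjoint transpose of Proposition \ref{prop:straightening}(2), deduce that $Un^+_S(f)$ induces equivalences on fibres (you via Proposition \ref{prop:markedequivalence}, the paper via the fact that $\iota_s^*$ is right Quillen), and conclude using that $\mathcal{U}$ reflects weak equivalences between fibrant objects by Lemma \ref{lemma:strovereta}. The differences are cosmetic.
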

\begin{proof}
Suppose we are given a map $f: A \longrightarrow B$ between fibrant objects of $\mathrm{Alg}_{\mathrm{hc}\tau_d(S)}(\mathbf{sSets^+})$ and suppose that $Un^+_S(f)$ is a marked equivalence in $\mathbf{dSets^+}/S$.  We need to show that for each colour $s$ of $S$ the induced map $f_s: A(s) \longrightarrow B(s)$ is a marked equivalence in $\mathbf{sSets^+}$. Note that $A(s)$ and $B(s)$ are fibrant. Lemma \ref{lemma:strovereta} above tells us that $\mathcal{U}$ detects weak equivalences between fibrant objects of $\mathbf{sSets^+}$, so we might as well prove that $\mathcal{U}(f_s)$ is a marked equivalence. If we denote the inclusion $\{s\} \longrightarrow S$ by $\iota_s$ then 
\begin{equation*}
\mathcal{U}(f_s) = (\mathcal{U} \circ \mathrm{hc}\tau_d(\iota_s)^\ast)(f) 
\end{equation*}
The adjoint of Proposition \ref{prop:straightening} tells us that we have a natural isomorphism of functors
\begin{equation*}
\mathcal{U} \circ \mathrm{hc}\tau_d(\iota_s)^\ast \simeq \iota_s^\ast \circ Un^+_S
\end{equation*}
Now observe that $(\iota_s^\ast \circ Un^+_S)(f)$ is a weak equivalence since $Un^+_S(f)$ is a weak equivalence between fibrant objects and $\iota_s^\ast$ is a right Quillen functor. $\Box$
\end{proof}

For any $X \in \mathbf{dSets^+}/S$ and a colour $s$ of $S$, the counit of the adjunction $((\iota_s)_!, \iota_s^\ast)$ gives us a map
\begin{equation*}
(\iota_s)_! X_s \longrightarrow X
\end{equation*}
Applying the straightening functor and then using Proposition \ref{prop:straightening} we get a map
\begin{equation*}
(\mathrm{hc}\tau_d(\iota_s)_! \circ \mathcal{S})(X_s) \longrightarrow St^+_S(X)
\end{equation*}
which by adjunction yields a natural map
\begin{equation*}
\psi^X_s: \mathcal{S}(X_s) \longrightarrow St^+_S(X)(s)
\end{equation*}
Before stating the next lemma, recall that for $A$ a simplicial $\Omega(T)$-algebra, the fiber of $M^+(A)$ over a colour $c$ of $\Omega(T)$ is isomorphic to $A(c)^\flat$. The following will be very convenient:

\begin{lemma}
\label{lemma:stmappingtree}
Let $A$ be a simplicial $\Omega(T)$-algebra. Then for any colour $c$ of $T$, the map
\begin{equation*}
\psi^{M^+(A)}_c: \mathcal{S}(A(c)^\flat) \longrightarrow St^+_S(M^+(A))(c)
\end{equation*}
is a marked equivalence in $\mathbf{sSets^+}$.
\end{lemma}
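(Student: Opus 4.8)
The plan is to induct on the number of vertices of $T$, exploiting the decomposition of mapping trees in Lemma \ref{lemma:mappingtreetrick} together with the two formal properties of straightening recorded in Proposition \ref{prop:straightening}: that $St^+_S$ preserves colimits, and that it is compatible with pushforward along maps of dendroidal sets. Throughout write $S = \Omega[T]$. In the base case $T = \eta$ one has $M^+(A) \cong i_!(A(\eta))^\flat$, the slice $\mathbf{dSets^+}/\eta$ is identified with $\mathbf{sSets^+}$, and $St^+_\eta = \mathcal{S}$; under these identifications $\psi^{M^+(A)}_\eta$ is the identity, so there is nothing to prove.

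For the inductive step, fix a leaf corolla $\alpha = C_n$ of $T$ with vertex $v$, leaves $l_1, \dots, l_n$ and root $r$, and form the restriction $A' = A|_{\Omega(\partial_v T)}$ together with the $\Omega(\alpha)$-algebra $\Xi$ as in the setup preceding Lemma \ref{lemma:mappingtreetrick}. By that lemma the canonical map $M(A') \coprod_{\Xi(r)} M(\Xi) \to M(A)$ is inner anodyne; equipping both sides with the evident markings turns it into a marked anodyne $\iota \colon P \to M^+(A)$, where $P := M^+(A') \coprod_{\Xi(r)^\flat} M^+(\Xi)$ (checking that the induced markings exhibit $\iota$ as a composite of pushouts of marked anodynes is a routine combinatorial verification). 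A direct fiber computation shows that $\iota$ restricts to an \emph{isomorphism} $P_c \xrightarrow{\cong} M^+(A)_c = A(c)^\flat$ for every colour $c$: over the leaves $l_j$ only the $M^+(\Xi)$ summand contributes $A(l_j)$, while over $r$ the gluing along $\Xi(r)^\flat$ collapses the $M^+(\Xi)$ fiber back onto $A(r)$. Since $St^+_S$ preserves marked equivalences (as verified in the proof that $(St^+_S, Un^+_S)$ is a Quillen pair), $St^+_S(\iota)$ is a pointwise weak equivalence of $\mathrm{hc}\tau_d(S)$-algebras. Naturality of $\psi$ in its argument then gives a commutative square whose top map $\mathcal{S}(P_c) \to \mathcal{S}(M^+(A)_c)$ is an isomorphism and whose bottom map is a weak equivalence, so it suffices to prove that $\psi^P_c$ is a marked equivalence for every $c$.

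To analyze $\psi^P_c$, compute $St^+_S(P)$ using colimit preservation together with Proposition \ref{prop:straightening}(2) applied to the inclusions $j \colon \Omega[\partial_v T] \hookrightarrow \Omega[T]$ and $k \colon \Omega[C_n] \hookrightarrow \Omega[T]$ and to $\iota_r \colon \eta_r \hookrightarrow \Omega[T]$. This identifies $St^+_S(P)$ with the pushout of $\mathrm{hc}\tau_d(j)_! St^+_{\Omega[\partial_v T]}(M^+(A'))$ and $\mathrm{hc}\tau_d(k)_! St^+_{\Omega[C_n]}(M^+(\Xi))$ along $\mathrm{hc}\tau_d(\iota_r)_!\,\mathcal{S}(\Xi(r)^\flat)$; because $St^+_S$ preserves cofibrations and $\mathrm{Alg}_{\mathrm{hc}\tau_d(S)}(\mathbf{sSets^+})$ is left proper, this is a homotopy pushout. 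The argument now feeds on two inputs: for colours $c$ of $\partial_v T$, the inductive hypothesis applied to $A'$ gives that $\psi^{M^+(A')}_c \colon \mathcal{S}(A(c)^\flat) \to St^+_{\Omega[\partial_v T]}(M^+(A'))(c)$ is a marked equivalence; and for the single-corolla algebra $\Xi$ one checks by an explicit calculation, entirely parallel to the computation carried out at the end of the proof of Lemma \ref{lemma:Stproduct}, that $\psi^{M^+(\Xi)}_{l_j}$ and $\psi^{M^+(\Xi)}_r$ are marked equivalences. Assembling these through the homotopy pushout, over a leaf $l_j$ only the $\Xi$-contribution survives and yields $\mathcal{S}(A(l_j)^\flat)$, while over a colour $c$ of $\partial_v T$ the $\Xi$-contribution is absorbed precisely because $\mathcal{S}(\Xi(r)^\flat) \to St^+_{\Omega[C_n]}(M^+(\Xi))(r)$ is already an equivalence, leaving $St^+_{\Omega[\partial_v T]}(M^+(A'))(c)$. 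This shows $\psi^P_c$, and hence $\psi^{M^+(A)}_c$, is a marked equivalence.

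The main obstacle is the last paragraph. The pushforward functors $\mathrm{hc}\tau_d(j)_!$ and $\mathrm{hc}\tau_d(k)_!$ are left Kan extensions of algebras and therefore do \emph{not} preserve the underlying value at a colour on the nose, so one cannot simply read off $St^+_S(P)(c)$ from the values of the three algebras being glued. The work lies in verifying that the homotopy pushout nevertheless reproduces $\mathcal{S}(A(c)^\flat)$ at every colour, and the decisive ingredient is the explicit single-corolla computation for $\Xi$, where one must see that the structure map of $St^+_{\Omega[C_n]}(M^+(\Xi))$ induced by $\alpha$ is a weak equivalence onto the relevant part of the value at $r$. This is the exact analogue, now for an $n$-corolla rather than a single $\Delta^1$, of the diagram chase already performed for Lemma \ref{lemma:Stproduct}, and it is where the real content of the lemma resides.
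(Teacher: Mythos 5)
Your proof is correct and follows essentially the same route as the paper's: induction on the number of vertices of $T$ via the decomposition of Lemma \ref{lemma:mappingtreetrick}, reduction through Proposition \ref{prop:straightening} to the subtree $\partial_v T$ (where the inductive hypothesis applies) and to the single leaf corolla, and the explicit computation that $\mathcal{S}(\Xi(r)^\flat) \longrightarrow St^+_{\Omega[\alpha]}(M^+(\Xi))(r)$ is a marked equivalence. The only cosmetic difference is that you absorb the $\Xi$-contribution directly inside the homotopy pushout, whereas the paper first arranges (by replacing $A$ with an equivalent algebra) for the leg $St^+(\Xi(r)^\flat) \longrightarrow St^+(M^+(A'))$ to be a cofibration and then applies the gluing lemma --- the same mechanism.
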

\begin{proof}
The statement is trivial in case $T = \eta$. Hence assume $T$ has at least one vertex. We proceed by induction on the number of vertices of $T$. We employ the notation described immediately above Lemma \ref{lemma:mappingtreetrick}, i.e. assume $\alpha$ is a leaf corolla of $T$ etc. The map $\psi^{M^+(A)}_{l_i}$ is an isomorphism for $1 \leq i \leq n$, so from now on assume $c$ is a colour of $T$ other than any of the leaves $l_i$. We have a commutative diagram
\[
\xymatrix{
\mathcal{S}(A(c)^\flat) \ar[rr]\ar[dr] & & St^+_{\Omega[T]}(M^+(A))(c) \\
& St^+_{\Omega[T]}(M^+(A'))(c) \ar[ur] &
}
\]
Letting $\iota: \partial_v T \longrightarrow T$ denote the face inclusion, Proposition \ref{prop:straightening} tells us that
\begin{equation*}
St^+_{\Omega[T]}(M^+(A'))(c) \simeq St^+_{\partial_v \Omega[T]}(M^+(A'))(c)
\end{equation*}
so that the map
\begin{equation*}
\mathcal{S}(A(c)^\flat) \longrightarrow St^+_{\Omega[T]}(M^+(A))(c)
\end{equation*}
is a marked equivalence in $\mathbf{sSets^+}$ by the inductive hypothesis. Therefore it suffices to show that the map 
\begin{equation*}
St^+_{\Omega[T]}(M^+(A'))(c) \longrightarrow St^+_{\Omega[T]}(M^+(A))(c)
\end{equation*}
is a marked equivalence. Applying Lemma \ref{lemma:mappingtreetrick} we see that we have a homotopy pushout
\[
\xymatrix{
St^+_{\Omega[T]}(\Xi(r)^\flat)(c) \ar[r]\ar[d] & St^+_{\Omega[T]}(M^+(\Xi))(c) \ar[d] \\
St^+_{\Omega[T]}(M^+(A'))(c) \ar[r] & St^+_{\Omega[T]}(M^+(A))(c)
}
\]
Replacing $A$ by an equivalent mapping tree if necessary, we may in fact assume the left vertical map is a cofibration. (Indeed one checks, using left properness of the Moerdijk-Cisinski model structure, that an equivalence of algebras induces an equivalence of mapping trees.) In order to show that the bottom horizontal map is a marked equivalence it now suffices to show that the top horizontal map is so. If we let $\kappa: \alpha \longrightarrow T$ denote the inclusion of our leaf corolla, we actually have
\begin{eqnarray*}
St^+_{\Omega[T]}(\Xi(r)^\flat) & \simeq & \mathrm{hc}\tau_d(\kappa)_!\bigl(St^+_{\Omega[\alpha]}(\Xi(r)^\flat)\bigr) \\
St^+_{\Omega[T]}(M^+(\Xi)) & \simeq & \mathrm{hc}\tau_d(\kappa)_!\bigl(St^+_{\Omega[\alpha]}(M^+(\Xi))\bigr)
\end{eqnarray*}
It will suffice to show that
\begin{equation*}
St^+_{\Omega[\alpha]}(\Xi(r)^\flat)(r) \longrightarrow St^+_{\Omega[\alpha]}(M^+(\Xi))(r)
\end{equation*}
is a marked equivalence. This is a quick computation: the left-hand side is simply
\begin{equation*}
\mathcal{S}(\prod_{i=1}^n A(l_i)^\flat)
\end{equation*}
and the right-hand side can be seen to be equivalent to
\begin{equation*}
\mathcal{S}\bigl(\prod_{i=1}^n A(l_i)^\flat \bigr) \times ((\Delta^1)^{\times n})^\sharp
\end{equation*}
This concludes the proof. $\Box$  
\end{proof}

We are now ready to prove the following result:

\begin{proposition}
\label{prop:strovertree}
Set $S = \Omega[T]$ for some tree $T \in \Omega$. Then the adjunction $(St^+_S, Un^+_S)$ is a Quillen equivalence.
\end{proposition}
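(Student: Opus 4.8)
The plan is to combine the two reductions that have just been set up. Since we have already shown that $RUn^+_S$ detects weak equivalences, it suffices by the standard trick to verify that the derived unit $\mathrm{id} \to RUn^+_S \circ LSt^+_S$ is a weak equivalence. I would check this first on fibrant--cofibrant objects. Because $S = \Omega[T]$ is normal, every coCartesian fibration $Y \to \Omega[T]$ has normal total space, so by Proposition \ref{prop:fibrantobjects} the fibrant--cofibrant objects of $\mathbf{dSets^+}/S$ are exactly the $Y^\natural$. By naturality of the unit together with two-out-of-three, the derived unit at an arbitrary $X$ is a weak equivalence as soon as it is so at a fibrant replacement of $X$, so it is enough to treat $X = Y^\natural$.

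Fix such an $X = Y^\natural$ and write $F := St^+_S(X)$ with a fibrant replacement $F \to \hat F$ in $\mathrm{Alg}_{\mathrm{hc}\tau_d(S)}(\mathbf{sSets^+})$; the derived unit is then a map $\theta_X : X \to Un^+_S(\hat F)$ of coCartesian fibrations over $\Omega[T]$. By Proposition \ref{prop:markedequivalence}, $\theta_X$ is a marked equivalence precisely when it induces a categorical equivalence on each fiber. Using the natural isomorphism $\iota_c^* \circ Un^+_S \simeq \mathcal{U} \circ \mathrm{hc}\tau_d(\iota_c)^*$, the fiber of $\theta_X$ over a colour $c$ is identified with a map $X_c \to \mathcal{U}(\hat F(c))$, and a diagram chase through the adjunctions defining the comparison map $\psi^X_c$ shows this factors as
\[
X_c \longrightarrow \mathcal{U}\mathcal{S}(X_c) \xrightarrow{\mathcal{U}(\psi^X_c)} \mathcal{U}(F(c)) \longrightarrow \mathcal{U}(\hat F(c)),
\]
where the first arrow is the unit of $(\mathcal{S}, \mathcal{U})$ at $X_c$. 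The crux is then to prove that $\psi^X_c$ is a weak equivalence: granting this, $\mathcal{S}(X_c) \to \hat F(c)$ is a weak equivalence with fibrant target, so $\hat F(c)$ computes $R\mathcal{U}(\mathcal{S}(X_c))$ (using that $\mathcal{U}$ preserves weak equivalences between fibrant objects), and the displayed composite becomes the derived unit of $(\mathcal{S}, \mathcal{U})$ at the cofibrant object $X_c$, hence a weak equivalence by the Quillen equivalence of Lemma \ref{lemma:strovereta}.

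To see that $\psi^X_c$ is a weak equivalence I would transport Lemma \ref{lemma:stmappingtree} along a mapping tree. Proposition \ref{prop:mappingtrees} supplies a simplicial $\Omega(T)$-algebra $A$ and a marked equivalence $\phi : M^+(A) \to X$ inducing categorical equivalences on fibers, so $A(c) \simeq X_c$; moreover $St^+_S$ preserves weak equivalences, so $St^+_S(\phi)(c) : St^+_S(M^+(A))(c) \to F(c)$ is a weak equivalence. Naturality of $\psi$ in the variable $X$ then gives a commutative square comparing $\psi^{M^+(A)}_c$ and $\psi^X_c$ through these two equivalences, and since $\psi^{M^+(A)}_c$ is a marked equivalence by Lemma \ref{lemma:stmappingtree}, two-out-of-three forces $\psi^X_c$ to be one as well. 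This shows $\theta_X$ is a fiberwise categorical equivalence, hence a marked equivalence, for every fibrant--cofibrant $X$; by the first paragraph the derived unit is then a weak equivalence in general, and combined with the fact that $RUn^+_S$ detects weak equivalences this yields the Quillen equivalence.

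The main obstacle I anticipate is the bookkeeping in the compatibility identity for the fiber of the unit: one must check carefully that, after the adjunction manipulations defining $\psi^X_c$ and the natural isomorphism $\iota_c^* Un^+_S \simeq \mathcal{U}\,\mathrm{hc}\tau_d(\iota_c)^*$, the fiber of $\theta_X$ over $c$ really is the composite displayed above and really is natural in $X$, so that transporting Lemma \ref{lemma:stmappingtree} through $\phi$ is legitimate. The remaining verifications (that the induced fibers are fibrant marked simplicial sets, so that marked equivalence coincides with categorical equivalence of the underlying $\infty$-categories as required by Proposition \ref{prop:markedequivalence}) are routine.
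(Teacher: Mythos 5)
Your proposal is correct and follows essentially the same route as the paper: reduce to the derived unit via the detection lemma, pass to fibrant objects, and compute the unit fiberwise using Proposition \ref{prop:mappingtrees}, Lemma \ref{lemma:stmappingtree}, the base-change identity $\iota_s^\ast \circ Un^+_S \simeq \mathcal{U} \circ \mathrm{hc}\tau_d(\iota_s)^\ast$, and the Quillen equivalence $(\mathcal{S},\mathcal{U})$ over $\eta$. The only (harmless) organizational difference is that the paper replaces $X$ by $M^+(A)$ and checks the unit there, invoking Lemmas \ref{lemma:mappingtreeoperadicequiv} and \ref{lemma:mappingtreemarkedequiv} to upgrade a fiberwise equivalence to a marked equivalence, whereas you keep $X$ fibrant and transport $\psi^{M^+(A)}_c$ to $\psi^X_c$ by naturality so that Proposition \ref{prop:markedequivalence} applies directly.
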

\begin{proof}
As reasoned above, it suffices to show that the derived unit
\begin{equation*}
\mathrm{id} \longrightarrow RUn^+_S \circ LSt^+_S 
\end{equation*}
is a weak equivalence for each $X \in \mathbf{dSets^+}/S$. Note that every object of $\mathbf{dSets^+}/S$ is cofibrant, since in our case $S$ is normal. Hence we can simply identify $LSt^+_S$ with $St^+_S$. Since the composite $RUn^+_S \circ St^+_S$ preserves weak equivalences, we can without loss of generality take a fibrant replacement for $X$. Therefore assume $X$ to be fibrant. \par 
Since $X$ is fibrant, we can apply Proposition \ref{prop:mappingtrees} to deduce the existence of a simplicial $\Omega(T)$-algebra $A$ and a marked equivalence
\begin{equation*}
M^+(A) \longrightarrow X
\end{equation*}
Hence it suffices to prove that
\begin{equation*}
M^+(A) \longrightarrow (RUn^+_S \circ St^+_S)(M^+(A)) 
\end{equation*}
is a marked equivalence. Since the object on the right is fibrant, Lemmas \ref{lemma:mappingtreeoperadicequiv} and \ref{lemma:mappingtreemarkedequiv} tell us that it is sufficient to prove that the stated map induces categorical equivalences on the fibers. This is an easy computation; indeed, first note that using Proposition \ref{prop:straightening} we get (for a colour $s$ of $S$)
\begin{eqnarray*}
\bigl((RUn^+_S \circ St^+_S)(M^+(A))\bigr)_s & \simeq & (\iota_s^\ast \circ RUn^+_S \circ St^+_S)(M^+(A)) \\
& \simeq & (\mathcal{U} \circ \mathrm{hc}\tau_d(\iota_s)^\ast \circ R \circ St^+_S)(M^+(A)) 
\end{eqnarray*}
where $R$ denotes fibrant replacement in $\mathrm{Alg}_{\mathrm{hc}\tau_d(S)}(\mathbf{sSets^+})$. By Lemma \ref{lemma:strovereta} the map
\begin{eqnarray*}
A(s)^\flat \simeq M^+(A)_s & \longrightarrow & (\mathcal{U} \circ \mathrm{hc}\tau_d(\iota_s)^\ast \circ R \circ St^+_S)(M^+(A)) 
\end{eqnarray*}
is a weak equivalence if and only if the adjoint map
\begin{eqnarray*}
\mathcal{S}(A(s)^\flat) & \longrightarrow &  (\mathrm{hc}\tau_d(\iota_s)^\ast \circ R \circ St^+_S)(M^+(A)) \\
& \simeq & (R \circ St^+_S)(M^+(A))(s)
\end{eqnarray*}
is a weak equivalence. This map factors as
\begin{equation*}
\mathcal{S}(A(s)^\flat) \longrightarrow St^+_S(M^+(A))(s) \longrightarrow (R \circ St^+_S)(M^+(A))(s)
\end{equation*}
The first map is a marked equivalence by Lemma \ref{lemma:stmappingtree}, the second by definition. We have now shown that the map of fibers 
\begin{equation*}
\phi_s: A(s)^\flat \longrightarrow (RUn^+_S \circ St^+_S)(M^+(A))_s 
\end{equation*} 
is a marked equivalence in $\mathbf{sSets^+}$. For any $\infty$-category $Z$ we have isomorphisms
\begin{eqnarray*}
\mathrm{Map}^\flat(A(s)^\flat, Z^\natural) & \simeq & A(s)^Z \\
\mathrm{Map}^\flat((RUn^+_S \circ St^+_S)(M^+(A))_s, Z^\natural) & \simeq & u((RUn^+_S \circ St^+_S)(M^+(A))_s)^Z
\end{eqnarray*}
In the first line we use the adjunction between $(-)^\flat$ and $u$, in the second line the fact that $(RUn^+_S \circ St^+_S)(M^+(A))_s$ is an $\infty$-category with equivalences marked, so that preservation of markings when mapping into $Z^\natural$ is automatic. We now conclude that the underlying map of $\phi_s$ is a categorical equivalence. $\Box$
\end{proof}

\subsection{Straightening in general}
In this section we will finally prove Theorem \ref{thm:Grothendieckconstruction} using Proposition \ref{prop:strovertree} and some formal arguments. The first we need is the following easy lemma:

\begin{lemma}
\label{lemma:normaldsets}
Suppose $\mathcal{C}$ is a collection of dendroidal sets satisfying the following conditions:
\begin{itemize}
\item[(1)] Every representable $\Omega[T]$ is contained in $\mathcal{C}$
\item[(2)] $\mathcal{C}$ is stable under coproducts
\item[(3)] If we are given a pushout square
\[
\xymatrix{
X \ar[d]_j \ar[r] & Y \ar[d]\\
X' \ar[r] & Y'\\
}
\]
such that $X, X', Y \in \mathcal{C}$ and $j$ is a normal monomorphism, then $Y' \in C$
\item[(4)] If we are given a sequence of normal monomorphisms
\begin{equation*}
X(0) \longrightarrow X(1) \longrightarrow \cdots
\end{equation*}
then the colimit $\varinjlim X(i)$ belongs to $\mathcal{C}$
\end{itemize}
Then $\mathcal{C}$ contains all normal dendroidal sets.
\end{lemma}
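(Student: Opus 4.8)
The plan is to build every normal dendroidal set $X$ out of representables by means of its normal skeletal filtration, exactly the filtration already invoked for normal objects earlier in this text. Recall that $X$ is the colimit of its skeleta $\mathrm{sk}_n(X)$, where $\mathrm{sk}_n(X)$ is the subobject generated by all dendrices whose shape has at most $n$ vertices, and that each inclusion $\mathrm{sk}_{n-1}(X) \subseteq \mathrm{sk}_n(X)$ is a normal monomorphism. Since this filtration is a countable sequence of normal monomorphisms with colimit $X$, condition (4) reduces the problem to showing $\mathrm{sk}_n(X) \in \mathcal{C}$ for every $n$ and every normal $X$.

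I would prove the statement $P(n)$, namely that $\mathrm{sk}_n(X) \in \mathcal{C}$ for every normal dendroidal set $X$, by induction on $n$. For the base case, note first that the empty dendroidal set lies in $\mathcal{C}$ as the empty coproduct, by (2). Then $\mathrm{sk}_0(X) = \coprod_{c \in X_\eta} \eta$, which is in $\mathcal{C}$ by (1) and (2), since $\eta = \Omega[\eta]$ is representable.

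For the inductive step, assume $P(n-1)$. The key structural fact, which uses normality of $X$ in an essential way, is that $\mathrm{sk}_n(X)$ is obtained from $\mathrm{sk}_{n-1}(X)$ by a single pushout
\[
\xymatrix{
\coprod_i \partial\Omega[T_i] \ar[d]_j \ar[r] & \mathrm{sk}_{n-1}(X) \ar[d] \\
\coprod_i \Omega[T_i] \ar[r] & \mathrm{sk}_n(X)
}
\]
where $i$ ranges over the orbits of nondegenerate $T_i$-dendrices of $X$ with $T_i$ having exactly $n$ vertices; normality guarantees that the automorphism groups act freely on these dendrices, so that the attaching is genuinely along the coproduct of boundary inclusions $j$, which is a normal monomorphism. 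To apply (3) I must check that all three of $\coprod_i \partial\Omega[T_i]$, $\coprod_i \Omega[T_i]$ and $\mathrm{sk}_{n-1}(X)$ lie in $\mathcal{C}$. The second is immediate from (1) and (2), and the third is exactly $P(n-1)$. For the first I use that each $\Omega[T_i]$ is itself normal and that $\partial\Omega[T_i]$ is precisely $\mathrm{sk}_{n-1}(\Omega[T_i])$ — indeed every proper face of $T_i$ has fewer than $n$ vertices — so $\partial\Omega[T_i] \in \mathcal{C}$ by $P(n-1)$ applied to the normal dendroidal set $\Omega[T_i]$, and then (2) handles the coproduct. Hence (3) gives $\mathrm{sk}_n(X) \in \mathcal{C}$, completing the induction, and $P(n)$ for all $n$ together with (4) yields $X \in \mathcal{C}$.

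The main obstacle is the bookkeeping that keeps the induction non-circular: the identity $\partial\Omega[T] = \mathrm{sk}_{n-1}(\Omega[T])$ for $T$ with $n$ vertices is exactly what lets the single induction hypothesis $P(n-1)$ supply both the boundaries $\partial\Omega[T_i]$ and the lower skeleton $\mathrm{sk}_{n-1}(X)$ at once. One should also be careful to confirm that representables are normal and that normality of $X$ is precisely what makes the skeletal attachment a pushout along a coproduct of boundary inclusions rather than a quotient thereof; this is the only place where the hypothesis \emph{normal} enters, and it is supplied by the normal skeletal filtration of \cite{dendroidalsets}.
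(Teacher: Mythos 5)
Your proof is correct and follows essentially the same route as the paper: reduce to the skeleta via the normal skeletal filtration and condition (4), then induct on $n$ using the pushout square attaching the nondegenerate $n$-vertex dendrices and condition (3). Your added observation that $\partial\Omega[T] = \mathrm{sk}_{n-1}(\Omega[T])$, so that the universally quantified inductive hypothesis covers the boundaries as well as $\mathrm{sk}_{n-1}(X)$, is exactly the point the paper leaves implicit when it invokes "the inductive hypothesis" for the top row.
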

\begin{proof}
Let $X$ be a normal dendroidal set. We wish to show that $X \in \mathcal{C}$. First use that $X$ admits a normal skeletal filtration (see \cite{dendroidalsets})
\begin{equation*}
\mathrm{sk}_0(X) \longrightarrow \mathrm{sk}_1(X) \longrightarrow \cdots
\end{equation*}
such that $X \simeq \varinjlim \mathrm{sk}_i(X)$. Using (4) we reduce to showing that $\mathrm{sk}_n(X) \in \mathcal{C}$ for all $n$. We now proceed by induction on $n$. We have a pushout diagram
\[
\xymatrix{
\coprod_{(T,t)} \partial \Omega[T] \ar[d]\ar[r] & \mathrm{sk}_{n-1}(X) \ar[d] \\ 
\coprod_{(T,t)} \Omega[T] \ar[r] & \mathrm{sk}_n(X)
}
\]
where the coproduct is taken over all pairs $(T,t)$ where $T$ is a tree with $n$ vertices and $t$ is a non-degenerate $T$-dendrex of $X$. The two dendroidal sets in the top row are in $\mathcal{C}$ by the inductive hypothesis, the bottom left one is in $\mathcal{C}$ by assumptions (1) and (2). Assumption (3) now guarantees that $\mathrm{sk}_n(X) \in \mathcal{C}$. $\Box$
\end{proof}

Recall that the unstraightening functor $Un^+_S$ is a simplicial functor. In order to prove Theorem \ref{thm:Grothendieckconstruction}, it suffices (by Lemma \ref{lemma:equivsimpcat}) to prove that $Un^+_S$ induces a weak equivalence of simplicial categories
\begin{equation*}
(\mathrm{Alg}_{\mathrm{hc}\tau_d(S)}(\mathbf{sSets^+}))^\circ \longrightarrow (\mathbf{dSets^+}/S)^\circ
\end{equation*}
This is what we will do.

\begin{proof}[Proof of Theorem \ref{thm:Grothendieckconstruction}]
Let $\mathcal{C}$ be the collection of all normal dendroidal sets for which the adjunction $(St^+_S, Un^+_S)$ is a Quillen equivalence. We wish to show that $\mathcal{C}$ satisfies the hypotheses of Lemma \ref{lemma:normaldsets} in order to conclude that $\mathcal{C}$ is precisely the collection of all normal dendroidal sets. \par 
The collection $\mathcal{C}$ satisfies (1) by Proposition \ref{prop:strovertree} and Lemma \ref{lemma:equivsimpcat}. It is easy to verify that it satisfies (2). Now let $(\mathrm{Alg}_{\mathrm{hc}\tau_d(S)}(\mathbf{sSets^+}))_f$ denote the full simplicial subcategory on fibrant objects of the category in parentheses and let $W_S$ denote the class of weak equivalences in this category. Also, let $\tilde{W}_S$ denote the class of weak equivalences in $(\mathbf{dSets^+}/S)^\circ$ which, by Proposition \ref{prop:markedequivalence}, is the class of morphisms which induce categorical equivalences on the fibers over $S$. We have a commutative square of simplicial categories as follows:
\[
\xymatrix@C=60pt{
(\mathrm{Alg}_{\mathrm{hc}\tau_d(S)}(\mathbf{sSets^+}))^\circ \ar[r]^{Un^+_S}\ar[d] & (\mathbf{dSets^+}/S)^\circ \ar[d]^{\psi_S} \\
(\mathrm{Alg}_{\mathrm{hc}\tau_d(S)}(\mathbf{sSets^+}))_f[W_S^{-1}] \ar[r]_{\varphi_S} & (\mathbf{dSets^+}/S)^\circ [\tilde{W}_S^{-1}]
}
\]
The left vertical functor is a weak equivalence of simplicial categories by Lemma \ref{lemma:combinlocalization}. Also, the functor $\psi_S$ is a weak equivalence of simplicial categories by Proposition \ref{prop:dwyerkan} below. Hence it suffices to show that $\varphi_S$ is an equivalence of simplicial categories. \par 
Now suppose we are given a pushout diagram of normal dendroidal sets
\[
\xymatrix{
X \ar[d]_f\ar[r]^g & Y \ar[d]^k \\
X' \ar[r]_l & Y'
}
\] 
such that $X, X', Y \in \mathcal{C}$ and such that the left vertical map is a normal monomorphism. Lemma \ref{lemma:combinlocalization2} and the assumption on $X$, $X'$ and $Y$ tell us that the following diagram is a homotopy pullback:
\[
\xymatrix{
(\mathrm{Alg}_{\mathrm{hc}\tau_d(Y')}(\mathbf{sSets^+}))_f[W_{Y'}^{-1}] \ar[r]\ar[d] & (\mathbf{dSets^+}/Y)^\circ [\tilde{W}_Y^{-1}] \ar[d] \\
(\mathbf{dSets^+}/X')^\circ [\tilde{W}_{X'}^{-1}] \ar[r] & (\mathbf{dSets^+}/X)^\circ [\tilde{W}_X^{-1}]
}
\]
The top and left arrows in this diagram factor through the map
\begin{equation*}
\varphi_{Y'}: (\mathrm{Alg}_{\mathrm{hc}\tau_d(Y')}(\mathbf{sSets^+}))_f[W_{Y'}^{-1}] \longrightarrow (\mathbf{dSets^+}/Y')^\circ [\tilde{W}_{Y'}^{-1}]
\end{equation*}
Using Lemma \ref{lemma:homlimsimplcat} we now deduce that $\varphi_{Y'}$ is a weak equivalence if and only if for any pair of objects $A$, $B$ of $(\mathbf{dSets^+}/Y')^\circ [\tilde{W}_{Y'}^{-1}]$ the induced diagram of simplicial mapping objects
\[
\xymatrix{
\mathrm{Map}_{(\mathbf{dSets^+}/Y')^\circ [\tilde{W}_{Y'}^{-1}]}(A,B) \ar[r]\ar[d] & \mathrm{Map}_{(\mathbf{dSets^+}/Y)^\circ [\tilde{W}_{Y}^{-1}]}(k^*A,k^*B) \ar[d] \\
\mathrm{Map}_{(\mathbf{dSets^+}/X')^\circ [\tilde{W}_{X'}^{-1}]}(l^*A,l^*B) \ar[r] & \mathrm{Map}_{(\mathbf{dSets^+}/X)^\circ [\tilde{W}_{X}^{-1}]}(f^*l^*A,f^*l^*B)
}
\]
is a homotopy pullback in the Quillen model structure on simplicial sets. Since $\psi_{X}$, $\psi_{X'}$, $\psi_{Y}$ and $\psi_{Y'}$ are equivalences of simplicial categories, we may replace this diagram with the equivalent diagram
\[
\xymatrix{
\mathrm{Map}^\sharp_{Y'}(A,B) \ar[r]\ar[d] & \mathrm{Map}^\sharp_{Y}(k^*A,k^*B) \ar[d] \\
\mathrm{Map}^\sharp_{X'}(l^*A,l^*B) \ar[r] & \mathrm{Map}^\sharp_{X}(f^*l^*A,f^*l^*B) 
}
\]
which is a pullback square. In this diagram all objects are fibrant and the bottom horizontal arrow is a Kan fibration by Lemma \ref{lemma:restrKanfibration}. Hence it is a homotopy pullback square. We have now established that $\mathcal{C}$ satisfies (3). Verifying that it satisfies (4) is done in a similar fashion, now applying Lemmas \ref{lemma:combinlocalization2} and \ref{lemma:homlimsimplcat} in the case of sequential colimits instead of pushouts. $\Box$
\end{proof}

In the proof above we used the following two results. The first goes back to Dwyer and Kan \cite{dwyerkan}:

\begin{proposition}
\label{prop:dwyerkan}
Let $\mathbf{C}$ be a simplicial category and let $W$ be a class of equivalences in $\mathbf{C}$ (equivalence here in the simplicial sense). Then the localization
\begin{equation*}
\mathbf{C} \longrightarrow \mathbf{C}[W^{-1}]
\end{equation*}
is a weak equivalence of simplicial categories.
\end{proposition}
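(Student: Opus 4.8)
The plan is to verify directly the two conditions defining a weak equivalence (Dwyer--Kan equivalence) of simplicial categories: essential surjectivity on homotopy categories, and inducing weak homotopy equivalences on all mapping spaces. Since the localization map $\mathbf{C} \longrightarrow \mathbf{C}[W^{-1}]$ is a bijection on objects, the induced functor $\pi_0\mathbf{C} \longrightarrow \pi_0(\mathbf{C}[W^{-1}])$ is a bijection on objects and in particular essentially surjective; thus the entire content of the statement lies in showing that for each pair of objects $x, y$ the natural map
\[
\mathbf{C}(x,y) \longrightarrow \mathbf{C}[W^{-1}](x,y)
\]
is a weak homotopy equivalence of simplicial sets.

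To do this I would model the localization by the Dwyer--Kan hammock localization $L^H(\mathbf{C}, W)$, whose mapping space $L^H(\mathbf{C},W)(x,y)$ is the simplicial set of reduced hammocks from $x$ to $y$, i.e. zigzags in $\mathbf{C}$ in which every backward-pointing arrow belongs to $W$, taken modulo the usual reduction relations. Under this model the map above is the inclusion of the hammocks consisting of a single forward morphism. The guiding principle is the one that makes the statement transparent $\infty$-categorically: a functor automatically preserves equivalences, so inverting morphisms that are already equivalences imposes no new condition and should change nothing. Concretely, since every $w \in W$ is an equivalence in $\mathbf{C}$, it admits a two-sided homotopy inverse, and one can use such inverses to straighten an arbitrary hammock: each backward segment $\xleftarrow{\ w\ }$ is replaced up to homotopy by a forward segment, after which adjacent arrows pointing the same way are composed, collapsing the zigzag to a single morphism of $\mathbf{C}$.

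The main obstacle is to carry out this straightening \emph{coherently} over the whole hammock space at once, rather than on individual simplices, so as to produce an actual simplicial homotopy exhibiting $\mathbf{C}(x,y)$ as a deformation retract of $L^H(\mathbf{C},W)(x,y)$. This is exactly the point at which a naive construction would require incompatible choices of homotopy inverses, and it is what the Dwyer--Kan theory of a homotopy calculus of fractions is designed to control: because the arrows of $W$ are invertible, $(\mathbf{C},W)$ admits a homotopy calculus of (two-sided) fractions, and their analysis of the resulting mapping spaces identifies $L^H(\mathbf{C},W)(x,y)$ with $\mathbf{C}(x,y)$ up to weak homotopy equivalence. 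I would therefore structure the proof so that, after the reduction in the first paragraph, the remaining claim becomes a direct appeal to the relevant comparison result of \cite{dwyerkan}, checking only that the hypothesis that $W$ consists of equivalences supplies precisely the invertibility needed to apply it.
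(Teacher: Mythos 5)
The paper offers no proof of this proposition at all --- it is stated with only a citation to Dwyer--Kan --- so there is nothing to compare against beyond that reference. Your outline (reduce to mapping spaces since the localization is bijective on objects, model $\mathbf{C}[W^{-1}]$ by the hammock localization, and invoke the homotopy calculus of fractions, which applies precisely because the morphisms of $W$ admit homotopy inverses) is the standard Dwyer--Kan argument that the citation points to, and it is sound.
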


\begin{lemma}
\label{lemma:restrKanfibration}
Suppose we are given a normal monomorphism $\tilde{S} \longrightarrow S$ of dendroidal sets and coCartesian fibrations $X \longrightarrow S$ and $Y \longrightarrow S$. Assume $X$ is normal. Define  $\tilde X = X \times_{S} \tilde S$ and $\tilde Y = Y \times_{S} \tilde S$. Then the induced map
\begin{equation*}
\mathrm{Map}^\sharp_S(X^\natural, Y^\natural) \longrightarrow \mathrm{Map}^\sharp_{\tilde S}(\tilde{X}^\natural, \tilde{Y}^\natural)
\end{equation*}
is a Kan fibration.
\end{lemma}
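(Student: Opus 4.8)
The plan is to show that the indicated restriction map has the right lifting property with respect to all left anodyne inclusions of simplicial sets. First I would record that its target is a Kan complex: since $\tilde X = X \times_S \tilde S$ is a subobject of the normal dendroidal set $X$ it is itself normal, and $\tilde Y \to \tilde S$ is a coCartesian fibration by Proposition \ref{prop:pullbackfibration}, so Proposition \ref{prop:mappingobjects} identifies $\mathrm{Map}^\sharp_{\tilde S}(\tilde X^\natural, \tilde Y^\natural)$ with the maximal Kan complex contained in an $\infty$-category (and likewise for $\mathrm{Map}^\sharp_S(X^\natural, Y^\natural)$, using that $X$ is normal and $Y \to S$ coCartesian). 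A map with the right lifting property against all left anodynes and with Kan complex target is a left fibration over a Kan complex, hence a Kan fibration; this is exactly the reduction used in the proof of Lemma \ref{lemma:Kanfibration}.

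So let $g \colon A \hookrightarrow B$ be a left anodyne inclusion, and consider a lifting problem of the restriction map against $g$. By the universal properties of the mapping objects, a map $A \to \mathrm{Map}^\sharp_S(X^\natural, Y^\natural)$ is the same datum as a map $\psi\colon i_!(A)^\sharp \otimes X^\natural \to Y^\natural$ over $S^\sharp$, and a map $B \to \mathrm{Map}^\sharp_{\tilde S}(\tilde X^\natural, \tilde Y^\natural)$ is the same as a map $\chi\colon i_!(B)^\sharp \otimes \tilde X^\natural \to \tilde Y^\natural$ over $\tilde S^\sharp$; the restriction map itself corresponds to restricting along $\tilde X^\natural \hookrightarrow X^\natural$ (a map over $S$ restricted to $\tilde X^\natural$ lands over $\tilde S$, hence in $\tilde Y^\natural$). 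Commutativity of the square says precisely that $\psi$ and $\chi$ agree on $i_!(A)^\sharp \otimes \tilde X^\natural$, so together they define a single map
\[
P := \bigl(i_!(B)^\sharp \otimes \tilde X^\natural\bigr) \coprod_{i_!(A)^\sharp \otimes \tilde X^\natural} \bigl(i_!(A)^\sharp \otimes X^\natural\bigr) \longrightarrow Y^\natural
\]
over $S^\sharp$, and the desired lift amounts to extending this along $P \hookrightarrow i_!(B)^\sharp \otimes X^\natural$.

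The key observation is that this inclusion is exactly the smash product of the map $g'\colon i_!(A)^\sharp \to i_!(B)^\sharp$ with the inclusion $j\colon \tilde X^\natural \hookrightarrow X^\natural$. For $g$ left anodyne the map $g'$ is marked anodyne, as observed in the proof of Lemma \ref{lemma:Kanfibration}, and $j$ is a cofibration: its underlying map $\tilde X \hookrightarrow X$ is a monomorphism into the normal dendroidal set $X$ and hence a normal monomorphism, and it preserves markings because coCartesian corollas are stable under the base change defining $\tilde X$. By Proposition \ref{prop:smashproduct} the smash product of a marked anodyne with a cofibration is again marked anodyne, so $P \hookrightarrow i_!(B)^\sharp \otimes X^\natural$ is marked anodyne. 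Since $Y \to S$ is a coCartesian fibration, Corollary \ref{cor:liftwrtmarked} shows that $Y^\natural \to S^\sharp$ has the right lifting property against all marked anodynes, producing the required lift $i_!(B)^\sharp \otimes X^\natural \to Y^\natural$ over $S^\sharp$, i.e. the sought map $B \to \mathrm{Map}^\sharp_S(X^\natural, Y^\natural)$.

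The main obstacle I anticipate is bookkeeping around the pullbacks rather than any genuine difficulty: one must verify that $j\colon \tilde X^\natural \to X^\natural$ is an honest morphism of marked dendroidal sets, i.e. that a $\tilde p$-coCartesian corolla of $\tilde X = X \times_S \tilde S$ maps to a $p$-coCartesian corolla of $X$, and that the adjoint reformulation correctly matches the restriction map with restriction along $\tilde X^\natural \hookrightarrow X^\natural$ composed with corestriction through $\tilde Y^\natural$. Both of these rest on the same stability-under-base-change property of coCartesian corollas. Once the inclusion $P \hookrightarrow i_!(B)^\sharp \otimes X^\natural$ is recognized as a smash product, the conclusion is a direct application of Propositions \ref{prop:smashproduct} and Corollary \ref{cor:liftwrtmarked}.
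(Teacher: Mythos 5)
Your proposal is correct and follows essentially the same route as the paper's proof: reduce to showing the map is a left fibration between Kan complexes, transpose the lifting problem by adjunction, identify the resulting inclusion as the smash product of the marked anodyne $i_!(A)^\sharp \subseteq i_!(B)^\sharp$ with the cofibration $\tilde{X}^\natural \subseteq X^\natural$, and conclude via Proposition \ref{prop:smashproduct} and the lifting property of $Y^\natural \longrightarrow S^\sharp$ against marked anodynes. The extra bookkeeping you flag (that $\tilde{X}^\natural \hookrightarrow X^\natural$ respects markings) is a reasonable point of care that the paper passes over silently.
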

\begin{proof}
Since both mapping objects are in fact Kan complexes, it suffices to show that the stated map is a left fibration. So, let $A \subseteq B$ be a left anodyne inclusion of simplicial sets. We want to show that there exists a lift in the diagram
\[
\xymatrix{
A \ar[r]\ar[d] & \mathrm{Map}^\sharp_S(X^\natural, Y^\natural) \ar[d] \\
B \ar[r] & \mathrm{Map}^\sharp_{\tilde S}(\tilde{X}^\natural, \tilde{Y}^\natural)
}
\]
For this it suffices to show (by adjunction) that there exists a lift in the diagram
\[
\xymatrix{
i_!(A)^\sharp \otimes X^\natural \coprod_{i_!(A)^\sharp \otimes \tilde{X}^\natural} i_!(B)^\sharp \otimes \tilde{X}^\natural \ar[r]\ar[d] & Y^\natural \ar[d] \\
i_!(B)^\sharp \otimes X^\natural \ar[r] & S^\sharp
}
\]
Observe that the left vertical map is the smash product of the marked anodyne $i_!(A)^\sharp \subseteq i_!(B)^\sharp$ with the cofibration $\tilde{X}^\natural \subseteq X^\natural$ and is hence marked anodyne by Proposition \ref{prop:smashproduct}. By Proposition \ref{prop:lifting} we conclude that the lift exists. $\Box$
\end{proof}

\newpage

% section 6: Applications
\section{Applications}
\subsection{Naturality of the coCartesian model structure}
We will investigate the behaviour of the coCartesian model structure on $\mathbf{dSets}^+/S$ with respect to a map $\phi: S \longrightarrow T$. Observe that any such map provides a simplicial adjunction
\[
\xymatrix@R=40pt@C=40pt{
\phi_!: \mathbf{dSets}^+/S \ar@<.5ex>[r] & \mathbf{dSets}^+/T: \phi^* \ar@<.5ex>[l]
}
\]
where the left adjoint $\phi_!$ is given by composition with $\phi$ and the right adjoint $\phi^*$ by pulling back along $\phi$.

\begin{proposition}
The adjunction $(\phi_!, \phi^*)$ is a simplicial Quillen adjunction.
\end{proposition}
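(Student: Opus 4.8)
The plan is to prove that $\phi_!$ is a left Quillen functor — in fact one preserving \emph{all} weak equivalences — and that the adjunction is simplicial. First I would dispose of the formal points. Since $\phi_!$ leaves the underlying marked dendroidal set of an object untouched, only altering the structural map to the base, and since cofibrations in $\mathbf{dSets}^+/S$ are detected on underlying dendroidal sets, $\phi_!$ automatically preserves cofibrations. For the simplicial enrichment, the identity $(i_!(K))^\sharp \otimes \phi_! X = \phi_!\bigl((i_!(K))^\sharp \otimes X\bigr)$ (the tensor being formed via projection onto the $X$-factor, which is unchanged by $\phi_!$) combined with the ordinary adjunction gives natural isomorphisms
\begin{equation*}
\mathrm{Map}_T^\sharp(\phi_! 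X, Y) \cong \mathrm{Map}_S^\sharp(X, \phi^* Y),
\end{equation*}
exhibiting $(\phi_!, \phi^*)$ as a simplicial adjunction. It thus remains to check the Quillen condition, for which it suffices that $\phi_!$ preserve marked equivalences.

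The technical heart is the claim that $\phi^*$ sends fibrant objects to fibrant objects, i.e. that $\phi^*(Z^\natural) = (\phi^* Z)^\natural$ for every coCartesian fibration $Z \longrightarrow T$. By Proposition \ref{prop:pullbackfibration}, $\phi^* Z \longrightarrow S$ is again a coCartesian fibration, so the only issue is that the pullback marking (corollas whose image in $Z$ is coCartesian) should agree with the intrinsic coCartesian corollas of $\phi^* Z$. I would establish this by showing that $\phi^*(Z^\natural) \longrightarrow S^\sharp$ has the right lifting property with respect to all marked anodyne maps: given a lifting problem against a marked anodyne $A \longrightarrow B$ over $S^\sharp$, composing with $\phi$ produces a lifting problem against $Z^\natural \longrightarrow T^\sharp$, which is solvable because $Z^\natural \longrightarrow T^\sharp$ has the right lifting property with respect to marked anodynes (Corollary \ref{cor:liftwrtmarked}); the universal property of the pullback then reassembles the solution into the desired lift over $S$. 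Corollary \ref{cor:liftwrtmarked} applied over $S$ now identifies $\phi^*(Z^\natural)$ with $(\phi^* Z)^\natural$, as required.

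With this in hand, preservation of weak equivalences is formal. Using the normalizations $X_{(n)} = X \times E_\infty^\sharp$, which commute with $\phi_!$ so that $(\phi_! X)_{(n)} = \phi_!(X_{(n)})$, the simplicial adjunction identity together with the previous step yields, for every coCartesian fibration $Z \longrightarrow T$, natural isomorphisms
\begin{equation*}
\mathrm{Map}_T^\sharp\bigl(\phi_!(X_{(n)}), Z^\natural\bigr) \cong \mathrm{Map}_S^\sharp\bigl(X_{(n)}, (\phi^* Z)^\natural\bigr).
\end{equation*}
If $f \colon X \longrightarrow Y$ is a marked equivalence over $S$, then since $\phi^* Z \longrightarrow S$ is a coCartesian fibration, Proposition \ref{prop:equivKan} shows the induced map on the right-hand mapping complexes is a homotopy equivalence; hence so is the map on the left-hand side, for every $Z \longrightarrow T$. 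Applying Proposition \ref{prop:equivKan} over $T$ shows $\phi_!(f)$ is a marked equivalence. As $\phi_!$ preserves cofibrations and all marked equivalences, it is left Quillen, and being a simplicial adjunction, $(\phi_!, \phi^*)$ is a simplicial Quillen adjunction.

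The main obstacle is precisely the fibrancy-preservation step of the second paragraph: a priori the pullback of the canonical marking on $Z^\natural$ could fail to be the set of coCartesian corollas of $\phi^* Z$, so one cannot simply invoke Proposition \ref{prop:fibrantobjects} or Proposition \ref{prop:equivKan} directly. The marked-anodyne lifting argument, resting on Corollary \ref{cor:liftwrtmarked}, is what makes this identification clean; everything else reduces to manipulating the adjunction and reapplying Proposition \ref{prop:equivKan}.
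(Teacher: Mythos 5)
Your proof is correct and takes essentially the same approach as the paper: the key point in both is that $\phi^*$ preserves fibrant objects because the pullback of a map with the right lifting property against marked anodynes again has that lifting property. The paper's proof is three lines and leaves implicit the final deduction you spell out (that the adjunction plus Proposition \ref{prop:equivKan} then forces $\phi_!$ to preserve all marked equivalences), so your write-up is just a more complete version of the same argument.
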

\begin{proof}
Clearly $\phi_!$ preserves cofibrations. It rests us to check that $\phi^*$ preserves fibrant objects. This follows from the fact that the pullback of a map having the right lifting property with respect to marked anodynes will again have the right lifting property with respect to marked anodynes. $\Box$ 
\end{proof}

\begin{theorem}
\label{thm:naturalitycoCmodstruct}
If $\phi: S \longrightarrow T$ is an operadic equivalence of dendroidal sets then the adjunction $(\phi_!, \phi^*)$ is a simplicial Quillen equivalence.
\end{theorem}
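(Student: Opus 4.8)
The plan is to prove that for an operadic equivalence $\phi: S \longrightarrow T$, the Quillen adjunction $(\phi_!, \phi^*)$ is a Quillen equivalence. Since we already know from the previous proposition that $(\phi_!, \phi^*)$ is a Quillen adjunction, the main work is to verify the Quillen equivalence condition. The most efficient route is to reduce everything to the straightening/unstraightening equivalence of Theorem~\ref{thm:Grothendieckconstruction}, which identifies the homotopy theory of $\mathbf{dSets^+}/S$ with that of $\mathrm{Alg}_{\mathrm{hc}\tau_d(S)}(\mathbf{sSets^+})$. The key point is that the straightening functors are compatible with base change: part (2) of Proposition~\ref{prop:straightening} gives the commuting square relating $St_S^+$, $St_T^+$ and the functors $\phi_!$ and $\mathrm{hc}\tau_d(\phi)_!$.

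First I would reduce to the case where $S$ and $T$ are normal. Using the standard trick of replacing $X$ by $X \times E_\infty^\sharp$ (as in the definition of marked equivalence), and replacing $S$ by $S \times E_\infty$ and $T$ by $T \times E_\infty$, one checks that $\phi$ being an operadic equivalence is preserved and that the Quillen equivalence question is unaffected. Once $S$ and $T$ are normal, both straightening adjunctions $(St_S^+, Un_S^+)$ and $(St_T^+, Un_T^+)$ are Quillen equivalences by Theorem~\ref{thm:Grothendieckconstruction}. Now consider the diagram of left Quillen functors
\[
\xymatrix@C=50pt{
\mathbf{dSets^+}/S \ar[r]^{\phi_!}\ar[d]_{St_S^+} & \mathbf{dSets^+}/T \ar[d]^{St_T^+} \\
\mathrm{Alg}_{\mathrm{hc}\tau_d(S)}(\mathbf{sSets^+}) \ar[r]_{\mathrm{hc}\tau_d(\phi)_!} & \mathrm{Alg}_{\mathrm{hc}\tau_d(T)}(\mathbf{sSets^+})
}
\]
which commutes by Proposition~\ref{prop:straightening}. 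Since the two vertical functors are left Quillen equivalences, the functor $\phi_!$ on homotopy categories is a weak equivalence of $\infty$-categories if and only if the bottom functor $\mathrm{hc}\tau_d(\phi)_!$ is.

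The hard part, and the crux of the argument, will be showing that $\mathrm{hc}\tau_d(\phi)_!$ is a Quillen equivalence on algebra categories. Here I would invoke the fact that $\phi$ is an operadic equivalence of dendroidal sets, which by the Cisinski--Moerdijk theory means (after normalization) that $\mathrm{hc}\tau_d(\phi): \mathrm{hc}\tau_d(S) \longrightarrow \mathrm{hc}\tau_d(T)$ is a weak equivalence of cofibrant simplicial operads. The desired statement then follows from the general principle --- due to the results of Berger and Moerdijk \cite{bergermoerdijk1}\cite{bergermoerdijk2} on the homotopy theory of algebras over operads --- that a weak equivalence between cofibrant simplicial operads induces a Quillen equivalence between the corresponding categories of algebras (valued in a suitable symmetric monoidal model category, here $\mathbf{sSets^+}$ with the coCartesian model structure on marked simplicial sets). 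I expect the main technical obstacle to be verifying that $\mathbf{sSets^+}$ satisfies the hypotheses needed to apply this Berger--Moerdijk invariance theorem (e.g. being a sufficiently nice cofibrantly generated, monoidal model category so that base change along an operad equivalence is a Quillen equivalence); once that is in place, the conclusion is formal. Finally, I would translate the established equivalence of homotopy theories back through the commuting square to conclude that $(\phi_!, \phi^*)$ is a Quillen equivalence, completing the proof. $\Box$
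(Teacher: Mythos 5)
Your proposal is correct and follows essentially the same route as the paper: reduce to normal $S$ and $T$ via the product with $E_\infty$, then use the commuting square of left Quillen functors from Proposition \ref{prop:straightening}, the straightening Quillen equivalences of Theorem \ref{thm:Grothendieckconstruction}, and the Berger--Moerdijk result that $\mathrm{hc}\tau_d(\phi)_!$ is a Quillen equivalence between the algebra categories, concluding by two-out-of-three for Quillen equivalences. The only cosmetic difference is that the paper performs the normalization reduction after the cofibrant case rather than before; the content is identical.
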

\begin{proof}
First assume $S$ and $T$ are cofibrant. Then we have a diagram of left Quillen functors
\[
\xymatrix{
\mathbf{dSets}^+/S \ar[d]\ar[r] & \mathrm{Alg}_{\mathrm{hc}\tau_d(S)}(\mathbf{sSets^+}) \ar[d]\\
\mathbf{dSets}^+/T \ar[r] & \mathrm{Alg}_{\mathrm{hc}\tau_d(T)}(\mathbf{sSets^+})
}
\]
in which the horizontal functors give Quillen equivalences by Theorem \ref{thm:Grothendieckconstruction} and the right vertical functor gives a Quillen equivalence by the results of Berger and Moerdijk \cite{bergermoerdijk1}\cite{bergermoerdijk2}. We conclude that the left vertical functor must also be part of a Quillen equivalence. \par 
For general $S$ and $T$ we have a commutative square of left Quillen functors as follows:
\[
\xymatrix{
\mathbf{dSets}^+/(S \times E_\infty) \ar[r]\ar[d] & \mathbf{dSets}^+/S \ar[d] \\
\mathbf{dSets}^+/(T \times E_\infty) \ar[r] & \mathbf{dSets}^+/T
}
\]  
We know the horizontal arrows are part of Quillen equivalences and we just proved the left vertical arrow is part of a Quillen equivalence. This shows the right vertical functor also gives a Quillen equivalence. $\Box$
\end{proof}

\begin{corollary}
An operadic equivalence $\phi: S \longrightarrow T$ induces an equivalence of $\infty$-categories
\begin{equation*}
\mathrm{hc}N(\phi_!): \mathbf{coCart}(S) \longrightarrow \mathbf{coCart}(T)
\end{equation*}
\end{corollary}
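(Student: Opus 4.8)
The plan is to read the corollary off Theorem \ref{thm:naturalitycoCmodstruct} by the same device used to deduce the main theorem from its Quillen equivalence: by Lemma \ref{lemma:equivsimpcat} it is enough to produce a \emph{simplicial} functor between the fibrant--cofibrant subcategories $(\mathbf{dSets}^+/T)^\circ$ and $(\mathbf{dSets}^+/S)^\circ$ and to check that it is a weak equivalence of simplicial categories. Since $\phi_!$ is only a left Quillen functor and does not preserve fibrant objects (a coCartesian fibration $Z \to S$ need not remain coCartesian after composing with $\phi$), I would not work with $\phi_!$ directly; instead I would use the simplicial right adjoint $\phi^*$, which is the natural carrier of the argument. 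Accordingly, $\mathrm{hc}N(\phi_!)$ is to be understood as the functor of $\infty$-categories induced by the left Quillen functor $\phi_!$, and it will emerge as the homotopy inverse of $\mathrm{hc}N(\phi^*)$, so that establishing the latter is an equivalence suffices.

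First I would check that $\phi^*$ restricts to a simplicial functor $(\mathbf{dSets}^+/T)^\circ \to (\mathbf{dSets}^+/S)^\circ$. By Proposition \ref{prop:fibrantobjects} the fibrant--cofibrant objects over $T$ are exactly the marked dendroidal sets $Z^\natural$ associated to coCartesian fibrations $Z \to T$ whose total space $Z$ is normal. Now $\phi^*(Z^\natural) = (Z \times_T S)^\natural$ is again fibrant, since $\phi^*$ preserves fibrant objects (as verified in the proof that $(\phi_!,\phi^*)$ is a Quillen adjunction, using stability of coCartesian fibrations under pullback); and it is again cofibrant because $Z \times_T S$ admits a map to the normal dendroidal set $Z$ and is therefore itself normal. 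Thus $\phi^*$ preserves fibrant--cofibrant objects, and being a simplicial functor it restricts to the desired map of simplicial categories, with no need to first reduce to the case of normal $S$ and $T$.

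It then remains to show that this restriction of $\phi^*$ is a weak equivalence of simplicial categories. Because $(\phi_!,\phi^*)$ is a Quillen equivalence by Theorem \ref{thm:naturalitycoCmodstruct}, the derived functor $\mathbb{R}\phi^*$ is an equivalence of homotopy categories, which on fibrant--cofibrant objects coincides with $\phi^*$; this yields essential surjectivity. For full faithfulness one observes that on fibrant--cofibrant objects the simplicial mapping complexes $\mathrm{Map}^\sharp_T$ and $\mathrm{Map}^\sharp_S$ compute the derived mapping spaces, so the Quillen-equivalence condition forces $\phi^*$ to induce homotopy equivalences on them. By Lemma \ref{lemma:equivsimpcat}, $\mathrm{hc}N(\phi^*) \colon \mathbf{coCart}(T) \to \mathbf{coCart}(S)$ is then an equivalence of $\infty$-categories, and its homotopy inverse is the functor $\mathrm{hc}N(\phi_!)$ induced by the left adjoint; hence $\mathrm{hc}N(\phi_!)$ is an equivalence as well.

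The main obstacle I anticipate is this last step: genuinely upgrading the abstract Quillen equivalence of Theorem \ref{thm:naturalitycoCmodstruct} into a Dwyer--Kan equivalence of the associated simplicial categories, that is, verifying that $\phi^*$ induces homotopy equivalences on the mapping complexes $\mathrm{Map}^\sharp_S(-,-)$ and is essentially surjective, together with the bookkeeping needed to identify $\mathrm{hc}N(\phi_!)$ with the homotopy inverse of $\mathrm{hc}N(\phi^*)$.
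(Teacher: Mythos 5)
Your argument is correct and is precisely the deduction the paper intends: the corollary is stated without proof as an immediate consequence of Theorem \ref{thm:naturalitycoCmodstruct} together with Lemma \ref{lemma:equivsimpcat}, with $\mathrm{hc}N(\phi_!)$ read as the derived functor, inverse to the equivalence induced by the simplicial right adjoint $\phi^*$. The only point to watch is that Lemma \ref{lemma:equivsimpcat} as stated assumes every object of $\mathbf{dSets^+}/S$ is cofibrant, which fails for non-normal $S$; your direct verification that $\phi^*$ preserves fibrant-cofibrant objects (pullbacks of normal dendroidal sets being normal) and induces homotopy equivalences on the mapping complexes $\mathrm{Map}^\sharp$ is exactly what is needed to bypass that hypothesis, so this does not create a gap.
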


\begin{remark}
Note that this result provides us with a technical advantage when working with algebras over an $\infty$-operad $S$ as opposed to homotopy algebras over simplicial operads. Indeed, suppose $P$ is a simplicial operad. By definition a homotopy algebra over $P$ in $\mathbf{sSets^+}$ is an algebra in $\mathbf{sSets^+}$ over a cofibrant resolution of $P$. By Theorem \ref{thm:Grothendieckconstruction} and the previous corollary the $\infty$-category of such algebras is equivalent to the $\infty$-category $\mathbf{coCart}(\mathrm{hc}N_d(P))$. In other words, when working with homotopy algebras over a simplicial operad we first have to pass to a cofibrant resolution of our operad; for coCartesian fibrations over a dendroidal set it is no longer necessary to pass to such a resolution.
\end{remark}

The previous corollary also justifies the following definition.

\begin{definition}
A \emph{symmetric monoidal $\infty$-category} is a coCartesian fibration over $N_d(\mathrm{Comm})$. We define the $\infty$-category of symmetric monoidal $\infty$-categories by
\begin{equation*}
\mathbf{Cat}^\otimes_\infty := \mathbf{coCart}(N_d(\mathrm{Comm}))
\end{equation*}
\end{definition}

\begin{remark}
By our results, the $\infty$-category $\mathbf{Cat}^\otimes_\infty$ is equivalent to the category of $E_\infty$-algebras in $\infty$-categories.
\end{remark}

\subsection{Topological algebras and left fibrations}
\label{subsection:leftfibrations}
We will now discuss how our results specialize to the setting of algebras over an $\infty$-operad taking values in spaces or, equivalently, $\infty$-groupoids and prove the results claimed in section \ref{section:leftfib}. Observe that there is a simplicial adjunction
\[
\xymatrix@R=40pt@C=40pt{
u: \mathbf{dSets}^+/S \ar@<.5ex>[r] & \mathbf{dSets}/S: (-)^\sharp \ar@<.5ex>[l]
}
\]
We wish to use this adjunction to transfer the coCartesian model structure on $\mathbf{dSets}^+/S$ to a model structure on $\mathbf{dSets}/S$, which will turn out to be the \emph{covariant model structure} described in section \ref{section:leftfib}. Since the functor $(-)^\sharp$ is full and faithful, one might also interpret this procedure as restricting the coCartesian model structure along $(-)^\sharp$. \par 
For later use, we observe that the functor $(-)^\sharp$ actually also admits a right adjoint of its own. Indeed, define
\begin{equation*}
\mathpzc{k}: \mathbf{dSets}^+/S \longrightarrow \mathbf{dSets}/S
\end{equation*}
by letting $\mathpzc{k}(X, \mathcal{E}_X)$ be the dendroidal subset of $X$ consisting of the dendrices all of whose corollas are in $\mathcal{E}_X$. Then $\mathpzc{k}$ is right adjoint to $(-)^\sharp$. \par 
We will need the following fact:
\begin{lemma}
\label{lemma:maxKanofcoCfib}
If $p: X \longrightarrow S$ is a coCartesian fibration, then $\tilde p: \mathpzc{k}(X^\natural) \longrightarrow S$ is a left fibration.
\end{lemma}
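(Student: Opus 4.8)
The plan is to show that $\tilde p \colon \mathpzc{k}(X^\natural) \longrightarrow S$ satisfies the three conditions in the definition of a left fibration, exploiting the characterization of $\mathpzc{k}(X^\natural)$ as the largest dendroidal subset of $X$ all of whose corollas are $p$-coCartesian. The guiding principle is that a left fibration is precisely a coCartesian fibration in which every corolla is coCartesian (as noted in the remark after the definition of coCartesian fibration), so morally I want to argue that $\mathpzc{k}(X^\natural)$ is a coCartesian fibration over $S$ all of whose corollas are $\tilde p$-coCartesian.

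First I would verify condition (i), that $\tilde p$ is an inner fibration. Given an inner horn $\Lambda^e[T] \longrightarrow \mathpzc{k}(X^\natural)$ together with an extension $\Omega[T] \longrightarrow S$, I can compose with the inclusion $\mathpzc{k}(X^\natural) \hookrightarrow X$ and use that $p$ is an inner fibration to obtain a lift $\Omega[T] \longrightarrow X$. The point is then to check that this lift actually factors through $\mathpzc{k}(X^\natural)$, i.e.\ that every corolla of the filled-in $T$-dendrex is $p$-coCartesian. All corollas of $\Omega[T]$ already appear as faces of the horn $\Lambda^e[T]$ for an \emph{inner} edge $e$ (contracting an inner edge does not delete any corolla), so their images are corollas of $\mathpzc{k}(X^\natural)$ and hence coCartesian; thus the lift lands in $\mathpzc{k}(X^\natural)$. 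Next, for condition (ii) I would observe that condition (2) in the definition of a coCartesian fibration already produces, for any corolla $\sigma$ of $S$ and prescribed lifts of its inputs, a $p$-coCartesian corolla $\xi$ lying over $\sigma$; since $\xi$ is coCartesian it is by definition a corolla of $\mathpzc{k}(X^\natural)$, which is exactly what (ii) demands.

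The substantive step is condition (iii): for a tree $T$ with at least two vertices and a leaf vertex $v$, I must fill any horn $\Lambda^v[T] \longrightarrow \mathpzc{k}(X^\natural)$ compatibly with $\Omega[T] \longrightarrow S$. Here I would argue by induction on the number of vertices of $T$, mimicking the bookkeeping of Proposition~\ref{prop:composition}. For the base case where $T$ has exactly two vertices, the leaf corolla with vertex $v$ maps to a coCartesian corolla of $X$ (it is a corolla of $\mathpzc{k}(X^\natural)$), so the defining lifting property of a $p$-coCartesian corolla against the horn $\Lambda^v[T]$ provides the desired filler in $X$; I then check this filler lies in $\mathpzc{k}(X^\natural)$ by using Proposition~\ref{prop:composition} to see that the newly created composite corolla is again coCartesian, given that both constituent corollas are. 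For the inductive step with three or more vertices, I would decompose the boundary using the faces of $\Omega[T]$ and apply the inductive hypothesis to the smaller trees together with closure of coCartesian corollas under composition (Proposition~\ref{prop:composition}) to propagate the coCartesian property through all corollas of the filled dendrex.

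The main obstacle, and the place requiring genuine care rather than routine diagram-chasing, is precisely this verification that the lifts produced in conditions (i) and (iii) remain inside $\mathpzc{k}(X^\natural)$ — that is, that every corolla of the extended dendrex is $p$-coCartesian, not merely that the extension exists in $X$. The key tool throughout is Proposition~\ref{prop:composition} (coCartesian corollas are closed under composition when $S$ is an $\infty$-operad and $p$ is an inner fibration), so I should first confirm that $S$ may be assumed to be an $\infty$-operad in this lemma, or otherwise adapt the closure argument; if $S$ is merely a general dendroidal set I would reduce to the representable case by pulling back along maps $\Omega[T] \longrightarrow S$ and invoking Proposition~\ref{prop:pullbackfibration}, since over a representable $\Omega[T]$ the base is an $\infty$-operad and the composition results apply directly. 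With that closure property secured, the remaining arguments are the standard coskeletal and horn-filling manipulations already used in the proof of Lemma~\ref{lemma:opfibgrpdsleftfib}.
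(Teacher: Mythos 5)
Your overall strategy is the same as the paper's (lift in $X$, then check the filler stays inside $\mathpzc{k}(X^\natural)$ by tracking which corollas are newly created), but the two places where you actually justify that check are both wrong as stated. First, for the inner fibration condition your claim that \emph{all} corollas of $\Omega[T]$ already appear in the inner horn $\Lambda^e[T]$ fails precisely when $T$ has exactly two vertices: in that case $e$ is the unique inner edge, the composite corolla lives only in the missing face $\partial_e\Omega[T]$, and the filler genuinely creates a new corolla. (Your parenthetical is beside the point -- the issue is not whether contracting deletes corollas but whether the composite corolla is visible in some face belonging to the horn; for $T$ with $3$ or more vertices there is a second inner edge $e'$ and the composite sits inside $\partial_{e'}\Omega[T]$, which is why the claim holds there.) The paper treats the two-vertex case separately, using that marked corollas of $X^\natural$ are closed under composition (the right lifting property against marked anodynes of type (3), i.e.\ Proposition \ref{prop:composition} after pulling back to a representable base). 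You have this tool in hand, you just did not notice it is needed here.

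Second, in your base case for condition (iii) you have the combinatorics backwards. For a leaf horn $\Lambda^v[T]$ the \emph{inner} face $\partial_e\Omega[T]$ is part of the horn, so the composite corolla is already given and already coCartesian; the face that is missing is $\partial_v\Omega[T]$, and the corolla the filler creates is the \emph{root} corolla. Deducing that the root corolla is coCartesian from the coCartesianness of the leaf corolla and of the composite is Proposition \ref{prop:composition2}, not Proposition \ref{prop:composition} (which goes the other way: leaf and root coCartesian imply composite coCartesian). Both defects are repairable with results already in the paper -- indeed the paper's own proof only writes out the inner fibration step, dispatching the two-vertex case exactly via closure under composition, and then observes that $\tilde p$ is a coCartesian fibration all of whose corollas are coCartesian -- but as written your justifications fail at exactly the two points you yourself flag as requiring genuine care.
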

\begin{proof}
First we wish to show that $\mathpzc{k}(X^\natural) \longrightarrow S$ is an inner fibration. Let $T$ be a tree with at least two vertices and let $e$ be an inner edge of $T$. Suppose we are given a lifting problem
\[
\xymatrix{
\Lambda^e[T] \ar[d]\ar[r] & \mathpzc{k}(X^\natural) \ar[d] \\
\Omega[T] \ar[r]\ar@{-->}[ur] & S
}
\]
We can extend this diagram and find a map as indicated by the dotted arrow below:
\[
\xymatrix{
\Lambda^e[T] \ar[d]\ar[r] & \mathpzc{k}(X^\natural) \ar[d]\ar[r] & X \ar[dl] \\
\Omega[T] \ar[r]\ar@{-->}[urr] & S & 
}
\]
If $T$ has 3 or more vertices, then all corollas of $\Omega[T]$ are already contained in $\Lambda^e[T]$, so the dotted arrow must factor over $\mathpzc{k}(X^\natural)$. If $T$ has 2 vertices, the dotted arrow must factor over $\mathpzc{k}(X^\natural)$ since the map $X^\natural \longrightarrow S^\sharp$ has the right lifting property with respect to marked anodynes of type (3), or said more informally, the set of marked corollas of $X^\natural$ is closed under composition. It is now easy to see that $\tilde p$ is a coCartesian fibration such that every corolla of $\mathpzc{k}(X^\natural)$ is $\tilde p$-coCartesian. In other words, $\tilde p$ is a left fibration. $\Box$
\end{proof}

\begin{definition}
We call a map $f$ in $\mathbf{dSets}/S$ a \emph{covariant equivalence} (resp. a \emph{covariant fibration}) if $f^\sharp$ is a coCartesian equivalence (resp. a fibration in the coCartesian model structure). 
\end{definition}
\begin{remark}
Be aware that a fibration in the coCartesian model structure is not the same thing as a coCartesian fibration.
\end{remark}

\begin{theorem}
\label{thm:covmodelstruct2}
There exists a left proper, simplicial, combinatorial model structure on $\mathbf{dSets}/S$ in which the weak equivalences (resp. fibrations) are the covariant equivalences (resp. covariant fibrations). Furthermore, the cofibrations in this model structure are the normal monomorphisms.
\end{theorem}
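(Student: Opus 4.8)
The plan is to transfer the coCartesian model structure on $\mathbf{dSets^+}/S$ across the adjunction $(u, (-)^\sharp)$ to obtain the desired model structure on $\mathbf{dSets}/S$. The key observation is that $(-)^\sharp$ is fully faithful, so $\mathbf{dSets}/S$ can be regarded as a reflective-type subcategory whose objects are exactly those marked dendroidal sets all of whose corollas are marked. I would verify directly that the classes defined in the statement (normal monomorphisms as cofibrations, and covariant equivalences/fibrations defined via $f^\sharp$) constitute a model structure, appealing to the existing structure on $\mathbf{dSets^+}/S$ rather than rebuilding everything from scratch.

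First I would check the formal generalities. The functor $(-)^\sharp$ preserves and reflects the relevant classes essentially by definition: a map $f$ is a covariant cofibration precisely when $u(f)$ is a normal monomorphism, and since $u \circ (-)^\sharp = \mathrm{id}$, one sees that $f^\sharp$ is a cofibration in the coCartesian model structure exactly when $f$ is a normal monomorphism, so cofibrations match up. Covariant equivalences and fibrations are \emph{defined} by applying $(-)^\sharp$, so the 2-out-of-3 property, retract closure, and the two lifting/factorization axioms can all be pulled back from $\mathbf{dSets^+}/S$ provided I can show that $(-)^\sharp$ interacts well with factorizations. The main technical point is that a factorization of $f^\sharp$ in $\mathbf{dSets^+}/S$ must be convertible into a factorization of $f$ living in the image of $(-)^\sharp$; here the right adjoint $\mathpzc{k}$ is the essential tool. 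Concretely, given a (cofibration, trivial fibration) or (trivial cofibration, fibration) factorization $f^\sharp = g \circ h$, I would apply $\mathpzc{k}$ and use the unit/counit of the $((-)^\sharp, \mathpzc{k})$ adjunction to land the intermediate object back in $\mathbf{dSets}/S$, checking that the resulting maps retain the correct lifting properties.

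For the identification of the model structure and its fibrant objects I would invoke Lemma~\ref{lemma:maxKanofcoCfib}: if $p: X \longrightarrow S$ is a coCartesian fibration then $\mathpzc{k}(X^\natural) \longrightarrow S$ is a left fibration, which is exactly what shows that the fibrant objects of the transferred structure are the left fibrations, matching Theorem~\ref{thm:covmodelstruct}. Left properness and the combinatorial property transfer along the adjunction since $(-)^\sharp$ is a left adjoint preserving colimits and cofibrations, and the simplicial enrichment descends because the mapping objects $\mathrm{Map}_S^\sharp$ and the tensoring $X \otimes i_!(K)$ are compatible with $(-)^\sharp$ up to the identifications already established in the excerpt.

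The hard part will be verifying the factorization and lifting axioms cleanly, because transfer of model structures along an adjunction is not automatic: the standard transfer theorem (using the right adjoint to create fibrations and weak equivalences) requires controlling the generating trivial cofibrations and showing their transfinite pushouts are weak equivalences. Rather than invoke a generic transfer theorem, I expect the smoothest route is to show directly that $(-)^\sharp$ is the left adjoint of a Quillen adjunction whose right adjoint $\mathpzc{k}$ reflects fibrant objects to left fibrations, and then use the fully faithfulness of $(-)^\sharp$ together with the already-established combinatorial model structure on $\mathbf{dSets^+}/S$ (Theorem~\ref{thm:coCartmodelstructure}) to deduce that the image is closed under the operations defining a model structure. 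The crux is ensuring that the weak factorization systems restrict to $\mathbf{dSets}/S$, i.e.\ that the intermediate objects produced by factorization in $\mathbf{dSets^+}/S$ can be taken to have all corollas marked; this is where I anticipate spending the most care, using $\mathpzc{k}$ and the explicit description of marked anodynes.
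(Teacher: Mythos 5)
Your overall strategy --- transferring the coCartesian model structure along the adjunction between $u$ and $(-)^\sharp$, with the second adjunction $((-)^\sharp, \mathpzc{k})$ and Lemma \ref{lemma:maxKanofcoCfib} as the essential tools --- is exactly the paper's. But the mechanism you propose for the factorization axioms is where the argument breaks down, and you have not isolated the one substantive verification that makes the transfer go through. The paper does not restrict the weak factorization systems of $\mathbf{dSets^+}/S$ to the all-marked objects; it runs the small object argument afresh in $\mathbf{dSets}/S$ on the images under $u$ of the generating (trivial) cofibrations, and the only nontrivial hypothesis of the transfer principle is then: \emph{for any trivial cofibration $f$ in $\mathbf{dSets^+}/S$, the map $u(f)$ is a covariant equivalence}. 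This is verified by a three-line adjunction computation: $\mathrm{Map}^\sharp_S(u(B)^\sharp, Z^\natural) \simeq \mathrm{Map}^\sharp_S(B, \mathpzc{k}(Z^\natural)^\sharp)$, and $\mathpzc{k}(Z^\natural) \longrightarrow S$ is again a coCartesian fibration by Lemma \ref{lemma:maxKanofcoCfib}, so the assumption on $f$ applies. You name both ingredients but never assemble them into this statement, which is the actual crux.

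Your proposed alternative --- factor $f^\sharp$ in $\mathbf{dSets^+}/S$ and apply $\mathpzc{k}$ to land the intermediate object back in the image of $(-)^\sharp$ --- fails for the (trivial cofibration, fibration) system. The intermediate object $C$ of such a factorization of $A^\sharp \to B^\sharp$ genuinely acquires unmarked corollas, since the generating trivial cofibrations include the flat inner horn inclusions $\Lambda^e[T]^\flat \subseteq \Omega[T]^\flat$; after applying $\mathpzc{k}$ you would need $A^\sharp \to \mathpzc{k}(C)^\sharp$ to remain a trivial cofibration, i.e.\ by two-out-of-three that $\mathpzc{k}(C)^\sharp \hookrightarrow C$ is a coCartesian equivalence. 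That claim is not supplied by fully faithfulness of $(-)^\sharp$ or by the unit/counit formalism; it is essentially equivalent to the existence of the model structure you are trying to construct (the fibrant replacement of an all-marked object being a left fibration), so the argument as sketched is circular. The same issue infects your treatment of cofibrations: you assert that "cofibrations match up," but in a transferred structure the cofibrations are \emph{defined} by a lifting property and one must \emph{prove} they are exactly the normal monomorphisms; the nontrivial direction (every covariant cofibration is a normal monomorphism) requires showing that covariant trivial fibrations have the right lifting property against all normal monomorphisms, which the paper handles by factoring a trivial fibration $X \to Y$ in $\mathbf{dSets^+}/S$ through $\mathpzc{k}(Y)^\sharp \times_Y X$ and checking that this pullback is already all-marked. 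None of these steps is present in your outline.
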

\begin{proof}
To show that the covariant equivalences and fibrations indeed define a model structure $\mathbf{dSets}/S$, we have to show that the adjunction between $u$ and $(-)^\sharp$ satisfies the requirements for transfer \cite{bergermoerdijk1}\cite{crans}. It suffices to show the following two things:
\begin{itemize}
\item[(i)] $(-)^\sharp$ preserves filtered colimits
\item[(ii)] For any trivial cofibration $f$ in $\mathbf{dSets}^+/S$, the map $u(f)$ is a covariant equivalence
\end{itemize}
Property (i) is obvious: $(-)^\sharp$ admits a right adjoint, so in fact it preserves all colimits. Now suppose $f: A \longrightarrow B$ is a trivial cofibration in $\mathbf{dSets}^+/S$. For convenience of notation, assume $A$ and $B$ are normal. Otherwise, we would first just take normalizations. We need to check that for any coCartesian fibration $Z \longrightarrow S$, the induced map
\begin{equation*}
\mathrm{Map}^\sharp_S(u(B)^\sharp, Z^\natural) \longrightarrow \mathrm{Map}^\sharp_S(u(A)^\sharp, Z^\natural)
\end{equation*}
is a weak homotopy equivalence of simplicial sets. Taking adjoints, this is the same as showing that
\begin{equation*}
\mathrm{Map}^\sharp_S(u(B), \mathpzc{k}(Z^\natural)) \longrightarrow \mathrm{Map}^\sharp_S(u(A), \mathpzc{k}(Z^\natural))
\end{equation*}
is a weak equivalence, which in turn is the same as showing that
\begin{equation*}
\mathrm{Map}^\sharp_S(B, \mathpzc{k}(Z^\natural)^\sharp) \longrightarrow \mathrm{Map}^\sharp_S(A, \mathpzc{k}(Z^\natural)^\sharp)
\end{equation*}
is a weak equivalence. This follows from the fact that $\mathpzc{k}(Z) \longrightarrow S$ is a left fibration (so in particular a coCartesian fibration) by Lemma \ref{lemma:maxKanofcoCfib} and the assumption that $A \longrightarrow B$ is a coCartesian equivalence. \par 
Now let us show that the cofibrations of this model structure are precisely the normal monomorphisms. Since every normal monomorphism is in the image of $u$ and $u$ is left Quillen by construction of the model structure on $\mathbf{dSets}/S$ we are considering, we conclude that every normal monomorphism is a cofibration in $\mathbf{dSets}/S$. For the converse, suppose $f: A \longrightarrow B$ is a cofibration in $\mathbf{dSets}/S$. If we can show $f^\sharp$ is a cofibration in the coCartesian model structure we can conclude that $f$ is a normal monomorphism. Suppose $X \longrightarrow Y$ is a trivial fibration in $\mathbf{dSets}^+/S$. We want to solve lifting problems of the form
\[
\xymatrix{
A^\sharp \ar[d]\ar[r] & X \ar[d] \\
B^\sharp \ar[r]\ar@{-->}[ur] & Y
}
\]
Note that the horizontal maps admit factorizations as follows:
\[
\xymatrix{
A^\sharp \ar[d]\ar[r] & \mathpzc{k}(Y)^\sharp \times_Y X \ar[d]\ar[r]& X \ar[d] \\
B^\sharp \ar[r] & \mathpzc{k}(Y)^\sharp \ar[r] & Y
}
\]
The middle vertical map is the pullback of a trivial fibration and hence itself a trivial fibration. In particular, it is surjective. Using that it also has the right lifting property with respect to cofibrations of the form $\Omega[C_n]^\flat \longrightarrow \Omega[C_n]^\sharp$ we conclude that
\begin{equation*}
\mathpzc{k}(Y)^\sharp \times_Y X = \bigl(u(\mathpzc{k}(Y)^\sharp \times_Y X)\bigr)^\sharp
\end{equation*} 
or in other words, that every corolla of $\mathpzc{k}(Y)^\sharp \times_Y X$ is marked. We see that the middle vertical map is in fact in the image of the functor $(-)^\sharp$. It follows that
\begin{equation*}
u(\mathpzc{k}(Y)^\sharp \times_Y X) \longrightarrow u(\mathpzc{k}(Y))
\end{equation*} 
is a trivial fibration, so we obtain a lift as follows:
\[
\xymatrix{
A \ar[r]\ar[d] & u(\mathpzc{k}(Y)^\sharp \times_Y X) \ar[d] \\
B \ar[r]\ar[ur] & \mathpzc{k}(Y)
}
\]
The composition
\begin{equation*}
B^\sharp \longrightarrow \mathpzc{k}(Y)^\sharp \times_Y X \longrightarrow X
\end{equation*}
now gives a lift in the original lifting problem, completing the proof that $f$ is a normal monomorphism. \par 
We conclude by showing left properness. Suppose we are given a pushout square in $\mathbf{dSets}/S$
\[
\xymatrix{
A \ar[d]\ar[r] & C \ar[d] \\
B \ar[r] & D
}
\]
in which the left vertical map is a cofibration and the top horizontal map is a covariant equivalence. The diagram
\[
\xymatrix{
A^\sharp \ar[d]\ar[r] & C^\sharp \ar[d] \\
B^\sharp \ar[r] & D^\sharp
}
\]
is still a pushout square, by definition the top horizontal map is a coCartesian equivalence and by what we've just shown the left vertical map is a cofibration in the coCartesian model structure as well. By left properness of the coCartesian model structure, the bottom horizontal map is a coCartesian equivalence and so, by definition, the bottom horizontal map in the first square is a covariant equivalence. $\Box$
\end{proof}

From now on we will refer to the model structure of Theorem \ref{thm:covmodelstruct2} as the covariant model structure. 

\begin{remark}
Observe that a map $f$ in $\mathbf{dSets}/S$ is a cofibration (resp. weak equivalence, resp. fibration) in the covariant model structure if and only if $f^\sharp$ is a cofibration (resp. weak equivalence, resp. fibration) in the coCartesian model structure on $\mathbf{dSets}^+/S$. This justifies the idea that we are `restricting' the coCartesian model structure along $(-)^\sharp$. Note that both the pairs $(u, (-)^\sharp)$ and $((-)^\sharp, \mathpzc{k})$ are Quillen pairs; the first by construction, the second by the characterization of cofibrations given above.
\end{remark}

\begin{remark}
The fibrant objects of $\mathbf{dSets}/S$ in the covariant model structure are the maps $p: X \longrightarrow S$ such that $p^\sharp: X^\sharp \longrightarrow S^\sharp$ is a fibrant object of $\mathbf{dSets}^+/S$. In this case $p$ is a coCartesian fibration and every corolla of $X$ is coCartesian; in other words, $p$ is a left fibration. Thus the fibrant objects of $\mathbf{dSets}/S$ are precisely the left fibrations. 
\end{remark}

As in the coCartesian model structure, there is a convenient characterization of weak equivalences between fibrant objects:

\begin{proposition}
Suppose we are given a diagram
\[
\xymatrix{
X \ar[rr]^f\ar[dr]_p & & Y \ar[dl]^q \\
 & S & 
}
\]
such that both $p$ and $q$ are left fibrations. Then the following are equivalent:
\begin{itemize}
\item[(1)] The map $f$ induces a covariant equivalence in $\mathbf{dSets}/S$
\item[(2)] The map $f$ is an operadic equivalence
\item[(3)] The map $f$ induces a weak homotopy equivalence of Kan complexes  $X_s \longrightarrow Y_s$ for each colour $s$ of $S$
\end{itemize}
\end{proposition}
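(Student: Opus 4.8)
The plan is to reduce this statement to its coCartesian counterpart, Proposition \ref{prop:markedequivalence}, which already establishes the equivalence of the analogous conditions for coCartesian fibrations and marked equivalences. The bridge is the observation (recorded in the remark following the definition of a coCartesian fibration) that a left fibration is precisely a coCartesian fibration in which \emph{every} corolla is $p$-coCartesian. Consequently, if $p: X \longrightarrow S$ and $q: Y \longrightarrow S$ are left fibrations, then the natural markings $X^\natural$ and $Y^\natural$ coincide with the maximal markings $X^\sharp$ and $Y^\sharp$, and any map $f: X \longrightarrow Y$ over $S$ automatically carries coCartesian corollas to coCartesian corollas (there being nothing to check, since all corollas are coCartesian on both sides). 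Thus $f$ satisfies the hypotheses of Proposition \ref{prop:markedequivalence}.

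First I would establish the equivalence of (1) and (2). By the definition of a covariant equivalence, $f$ is a covariant equivalence in $\mathbf{dSets}/S$ if and only if $f^\sharp: X^\sharp \longrightarrow Y^\sharp$ is a weak equivalence in the coCartesian model structure, i.e.\ a marked equivalence (Theorem \ref{thm:coCartmodelstructure}). Under the identifications $X^\sharp = X^\natural$ and $Y^\sharp = Y^\natural$ noted above, this is exactly condition (1) of Proposition \ref{prop:markedequivalence} applied to $f$. That proposition then yields that it holds if and only if $f$ is an operadic equivalence, which is condition (2). Hence (1) and (2) are equivalent.

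It remains to match condition (3). Proposition \ref{prop:markedequivalence} phrases the fiberwise condition as: $f$ induces a \emph{categorical} equivalence $X_s \longrightarrow Y_s$ for each colour $s$ of $S$. Here I would invoke the fact that the fibers of a left fibration are Kan complexes, which follows from Remark \ref{rmk:simplicialleftfib} together with Proposition \ref{prop:leftfibcomppullback}. The main (and only genuinely delicate) point is the reconciliation of the two notions of fiberwise equivalence: for a map between Kan complexes, being a categorical equivalence (an equivalence in the Joyal model structure) is equivalent to being a weak homotopy equivalence (an equivalence in the Quillen model structure). Indeed, a categorical equivalence between Kan complexes is a weak homotopy equivalence, and conversely a weak homotopy equivalence of Kan complexes is a simplicial homotopy equivalence by Whitehead's theorem, hence a categorical equivalence. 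Applying this to each fiber $f_s: X_s \longrightarrow Y_s$ shows that condition (3) of Proposition \ref{prop:markedequivalence} agrees with condition (3) of the present statement, completing the chain of equivalences. Everything beyond this last reconciliation is a matter of transporting Proposition \ref{prop:markedequivalence} across the adjunction $(u, (-)^\sharp)$ and the identification $X^\natural = X^\sharp$ for left fibrations.
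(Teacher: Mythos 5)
Your proposal is correct and follows essentially the same route as the paper: both reduce to Proposition \ref{prop:markedequivalence} via the observation that for left fibrations $X^\natural = X^\sharp$ (so that $f^\sharp$ being a coCartesian equivalence is exactly condition (1) of that proposition), and both finish by noting that the fibers are Kan complexes, for which categorical equivalences coincide with weak homotopy equivalences. The only difference is that you spell out the identification $X^\natural = X^\sharp$ and the Whitehead-type argument slightly more explicitly than the paper does.
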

\begin{proof}
By definition, (1) is equivalent to $f^\sharp$ being a coCartesian equivalence in $\mathbf{dSets}^+/S$. Since $X^\sharp \longrightarrow S^\sharp$ and $Y^\sharp \longrightarrow S^\sharp$ are fibrant objects of the latter category, we may apply Proposition \ref{prop:markedequivalence} to deduce that (1) is in fact equivalent to any of the following statements:
\begin{itemize}
\item[(2)] The map $f$ is an operadic equivalence
\item[(3')] The map $f$ induces a categorical equivalence $X_s \longrightarrow Y_s$ for each colour $s$ of $S$
\end{itemize}
Since the fibers $X_s$ and $Y_s$ are in fact Kan complexes by the fact that $p$ and $q$ are left fibrations, it follows that (3') is equivalent to (3): indeed, a map of Kan complexes is a categorical equivalence if and only if it is a weak homotopy equivalence. $\Box$ 
\end{proof}

\begin{definition}
Endow $\mathbf{dSets}/S$ with the covariant model structure. We define the $\infty$-category of left fibrations over $S$ by
\begin{equation*}
\mathbf{LFib}(S) := \mathrm{hc}N((\mathbf{dSets}/S)^\circ)
\end{equation*}
In case $S$ is normal, we define the $\infty$-category of \emph{$S$-algebras in spaces} by
\begin{equation*}
\mathrm{Alg}_S(\mathcal{S}) := \mathrm{hc}N((\mathrm{Alg}_{\mathrm{hc}\tau_d(S)}(\mathbf{sSets}))^\circ)
\end{equation*}
where $\mathbf{sSets}$ is endowed with the Quillen model structure.
\end{definition}

Inspecting our constructions, we may derive the following special case of Theorem \ref{thm:Grothendieckconstruction}:

\begin{theorem}
For a normal dendroidal set $S$, the straightening functor $St_S$ induces an equivalence of $\infty$-categories
\begin{equation*}
\mathbf{LFib}(S) \simeq \mathrm{Alg}_S(\mathcal{S})
\end{equation*}
\end{theorem}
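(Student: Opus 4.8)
The plan is to deduce the theorem directly from Theorem~\ref{thm:Grothendieckconstruction} by restricting the Grothendieck equivalence along the two ``everything is marked'' embeddings. On the geometric side, Theorem~\ref{thm:covmodelstruct2} exhibits the covariant model structure on $\mathbf{dSets}/S$ as the restriction of the coCartesian model structure along the fully faithful left Quillen functor $(-)^\sharp\colon \mathbf{dSets}/S \to \mathbf{dSets^+}/S$: by construction $f$ is a covariant cofibration (resp.\ equivalence) precisely when $f^\sharp$ is a coCartesian one, and the fibrant objects are exactly the left fibrations, whose sharpening $X^\sharp$ equals $X^\natural$ since every corolla of a left fibration is coCartesian. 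The analogous picture holds on the algebraic side: the pointwise functor $(-)^\sharp\colon \mathrm{Alg}_{\mathrm{hc}\tau_d(S)}(\mathbf{sSets}) \to \mathrm{Alg}_{\mathrm{hc}\tau_d(S)}(\mathbf{sSets^+})$ is left Quillen, because on simplicial sets $(-)^\sharp$ sends monomorphisms to cofibrations and weak homotopy equivalences to marked equivalences (the latter by the argument already used to show $i_!(g)^\sharp$ is a marked equivalence), and $K^\sharp$ is fibrant in $\mathbf{sSets^+}$ exactly when $K$ is a Kan complex. Thus $\mathbf{LFib}(S)$ and $\mathrm{Alg}_S(\mathcal{S})$ sit inside $\mathbf{coCart}(S)$ and $\mathrm{Alg}_S(\mathbf{Cat}_\infty)$ as the full sub-$\infty$-categories cut out by $(-)^\sharp$, full faithfulness at the level of homotopy-coherent nerves following from $\mathpzc{k}\circ(-)^\sharp=\mathrm{id}$ and the simplicial adjunction $(-)^\sharp\dashv\mathpzc{k}$, which identify the relevant mapping objects $\mathrm{Map}^\sharp_S(X^\sharp,Y^\sharp)$.

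The single compatibility I would establish first is a natural isomorphism
\[
St_S^+(X^\sharp) \cong \bigl(St_S(X)\bigr)^\sharp, \qquad X \in \mathbf{dSets}/S,
\]
between the marked straightening of $X^\sharp$ and the pointwise sharpening of the unmarked straightening $St_S(X)$. Since $St_S$, $St_S^+$ and both copies of $(-)^\sharp$ preserve colimits, it suffices to check this on representables $p=\mathrm{id}_{\Omega[T]}$, where $St_{\Omega[T]}(\mathrm{id})(c)=\Delta[T/c]^{\mathrm{op}}$ and one verifies from the definition of the markings $\mathcal{E}_p(c)$ that, once every corolla of $\Omega[T]^\sharp$ is marked, every edge of each cube $\Delta[T/c]^{\mathrm{op}}$ has the form $\sigma_!(\tilde e)$ and is therefore marked. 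Passing to right adjoints yields the companion relation $\mathpzc{k}\circ Un^+_S \cong Un_S \circ \mathpzc{k}$, so that the square of left Quillen functors relating $(St_S,Un_S)$ and $(St^+_S,Un^+_S)$ through the two embeddings $(-)^\sharp$ commutes.

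With this in hand I would first check that $(St_S,Un_S)$ is a Quillen adjunction: for a covariant cofibration $f$ the map $f^\sharp$ is a cofibration in $\mathbf{dSets^+}/S$, hence $St_S(X)^\sharp=St^+_S(X^\sharp)$ is a cofibration of $\mathbf{sSets^+}$-algebras by the preservation lemma for $St^+_S$, and since $(-)^\sharp$ reflects cofibrations and weak equivalences pointwise this forces $St_S$ to preserve cofibrations and (using the reflection of weak equivalences) trivial cofibrations. To upgrade to a Quillen equivalence I would use, exactly as in the proof of Proposition~\ref{prop:strovertree}, that every object of $\mathbf{dSets}/S$ is normal, hence cofibrant, because any dendroidal set mapping to the normal $S$ is itself normal. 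For cofibrant $X$ and fibrant $A$, an adjoint pair $St_S(X)\to A$ and $X\to Un_S(A)$ is carried by $(-)^\sharp$ to an adjoint pair for $(St^+_S,Un^+_S)$, via $St^+_S(X^\sharp)=St_S(X)^\sharp$ and the identification $Un^+_S(A^\sharp)=Un_S(A)^\sharp$; since covariant equivalences are defined through $(-)^\sharp$ and $K^\sharp\to L^\sharp$ is a marked equivalence iff $K\to L$ is a weak homotopy equivalence, the Quillen-equivalence property for $(St^+_S,Un^+_S)$ supplied by Theorem~\ref{thm:Grothendieckconstruction} transfers to $(St_S,Un_S)$. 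Lemma~\ref{lemma:equivsimpcat} then converts this into the desired equivalence $\mathbf{LFib}(S)\simeq\mathrm{Alg}_S(\mathcal{S})$ of homotopy-coherent nerves.

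The crux, and the step I would treat most carefully, is the identification $Un^+_S(A^\sharp)=Un_S(A)^\sharp$ for a fibrant space-valued algebra $A$, equivalently the assertion that straightening and unstraightening respect the two groupoidal restrictions: $St^+_S$ must carry marked left fibrations to $\infty$-groupoid-valued algebras and $Un^+_S$ must send everything-marked algebras back to left fibrations. The easy half is that $Un_S$, being right Quillen for the covariant structure, sends fibrant algebras to left fibrations by Lemma~\ref{lemma:maxKanofcoCfib} and Theorem~\ref{thm:covmodelstruct2}; the genuinely new input is that the fibrant replacement of an everything-marked algebra remains everything-marked up to equivalence, which I expect to follow from the combinatorial compatibility of markings isolated above together with the fact from Section~\ref{section:leftfib} that the fibers of a left fibration are Kan complexes. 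Once this closure of the image of $(-)^\sharp$ under fibrant replacement is secured, the restricted functors are mutually inverse on the two full subcategories and the theorem follows.
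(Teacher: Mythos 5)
Your overall architecture --- restricting the marked Quillen equivalence of Theorem \ref{thm:Grothendieckconstruction} along the ``everything marked'' embeddings on both sides --- is the route the paper intends, but the compatibility you single out as the foundation of the argument is false: there is no natural isomorphism $St^+_S(X^\sharp) \cong (St_S(X))^\sharp$. Take $S = X = \Omega[T]$ with $T$ the linear tree with edges $0,1,2$ ($2$ the root) and vertices $v\colon 0\to 1$, $w\colon 1\to 2$. Then $St_{\Omega[T]}(\mathrm{id})(2)$ is the square $(\Delta^1)^{\{0,1\}}$, and unwinding the definition of $\mathcal{E}_p(2)$ one finds that the only nondegenerate marked edges are $\{(t_0,0)\}$ (the cube of the composite corolla $0\to 2$), $\{(0,t_1)\}$ (the cube of $w$) and $\{(t_0,1)\}$ (namely $w_!$ applied to the cube of $v$); the remaining edge $\{(1,t_1)\}$ and the diagonal are \emph{not} of the form $\sigma_!(\tilde e)$, even though every corolla of $\Omega[T]^\sharp$ is marked. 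This is consistent with the remark following the definition of $St^+_S$, which stresses that the markings are the \emph{smallest} functorial choice. Consequently the ``companion relation'' you obtain by passing to right adjoints, and with it the crucial identification $Un^+_S(A^\sharp) = Un_S(A)^\sharp$ on which your closing argument rests, does not follow from your square.

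The identification you need is nevertheless available, but it comes from the \emph{other} commuting square: by construction $St^+_S(X,\mathcal{E}_X)$ has underlying algebra $St_S$ of the underlying object, i.e. $u \circ St^+_S = St_S \circ u$ holds on the nose, where both $u$'s are the forgetful left adjoints to the respective functors $(-)^\sharp$. Taking mates of \emph{this} square of left adjoints yields $Un^+_S \circ (-)^\sharp \cong (-)^\sharp \circ Un_S$, which is exactly $Un^+_S(A^\sharp) \cong Un_S(A)^\sharp$; combined with the fact that the covariant model structure and the pointwise model structure on space-valued algebras are both created by the right adjoints $(-)^\sharp$, this is what lets the Quillen equivalence of Theorem \ref{thm:Grothendieckconstruction} restrict. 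Your comparison map $St^+_S(X^\sharp) \to St_S(X)^\sharp$ is at best a natural weak equivalence, and using it would require proving that the unmarked edges become invertible after localizing at the marked ones (true in the example above, but an argument is owed). A secondary point: left Quillen-ness of the pointwise $(-)^\sharp$ on the algebra categories cannot be checked by noting that it preserves cofibrations pointwise, since projective cofibrations of algebras are not defined pointwise; one should instead verify that its right adjoint preserves fibrations and trivial fibrations, which \emph{are} pointwise conditions.
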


We also get a naturality result analogous to the previous section:

\begin{theorem}
An operadic equivalence of dendroidal sets $\phi: S \longrightarrow T$ induces an equivalence of $\infty$-categories $\mathbf{LFib}(S) \simeq \mathbf{LFib}(T)$.
\end{theorem}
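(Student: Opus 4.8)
The plan is to deduce this from the corresponding statement for the coCartesian model structure, Theorem~\ref{thm:naturalitycoCmodstruct}, by transporting everything along the functor $(-)^\sharp$. Recall from Theorem~\ref{thm:covmodelstruct2} and the remark following it that the covariant model structure on $\mathbf{dSets}/S$ is defined precisely so that a map $f$ is a cofibration, weak equivalence, or fibration if and only if $f^\sharp$ is so in the coCartesian model structure on $\mathbf{dSets^+}/S$. In particular $(-)^\sharp$ both preserves and reflects weak equivalences, sends normal objects (the cofibrant objects) to cofibrant objects, and sends left fibrations (the fibrant objects of $\mathbf{dSets}/S$) to fibrant objects. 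The remaining ingredients are formal: since $\phi_!$ is post-composition with $\phi$ and $\phi^*$ is pullback along $\phi$, and since $(-)^\sharp$ admits both a left adjoint $u$ and a right adjoint $\mathpzc{k}$ and hence preserves both limits and colimits, one checks directly that $(-)^\sharp$ commutes with both functors, i.e. $(\phi_! X)^\sharp = \phi_!(X^\sharp)$ and $(\phi^* Y)^\sharp = \phi^*(Y^\sharp)$, and that these identifications are compatible with the units and counits of the two adjunctions.

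First I would verify that $(\phi_!, \phi^*)$ is a simplicial Quillen adjunction on the covariant structures. The functor $\phi_!$ clearly preserves normal monomorphisms, hence cofibrations. For the fibrations, note that a map $g$ over $T$ is a covariant fibration exactly when $g^\sharp$ is a fibration in $\mathbf{dSets^+}/T$, and $(\phi^* g)^\sharp = \phi^*(g^\sharp)$; since the marked adjunction $(\phi_!, \phi^*)$ is a Quillen adjunction (the one underlying Theorem~\ref{thm:naturalitycoCmodstruct}), $\phi^*$ preserves fibrations and trivial fibrations at the marked level, and therefore preserves covariant fibrations and trivial covariant fibrations. Simpliciality is inherited from the simplicial enrichment of the coCartesian structure, as recorded in Proposition~\ref{prop:naturalitycovmodstruct}.

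The heart of the argument is then a one-line translation of the Quillen-equivalence criterion. Let $A \in \mathbf{dSets}/S$ be cofibrant (i.e. normal) and $B \in \mathbf{dSets}/T$ fibrant (i.e. a left fibration), and let $g\colon \phi_! A \to B$ be a map with adjoint $\tilde g\colon A \to \phi^* B$. Applying $(-)^\sharp$ and using the commutations above, $g^\sharp\colon \phi_!(A^\sharp) \to B^\sharp$ has adjoint $\tilde g^\sharp\colon A^\sharp \to \phi^*(B^\sharp)$ in the marked adjunction, with $A^\sharp$ marked-cofibrant and $B^\sharp$ marked-fibrant. By Theorem~\ref{thm:naturalitycoCmodstruct} the marked adjunction is a Quillen equivalence, so $g^\sharp$ is a coCartesian weak equivalence if and only if $\tilde g^\sharp$ is; since $(-)^\sharp$ reflects weak equivalences, this says exactly that $g$ is a covariant weak equivalence if and only if $\tilde g$ is. Hence $(\phi_!, \phi^*)$ is a simplicial Quillen equivalence between the covariant model structures. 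As both are simplicial and combinatorial (Theorem~\ref{thm:covmodelstruct2}), this Quillen equivalence induces a Dwyer--Kan equivalence between the simplicial categories $(\mathbf{dSets}/S)^\circ$ and $(\mathbf{dSets}/T)^\circ$ of fibrant-cofibrant objects, and therefore, by Lemma~\ref{lemma:equivsimpcat}, an equivalence of $\infty$-categories $\mathbf{LFib}(S) = \mathrm{hc}N((\mathbf{dSets}/S)^\circ) \simeq \mathrm{hc}N((\mathbf{dSets}/T)^\circ) = \mathbf{LFib}(T)$.

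I do not expect any genuinely new obstacle here: all the homotopy-theoretic content already resides in the marked case, Theorem~\ref{thm:naturalitycoCmodstruct}, and this statement is a formal consequence. The one point I would check with care is the interaction of $(-)^\sharp$ with pullback markings, namely that $\phi^*$ applied to the all-marked object $X^\sharp$ again yields an all-marked object, so that one genuinely has $(\phi^* X)^\sharp = \phi^*(X^\sharp)$ rather than merely a comparison map. This holds because a corolla in a fibre product of marked dendroidal sets is marked precisely when both of its images are marked, and both factors $X^\sharp$ and $S^\sharp$ have all corollas marked. Everything else is the bookkeeping of transporting the Quillen-equivalence criterion across a functor that preserves and reflects the three distinguished classes of maps.
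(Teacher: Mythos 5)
Your argument is correct, but it takes a genuinely different route from the one the paper intends. The paper's ``analogous to the previous section'' points at rerunning the proof of Theorem \ref{thm:naturalitycoCmodstruct}: form the commutative square of left Quillen functors whose horizontal arrows are the unmarked straightening functors of Theorem \ref{thm:straighteningleftfib} and whose right vertical arrow is the Berger--Moerdijk change-of-operad functor $\mathrm{hc}\tau_d(\phi)_!$ on $\mathbf{sSets}$-valued algebras, then conclude by two-out-of-three for Quillen equivalences, using the product with $E_\infty$ to reduce to normal $S$ and $T$. You instead transport Theorem \ref{thm:naturalitycoCmodstruct} itself across the functor $(-)^\sharp$ that creates the covariant model structure, using that $(-)^\sharp$ strictly commutes with $\phi_!$ and $\phi^*$, sends normal objects to cofibrant ones and left fibrations to fibrant ones, and detects weak equivalences by definition; the Quillen-equivalence criterion then translates verbatim. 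Your route is more economical --- it needs no further input from the straightening machinery or from Berger--Moerdijk, only the formal relationship between the two model structures --- while the paper's route runs in exact parallel with the marked case and identifies the equivalence explicitly with the one induced on algebra categories. The point you single out, that $\phi^*$ of an all-marked object is again all-marked, is indeed the only place a strict commutation could fail, and your justification is right. Two cosmetic remarks: the simpliciality of $(\phi_!,\phi^*)$ is not ``recorded in Proposition \ref{prop:naturalitycovmodstruct}'' but follows from the identity $\phi_!(X \otimes i_!(\Delta^n)) = \phi_!(X) \otimes i_!(\Delta^n)$; and your final passage from a simplicial Quillen equivalence to an equivalence of homotopy coherent nerves of fibrant-cofibrant objects is at the same level of rigor as the paper's own Corollary to Theorem \ref{thm:naturalitycoCmodstruct}, since Lemma \ref{lemma:equivsimpcat} as stated assumes every object of the source is cofibrant, which fails for $\mathbf{dSets}/S$ just as it does for $\mathbf{dSets^+}/S$.
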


\begin{remark}
Note that by setting $S = N_d(\mathrm{Comm})$ we get an equivalence of $\infty$-categories
\begin{equation*}
\mathrm{hc}N(\mathbf{dSets}^\circ) \simeq \mathrm{Alg}_{E_\infty}(\mathcal{S})
\end{equation*}
where $\mathbf{dSets}$ is endowed with the covariant model structure. In this way we can obtain an infinite loop space machine for $\infty$-operads. We will discuss this situation in some more detail in \cite{heuts2}.
\end{remark}

\begin{remark}
An obvious but important warning: beware that the covariant model structure on $\mathbf{dSets}$ is definitely \emph{not} Quillen equivalent to the Cisinski-Moerdijk model structure on $\mathbf{dSets}$. In fact, it is a left Bousfield localization of it. More generally, if $S$ is an $\infty$-operad, the covariant model structure on $\mathbf{dSets}/S$ is a localization of the model structure induced from the Cisinski-Moerdijk model structure.
\end{remark}

\newpage

% appendix
\appendix
\section{Appendix}
\subsection{An inner anodyne extension}
In the proof of Proposition \ref{prop:composition}, we used the following slight generalization of Lemma 7.2.4 of \cite{dendroidalsets}:

\begin{lemma}
\label{lemma:innerextension}
Suppose we are given trees $S, T \in \Omega$ and a leaf $l$ of $T$. Suppose furthermore that $R$ is a tree equipped with a monomorphism $R \longrightarrow S$ mapping the root of $R$ to the root of $S$. Denote the tree obtained by grafting $S$ onto $l$ by $T \circ_l S$ and similarly define $T \circ_l R$. Then the induced map
\begin{equation*}
\Omega[S] \coprod_{\Omega[R]} \Omega[T \circ_l R] \longrightarrow \Omega[T \circ_l S]
\end{equation*}
is inner anodyne.
\end{lemma}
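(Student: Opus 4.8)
The plan is to realize the displayed map as an inclusion of subobjects of $\Omega[T\circ_l S]$ and to build it up, by cobase changes along inner anodyne maps, from the basic grafting lemma, i.e. the case $R=\eta$, which is Lemma 7.2.4 of \cite{dendroidalsets}. First I would note that, since $R\longrightarrow S$ carries the root of $R$ to the root of $S$, both $S$ and $T\circ_l R$ are subtrees of $U:=T\circ_l S$, and their intersection (computed as subtrees of $U$) is exactly $R$. Hence $\Omega[S]\cap\Omega[T\circ_l R]=\Omega[R]$, so the pushout $\Omega[S]\coprod_{\Omega[R]}\Omega[T\circ_l R]$ is simply the union $\Omega[S]\cup\Omega[T\circ_l R]\subseteq\Omega[U]$. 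A non-degenerate dendrex of $U$ fails to lie in this union precisely when the corresponding face uses at least one vertex of $T$ (below $l$) together with at least one vertex of $S$ not in $R$; these crossing faces are the ones that must be filled.

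To make the induction close I would prove a mild strengthening that is stable under the inductive step: given a tree $B$ with a leaf $e$, a tree $C$ grafted onto $B$ at $e$, a subtree $P\subseteq B$ containing $e$, and a subtree $Q\subseteq C$ containing the root edge $e$, the inclusion
\[
\Omega[P\circ_e C]\cup\Omega[B\circ_e Q]\ \hookrightarrow\ \Omega[B\circ_e C]
\]
is inner anodyne. The statement of the lemma is the case $B=T$, $C=S$, $e=l$, $P=\eta_l$, $Q=R$, and the basic grafting lemma is the case $P=Q=\eta_e$. The faces now missing from the source are those using a vertex of $B\setminus P$ and a vertex of $C\setminus Q$.

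The induction runs on the size of the missing region $(B\setminus P)\sqcup(C\setminus Q)$. If $P=B$ or $Q=C$ there are no missing faces and the map is an identity. Otherwise, assuming $C\setminus Q$ has at least two vertices, I would chop a leaf corolla $w$ off $C\setminus Q$, writing $C=C_0\circ_f C_w$ with $Q\subseteq C_0$, and factor the map through $\Omega[P\circ_e C]\cup\Omega[B\circ_e C_0]$. The first map of the factorization is a cobase change of the inductive hypothesis for $(B,P,C_0,Q)$, and the second is the inductive hypothesis for the strictly smaller instance obtained by attaching the single corolla $C_w$ at $f$; the key point at each stage is that the relevant pushout square of representables is \emph{effective}, meaning that the intersection of the two glued subobjects is exactly the expected common subtree, so that the cobase change really computes the union. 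When $C\setminus Q$ consists of a single corolla this particular reduction fails to decrease the missing region, and there I would peel a corolla off $B\setminus P$ instead, running the symmetric argument on the bottom tree; a lexicographic induction on $(|C\setminus Q|,\,|B\setminus P|)$ then terminates, its irreducible base cases (both regions a single corolla) being dispatched directly, exactly as in the proof of Lemma 7.2.4, by filling a single inner horn.

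The main obstacle is bookkeeping rather than conceptual: one must check in each reduction that the pushout of representables is effective, which comes down to a careful computation of intersections of subtrees inside $U$, complicated by the fact that faces may contract the distinguished inner edges $l$, $e$ and $f$; and one must organize the induction so that it provably terminates at the grafting lemma and at the identity maps. The degenerate situations $T=\eta$, $S=\eta$ and $R=S$ should be disposed of separately at the very start, where the map is either an isomorphism or an instance already covered by \cite{dendroidalsets}.
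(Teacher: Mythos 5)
Your overall architecture --- rewriting the pushout as a union of subobjects of $\Omega[T\circ_l S]$, strengthening to a two-subtree statement, and running a lexicographic induction on the missing region --- is coherent and genuinely different from the paper's proof, which inducts on the sizes of $T$ and $S$, reduces to the case where $R\longrightarrow S$ is a single face, and at each stage extends over the faces of $T\circ_l S$ by the inductive hypothesis before filling the single inner horn $\Lambda^l[T\circ_l S]$ at the grafting edge. Your two reduction steps, and the effectivity checks you flag, are fine. The genuine gap is in the terminal case: your irreducible base cases are \emph{not} single inner horns, and this is exactly where the combinatorial content of the lemma is concentrated. Concretely, take $T$ a $1$-corolla, $S$ the linear tree with two vertices, and $R$ the root edge of $S$, so that $T\circ_l S$ is the linear tree $i_!(\Delta^3)$. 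Your first reduction peels the top vertex off $S$ and terminates at the instance in which both missing regions are single corollas, namely the inclusion
\begin{equation*}
\Delta^{\{0,1,2\}}\cup\Delta^{\{1,2,3\}}\ \subseteq\ \Delta^3,
\end{equation*}
the union of the two \emph{outer} faces of a three-vertex linear tree. This omits both inner faces and is not a horn of any kind; it is inner anodyne, but only after a further decomposition (fill $\Lambda^2_1$ of the face $\{0,1,3\}$, then $\Lambda^3_1$), and for larger trees the general base case --- the union of one root face and one top face of a tree --- needs its own induction over all subfaces containing both removed vertices. That induction carries essentially the same weight as the lemma itself, so as written the proposal funnels the real difficulty into a step it then dismisses as ``a single inner horn, exactly as in Lemma 7.2.4''.

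Two smaller points. First, the ``symmetric argument on the bottom tree'' cannot literally mirror the top one, since $B\setminus P$ is not upward closed the way $C\setminus Q$ is ($P$ contains a leaf of $B$, not its root edge); the workable move is to enlarge $P$ by one adjacent vertex at a time, and a one-vertex enlargement is automatically a single outer face, which is what forces your base cases into the root-face-union-top-face shape above. Second, if you want to salvage the strategy, isolate that base case as a separate lemma (for a tree $U$ with an outer root face $\partial_v$ and an outer top face $\partial_w$, the inclusion $\partial_v\Omega[U]\cup\partial_w\Omega[U]\subseteq\Omega[U]$ is inner anodyne) and prove it by adjoining the missing subtrees in order of size, each together with a chosen inner face; at that point you will have essentially reconstructed the induction the paper runs directly on $T$, $S$ and the faces of $S$.
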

\begin{proof}
We will proceed by induction on the number of vertices of $T$ and $S$. If either $S$ or $T$ equals $\eta$, the result is trivial.
Therefore, suppose both have one vertex, and denote the vertex of $T$ by $t$. If $R = S$, the result is again trivial, so suppose $R$ is the root of $S$. The stated map is now (isomorphic to) the inner horn inclusion
\begin{equation*}
\Lambda^l[T \circ_l S] \subseteq \Omega[T \circ_l S]
\end{equation*}
and hence inner anodyne. Now suppose $S$ has $n \geq 2$ vertices and the monomorphism $R \longrightarrow S$ is a face of $S$. Let $p: X \longrightarrow Y$ be an inner fibration of dendroidal sets. We need to show we can find a lift in the following diagram:
\[
\xymatrix{
\Omega[S] \coprod_{\Omega[R]} \Omega[T \circ_l R] \ar[d]\ar[r] & X \ar[d]^p \\
\Omega[T \circ_l S] \ar[r] & Y
}
\]
Consider the horn $\Lambda^l[T \circ_l S]$. It consists of the face $\partial_t \Omega[T \circ_l S] = \Omega[S]$, on which our map to $X$ has already been specified, and a bunch of faces of the form $\Omega[T] \circ_l \partial_\alpha \Omega[S]$ induced from the faces of $\Omega[S]$. By applying the inductive hypothesis to the maps
\begin{equation*}
\partial_\alpha \Omega[S] \coprod_{\Omega[R \cap \partial_\alpha S]} \Omega[T \circ_l (R \cap \partial_\alpha S)] \longrightarrow \Omega[T \circ_l \partial_\alpha S]
\end{equation*}
we see that our map to $X$ can be extended to these faces as well. (We use here that $\Omega[R \cap \partial_\alpha S]$ is either a face of $\partial_\alpha \Omega[S]$ or equal to it.) We have now built a diagram
\[
\xymatrix{
\Lambda^l[T \circ_l S] \ar[d]\ar[r] & X \ar[d]^p \\
\Omega[T \circ_l S] \ar[r] & Y
}
\]
in which a lift exists by the fact that $p$ is an inner fibration. This lift will also serve as a lift in our original diagram. \\
Now suppose $T$ has $n \geq 2$ vertices. The horn $\Lambda^l[T \circ_l S]$ now consists entirely of faces induced from faces of $S$ and $T$, so the inductive hypothesis combined with an argument similar to the one above yields the desired lift. \\
Finally, we wish to show the result holds for $R$ as in the proposition. We can factor the map $R \longrightarrow S$ as a composition of face maps. Denote this composition by
\begin{equation*}
R = A_0 \longrightarrow A_1 \longrightarrow \ldots \longrightarrow A_k = S
\end{equation*}
We already know that the induced map
\begin{equation*}
\Omega[A_1] \coprod_{\Omega[A_0]} \Omega[T \circ_l A_0] \longrightarrow \Omega[T \circ_l A_1]
\end{equation*}
is inner anodyne. Now consider, for $1 \leq i \leq k-1$, the diagram
\[
\xymatrix{
\Omega[A_i] \coprod_{\Omega[A_0]} \Omega[T \circ_l A_0] \ar[r]\ar[d] & \Omega[A_{i+1}] \coprod_{\Omega[A_0]} \Omega[T \circ_l A_0] \ar[d]\ar[ddr] & \\
\Omega[T \circ_l A_i] \ar[r]\ar[rrd] & P \ar[dr] & \\
&& \Omega[T \circ_l A_{i+1}]
}
\]
in which the square is a pushout. If we assume the left vertical map is inner anodyne, the right vertical map will be as well. The pushout $P$ may be identified with the pushout
\begin{equation*}
\Omega[A_{i+1}] \coprod_{\Omega[A_i]} \Omega[T \circ_l A_i]
\end{equation*}
By what we have already proven, we see that the map $P \longrightarrow \Omega[T \circ_l A_{i+1}]$ is inner anodyne. Hence
\begin{equation*}
\Omega[A_{i+1}] \coprod_{\Omega[A_0]} \Omega[T \circ_l A_0] \longrightarrow \Omega[T \circ_l A_{i+1}]
\end{equation*}
will be inner anodyne. By setting $i+1 = n$ we conclude that
\begin{equation*}
\Omega[S] \coprod_{\Omega[R]} \Omega[T \circ_l R] \longrightarrow \Omega[T \circ_l S]
\end{equation*}
is inner anodyne. $\Box$
\end{proof}

\subsection{Subdivision of foliage}
In this section we prove two lemma's needed in the proof of Proposition \ref{prop:smashproduct}. Recall the classes (1), (2), ($2^*$), (3) and (4) from the definition of marked anodyne morphisms. We will refer to morphisms in the weakly saturated class generated by (1) as \emph{inner anodynes} and morphisms in the weakly saturated class generated by (2) and ($2^*$) as \emph{leaf anodynes}. \par
Recall the notation
\begin{equation*}
\eta_{c_1, \ldots, c_n} := \coprod_{1\leq i \leq n} \eta_{c_i}
\end{equation*}
Then we have the following:

\begin{lemma}
\label{lemma:cylindersubdivision}
Let $S$ be an arbitrary tree and let $c_1, \ldots, c_n$ denote the leaves of $C_n$. The inclusion
\begin{equation*}
\eta_{c_1, \ldots, c_n}^\sharp \otimes \Omega[S]^\flat \coprod_{\eta_{c_1, \ldots, c_n} \otimes \partial \Omega[S]^\flat} \Omega[C_n]^\sharp \otimes \partial \Omega[S]^\flat \longrightarrow \Omega[C_n]^\sharp \otimes \Omega[S]^\flat
\end{equation*}
is marked anodyne.
\end{lemma}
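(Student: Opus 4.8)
The plan is to recognize the map in the statement as the pushout-product of the generating marked anodyne $f\colon \eta_{c_1,\ldots,c_n}^\sharp \to \Omega[C_n]^\sharp$ of type ($2^*$) with the boundary inclusion $g\colon \partial\Omega[S]^\flat \to \Omega[S]^\flat$, i.e.\ precisely the map appearing in case ($2^*$a) of the proof of Proposition \ref{prop:smashproduct}, and to verify that it is marked anodyne by an explicit filtration of $\Omega[C_n]\otimes\Omega[S]$ by its shuffles. Recall that the colours of $\Omega[C_n]\otimes\Omega[S]$ are pairs $(c,s)$ with $c$ an edge of $C_n$ and $s$ an edge of $S$, and that the non-degenerate dendrices are the faces of the \emph{shuffles}, the maximal non-degenerate dendrices, each obtained by percolating the unique vertex $v$ of $C_n$ through the tree $S$. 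First I would identify what is already present: the domain
\begin{equation*}
(\Omega[C_n]^\sharp \otimes \partial\Omega[S]^\flat) \coprod_{\eta_{c_1,\ldots,c_n}^\sharp \otimes \partial\Omega[S]^\flat} (\eta_{c_1,\ldots,c_n}^\sharp \otimes \Omega[S]^\flat)
\end{equation*}
already contains every dendrex lying over $\partial\Omega[S]$ together with the $n$ leaf ``columns'' $\eta_{c_i}\otimes\Omega[S]$, so the dendrices that remain to be attached are exactly those that surject onto the top dendrex of $\Omega[S]$ and genuinely involve the vertex $v$.

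Next I would organize these remaining dendrices according to the poset of shuffles. The shuffles range from the one in which $v$ sits below all of $S$ (one copy of $v$ with $n$ grafted copies of $S$) to the one in which $v$ sits above all of $S$ (a single copy of $S$ carrying a copy of $v$ at each of its leaves), the intermediate shuffles being obtained by percolation steps that move $v$ past a vertex of $S$. This poset carries a natural partial order, which I would refine to a total order, and process the shuffles in that order. Each shuffle $\Sigma$ is then adjoined by a secondary induction over its inner edges: the inner edges created by contracting $S$-internal pairs $(c_i,s)$ are filled by inner horn inclusions of type (1); the edges produced by the percolation of $v$ are filled by leaf horn inclusions of type (2); and the $n$-ary leaf corollas arising from $v$ must be marked, which is achieved by pushouts of the type ($2^*$) map $\coprod_{c\in\mathrm{in}(C_n)}\eta_c\subseteq\Omega[C_n]^\sharp$. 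The key compatibility to check is that the marking of $\Omega[C_n]^\sharp$ propagates through the tensor product precisely to these top corollas, so that the markings demanded by the filtration steps agree with those present in the source and target.

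The main obstacle, exactly as in the companion Lemma \ref{lemma:cylindersubdivision2} used for case (2a), is the bookkeeping that guarantees at each stage of this double induction that the dendrex being attached is added along a genuine horn of the asserted type: that the unique missing face is an inner face (for (1)) or the designated leaf face (for (2)), or that the corolla being marked already has all its other faces present (for ($2^*$)), while every remaining face of the dendrex has entered the filtration at an earlier stage. I would arrange the total order on shuffles and the secondary order on inner edges so that ``percolation-lower'' and ``more contracted'' dendrices always appear first, and verify that this ordering has the required property; this subdivision-of-foliage analysis is the technical heart of the argument and runs parallel to the one carried out in detail for Lemma \ref{lemma:cylindersubdivision2}. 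Since each step is a pushout of a generating marked anodyne and the marked anodynes form a weakly saturated class, the composite inclusion is marked anodyne, which is the assertion of the lemma.
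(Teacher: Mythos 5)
Your proposal follows essentially the same route as the paper's proof: a filtration of $\Omega[C_n]^\sharp\otimes\Omega[S]^\flat$ by shuffles ordered compatibly with the percolation partial order, with each shuffle adjoined via a secondary induction in which every step is a pushout of an inner horn (type (1)) or a leaf horn (types (2)/($2^*$)) whose markings account for the $C_n$-corollas. The paper executes the face-by-face bookkeeping you defer — classifying, for each dendrex adjoined, which faces lie in $\Omega[C_n]^\sharp\otimes\partial\Omega[S]^\flat$, which lie in earlier filtration stages (via the Boardman--Vogt relation for contracted percolation edges), and which form the missing horn faces — but the strategy and the choice of generating anodynes at each stage coincide with yours.
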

\begin{proof}
In case $S = \eta$ the statement is trivial, so we may suppose $S$ has at least one vertex. Throughout this proof we will use several pictures to illustrate what is going on: a black vertex will represent the vertex of $C_n$ (let's call it $v$), a white vertex will represent a vertex of $S$. Denote the root of $C_n$ by $d$. We will also fix planar representations of our trees in order to avoid unnecessary discussions of automorphisms in $\Omega$. \par
The tensor product $\Omega[C_n]^\sharp \otimes \Omega[S]^\flat$ can be written as a union of all the shuffles of $C_n$ and $S$, say
\begin{equation*}
\Omega[C_n]^\sharp \otimes \Omega[S]^\flat = \bigl(\bigcup_{k=1}^N \Omega[R_k], \mathcal{E}\bigr)
\end{equation*}
Recall that the set of these shuffles has a partial order in which there is a minimal element given by grafting a copy of $S$ onto each leaf of $C_n$ and a maximal element obtained by grafting $n$-corollas onto all the leaves of $S$. \par
First, we assume for simplicity that $n \geq 1$. We set
\begin{equation*}
A_0 := \eta_{c_1, \ldots, c_n}^\sharp \otimes \Omega[S]^\flat \coprod_{\eta_{c_1, \ldots, c_n} \otimes \partial \Omega[S]^\flat} \Omega[C_n]^\sharp \otimes \partial \Omega[S]^\flat
\end{equation*}
Now define a filtration
\begin{equation*}
A_0 \subseteq A_1 \subseteq \cdots \subseteq A_N = \Omega[C_n]^\sharp \otimes \Omega[S]^\flat
\end{equation*}
of the given inclusion by successively adjoining all the shuffles of the tensor product in a way that respects the partial ordering on these shuffles. In other words, for $i > 0 $ each $A_{i+1}$ is obtained from $A_i$ by taking the union with a shuffle $R_k$ which can be obtained from a shuffle $R_l$ already contained in $A_i$ by replacing a configuration in $R_l$ of the form
\[
\xymatrix@R=10pt@C=12pt{
&&&&&& \\
&*=0{\circ}\ar@{-}[ul]\ar@{-}[ur]\ar@{}[u]|{\cdots}&&&&*=0{\circ}\ar@{-}[ul]\ar@{-}[ur]\ar@{}[u]|{\cdots}&\\
&&&\ar@{}[u]|{\cdots\cdots}&&&\\
&&&*=0{\bullet}\ar@{-}\ar@{-}[uull]\ar@{-}[uurr]&&&\\
&&&\ar@{-}[u]&&&\\
&&&&&&
}
\]
by the configuration
\[
\xymatrix@R=10pt@C=12pt{
&&&&&& \\
&*=0{\bullet}\ar@{-}[ul]\ar@{-}[ur]\ar@{}[u]|{\cdots}&&&&*=0{\bullet}\ar@{-}[ul]\ar@{-}[ur]\ar@{}[u]|{\cdots}&\\
&&&\ar@{}[u]|{\cdots\cdots}&&&\\
&&&*=0{\circ}\ar@{-}\ar@{-}[uull]\ar@{-}[uurr]&&&\\
&&&\ar@{-}[u]&&&\\
&&&&&&
}
\]
We will first examine the inclusion $A_0 \subseteq A_1$, which is the adjoining of the initial shuffle of our tensor product. Let $F_1(S)$ denote the set of all subtrees of $S$ containing the root $r$ of $S$. For $0 \leq j \leq M$ with $M := n \cdot |\mathrm{vert}(S)|$ we define $K_0^j$ to be the set of dendrices of $A_1$ of the form
\[
\xymatrix@R=10pt@C=12pt{
&&&&&&&& \\
&*=0{\circ}\ar@{-}[ul]\ar@{-}[ur]\ar@{}[u]|{\alpha_1}&&*=0{\circ}\ar@{-}[ul]\ar@{-}[ur]\ar@{}[u]|{\alpha_i}&&*=0{\circ}\ar@{-}[ul]\ar@{-}[ur]\ar@{}[u]|{\alpha_{i+1}}&&*=0{\circ}\ar@{-}[ul]\ar@{-}[ur]\ar@{}[u]|{\alpha_n}&\\
&&\ar@{}[u]|{\quad\cdots}&&&\ar@{}[u]|{\quad\quad\cdots}&&&\\
&&&&*=0{\bullet}\ar@{-}\ar@{-}[uul]^(.75){c_i}\ar@{-}[uur]\ar@{-}[uulll]^(.75){c_1}\ar@{-}[uurrr]_(.75){c_n}&&&&\\
&&&&\ar@{-}[u]&&&&\\
&&&&&&&&
}
\]
where $\alpha_1, \ldots, \alpha_n$ are elements of $F_1(S)$ and the total number of white vertices equals $j$. The $c_i$ in the picture should more accurately be denoted by $c_i \otimes r$ with $r$ the root of $S$, but this is omitted for graphical reasons. Note that $K_0^0$ is contained in $A_0$ and $K_0^M$ is the initial shuffle of our tensor product. By setting $A_0^j = A_0 \cup K_0^j$ we obtain a filtration
\begin{equation*}
A_0 \subseteq A_0^1 \subseteq \cdots \subseteq A_0^M = A_1
\end{equation*}
Each inclusion $A_0^j \subseteq A_0^{j+1}$ is a composition of inclusions obtained by adjoining the elements of $K_0^{j+1}$ one at a time. Consider first the inclusion $A_0 \subseteq A_0^1$ and write it as such a composition of inclusions. Say we are adjoining an element $f$ of $K_0^1$. It might already factor through $\Omega[C_n]^\sharp \otimes \partial\Omega[S]^\flat$, in which case there is nothing to prove. If it doesn't, we consider the three faces of $f$: the face obtained by chopping of the white vertex, which is contained in $\Omega[C_n]^\sharp \otimes \partial\Omega[S]^\flat$, the face obtained by chopping of the black vertex, which is contained in $\eta_{c_1, \ldots, c_n}^\sharp \otimes \Omega[S]^\flat$, and an inner face which is not contained in any previous stage of our composition. Hence the map adjoining $f$ is a pushout of the map $\Lambda^e[f]^\flat \subseteq \Omega[f]^\flat$ (with $e$ denoting the inner edge of $f$) and therefore inner anodyne. We deduce that $A_0 \subseteq A_0^1$ is a composition of inner anodynes and thus inner anodyne. \par
Now consider an inclusion $A_0^j \subseteq A_0^{j+1}$, which is filtered by adjoining elements of $K_0^{j+1}$ one by one. If we adjoin an element $f \in K_0^{j+1}$ that factors through $\Omega[C_n]^\sharp \otimes \partial\Omega[S]^\flat$ there is again nothing to prove. If it doesn't, we consider its faces: 
\begin{itemize}
\item[(i)] We have faces chopping of a white vertex or contracting an inner edge connecting two white vertices; these are already contained in $A_0^j$.
\item[(ii)] Possibly there is a face chopping of the black vertex, which factors through  $\eta_{c_1, \ldots, c_n}^\sharp \otimes \Omega[S]^\flat$.
\item[(iii)] There are inner faces of the form $\partial_{c_i\otimes r} f$ for $1 \leq i \leq n$, which cannot be contained in any earlier stage of our filtration. Denote the set of these faces by $E$.
\end{itemize}
We see that the map adjoining $f$ is a pushout of the map $\Lambda^E[f]^\flat \subseteq \Omega[f]^\flat$. This is easily seen to be inner anodyne (cf. Lemma 7.2.3 of \cite{dendroidalsets}). We have shown that the map $A_0 \subseteq A_1$ is inner anodyne. \par 
Let now $A_i \subseteq A_{i+1}$ be any other map from our filtration. Say $A_{i+1}$ is obtained from $A_i$ by adjoining a shuffle $R_k$ and suppose $R_k$ is obtained from a shuffle $R_l$ which is contained in $A_i$ by percolating the vertex of $C_n$ up through a vertex of $S$, say $w$. If $w$ is a vertex with no inputs, then $R_k$ is actually a (composition of) face(s) of $R_l$ and there is nothing to prove. Hence we will assume $w$ has at least one input edge. We introduce some terminology: we will say a vertex of $R_k$ of the form $c_i \otimes s$ for $s$ a vertex of $S$ and $1 \leq i \leq n$ is \emph{above a black vertex} and a vertex of the form $d \otimes s$ is \emph{below the black vertices}. Recall that $d$ is the root of $C_n$. \par 
First, let $U$ denote the set of colours of $S$ such that the corolla $C_n \otimes s$ appears in $R_k$. For a subset $V \subseteq U$, we denote by $R_k^{(V)}$ the subtree of $R_k$ obtained by contracting all edges of the form $d \otimes u$ for $u \in U \backslash V$. Note that if $V$ does not contain any input edges of $w$ then $R_k^{(V)}$ is already contained in $A_i$ by the Boardman-Vogt relation. Therefore we will only consider $V$ containing at least one input edge of $V$. Also remark that if $V = U$ we have $R_k^{(V)} = R_k$. Fix a linear order on all $V \subseteq U$ containing an input edge of $V$ extending the partial order of inclusion. By adjoining the trees $R_k^{(V)}$ to $A_i$ in this order we obtain a filtration
\begin{equation*}
A_i = A_i^0 \subseteq A_i^1 \subseteq \cdots \subseteq A_i^l = A_{i+1}
\end{equation*}
We will refine this filtration even further. Given a map $A_i^j \subseteq A_i^{j+1}$ which is given by adjoining a tree $R_k^{(V)}$, we define $K_i^{j,m}$ to be the set of subtrees of $R_k^{(V)}$ which have a total of exactly $m$ white vertices above black vertices and which are equal to $R_k^{(V)}$ below (and at) the black vertices. In other words, the elements of $K_i^{j,m}$ are obtained from $R_k^{(V)}$ by chopping of white vertices which are above black vertices and by contracting inner edges connecting white vertices above black vertices. The integer $m$ ranges from 0 to the number of white vertices in $R_k$ which are above black vertices, which we will denote by $M$. By defining $A_i^{j,m} = A_i^j \cup K_i^{j,m}$ we obtain a filtration
\begin{equation*}
A_i^j \subseteq A_i^{j,0} \subseteq A_i^{j,1} \subseteq \cdots \subseteq A_i^{j,M} = A_i^{j+1} 
\end{equation*}
Note the analogy between this filtration and the one we considered for the inclusion $A_0 \subseteq A_1$. We first consider this filtration in case the map $A_i^j \subseteq A_i^{j+1}$ is given by adjoining a tree $R_k^{(V)}$ where $V$ is a singleton $\{e\}$, with $e$ necessarily being an input edge of $w$. The set $K_i^{j,0}$ contains only one tree, say $\tilde R_k^{(V)}$, which is obtained from $R_k^{(V)}$ by chopping off everything above the black vertices. If this tree is already contained in $\Omega[C_n]^\sharp \otimes \partial\Omega[S]^\flat$ then the inclusion $A_i^j \subseteq A_i^{j,0}$ is an equality. (Actually, this is always the case, unless $e$ is a leaf of $S$.) If not, we observe the following. The faces of $\tilde R_k^{(V)}$ are:
\begin{itemize}
\item[(i)] The face obtained by chopping of the corolla $C_n$ attached to $e$. This face cannot factor through any earlier stage of our filtration.
\item[(ii)] The inner face contracting $e$. This is contained in $A_i^j$ by the Boardman-Vogt relation.
\item[(iii)] Faces obtained by chopping off corollas other than the one considered in (i) and inner faces contracting an edge between two white corollas below the black vertices. All of these are already contained in $\Omega[C_n]^\sharp \otimes \partial\Omega[S]^\flat$.   
\end{itemize}
We conclude that the inclusion $\Omega[\tilde R_k^{(V)}] \cap A_i^j \subseteq \Omega[\tilde R_k^{(V)}]$ equals the horn inclusion 
\begin{equation*}
(\Lambda^v[\tilde R_k^{(V)}], \mathcal{E}_V') \subseteq (\Omega[\tilde R_k^{(V)}], \mathcal{E}_V)
\end{equation*}
where $\mathcal{E}_V$ denotes the set of all degenerate 1-corollas of $\Omega[\tilde R_k^{(V)]}$ and the corolla $C_n$ which is attached to the edge $e$. Also, $\mathcal{E}_V'$ denotes the intersection of this set with the corollas of the horn on the left. Note that by a slight abuse of notation we have again denoted the vertex of this corolla by $v$. Now remark that the map $A_i^j \subseteq A_i^{j,0}$ is a pushout of this horn inclusion, which is leaf anodyne, and hence this map is itself leaf anodyne. \par 
Consider now a map $A_i^{j,m} \subseteq A_i^{j,m+1}$. We can again write it as a composition of maps by adjoining the elements of $K_i^{j,m+1}$ one by one. Suppose we are adjoining such an element $f$. As before, if it factors through $\Omega[C_n]^\sharp \otimes \partial\Omega[S]^\flat$ there is nothing to prove. If not, consider its faces:
\begin{itemize}
\item[(i)] We have the faces obtained by chopping off a white vertex above a black vertex or contracting an inner edge between two white vertices above a black vertex. These are contained in $A_i^{j,m}$.
\item[(ii)] The inner face contracting the edge $e$. This is contained in $A_i^j$ by the Boardman-Vogt relation.
\item[(iii)]  Faces obtained by chopping off a corolla other than the $C_n$ attached to $e$, having a vertex which is not a white one above a black one. Also, inner faces contracting an edge between two white corollas below the black vertices. All of these are already contained in $\Omega[C_n]^\sharp \otimes \partial\Omega[S]^\flat$.
\item[(iv)] Inner faces contracting an edge between a white vertex directly above a black vertex and the vertex below it, which is the vertex of a leaf corolla of $\tilde R_k^{(V)}$. These faces cannot be contained in any earlier stage of our filtration. Denote the set of these by $E$.
\item[(v)] Possibly, a face obtained by chopping of the corolla $C_n$ attached to $e$. This can only happen if $f$ has no white vertices above this corolla.
\end{itemize} 
In case (v) does not occur, we are done: the map adjoining $f$ is inner anodyne. When (v) does occur, we can first adjoin the elements of $E$ one at a time: the face of each such element obtained by chopping off the corolla $C_n$ attached to $e$ cannot be contained in any earlier stage of our filtration either, so the resulting map adjoining this element will be a pushout of a leaf anodyne and hence leaf anodyne itself. After adjoining all of $E$, the map adjoining $f$ will then again be leaf anodyne, since the only face we're missing is (v). \par 
We have now shown that each map $A_i^j \subseteq A_i^{j+1}$ in our filtration given by adjoining a tree $R_k^{(V)}$ with $V$ a singleton is marked anodyne. We proceed by induction on the size of $V$. So, suppose $V$ contains at least two edges. Let $A_i^j \subseteq A_i^{j+1}$ be the corresponding map in our filtration. The analysis that follows is very similar to what was done above in case $V$ is a singleton, but we'll have to make some slight adaptations. The set $K_i^{j,0}$ contains only one tree, again denoted $\tilde R_k^{(V)}$, which is obtained from $R_k^{(V)}$ by chopping off everything above the black vertices. If this tree is already contained in $\Omega[C_n]^\sharp \otimes \partial\Omega[S]^\flat$ the inclusion $A_i^j \subseteq A_i^{j,0}$ is an equality. If not, we observe the following. The faces of $\tilde R_k^{(V)}$ are:
\begin{itemize}
\item[(i)] The faces obtained by chopping of a corolla $C_n$ attached to an element of $V$. These faces cannot factor through any earlier stage of our filtration. Denote the set of them by $L$.
\item[(ii)] The inner faces contracting edges in $V$. These are contained in $A_i^j$ by the inductive hypothesis on the size of $V$.
\item[(iii)] Faces obtained by chopping off corollas other than the ones considered in (i) and inner faces contracting an edge between two white corollas below the black vertices. All of these are already contained in $\Omega[C_n]^\sharp \otimes \partial\Omega[S]^\flat$.   
\end{itemize}
We see that the map adjoining $\tilde R_k^{(V)}$ is a pushout of the map 
\begin{equation*}
(\Lambda^L[\tilde R_k^{(V)}], \mathcal{E}_V') \subseteq (\Omega[\tilde R_k^{(V)}], \mathcal{E}_V)
\end{equation*}
where $\mathcal{E}_V$ now denotes the set of degenerate 1-corollas of $\Omega[\tilde R_k^{(V)}]$ and the corollas $C_n$ attached to the edges in $V$. The set $\mathcal{E}_V'$ is the intersecion of this with the horn on the left. Similar in spirit to Lemma 7.2.3 of \cite{dendroidalsets} one easily sees that this is leaf anodyne. \par  
We now consider a map $A_i^{j,m} \subseteq A_i^{j,m+1}$ from our filtration. The argument given above when we were considering $V$ a singleton is again easily adapted to the case where $V$ has more elements. This completes our analysis of the filtration of the map
\begin{equation*}
\eta_{c_1, \ldots, c_n}^\sharp \otimes \Omega[S]^\flat \coprod_{\eta_{c_1, \ldots, c_n} \otimes \partial \Omega[S]^\flat} \Omega[C_n]^\sharp \otimes \partial \Omega[S]^\flat \longrightarrow \Omega[C_n]^\sharp \otimes \Omega[S]^\flat
\end{equation*} 
which we have now shown to be marked anodyne. $\Box$
\end{proof}

Let $T$ be a tree with at least two vertices and let $v$ be the vertex of a leaf corolla of $T$. Let $\mathcal{E}$ be the union of all degenerate 1-corollas of $T$ together with the leaf corolla with vertex $v$. We introduce the notation
\begin{eqnarray*}
\Omega[T]^\diamondsuit & := & (\Omega[T], \mathcal{E}) \\
\Lambda^v[T]^\diamondsuit & := & (\Lambda^v[T], \mathcal{E} \cap \Lambda^v[T])
\end{eqnarray*}
We find the following result:

\begin{lemma}
\label{lemma:cylindersubdivision2}
Let $S$ be an arbitrary tree. The smash product
\begin{equation*}
(\Lambda^v[T]^\diamondsuit \otimes \Omega[S]^\flat) \coprod_{\Lambda^v[T]^\diamondsuit \otimes \partial\Omega[S]^\flat} (\Omega[T]^\diamondsuit \otimes \partial\Omega[S]^\flat) \longrightarrow \Omega[T]^\diamondsuit \otimes \Omega[S]^\flat
\end{equation*}
is marked anodyne.
\end{lemma}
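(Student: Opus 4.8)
The plan is to mimic, in greater generality, the shuffle filtration used in the proof of Lemma \ref{lemma:cylindersubdivision}. First I would dispose of the degenerate case $S = \eta$: there $\partial\Omega[S] = \emptyset$ and $\Omega[S]^\flat = \eta$ is the monoidal unit, so the map in question is literally the generating leaf anodyne $\Lambda^v[T]^\diamondsuit \subseteq \Omega[T]^\diamondsuit$ of type (2); hence I may assume $S$ has at least one vertex. After fixing planar representatives to avoid automorphisms, I record the shape of the marking: by the description of the tensor product of marked dendroidal sets, the marked corollas of $\Omega[T]^\diamondsuit \otimes \Omega[S]^\flat$ are exactly the copies of the corolla $v$ tensored with a colour of $S$, together with the degenerate $1$-corollas. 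I then write $\Omega[T]^\diamondsuit \otimes \Omega[S]^\flat = (\bigcup_{k=1}^N \Omega[R_k], \mathcal{E})$ as the union of its shuffles, ordered by the usual partial order (minimum: a copy of $S$ grafted onto each leaf of $T$; maximum: copies of $T$ grafted onto the leaves of $S$), and build a filtration $A_0 \subseteq A_1 \subseteq \cdots \subseteq A_N$ of the inclusion by adjoining shuffles in a linear order refining this partial order, where $A_0$ denotes the domain of the map.

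The core of the argument is the analysis of a single step $A_i \subseteq A_{i+1}$, in which a shuffle $R_k$ is obtained from a shuffle $R_l \subseteq A_i$ by percolating some vertex of $T$ up through a vertex $w$ of $S$. As in Lemma \ref{lemma:cylindersubdivision} I would refine this into a triple filtration: first over the subsets $V$ of the set $U$ of edges $d \otimes u$ along which the percolation occurs, adjoining the subtrees $R_k^{(V)}$ in an order extending inclusion, and then, for each such $V$, over the number $m$ of $S$-vertices lying above the percolating $T$-vertices. The decisive bookkeeping is to identify, for each elementary adjunction of a subtree $f$, the faces of $f$ not already present at an earlier stage. These always comprise an inner face (contracting the newly created inner edge(s)) together with, possibly, one outer face chopping off the percolating corolla; the remaining faces lie in $\Omega[T]^\diamondsuit \otimes \partial\Omega[S]^\flat$, in $\Lambda^v[T]^\diamondsuit \otimes \Omega[S]^\flat$, or in a previous stage. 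The classification is then governed entirely by whether the percolating corolla is a copy of the marked leaf corolla $v$: if it is, and the missing outer face chops it off, the adjunction is a pushout of a horn $(\Lambda^{v}[\,\cdot\,], \mathcal{E}') \subseteq (\Omega[\,\cdot\,],\mathcal{E})$ of type (2), where $\mathcal{E}$ singles out precisely the copy of $v$ described above, so that the horn is genuinely a leaf anodyne; otherwise the adjunction is a pushout of an inner horn inclusion and hence inner anodyne.

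The main obstacle is purely combinatorial: for a general tree $T$ (rather than the single corolla $C_n$ of Lemma \ref{lemma:cylindersubdivision}) the linear order on shuffles and the two inner sub-filtrations must be arranged so that at every elementary step the set of genuinely new faces is exactly one inner horn or one leaf horn, with no face appearing prematurely. I would control this by separating the part of each shuffle lying above the edge $e = e_v$, where only copies of $v$ and grafted copies of $S$ interact, from the fixed context $T' := \partial_v T$ lying below $e$. Percolations taking place below $e$, or percolations of vertices of $T'$, never meet a marked corolla and produce only inner anodynes — here one reuses the stability of inner anodynes under pushout-products with normal monomorphisms (\cite{dendroidalsets}) and, for the grafting configurations, Lemma \ref{lemma:innerextension} — whereas the percolations of $v$ up through $S$ reproduce verbatim the leaf-anodyne steps of Lemma \ref{lemma:cylindersubdivision}. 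Since the marked anodyne morphisms form a weakly saturated class, once every elementary step has been identified as inner or leaf anodyne the whole inclusion is marked anodyne.
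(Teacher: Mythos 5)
Your proposal is correct and follows essentially the same route the paper intends: the paper's own ``proof'' of Lemma \ref{lemma:cylindersubdivision2} consists only of the remark that the method is completely analogous to Lemma \ref{lemma:cylindersubdivision} (with an ``(Expand)'' left in the source), and your shuffle filtration with the sub-filtrations over the percolation subsets $V$ and the vertex count $m$ is exactly that analogy carried out. Your key observation --- that outer faces chopping off copies of unmarked corollas of $T$ always land in $\Lambda^v[T]^\diamondsuit \otimes \Omega[S]^\flat$ or $\Omega[T]^\diamondsuit \otimes \partial\Omega[S]^\flat$, so that the only genuinely missing outer faces are at (marked) copies of $v$ and every elementary step is an inner or leaf anodyne pushout --- is precisely the point the paper leaves implicit, so your write-up is in fact more complete than the paper's.
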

\begin{proof}
The method is completely analogous to the proof of \ref{lemma:cylindersubdivision2}. (Expand) $\Box$
\end{proof}

\subsection{Combinatorial model categories}
For the reader's convenience we recall some results from Jeff Smith's treatment of combinatorial model categories. We just state what we need; a comprehensive treatment can be found in Appendix A.2.6 to \cite{htt}.

\begin{definition}
A model category $\mathbf{C}$ is said to be \emph{combinatorial} if it satisfies the following two conditions:
\begin{itemize}
\item[(1)] $\mathbf{C}$ is presentable
\item[(2)] The model structure on $\mathbf{C}$ is cofibrantly generated, i.e. there exists sets $I$ and $J$ such that the class of cofibrations (resp. trivial cofibrations) is the smallest weakly saturated class containing $I$ (resp. $J$)
\end{itemize}
\end{definition}

\begin{definition}
Let $\mathbf{C}$ be a presentable category. A class $W$ of morphisms of $\mathbf{C}$ is said to be \emph{perfect} if it satisfies the following conditions:
\begin{itemize}
\item[(1)] $W$ contains all isomorphisms
\item[(2)] $W$ satisfies the two-out-of-three property: given composable morphisms $f$ and $g$, if any two of the three morphisms $f$, $g$ and $g \circ f$ belong to $W$, they all do
\item[(3)] $W$ is stable under filtered colimits: given a filtered family of morphisms $\{X_\alpha \rightarrow Y_\alpha\}$ in $W$, the induced map $\varinjlim X_\alpha \rightarrow \varinjlim Y_\alpha$ is in $W$
\item[(4)] There exists a set (i.e. not a proper class) $W_0 \subseteq W$ such that $W_0$ generates $W$ under filtered colimits
\end{itemize}
\end{definition}

\begin{lemma}
\label{lemma:perfectclass}
If $F: \mathbf{C} \longrightarrow \mathbf{C}'$ is a functor between presentable categories which preserves filtered colimits and $W'$ is a perfect class of morphisms in $\mathbf{C}'$, then $F^{-1}(W')$ is a perfect class of morphisms in $\mathbf{C}$.
\end{lemma}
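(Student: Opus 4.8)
The plan is to verify directly the four defining conditions of a perfect class for $W := F^{-1}(W')$, dispatching (1)--(3) by hand and reserving the real work for condition (4). For (1) and (2) there is nothing to do: $F$ carries isomorphisms to isomorphisms and commutes with composition, so the fact that $W'$ contains all isomorphisms and satisfies two-out-of-three passes immediately to its preimage. For (3), recall that filtered colimits of morphisms are computed pointwise; since $F$ preserves filtered colimits, given a filtered family $\{X_\alpha \to Y_\alpha\}$ in $W$ we have $F(\varinjlim(X_\alpha \to Y_\alpha)) \cong \varinjlim F(X_\alpha \to Y_\alpha)$, a filtered colimit of morphisms of $W'$, which lies in $W'$ by condition (3) for $W'$. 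Hence the induced map lies in $W$.

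The substance is condition (4), the existence of a generating set, and here I would pass to the arrow categories. Let $\mathbf{C}^{[1]}$ and $(\mathbf{C}')^{[1]}$ denote the categories of functors out of the poset $[1] = (0 \to 1)$; these are again presentable, and morphisms of $\mathbf{C}$ (resp.\ $\mathbf{C}'$) are precisely their objects. Write $\overline{W} \subseteq \mathbf{C}^{[1]}$ and $\overline{W'} \subseteq (\mathbf{C}')^{[1]}$ for the full subcategories spanned by the two classes, and let $F^{[1]}$ be the functor induced by postcomposition with $F$, so that $\overline{W} = (F^{[1]})^{-1}(\overline{W'})$. The key reformulation is that conditions (3) and (4) for $W'$ say exactly that $\overline{W'}$ is an accessible subcategory of $(\mathbf{C}')^{[1]}$ closed under filtered colimits: closure under filtered colimits is (3), while the generating set $W_0'$ of (4) exhibits $\overline{W'}$ as generated under filtered colimits by a set, which together make the inclusion $\overline{W'} \hookrightarrow (\mathbf{C}')^{[1]}$ an accessible, filtered-colimit-preserving functor (this is the content of the relevant results in the appendix of \cite{htt}).

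Next I would observe that $F^{[1]}$ is an accessible functor: it preserves filtered colimits (computed pointwise, since $F$ does) and is a functor between presentable, hence accessible, categories. It then remains to invoke the stability of accessible subcategories under preimage along accessible functors: the inverse image $\overline{W} = (F^{[1]})^{-1}(\overline{W'})$ of an accessible full subcategory under an accessible functor is again accessible, and since both $F^{[1]}$ and the inclusion of $\overline{W'}$ preserve filtered colimits, $\overline{W}$ is closed under filtered colimits in $\mathbf{C}^{[1]}$. Finally, choosing a regular cardinal $\kappa$ large enough that $\mathbf{C}^{[1]}$ is $\kappa$-accessible and $\overline{W}$ is $\kappa$-accessibly embedded, the set $W_0$ of those morphisms in $W$ which are $\kappa$-compact objects of $\mathbf{C}^{[1]}$ is essentially small and generates $W$ under $\kappa$-filtered, hence filtered, colimits. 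This yields condition (4) and completes the argument.

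The main obstacle is precisely this last circle of ideas: recognizing that ``perfect'' is a repackaging of ``accessible subcategory of the arrow category closed under filtered colimits,'' and then controlling the cardinal of accessibility so that a \emph{set} of $\kappa$-compact generators actually exists inside $\overline{W}$. By contrast, the verifications that $F^{[1]}$ is accessible and that preimages of accessible subcategories under accessible functors remain accessible are standard facts of accessible-category theory and can be cited rather than reproved; the only care needed is that one chooses $\kappa$ simultaneously large enough for all the accessibility statements involved, which is possible by the usual uniformization theorems.
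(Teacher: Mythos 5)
Your argument is correct, and it is in fact the argument the paper implicitly relies on: the lemma is stated without proof in the appendix, with a pointer to Appendix A.2.6 of \cite{htt}, where Lurie proves exactly this statement by the route you take — (1)–(3) are formal, and (4) is obtained by identifying perfect classes with accessible, filtered-colimit-closed full subcategories of the arrow category and invoking stability of accessible subcategories under preimages along accessible functors. So your proposal matches the intended proof; no gaps.
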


\begin{proposition}
\label{prop:combinmodelcat}
Let $\mathbf{C}$ be a presentable category. Suppose we are given classes $C$ and $W$ of morphisms of $\mathbf{C}$ such that $C$ is weakly saturated and generated by a set and $W$ is perfect. Suppose furthermore that $W$ is stable under pushouts by elements of $C$ and that any morphism having the right lifting property with respect to all morphisms in $C$ belongs to $W$. Then there exists a left proper combinatorial model structure on $\mathbf{C}$ in which the cofibrations are the elements of $C$, the weak equivalences are the elements of $W$ and the fibrations are the morphisms having the right lifting property with respect to every morphism in $C \cap W$.
\end{proposition}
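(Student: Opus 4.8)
The plan is to verify the model category axioms for the triple consisting of cofibrations $=C$, weak equivalences $=W$, and fibrations $=\mathrm{rlp}(C\cap W)$, following Jeff Smith's recognition theorem (this is essentially Proposition A.2.6.8 of \cite{htt}). Several axioms are immediate: $W$ satisfies two-out-of-three by perfectness, and all three classes are closed under retracts ($C$ because it is weakly saturated; $W$ because a retract is the filtered colimit of an evident idempotent diagram in the arrow category $\mathbf{C}^{[1]}$, which $W$ is closed under; and fibrations because they are defined by a lifting property). The \emph{left proper} conclusion is exactly the hypothesis that $W$ is stable under pushouts by cofibrations. Thus everything reduces to producing the two functorial factorizations and checking the two lifting axioms, and the whole difficulty is concentrated in the construction of the fibrations.

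First I would fix a set $I$ generating $C$ as a weakly saturated class. Quillen's small object argument applied to $I$ factors any morphism as a map in $C$ followed by a map in $\mathrm{rlp}(I)=\mathrm{rlp}(C)$. By hypothesis $\mathrm{rlp}(C)\subseteq W$, and since $C\cap W\subseteq C$ we have $\mathrm{rlp}(C)\subseteq\mathrm{rlp}(C\cap W)$; that is, every map in $\mathrm{rlp}(C)$ is simultaneously a fibration and a weak equivalence, giving the ``trivial fibrations''. Conversely, a standard retract argument shows every fibration lying in $W$ is in $\mathrm{rlp}(C)$: given such a $p$, factor $p=q\circ i$ with $i\in C$ and $q\in\mathrm{rlp}(C)\subseteq W$; two-out-of-three forces $i\in C\cap W$, so the fibration $p$ admits a lift against $i$, exhibiting $p$ as a retract of $q$ and hence $p\in\mathrm{rlp}(C)$. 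This identifies the trivial fibrations with $\mathrm{rlp}(C)$ and settles the factorization into a cofibration followed by a trivial fibration, as well as the lifting of cofibrations against trivial fibrations.

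The heart of the matter is to exhibit the trivial cofibrations $C\cap W$ as the saturation $\mathrm{cof}(J)$ of a \emph{set} $J$, so that the small object argument also yields the factorization into a trivial cofibration followed by a fibration. Here I would use perfectness of $W$ in its strong form: $W$ is an accessible, accessibly embedded subcategory of $\mathbf{C}^{[1]}$. Choosing a regular cardinal $\kappa$ large enough that $\mathbf{C}$ is locally $\kappa$-presentable, that the domains and codomains of the maps in $I$ are $\kappa$-compact, and that $W$ is closed under $\kappa$-filtered colimits and generated under them by its $\kappa$-compact members, I would let $J$ be a set of representatives for the isomorphism classes of those morphisms in $C\cap W$ that are $\kappa$-compact as objects of $\mathbf{C}^{[1]}$. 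Two things must then be checked: that $\mathrm{cof}(J)\subseteq C\cap W$, for which one uses that $C$ is weakly saturated together with the stability of $W$ under the pushouts and transfinite compositions occurring in a $J$-cell complex; and that $\mathrm{rlp}(J)=\mathrm{rlp}(C\cap W)$, for which one shows that an arbitrary trivial cofibration is a retract of a transfinite composite of pushouts of maps in $J$, using $\kappa$-compactness to approximate it by the small trivial cofibrations in $J$.

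The main obstacle is precisely this last point: proving $\mathrm{rlp}(J)=\mathrm{rlp}(C\cap W)$, equivalently that $J$ generates all trivial cofibrations. This is the genuine content of Smith's theorem and rests on the accessibility of $W$ together with the presentability of $\mathbf{C}$; the delicate step is that a lifting problem against a general trivial cofibration can be reduced, via the $\kappa$-compactness of the objects in $J$ and a transfinite induction, to the lifting problems one can already solve against $J$. I expect the careful bookkeeping of this cardinality argument---and in particular checking that the pushout-stability hypothesis on $W$ suffices to keep every cell of a $J$-complex inside $W$---to be the technical crux; the precise argument is carried out in \cite{htt}. Once $J$ is in hand, the small object argument on $J$ supplies the remaining factorization, the lifting of trivial cofibrations against fibrations holds by the very definition of fibrations, and the resulting model structure is combinatorial (presentable, with generating cofibrations $I$ and generating trivial cofibrations $J$) and left proper, as required.
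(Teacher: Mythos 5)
Your proposal is correct and follows exactly the standard Smith recognition argument; the paper itself gives no proof of this proposition at all, stating it only as recalled material with a reference to Appendix A.2.6 of \cite{htt}, and your outline (retract-closure of $W$ by splitting the idempotent as a filtered colimit in the arrow category, identification of the trivial fibrations with $\mathrm{rlp}(C)$ via the retract argument, and reduction of everything to the existence of a generating set $J$ for $C \cap W$) is precisely the argument that citation contains. Since you honestly isolate the genuine crux --- the accessibility/cardinality argument showing $\mathrm{rlp}(J) = \mathrm{rlp}(C \cap W)$ --- and defer it to the same source the paper cites, there is nothing further to compare.
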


\begin{lemma}
\label{lemma:combinlocalization}
Let $\mathbf{C}$ be a combinatorial simplicial model category and let $\mathbf{C}_f$ be the full simplicial subcategory of $\mathbf{C}$ on fibrant objects. Let $W$ denote the class of weak equivalences in $\mathbf{C}_f$. Let $\mathbf{C}_f[W^{-1}]$ be the localization of $\mathbf{C}_f$ obtained by formally inverting all elements of $W$. Then the functor
\begin{equation*}
\mathbf{C}^\circ \longrightarrow \mathbf{C}_f[W^{-1}]
\end{equation*} 
is a weak equivalence in the Bergner model structure on simplicial categories \cite{bergner}.
\end{lemma}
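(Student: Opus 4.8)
The plan is to factor the functor of the lemma through the localization of the smaller simplicial category $\mathbf{C}^\circ$ and to reduce everything to Proposition \ref{prop:dwyerkan}. Write $\iota : \mathbf{C}^\circ \hookrightarrow \mathbf{C}_f$ for the inclusion of the full simplicial subcategory on fibrant--cofibrant objects, and note that $\iota$ carries $W|_{\mathbf{C}^\circ}$ into $W$. Since localization is functorial, the functor in the statement is the composite
\[
\mathbf{C}^\circ \longrightarrow \mathbf{C}^\circ[W^{-1}] \xrightarrow{\ \bar\iota\ } \mathbf{C}_f[W^{-1}],
\]
so it suffices to prove that each of these two functors is a weak equivalence in the Bergner model structure.

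For the first functor I would invoke the classical fact that in any simplicial model category a map between fibrant--cofibrant objects is a weak equivalence precisely when it is an equivalence in the simplicial sense (the simplicial Whitehead theorem). Thus the restriction of $W$ to $\mathbf{C}^\circ$ consists entirely of simplicial equivalences, and Proposition \ref{prop:dwyerkan} applies verbatim to show that $\mathbf{C}^\circ \to \mathbf{C}^\circ[W^{-1}]$ is a Dwyer--Kan equivalence.

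For the second functor the idea is to produce an explicit homotopy inverse coming from cofibrant replacement. Since $\mathbf{C}$ is combinatorial it admits a functorial factorization, giving a cofibrant replacement functor $Q$ together with a natural transformation $q : Q \Rightarrow \mathrm{id}$ whose components are trivial fibrations. If $X$ is fibrant then $QX \to X$ is a trivial fibration with fibrant target, so $QX$ is again fibrant, hence fibrant--cofibrant; therefore $Q$ restricts to a functor $\mathbf{C}_f \to \mathbf{C}^\circ$. By the two-out-of-three property $Q$ preserves weak equivalences, so it descends to $\bar Q : \mathbf{C}_f[W^{-1}] \to \mathbf{C}^\circ[W^{-1}]$. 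The natural weak equivalence $q$, together with its restriction $Q\iota \Rightarrow \mathrm{id}_{\mathbf{C}^\circ}$, then exhibits $\bar Q$ as a two-sided inverse to $\bar\iota$ up to natural equivalence, whence $\bar\iota$ is a Dwyer--Kan equivalence.

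The main obstacle lies in this last step: one must check that a natural transformation whose components lie in $W$ really induces a natural equivalence between the localized simplicial functors, and that the (a priori non-simplicial) functor $Q$ and the transformation $q$ interact correctly with the simplicial enrichment that $\mathbf{C}_f[W^{-1}]$ remembers. I would handle this either by replacing $Q$ by a simplicial cofibrant replacement functor, available in a combinatorial simplicial model category, or by passing to homotopy coherent nerves: there essential surjectivity of $\bar\iota$ follows at once from the weak equivalences $QX \to X$, while full faithfulness follows from the computation of mapping spaces in the hammock localization, which for fibrant--cofibrant objects reproduces the simplicial mapping spaces of $\mathbf{C}^\circ$. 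Modulo this coherence bookkeeping, the two displayed equivalences compose to give the claim.
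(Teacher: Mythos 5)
The paper does not actually prove this lemma: it is stated in the appendix as a recalled result, with the reader referred to Appendix A.3.6 of \cite{htt}, so there is no in-text argument to compare yours against. Your two-step factorization through $\mathbf{C}^\circ[W^{-1}]$ is nonetheless the standard route and is essentially what the cited source does. Step one is solid: weak equivalences between fibrant--cofibrant objects in a simplicial model category are simplicial homotopy equivalences, so Proposition \ref{prop:dwyerkan} applies directly. Your verification in step two that $QX$ is again fibrant (a trivial fibration is in particular a fibration, and fibrations compose) and that $Q$ preserves $W$ by two-out-of-three is also correct.

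The places where your sketch is thinnest are exactly the ones you flag, and they deserve to be named as the real content of the proof rather than "bookkeeping." First, the small object argument in a merely combinatorial model category produces a $Q$ that need not be a simplicial functor, and $\mathbf{C}_f[W^{-1}]$ only receives simplicial functors; you do need the enriched/simplicial cofibrant replacement available in a combinatorial \emph{simplicial} model category, or else you must abandon $Q$ and argue directly. Second, "formally inverting $W$" must be interpreted homotopy-invariantly (a homotopy pushout in the Bergner model structure, equivalently the Dwyer--Kan hammock localization); the strict pushout has uncontrolled mapping spaces, and your claim that a natural transformation with components in $W$ induces a natural equivalence of localized functors is only available for the homotopy-invariant localization. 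Once that interpretation is fixed, the cleanest finish is the one you mention in passing: essential surjectivity of $\bar\iota$ from the zigzag $QX \to X$, and full faithfulness from the Dwyer--Kan computation identifying the hammock-localization mapping spaces of a simplicial model category with the simplicial mapping spaces between fibrant--cofibrant representatives. With those two points made precise your argument is complete and agrees with the standard proof.
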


The following lemma has not been stated in the literature before, but is a version of Corollary A.3.6.18 of \cite{htt}. All of the necessary arguments given by Lurie involving projective model structures on simplicial functor categories carry through, mutatis mutandis, to the setting of projective model structures on categories of algebras over simplicial operads, and we obtain:   
\begin{lemma}
\label{lemma:combinlocalization2}
Suppose we are given a partially ordered set $I$ and define $I^+ = I \cup \{\infty\}$ by adjoining a largest element $\infty$. Suppose we are given a diagram
\begin{equation*}
\mathcal{D}: I^+ \longrightarrow \mathbf{sOper} 
\end{equation*} 
taking values in cofibrant simplicial operads, which exhibits $\mathcal{D}(\infty)$ as the colimit of the diagram $\mathcal{D}|_I$. Let $\mathbf{C}$ be a combinatorial simplicial model category an endow the categories $\mathrm{Alg}_{\mathcal{D}(i)}(\mathbf{C})$ with the projective model structure (cf. \cite{bergermoerdijk2}). Denote by $\mathrm{Alg}_{\mathcal{D}(i)}(\mathbf{C})_f$ their full simplicial subcategories on fibrant objects and let $W_i$ be the classes of weak equivalences in the respective categories. Then the homotopy limit (in the Bergner model structure) of the diagram $\{\mathrm{Alg}_{\mathcal{D}(i)}(\mathbf{C})_f[W_i^{-1}]\}_{i \in I}$ is the simplicial category
\begin{equation*}
\mathrm{Alg}_{\mathcal{D}(\infty)}(\mathbf{C})_f[W_\infty^{-1}]
\end{equation*}
\end{lemma}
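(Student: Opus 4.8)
The plan is to reduce the statement to Corollary A.3.6.18 of \cite{htt} by recording the three structural facts about projective model structures on algebra categories that Lurie's argument for enriched functor categories requires, and then checking that his manipulation of homotopy limits is insensitive to the presence of higher-arity operations. First I would observe that an algebra over an operad $P$ in $\mathbf{C}$ is precisely a map of simplicial operads $P \longrightarrow \underline{\mathbf{C}}$, where $\underline{\mathbf{C}}$ is the operad associated to the symmetric monoidal simplicial category $\mathbf{C}$. Since $\mathcal{D}(\infty)$ is the colimit of $\mathcal{D}|_I$, a cone $\{\mathcal{D}(i) \longrightarrow \underline{\mathbf{C}}\}_{i \in I}$ is the same datum as a single map $\mathcal{D}(\infty) \longrightarrow \underline{\mathbf{C}}$; applying the same reasoning to morphisms of algebras shows that the restriction functors exhibit an isomorphism of categories
\[
\mathrm{Alg}_{\mathcal{D}(\infty)}(\mathbf{C}) \simeq \varprojlim_{i \in I} \mathrm{Alg}_{\mathcal{D}(i)}(\mathbf{C}),
\]
the limit being taken along the restriction functors $\psi^*$ induced by the structure maps of the diagram.

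Next I would verify the Quillen-theoretic hypotheses. Each restriction functor $\psi^*: \mathrm{Alg}_{\mathcal{D}(j)}(\mathbf{C}) \longrightarrow \mathrm{Alg}_{\mathcal{D}(i)}(\mathbf{C})$ is right Quillen for the projective model structures of \cite{bergermoerdijk2}: weak equivalences and fibrations in these structures are detected colourwise, and $\psi^*$ merely reindexes colours along $\psi$, so it carries pointwise fibrations and pointwise trivial fibrations to maps of the same kind (its left adjoint being operadic left Kan extension). Because $\mathbf{C}$ is combinatorial and each $\mathcal{D}(i)$ is cofibrant, every category in sight is a combinatorial simplicial model category, and by Lemma \ref{lemma:combinlocalization} together with Proposition \ref{prop:dwyerkan} the simplicial category $\mathrm{Alg}_{\mathcal{D}(i)}(\mathbf{C})_f[W_i^{-1}]$ is Bergner-equivalent to the Dwyer--Kan localization of $\mathrm{Alg}_{\mathcal{D}(i)}(\mathbf{C})$ at its weak equivalences, i.e.\ to a model for its underlying $\infty$-category. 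With the strict-limit identification above and these right Quillen functors in place, the diagram $\{\mathrm{Alg}_{\mathcal{D}(i)}(\mathbf{C})\}_{i \in I}$ presents $\mathrm{Alg}_{\mathcal{D}(\infty)}(\mathbf{C})$ as a limit of combinatorial model categories along right Quillen functors; this is exactly the input to Corollary A.3.6.18 of \cite{htt}, whose conclusion is that the localization of the limit is the homotopy limit (in the Bergner model structure) of the localizations. This yields the asserted equivalence.

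The main obstacle is the verification that Lurie's proof, which is written for $\mathbf{C}$-enriched functor categories $\mathbf{C}^{\mathcal{O}}$ --- equivalently, for algebras over operads with only unary operations --- applies mutatis mutandis to algebras over arbitrary coloured operads. The point to check is that the only features of the projective model structure his homotopy-limit argument uses are its combinatoriality, the colourwise (pointwise) detection of fibrations and weak equivalences, and the right Quillen character of the restriction functors; each of these is supplied for $\mathrm{Alg}_P(\mathbf{C})$ by \cite{bergermoerdijk2} exactly as for functor categories. The higher-arity operations of the $\mathcal{D}(i)$ enter only through the structure maps of an algebra and never interfere with this bookkeeping, since equivalences and fibrations are tested on the underlying families indexed by colours and the colour sets convert the operad colimit into the relevant colimit of indexing data. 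Once these structural facts are recorded, the remainder of Lurie's argument --- the identification of the homotopy limit of the localized relative categories --- is formal and transcribes without change.
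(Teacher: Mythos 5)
Your proposal is correct and follows essentially the same route as the paper, which itself gives no argument beyond asserting that Lurie's proof of Corollary A.3.6.18 of \cite{htt} carries over mutatis mutandis from projective model structures on simplicial functor categories to projective model structures on algebras over cofibrant simplicial operads. The details you supply --- the identification $\mathrm{Alg}_{\mathcal{D}(\infty)}(\mathbf{C}) \simeq \varprojlim_{i} \mathrm{Alg}_{\mathcal{D}(i)}(\mathbf{C})$ via maps of operads into $\underline{\mathbf{C}}$, the right Quillen character of the restriction functors, and the colourwise detection of fibrations and weak equivalences from \cite{bergermoerdijk2} --- are precisely the points the paper leaves implicit, so your write-up is a faithful (and more explicit) version of the intended proof.
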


Finally, we state a couple of lemmas which do not involve combinatorial model categories, but which could not find another place in this text. They can be found in sections A.2.3 and A.3.1 and A.3.2 of \cite{htt}.

\begin{lemma}
\label{lemma:extensionuptohomotopy}
Suppose we are given a diagram
\[
\xymatrix{
A' \ar[r]\ar[dd] & A \ar[dd]^i\ar[dr] & \\
 & & X \\
B' \ar[r] & B \ar@{-->}[ur] &
}
\]
in any model category $\mathbf{C}$, where $X$ is fibrant, $i$ is a cofibration between cofibrant objects and the horizontal arrows are weak equivalences. If we can find an extension $B' \longrightarrow X$ rendering the diagram commutative then the dotted extension also exists.
\end{lemma}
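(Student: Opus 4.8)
The plan is to reduce the on-the-nose extension problem along the cofibration $i$ to a statement about homotopy classes of maps into the fibrant object $X$, and then to rigidify a homotopy into a strict extension by means of a path object. Throughout, write $u\colon A'\to A$ and $v\colon B'\to B$ for the two horizontal weak equivalences, $a\colon A'\to B'$ for the remaining vertical map, $f\colon A\to X$ for the given diagonal, and $g\colon B'\to X$ for the hypothesized extension, so that commutativity of the given diagram reads $g\circ a=f\circ u$ and $i\circ u=v\circ a$. I use that $A,A',B,B'$ are all cofibrant and that $X$ is fibrant, so that for each of these sources the set $\pi(-,X)$ of homotopy classes of maps into $X$ is well defined, left and right homotopy agree, and a weak equivalence between cofibrant objects induces a bijection on $\pi(-,X)$ for fibrant $X$ (see \cite{hovey}).

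First I would produce a candidate $g'\colon B\to X$ whose restriction along $i$ is merely homotopic to $f$. Since $v$ is a weak equivalence of cofibrant objects and $X$ is fibrant, $v^{*}\colon\pi(B,X)\to\pi(B',X)$ is a bijection; let $g'\colon B\to X$ represent $(v^{*})^{-1}[g]$, so that $g'\circ v\simeq g$. A short diagram chase in homotopy classes then gives, using $i\circ u=v\circ a$ and $g\circ a=f\circ u$,
\[
u^{*}[\,g'\circ i\,]=[\,g'\circ v\circ a\,]=[\,g\circ a\,]=[\,f\circ u\,]=u^{*}[f].
\]
Because $u^{*}\colon\pi(A,X)\to\pi(A',X)$ is injective (again $u$ is a weak equivalence of cofibrant objects), this forces $[\,g'\circ i\,]=[f]$ in $\pi(A,X)$, that is, $g'\circ i\simeq f$.

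The heart of the argument is then to upgrade this homotopy to a genuine equality, and this is where the cofibration $i$ and the fibrancy of $X$ enter. Choose a path object $X\xrightarrow{\ \sim\ }X^{I}\xrightarrow{(d_{0},d_{1})}X\times X$; since $X$ is fibrant, each evaluation $d_{0},d_{1}\colon X^{I}\to X$ is a trivial fibration. Present the homotopy $g'\circ i\simeq f$ as a right homotopy $h\colon A\to X^{I}$ with $d_{0}\circ h=g'\circ i$ and $d_{1}\circ h=f$, and consider the lifting problem
\[
\xymatrix{
A \ar[r]^{h} \ar[d]_{i} & X^{I} \ar[d]^{d_{0}} \\
B \ar[r]_{g'} \ar@{-->}[ur] & X
}
\]
which commutes by the defining properties of $h$. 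As $i$ is a cofibration and $d_{0}$ a trivial fibration, a lift $H\colon B\to X^{I}$ exists. Setting $\tilde g:=d_{1}\circ H\colon B\to X$ produces the desired dotted extension, since $\tilde g\circ i=d_{1}\circ H\circ i=d_{1}\circ h=f$.

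I expect the only genuinely delicate point to be this final rigidification: everything preceding it is formal bookkeeping with homotopy classes, whereas turning ``$g'\circ i\simeq f$'' into a strict extension $\tilde g$ with $\tilde g\circ i=f$ relies exactly on the interplay of the cofibration $i$ against a trivial fibration $d_{0}$ extracted from a path object on the fibrant $X$. One should also keep careful track of the cofibrancy of $A'$ and $B'$, and not merely of $A$ and $B$, since it is the bijectivity of $u^{*}$ and $v^{*}$ on homotopy classes that legitimizes the opening reduction.
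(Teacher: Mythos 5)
The paper itself does not prove this lemma; it simply refers to Proposition A.2.3.1 of \cite{htt}, and your two-step strategy (solve the extension problem up to homotopy, then rigidify against a path object of the fibrant target) is exactly the standard argument for that statement. The second half of your proof is correct as written: $d_0\colon X^I\to X$ is a trivial fibration because $X$ is fibrant, the homotopy $g'\circ i\simeq f$ can be realized as a right homotopy through any chosen path object because $A$ is cofibrant and $X$ is fibrant, and lifting against the cofibration $i$ gives $\tilde g=d_1\circ H$ with $\tilde g\circ i=f$. You are also reading the conclusion correctly: only $\tilde g\circ i=f$ is required, not $\tilde g\circ v=g$.

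The one point that needs repair is the one you flag yourself: the hypotheses only make $A$ and $B$ cofibrant (as domain and codomain of $i$), so you are not entitled to the bijectivity of $v^{*}$ and injectivity of $u^{*}$ on homotopy classes $\pi(-,X)$, which requires cofibrant sources. This is fixable without changing the structure of the argument: run the first step in the homotopy category instead. Writing $\gamma\colon\mathbf{C}\to\mathrm{Ho}(\mathbf{C})$ for the localization, $\gamma(u)$ and $\gamma(v)$ are isomorphisms for arbitrary $A'$ and $B'$; since $B$ is cofibrant and $X$ is fibrant, the class $\gamma(g)\circ\gamma(v)^{-1}\in\mathrm{Ho}(\mathbf{C})(B,X)\cong\pi(B,X)$ is represented by an honest morphism $g'\colon B\to X$; and the chase
$\gamma(g'\circ i)\circ\gamma(u)=\gamma(g')\circ\gamma(v)\circ\gamma(a)=\gamma(g\circ a)=\gamma(f)\circ\gamma(u)$,
together with invertibility of $\gamma(u)$ and the identification $\mathrm{Ho}(\mathbf{C})(A,X)\cong\pi(A,X)$ for cofibrant $A$ and fibrant $X$, yields $g'\circ i\simeq f$. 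With that substitution your proof goes through under exactly the stated hypotheses; as written it proves the lemma only under the additional assumption that $A'$ and $B'$ are cofibrant (which does happen to hold in the paper's one application of the lemma).
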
 

\begin{lemma}
\label{lemma:equivsimpcat}
Let $\mathbf{C}$ and $\mathbf{D}$ be simplicial model categories and suppose that every object of $\mathbf{C}$ is cofibrant. Let
\[
\xymatrix@R=40pt@C=40pt{
F: \mathbf{C} \ar@<1ex>[r] & \mathbf{D}: G \ar@<1ex>[l]
}
\]
be a Quillen adjunction between the underlying model categories and suppose $G$ is a simplicial functor. Then the following are equivalent:
\begin{itemize}
\item[(1)] The Quillen pair $(F,G)$ is a Quillen equivalence
\item[(2)] By restriction $G$ induces a weak equivalence $\mathbf{D}^\circ \longrightarrow \mathbf{C}^\circ$ of simpicial categories in the Bergner model structure
\end{itemize} 
\end{lemma}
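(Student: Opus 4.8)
The plan is to deduce the equivalence of (1) and (2) by comparing the restriction of $G$ to fibrant--cofibrant objects with the derived adjunction $(LF, RG)$ on homotopy categories, using throughout the hypothesis that every object of $\mathbf{C}$ is cofibrant. The first point to record is that this hypothesis makes $G$ restrict cleanly: as a right Quillen functor $G$ preserves fibrant objects, and since every object of $\mathbf{C}$ is cofibrant, $G$ sends a fibrant--cofibrant object of $\mathbf{D}$ to a fibrant--cofibrant object of $\mathbf{C}$. Hence $G$ restricts to a genuine simplicial functor $\mathbf{D}^\circ \to \mathbf{C}^\circ$, with no derived correction, and this restriction models $RG$ on homotopy categories. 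Throughout I would freely use the standard facts from \cite{htt} that $\mathrm{Ho}(\mathbf{C})$ may be described as the category with objects those of $\mathbf{C}^\circ$ and morphisms $\pi_0 \mathrm{Map}_{\mathbf{C}}(X,Y)$; that for a weak equivalence $u \colon P \to Q$ between cofibrant objects and a fibrant object $Y$, precomposition $u^* \colon \mathrm{Map}_{\mathbf{C}}(Q,Y) \to \mathrm{Map}_{\mathbf{C}}(P,Y)$ is a weak homotopy equivalence; and that a simplicial functor is a Bergner equivalence precisely when it induces weak equivalences on all mapping spaces and is essentially surjective on homotopy categories.

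The computational heart is a single identification. For $X, Y \in \mathbf{D}^\circ$ the simplicial adjunction $F \dashv G$ provides a natural isomorphism $\mathrm{Map}_{\mathbf{C}}(GX, GY) \cong \mathrm{Map}_{\mathbf{D}}(FGX, Y)$, and a triangle-identity computation identifies the comparison map $\mathrm{Map}_{\mathbf{D}}(X,Y) \to \mathrm{Map}_{\mathbf{C}}(GX,GY)$ induced by the simplicial functoriality of $G$ with precomposition by the counit $\epsilon_X \colon FGX \to X$. Since $GX$ is cofibrant and $F$ is left Quillen, $FGX$ is cofibrant, while $Y$ is fibrant; so by the detection principle above the comparison map is a weak homotopy equivalence as soon as $\epsilon_X$ is a weak equivalence. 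Verifying this identification carefully --- that the enriched functoriality of $G$ really does correspond, under the simplicial adjunction isomorphism, to precomposition by the derived counit --- is the step I expect to be the main obstacle, and it is precisely where the \emph{simplicial} nature of the adjunction and the assumption that $G$ is a simplicial functor are used.

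For the implication (1) $\Rightarrow$ (2), assume $(F,G)$ is a Quillen equivalence. Then for each $X \in \mathbf{D}^\circ$ the counit $\epsilon_X \colon FGX \to X$ is a weak equivalence between cofibrant objects, so by the identification above the map $\mathrm{Map}_{\mathbf{D}}(X,Y) \to \mathrm{Map}_{\mathbf{C}}(GX,GY)$ is a weak homotopy equivalence for all $X, Y \in \mathbf{D}^\circ$; this is the full-faithfulness condition for $G \colon \mathbf{D}^\circ \to \mathbf{C}^\circ$. For essential surjectivity, let $A \in \mathbf{C}^\circ$ and let $R(FA) \in \mathbf{D}^\circ$ be a fibrant replacement of the cofibrant object $FA$; the derived unit $A \to G(R(FA))$ is a weak equivalence because $(F,G)$ is a Quillen equivalence, exhibiting $A$ as isomorphic in $\mathrm{Ho}(\mathbf{C})$ to $G$ applied to an object of $\mathbf{D}^\circ$. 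Hence $G$ is a Bergner equivalence.

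For (2) $\Rightarrow$ (1), suppose $G \colon \mathbf{D}^\circ \to \mathbf{C}^\circ$ is a Bergner equivalence. Applying $\pi_0$ to the weak equivalences $\mathrm{Map}_{\mathbf{D}}(X,Y) \to \mathrm{Map}_{\mathbf{C}}(GX,GY)$ and using that $GX, GY$ lie in $\mathbf{C}^\circ$, we see that $RG \colon \mathrm{Ho}(\mathbf{D}) \to \mathrm{Ho}(\mathbf{C})$ is fully faithful, while essential surjectivity of the Bergner equivalence gives that $RG$ is essentially surjective. Thus $RG$ is an equivalence of categories, and since $LF \dashv RG$, its left adjoint $LF$ is automatically an inverse equivalence; both the derived unit and counit are then isomorphisms, which is exactly the statement that $(F,G)$ is a Quillen equivalence.
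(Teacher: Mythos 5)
Your skeleton is the right one, and your direction (2) $\Rightarrow$ (1) is fine as written: it uses only that $G$ preserves weak equivalences between fibrant objects, that every object of $\mathbf{C}$ is cofibrant (so the restricted $G$ models $RG$), and that a functor admitting an adjoint is an equivalence iff unit and counit are isomorphisms. (For calibration: the paper does not prove this lemma at all but quotes it from \S A.3.1 of \cite{htt}, so the baseline is Lurie's Proposition A.3.1.10.) The genuine gap sits exactly where you predicted the "main obstacle" would be, but it is not a verification issue --- the step is unavailable. Your computational heart invokes "the simplicial adjunction $F \dashv G$" to get a natural isomorphism $\mathrm{Map}_{\mathbf{C}}(GX,GY) \cong \mathrm{Map}_{\mathbf{D}}(FGX,Y)$ of simplicial sets. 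But the hypotheses only assert that $G$ is a simplicial functor; $F$ is not assumed simplicial and the adjunction is not assumed enriched, and this asymmetry is the entire point of the statement. In the one place the paper applies the lemma, $F = St^+_S$ is explicitly \emph{not} simplicial: Lemma \ref{lemma:Stproduct} provides only a natural comparison map $St^+_S(X \otimes K) \longrightarrow St^+_S(X) \otimes K$ which is a weak equivalence rather than an isomorphism, and the simplicial structure on $Un^+_S$ is manufactured by hand from it. So in the intended application your key isomorphism does not exist, and with it the identification of the comparison map $\mathrm{Map}_{\mathbf{D}}(X,Y) \to \mathrm{Map}_{\mathbf{C}}(GX,GY)$ with $\epsilon_X^*$ collapses; as written you have proved the standard statement about genuinely simplicial Quillen adjunctions, not this lemma.

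The repair is to replace the enriched adjunction isomorphism by an adjunction \emph{up to weak equivalence}, which is what the hypotheses actually yield. Since $G$ is simplicial there are canonical natural maps $G(D) \otimes K \to G(D \otimes K)$, whence by the underlying adjunction natural maps $F(A \otimes K) \to F(A) \otimes K$. For $K = \Delta^n$ both sides map to $F(A)$ by weak equivalences (here one uses that $A$ and $A \otimes \Delta^n$ are cofibrant because every object of $\mathbf{C}$ is, together with Ken Brown's lemma for $F$), so by two-out-of-three $F(A \otimes \Delta^n) \to F(A) \otimes \Delta^n$ is a weak equivalence between cofibrant objects. Precomposing with these maps and applying the ordinary adjunction bijection in each simplicial degree produces a natural map $\phi_A \colon \mathrm{Map}_{\mathbf{D}}(FA,Y) \to \mathrm{Map}_{\mathbf{C}}(A,GY)$ for $A \in \mathbf{C}$ and fibrant $Y$, which one shows to be a weak homotopy equivalence of Kan complexes: on $\pi_0$ it realizes the derived adjunction bijection, and cotensoring with an arbitrary finite simplicial set $K$ (using $\mathrm{Map}_{\mathbf{C}}(A,GY)^K \cong \mathrm{Map}_{\mathbf{C}}(A \otimes K, GY)$ and the weak equivalence $F(A \otimes K) \to FA \otimes K$ again) gives the same bijection after all such cotensors, which suffices for Kan complexes. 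With $\phi_{GX}$ in place of your isomorphism, your factorization $\mathrm{Map}_{\mathbf{D}}(X,Y) \to \mathrm{Map}_{\mathbf{D}}(FGX,Y) \to \mathrm{Map}_{\mathbf{C}}(GX,GY)$ via the triangle identity and the counit argument completes (1) $\Rightarrow$ (2) exactly as you intended; this weak-enrichment lemma is the content your proposal is missing.
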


\begin{lemma}
\label{lemma:homlimsimplcat}
Let $I$ be a small category and let $\{\mathbf{C}_i\}_{i \in I}$ be a diagram of simplicial categories. Suppose we are given simplicial functors
\[
\xymatrix@C=50pt{
\mathbf{D} \ar[r]^{\mathcal{F}} & \mathbf{C} \ar[r]^{\mathcal{G}} & \varprojlim \{\mathbf{C}_i\}_{i \in I}
}
\]
such that $\mathcal{G} \circ \mathcal{F}$ exhibits $\mathbf{D}$ as a homotopy limit of the diagram $\{\mathbf{C}_i\}_{i \in I}$. Then the following are equivalent:
\begin{itemize}
\item[(1)] $\mathcal{G}$ exhibits $\mathbf{D}$ as a homotopy limit of $\{\mathbf{C}_i\}_{i \in I}$
\item[(2)] For every pair of objects $x, y \in \mathbf{C}$, the functor $\mathcal{G}$ exhibits $\mathrm{Map}_{\mathbf{C}}(A,B)$ as a homotopy limit of the diagram $\{\mathrm{Map}_{\mathbf{C_i}}(\mathcal{G}_i A, \mathcal{G}_i B)\}_{i \in I}$
\end{itemize}
\end{lemma}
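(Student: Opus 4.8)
The plan is to recognize both conditions as two facets of a single assertion, namely that the comparison functor $\mathbf{C} \longrightarrow \varprojlim\{\mathbf{C}_i\}$ induced by $\mathcal{G}$, followed by the canonical map from the strict limit to a chosen model of the homotopy limit, is a weak equivalence in the Bergner model structure on simplicial categories \cite{bergner}, i.e. a Dwyer--Kan equivalence. First I would fix such a model: replace the diagram $\{\mathbf{C}_i\}$ by a levelwise-equivalent, injectively fibrant diagram and take its strict limit $\mathbf{L}$. The hypothesis that $\mathcal{G}\circ\mathcal{F}$ exhibits $\mathbf{D}$ as a homotopy limit then says precisely that the resulting functor $\Psi\colon \mathbf{D}\to\mathbf{L}$ is a Dwyer--Kan equivalence; condition (1) says the same for the functor $\Phi\colon\mathbf{C}\to\mathbf{L}$ induced by $\mathcal{G}$; and by construction $\Psi = \Phi\circ\mathcal{F}$.

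The central structural input, which I expect to be the main obstacle, is the natural identification
\[
\mathrm{Map}_{\mathbf{L}}(\Phi A, \Phi B) \;\simeq\; \varprojlim_i\, \mathrm{Map}_{\mathbf{C}_i}(\mathcal{G}_i A, \mathcal{G}_i B),
\]
valid for all $A,B\in\mathbf{C}$, exhibiting the mapping spaces of a homotopy limit of simplicial categories as the homotopy limits of the individual mapping spaces. This is exactly where the choice of an injectively fibrant model of the diagram must be used: for the strict limit of such a diagram the mapping objects are computed by an honest limit of mapping objects, which is then a genuine homotopy limit. Granting this, condition (2) is literally the assertion that $\Phi$ induces a weak equivalence on every mapping space, i.e. that $\Phi$ is \emph{homotopically fully faithful}.

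With these reformulations the equivalence becomes a two-out-of-three argument for Dwyer--Kan equivalences. For (1) $\Rightarrow$ (2): a Dwyer--Kan equivalence is by definition homotopically fully faithful, so $\Phi$ induces weak equivalences on mapping spaces, which by the displayed identity is precisely (2). For (2) $\Rightarrow$ (1): condition (2) together with the displayed identity shows $\Phi$ is homotopically fully faithful, so it remains only to verify that $\Phi$ is essentially surjective on homotopy categories. This I would read off directly from the hypothesis: since $\Psi=\Phi\circ\mathcal{F}$ is a Dwyer--Kan equivalence it is essentially surjective, whence every object of $\mathbf{L}$ is equivalent to $\Psi(d)=\Phi(\mathcal{F}(d))$ for some object $d$ of $\mathbf{D}$, and in particular lies in the essential image of $\Phi$. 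Thus $\Phi$ is both essentially surjective and homotopically fully faithful, hence a Dwyer--Kan equivalence, which is (1).

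In summary, once the mapping-space formula for homotopy limits is in hand the lemma is a formal consequence of essential surjectivity and fully-faithfulness bookkeeping, so the only genuinely non-formal point is the verification of that formula; for this one may appeal to the treatment of homotopy limits in the Bergner model structure in the appendix of \cite{htt}, which also justifies the behaviour on $\pi_0$ used in the essential-surjectivity step.
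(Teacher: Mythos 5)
The paper offers no proof of this lemma: it is stated as imported wholesale from the appendix of \cite{htt} (sections A.2.3, A.3.1, A.3.2), so there is no in-text argument to compare against. Your sketch is a correct reconstruction of the standard argument, and its logical skeleton is sound: pass to an injectively fibrant replacement of the diagram so that the strict limit $\mathbf{L}$ models the homotopy limit (legitimate, since the Bergner model structure is combinatorial and hence the injective model structure on $I$-diagrams exists and $\varprojlim$ is right Quillen for it); observe that the hypothesis makes $\Psi = \Phi \circ \mathcal{F}$ a Dwyer--Kan equivalence; and then trade condition (1) for condition (2) by separating homotopical full faithfulness of $\Phi$ (which is exactly (2), via the mapping-space formula) from essential surjectivity (which is supplied for free because the essentially surjective $\Psi$ factors through $\Phi$). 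You are also right to isolate the one genuinely non-formal ingredient, namely that for an injectively fibrant diagram the limit $\varprojlim_i \mathrm{Map}_{\mathbf{C}_i}(\mathcal{G}_i A, \mathcal{G}_i B)$, which is tautologically $\mathrm{Map}_{\mathbf{L}}(\Phi A, \Phi B)$, is an honest homotopy limit of the mapping-space diagram; this is precisely the content of Lurie's analysis in A.3.2, so your deferral lands on the same citation the paper itself relies on for the entire lemma. The only point worth making explicit in a full write-up is the naturality check that the comparison map appearing in condition (2) --- from $\mathrm{Map}_{\mathbf{C}}(A,B)$ to the homotopy limit of the mapping spaces --- agrees under your identification with the map induced by $\Phi$, so that (2) really is the assertion of homotopical full faithfulness; this is routine but it is the hinge on which the whole equivalence turns.
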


\newpage

% bibliography

\bibliographystyle{plain}
\bibliography{biblio}

\begin{thebibliography}{10}

\bibitem{bergermoerdijk1}
C.~Berger and I.~Moerdijk.
\newblock Axiomatic homotopy theory for operads.
\newblock {\em Comment. Math. Helv.}, 78(4), 2003.

\bibitem{bergermoerdijk2}
C.~Berger and I.~Moerdijk.
\newblock Resolution of coloured operads and rectification of homotopy
  algebras.
\newblock {\em Contemp. Math.}, (431):31--58, 2007.

\bibitem{bergner}
J.E. Bergner.
\newblock A model category structure on the category of simplicial categories.
\newblock {\em Trans. Amer. Math. Soc.}, 359:2043--2058, 2007.

\bibitem{boardmanvogt}
M.~Boardman and R.~Vogt.
\newblock {\em Homotopy invariant algebraic structures on topological spaces},
  volume 347 of {\em Lecture Notes in Math.}
\newblock Springer-Verlag, 1973.

\bibitem{moerdijkcisinski2}
D.-C. Cisinski and I.~Moerdijk.
\newblock Dendroidal {S}egal spaces and infinity-operads.
\newblock 2010.
\newblock arXiv:1010.4956.

\bibitem{dsetsvssimploper}
D.-C. Cisinski and I.~Moerdijk.
\newblock Dendroidal sets and simplicial operads.
\newblock 2011.
\newblock arXiv:1109.1004.

\bibitem{moerdijkcisinski}
D.-C. Cisinski and I.~Moerdijk.
\newblock Dendroidal sets as models for homotopy operads.
\newblock {\em Journal of Topology}, 4(2):257--299, 2011.

\bibitem{crans}
S.E. Crans.
\newblock Quillen closed model structures for sheaves.
\newblock {\em J. Pure Appl. Algebra}, 101:35--57, 1995.

\bibitem{dwyerkan}
W.G. Dwyer and D.M. Kan.
\newblock Simplicial localizations of categories.
\newblock {\em J. Pure Appl. Algebra}, 17:267--284, 1980.

\bibitem{heuts2}
G.S.K.S. Heuts.
\newblock An infinite loop space machine for infinity-operads.
\newblock To appear.

\bibitem{hirschhorn}
P.~Hirschhorn.
\newblock {\em Model {C}ategories and {T}heir {L}ocalizations}.
\newblock Number~99 in Mathematical {S}urveys and {M}onographs. American
  Mathematical Society, Providence, R.I., 2003.

\bibitem{hovey}
M.~Hovey.
\newblock {\em Model {C}ategories}.
\newblock American Mathematical Society, Providence, R.I., 1998.

\bibitem{joyal}
A.~Joyal.
\newblock Quasi-categories and {K}an complexes.
\newblock {\em J. Pure Appl. Algebra}, (175):207--222, 2005.

\bibitem{dagv}
J.~Lurie.
\newblock Derived {A}lgebraic {G}eometry {V}: Structured {S}paces.
\newblock Available online.

\bibitem{htt}
J.~Lurie.
\newblock {\em Higher Topos Theory}.
\newblock Princeton University Press, 2009.

\bibitem{higheralgebra}
J.~Lurie.
\newblock {\em Higher Algebra}.
\newblock 2011.
\newblock Available online.

\bibitem{may}
J.P. May.
\newblock {\em The Geometry of Iterated Loop Spaces}, volume 271 of {\em
  Lecture Notes in Math.}
\newblock Springer-Verlag, 1972.

\bibitem{dendroidalsets}
I.~Moerdijk and B.~To{\"{e}}n.
\newblock {\em Simplicial Methods for Operads and Algebraic Geometry}.
\newblock Birkh{\"{a}}user, 2010.

\bibitem{moerdijkweiss2}
I.~Moerdijk and I.~Weiss.
\newblock On inner {K}an complexes in the category of dendroidal sets.
\newblock {\em Adv. in Math.}, 221(2):343--389, 2009.

\bibitem{hag1}
B.~To{\"{e}}n and G.~Vezzosi.
\newblock Homotopical {A}lgebraic {G}eometry {I}: Topos theory.
\newblock 2004.
\newblock arXiv:math/0207028.

\bibitem{hag2}
B.~To{\"{e}}n and G.~Vezzosi.
\newblock Homotopical {A}lgebraic {G}eometry {II}: geometric stacks and
  applications.
\newblock 2006.
\newblock arXiv:math/0404373.

\bibitem{vistoli}
A.~Vistoli.
\newblock Notes on {G}rothendieck topologies, fibered categories and descent
  theory.
\newblock 2008.
\newblock Available online.

\end{thebibliography}
\par 

\textsc{Gijs Heuts} \\
Harvard University \\
gheuts@math.harvard.edu

\end{document}